\title{A survey of graphical languages for monoidal categories}
\author{Peter Selinger}
\date{Dalhousie University}
\begin{document}
\maketitle

\begin{abstract}
  This article is intended as a reference guide to various notions of
  monoidal categories and their associated string diagrams. It is
  hoped that this will be useful not just to mathematicians, but also
  to physicists, computer scientists, and others who use diagrammatic
  reasoning. We have opted for a somewhat informal treatment of
  topological notions, and have omitted most proofs.  Nevertheless,
  the exposition is sufficiently detailed to make it clear what is
  presently known, and to serve as a starting place for more in-depth
  study. Where possible, we provide pointers to more rigorous
  treatments in the literature. Where we include results that have
  only been proved in special cases, we indicate this in the form of
  caveats.
\end{abstract}

\tableofcontents

%----------------------------------------------------------------------
\section{Introduction}

There are many kinds of monoidal categories with additional structure
--- braided, rigid, pivotal, balanced, tortile, ribbon, autonomous,
sovereign, spherical, traced, compact closed, *-autonomous, to name a
few. Many of them have an associated graphical language of ``string
diagrams''. The proliferation of different notions is often confusing
to non-experts, and occasionally to experts as well. To add to the
confusion, one concept often appears in the literature under multiple
names (for example, ``rigid'' is the same as ``autonomous'',
``sovereign'' is the same as ``pivotal'', and ``ribbon'' is the same
as ``tortile''). 

In this survey, I attempt to give a systematic overview of the main
notions and their associated graphical languages. My initial intention
was to summarize, without proof, only the main definitions and
coherence results that appear in the literature.  However, it quickly
became apparent that, in the interest of being systematic, I had to
include some additional notions. This led to the sections on spacial
categories, and planar and braided traced categories.

Historically, the terminology was often fixed for special cases before
more general cases were considered. As a result, some concepts have a
common name (such as ``compact closed category'') where another name
would have been more systematic (e.g.~``symmetric autonomous
category'').  I have resisted the temptation to make major changes to
the established terminology. However, I propose some minor tweaks that
will hopefully not be disruptive. For example, I prefer ``traced
category'', which can be combined with various qualifying adjectives,
to the longer and less flexible ``traced monoidal category''.

Many of the coherence results are widely known, or at least presumed
to be true, but some of them are not explicitly found in the
literature. For those that can be attributed, I have attempted to do
so, sometimes with a caveat if only special cases have been proved in
the literature. For some easy results, I have provided proof sketches.
Some unproven results have been included as conjectures.

While the results surveyed here are mathematically rigorous, I have
shied away from giving the full technical details of the definitions
of the graphical languages and their respective notions of equivalence
of diagrams.  Instead, I present the graphical languages somewhat
informally, but in a way that will be sufficient for most
applications. Where appropriate, full mathematical details can be
found in the references.

Readers who want a quick overview of the different notions are
encouraged to first consult the summary chart at the end of this
article.

An updated version of this article will be maintained on the ArXiv, so
I encourage readers to contact me with corrections, literature
references, and updates.

\paragraph{Graphical languages: an evolution of notation.}

The use of graphical notations for operator diagrams in physics goes
back to Penrose {\cite{Pen71}}. Initially, such notations applied to
multiplications and tensor products of linear operators, but it became
gradually understood that they are applicable in more general
situations.

To see how graphical languages arise from matrix multiplication,
consider the following example. Let $M:A\ii B$, $N:B\x C\ii D$,
and $P:D\ii E$ be linear maps between finite dimensional vector spaces
$A,B,C,D,E$. These maps can be combined in an obvious way to obtain a
linear map $F:A\x C\ii E$. In functional notation, the map $F$
can be written
\begin{equation}\label{eqn-functional}
  F = P\cp N\cp (M\x \id_C).
\end{equation}
The same can be expressed as a summation over matrix indices, relative
to some chosen basis of each space. In mathematical notation, suppose
$M=(m_{j,i})$, $N=(n_{l,jk})$, $P=(p_{m,l})$, and $F=(f_{m,ik})$,
where $i,j,k,l,m$ range over basis vectors of the respective spaces.
Then
\begin{equation}\label{eqn-math}
  f_{m,ik} = \sum_j\sum_l p_{m,l} n_{l,jk}   m_{j,i}.
\end{equation}
In physics, it is more common to write column indices as superscripts
and row indices as subscripts. Moreover, one can drop the summation
symbols by using Einstein's summation convention.
\begin{equation}\label{eqn-physics}
  F_{m}^{ik} = P_{m}^{l} N_{l}^{jk} M_{j}^{i}.
\end{equation}
In (\ref{eqn-math}) and (\ref{eqn-physics}), the order of the
factors in the multiplication is not relevant, as all the information
is contained in the indices. Also note that, while the notation
mentions the chosen bases, the result is of course basis independent.
This is because indices occur in pairs of opposite variance (if on the
same side of the equation) or equal variance (if on opposite sides of
the equation). It was Penrose {\cite{Pen71}} who first pointed out
that the notation is valid in many situations where the indices are
purely formal symbols, and the maps may not even be between vector
spaces.

Since the only non-trivial information in (\ref{eqn-physics}) is in
the pairing of indices, it is natural to represent these pairings
graphically by drawing a line between paired indices. Penrose
{\cite{Pen71}} proposed to represent the maps $M,N,P$ as boxes, each
superscript as an incoming wire, and each subscript as an outgoing
wire. Wires corresponding to the same index are connected.  Thus, we
obtain the graphical notation:
\begin{equation}
  \vcenter{\wirechart{@R-.2cm}{
      *{}\wireright{r}{k}&\blank\\
      &\blank\nnbox{[u].[d]}{F}\wireright{r}{m}&*{}\\
      *{}\wireright{r}{i}&\blank\\
      }}
  =
  \vcenter{\wirechart{@R-.2cm}{
      *{}\wireright{rr}{k}&&\blank\\
      &&\blank\nnbox{[u].[d]}{N}\wireright{r}{l}&\blank\nnbox{[]}{P}\wireright{r}{m}&*{}\\
      *{}\wireright{r}{i}&\blank\nnbox{[]}{M}\wireright{r}{j}&\blank
      }}
\end{equation}
Finally, since the indices no longer serve any purpose, one may omit
them from the notation. Instead, it is more useful to label each wire
with the name of the corresponding space.
\begin{equation}\label{eqn-cat-diagram}
  \vcenter{\wirechart{@R-.2cm}{
      *{}\wireright{r}{C}&\blank\\
      &\blank\nnbox{[u].[d]}{F}\wireright{r}{E}&*{}\\
      *{}\wireright{r}{A}&\blank\\
      }}
  =
  \vcenter{\wirechart{@R-.2cm}{
      *{}\wireright{rr}{C}&&\blank\\
      &&\blank\nnbox{[u].[d]}{N}\wireright{r}{D}&\blank\nnbox{[]}{P}\wireright{r}{E}&*{}\\
      *{}\wireright{r}{A}&\blank\nnbox{[]}{M}\wireright{r}{B}&\blank
      }}
\end{equation}
In the notation of monoidal categories, (\ref{eqn-cat-diagram}) can be
expressed as a commutative diagram
\begin{equation}
  \xymatrix{
    A\x C \ar[r]^<>(.5){F} \ar[d]_{M\x\id_C} & E \\
    B\x C \ar[r]^<>(.5){N} & D \ar[u]_{P},
    }
\end{equation}
or simply:
\begin{equation}
  F = P\cp N\cp (M\x \id_C).
\end{equation}
Thus, we have completed a full circle and arrived back at the notation
(\ref{eqn-functional}) that we started with.

\paragraph{Organization of the paper.}

In each of the remaining sections of this paper, we will consider a
particular class of categories and its associated graphical language.

\paragraph{Acknowledgments.}

I would like to thank Gheorghe {\Stefanescu} and Ross Street for their
help in locating hard-to-obtain references, and for providing some
background information. Thanks to Fabio Gadducci, Chris Heunen, and
Micah McCurdy for useful comments on an earlier draft.

%----------------------------------------------------------------------
\section{Categories}\label{sec-categories}

We only give the most basic definitions of categories, functors, and
natural transformations. For a gentler introduction, with more details
and examples, see e.g.~Mac~Lane {\cite{ML71}}.

\begin{definition}
  A {\em category} $\Cc$ consists of:
  \begin{itemize}
  \item a class $\obj{\Cc}$ of {\em objects}, denoted $A$, $B$, $C$,
    \ldots;
  \item for each pair of objects $A,B$, a set $\hom_{\Cc}(A,B)$ of
    {\em morphisms}, which are denoted $f:A\ii B$;
  \item {\em identity morphisms} $\id_A:A\ii A$ and the operation of
    {\em composition}: if $f:A\ii B$ and $g:B\ii C$, then 
    \[ g\cp f:A\ii C,
    \]
  \end{itemize}
  subject to the three equations
  \[ \id_B\cp f = f, \sep f\cp\id_A = f, \sep (h\cp g)\cp f = h\cp(g\cp f)
  \]
  for all $f:A\ii B$, $g:B\ii C$, and $h:C\ii D$.
\end{definition}

The terms ``map'' or ``arrow'' are often used interchangeably with
``morphism''.

\begin{examples}
  Some examples of categories are: the category $\Set$ of sets (with
  functions as the morphisms); the category $\Rel$ of sets (with
  relations as the morphisms); the category $\Vect$ of vector spaces
  (with linear maps); the category $\Hilb$ of Hilbert spaces (with bounded
  linear maps); the category $\UHilb$ of Hilbert spaces (with unitary
  maps); the category $\Top$ of topological spaces (with continuous
  maps); the category $\Cob$ of $n$-dimensional oriented manifolds
  (with oriented cobordisms).  Note that in each case, we need to
  specify not only the objects, but also the morphisms (and
  technically the composition and identities, although they are often
  clear from the context).
  
  Categories also arise in other sciences, for example in logic (where
  the objects are propositions and the morphisms are proofs), and in
  computing (where the objects are data types and the morphisms are
  programs).
\end{examples}

Many concepts associated with sets and functions, such as {\em
  inverse}, {\em monomorphism} (injective map), {\em idempotent}, {\em
  cartesian product}, etc., are definable in an arbitrary category.

\paragraph{Graphical language.}

In the graphical language of categories, objects are represented as
{\em wires} (also called {\em edges}) and morphisms are represented as
{\em boxes} (also called {\em nodes}).  An identity morphisms is
represented as a continuing wire, and composition is represented by
connecting the outgoing wire of one diagram to the incoming wire of
another. This is shown in Table~\ref{tab-graphical-cats}.

%......................................................................
\begin{table}
  \begin{center}
    \begin{tabular}{@{}llc@{}}
      Object& $A$ &
      $\wirechart{@C=.7cm}{*{}\wireright{rr}{A}&&}$ \\\\
      Morphism &$f:A\ii B$ &
      $\wmetamorph{f}{A}{B}$\\\\
      Identity & $\id_A:A\ii A$ &
      $\wirechart{@C=.7cm}{*{}\wireright{rr}{A}&&}$ \\\\
      Composition & $t\cp s$ &
      $\wirechart{}{*{}\wireright{r}{A}&
        \wwblank{8mm}\ulbox{[]}{s}\wireright{r}{B}&
        \wwblank{8mm}\ulbox{[]}{t}\wireright{r}{C}&
        }$
    \end{tabular}
  \end{center}
  \caption{The graphical language of categories}
  \label{tab-graphical-cats}
\end{table}
%......................................................................

\paragraph{Coherence.}

Note that the three defining axioms of categories (e.g., $\id_B\cp
f=f$) are automatically satisfied ``up isomorphism'' in the graphical
language. This property is known as {\em soundness}. A converse of
this statement is also true: every equation that holds in the
graphical language is a consequence of the axioms. This property is
called {\em completeness}. We refer to a soundness and completeness
theorem as a {\em coherence theorem}.

\begin{theorem}[Coherence for categories]\label{thm-coherence-categories}
  A well-formed equation between two morphism terms in the language of
  categories follows from the axioms of categories if and only if it
  holds in the graphical language up to isomorphism of diagrams.
\end{theorem}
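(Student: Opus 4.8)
The plan is to make the graphical language precise enough to prove both directions of the biconditional. First I would define a "morphism term in the language of categories" as an expression built from a set of typed generators $f:A\ii B$ (where objects are just formal symbols) using composition and identities, with the obvious well-formedness conditions on types. The diagrams, in turn, are finite sequences of boxes connected in a line by wires labelled with objects; I would define these formally as, say, a totally ordered list of boxes together with a compatible typing, modulo renaming — and "isomorphism of diagrams" is then simply an order-preserving, type-preserving bijection of boxes. The key observation is that a diagram in this very simple language is essentially \emph{nothing but} a finite word $f_n\cdots f_1$ of composable generators (the identities contribute no boxes), so a diagram is, up to isomorphism, exactly such a word.

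\emph{Soundness} is the easy direction: I would exhibit an interpretation sending each morphism term to its associated diagram, defined by structural recursion ($\id_A$ maps to the bare wire labelled $A$; $g\cp f$ maps to the horizontal juxtaposition of the diagrams of $g$ and $f$), and check that the three axioms $\id_B\cp f=f$, $f\cp\id_A=f$, $(h\cp g)\cp f=h\cp(g\cp f)$ are each sent to a pair of diagrams related by the identity isomorphism — indeed, erasing identity wires and forgetting the bracketing of juxtaposition makes both sides literally the same word of boxes. Hence any equation provable from the axioms holds in the graphical language.

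\emph{Completeness} is the direction requiring a normal form. I would define the normal form of a morphism term to be obtained by repeatedly rewriting with the axioms oriented left-to-right: $(h\cp g)\cp f \leadsto h\cp(g\cp f)$ to right-associate, and $\id\cp f\leadsto f$, $f\cp\id\leadsto f$ to delete identities; this terminates and yields a term of the shape $f_n\cp(f_{n-1}\cp(\cdots\cp f_1))$ with each $f_i$ a non-identity generator (or the term $\id_A$ itself, in the degenerate case). I would then argue that two terms with the same associated diagram have the same normal form — because the diagram literally records the ordered list of non-identity generators, which is precisely the data of the normal form — and that every term is provably equal to its normal form. Combining, if two terms have isomorphic diagrams they have equal normal forms, hence are provably equal.

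The main obstacle is not any one step but getting the \emph{bookkeeping of the formalism} right: one must pin down precisely what counts as a well-formed equation (both sides must have the same domain and codomain, and the generator set is shared), what the objects of the graphical language are (here, just the labels, with no tensor, so wires never branch or merge), and what "isomorphism of diagrams" means, so that the trivial-looking correspondence "diagram $=$ word of non-identity boxes" is actually a theorem rather than a definition. Once these are set up, termination and confluence of the rewriting system, and the claim that the diagram determines the normal form, are all routine. For the purposes of this survey I would only sketch this, noting that it is the prototype for the more substantial coherence theorems in the later sections, where the graphical equivalence is genuinely topological rather than combinatorial.
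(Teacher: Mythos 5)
Your proposal is correct and matches the paper's argument: the paper likewise proves this by observing that the axioms reduce every morphism term uniquely to a normal form $((f_n\cp\ldots)\cp f_2)\cp f_1$ (with $n\geq 0$), whose diagram is exactly the linear word of boxes, so isomorphic diagrams force equal normal forms. Your extra detail on the rewriting system and the bookkeeping of signatures only fleshes out what the paper leaves as a sketch; the normal form you use is right-associated rather than left-associated, which is immaterial.
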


Hopefully it is obvious what is meant by {\em isomorphism of
  diagrams}: two diagrams are isomorphic if the boxes and wires of the
first are in bijective correspondence with the boxes and wires of the
second, preserving the connections between boxes and wires.

Admittedly, the above coherence theorem for categories is a
triviality, and is not usually stated in this way. However, we have
included it for sake of uniformity, and for comparison with the less
trivial coherence theorems for monoidal categories in the following
sections.  The proof is straightforward, since by the associativity
and unit axioms, each morphism term is uniquely equivalent to a term
of the form $((f_n\cp \ldots)\cp f_2)\cp f_1$ for $n\geq 0$, with
corresponding diagram
\[
\wirechart{}{*{}  \wireright{r}{}&
  \blank\ulbox{[]}{f_1}  \wireright{r}{}&
  \blank\ulbox{[]}{f_2}  \wireright{r}{}&
  *+{\cdots}  \wireright{r}{}&
  \blank\ulbox{[]}{f_n}  \wireright{r}{}& *{.}
  }
\]

\begin{remark}
  We have equipped wires with a left-to-right arrow, and boxes with a
  marking in the upper left corner. These markings are of no use at
  the moment, but will become important as we extend the language in
  the following sections.
\end{remark}

%----------------------------------------------------------------------
\subsection*{Technicalities}

\paragraph{Signatures, variables, terms, and equations.}

So far, we have not been very precise about what the wires and boxes
of a diagram are labeled with. We have also glossed over what was
meant by ``a well-formed equation between morphism terms in the
language of categories''. We now briefly explain these notions,
without giving all the formal details. For a more precise mathematical
treatment, see e.g.~Joyal and Street {\cite{JS91}}.

The wires of a diagram are labeled with {\em object variables}, and
the boxes are labeled with {\em morphism variables}. To understand
what this means, consider the familiar language of arithmetic
expressions. This language deals with {\em terms}, such as
$(x+y+2)(x+3)$, which are built up from {\em variables}, such as $x$
and $y$, {\em constants}, such as $2$ and $3$, by means of {\em
  operations}, such as addition and multiplication. Variables can be
viewed in three different ways: first, they can be viewed as {\em
  symbols} that can be compared (e.g.~the variable $x$ occurs twice in
the given term, and is different from the variable $y$). They can also
be viewed as placeholders for arbitrary {\em numbers}, for example
$x=5$ and $y=15$. Here $x$ and $y$ are allowed to represent different
numbers or the same number; however, the two occurrences of $x$ must
denote the same number. Finally, variables can be viewed as
placeholders for arbitrary {\em terms}, such as $x=a+b$ and $y=z^2$.

The formal language of category theory is similar, except that we
require two sets of variables: object variables (for labeling wires)
and morphism variables (for labeling boxes). We must also equip each
morphism variable with a specified domain and codomain. The following
definition makes this more precise.

\begin{definition}
  A {\em simple (categorical) signature} $\Sigma$ consists of a set
  $\Sigma_0$ of {\em object variables}, a set $\Sigma_1$ of {\em
    morphism variables}, and a pair of functions
  $\dom,\cod:\Sigma_1\ii\Sigma_0$. Object variables are usually
  written $A,B,C,\ldots$, morphism variables are usually written
  $f,g,h,\ldots$, and we write $f:A\ii B$ if $\dom(f)=A$ and
  $\cod(f)=B$.
\end{definition}

Given a simple signature, we can then build {\em morphism terms},
such as $f\cp(g\cp \id_A)$, which are built from morphism variables
(such as $f$ and $g$) and morphism constants (such as $\id_A$), via
operations (i.e., composition). Each term is recursively equipped with
a domain and a codomain, and we must require compositions to respect
the domain and codomain information. A term that obeys these rules is
called {\em well-formed}. Finally, an equation between terms is called
a {\em well-formed equation} if the left-hand side and right-hand side
are well-formed terms that moreover have equal domains and equal
codomains.

The graphical language is also relative to a given signature. The
wires and boxes are labeled, respectively, with object variables and
morphism variables from the signature, and the labeling must respect
the domain and codomain information. This means that the wire entering
(respectively, exiting) a box labeled $f$ must be labeled by the
domain (respectively, codomain) of $f$.

The above remark about the different roles of variables in arithmetic
also holds for the diagrammatic language of categories. On the one
hand, the labels can be viewed as formal symbols. This is the view
used in the coherence theorem, where the formal labels are part of the
definition of equivalence (in this case, isomorphism) of diagrams.

The labels can also be viewed as placeholders for specific objects and
morphisms in an actual category. Such an assignment of objects and
morphisms is called an {\em interpretation} of the given signature.
More precisely, an interpretation $i$ of a signature $\Sigma$ in a
category $\Cc$ consists of a function $i_0:\Sigma_0\ii\obj{\Cc}$, and
for any $f\in\Sigma_1$ a morphism $i_1(f):i_0(\dom f)\ii i_0(\cod f)$.
By a slight abuse of notation, we write $i:\Sigma\ii\Cc$ for such an
interpretation.

Finally, a morphism variable can be viewed as a placeholder for an
arbitrary (possibly composite) diagram. We occasionally use this
latter view in schematic drawings, such as the schematic
representation of $t\cp s$ in Table~\ref{tab-graphical-cats}. We then
label a box with a morphism term, rather than a formal variable, and
understand the box as a short-hand notation for a possibly composite
diagram corresponding to that term.

\paragraph{Functors and natural transformations.}

\begin{definition}
  Let $\Cc$ and $\Dd$ be categories.  A {\em functor} $F:\Cc\ii\Dd$
  consists of a function $F:\obj{\Cc}\ii\obj{\Dd}$, and for each pair
  of objects $A,B\in\obj{\Cc}$, a function
  $F:\hom_{\Cc}(A,B)\ii\hom_{\Dd}(FA,FB)$, satisfying $F(g\cp
  f)=F(g)\cp F(f)$ and $F(\id_A)=\id_{FA}$.
\end{definition}

\begin{definition}
  Let $\Cc$ and $\Dd$ be categories, and let $F,G:\Cc\ii\Dd$ be
  functors. A {\em natural transformation} $\natt:F\ii G$ consists of
  a family of morphisms $\natt_A:FA\ii GA$, one for each object
  $A\in\obj{\Cc}$, such that the following diagram commutes for all
  $f:A\ii B$:
  \[\xymatrix{
    FA\ar[r]^{\natt_A} \ar[d]_{Ff} & GA\ar[d]^{Gf} \\
    FB\ar[r]^{\natt_B} & GB. \\
    }
  \]
\end{definition}

\paragraph{Coherence and free categories.}

Most coherence theorems are proved by characterizing the {\em free}
categories of a certain kind. 

\begin{definition}
  We say that a category $\Cc$ is {\em free} over a signature $\Sigma$
  if it is equipped with an interpretation $i:\Sigma\ii\Cc$, such that
  for any category $\Dd$ and interpretation $j:\Sigma\ii\Dd$, there
  exists a unique functor $F:\Cc\ii\Dd$ such that $j=F\cp i$.
\end{definition}

\begin{theorem}
  The graphical language of categories over a signature $\Sigma$, with
  identities and composition as defined in
  Table~\ref{tab-graphical-cats}, and up to isomorphism of diagrams,
  forms the free category over $\Sigma$.
\end{theorem}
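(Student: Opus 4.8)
The plan is to verify the universal property directly, by constructing the unique functor out of the graphical category. Let $\Gg_\Sigma$ denote the graphical language of categories over $\Sigma$, taken up to isomorphism of diagrams, with hom-sets consisting of such isomorphism classes of diagrams from one object-variable-list to another, and with composition given by juxtaposition as in Table~\ref{tab-graphical-cats}. First I would check that $\Gg_\Sigma$ is in fact a category: composition is well-defined on isomorphism classes (gluing two diagrams along a boundary wire is compatible with diagram isomorphism), it is associative and unital on the nose because in the graphical language the associativity and unit laws hold \emph{up to isomorphism of diagrams}, which is exactly equality in $\Gg_\Sigma$; here I am invoking the soundness half of Theorem~\ref{thm-coherence-categories}. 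There is an evident interpretation $i:\Sigma\ii\Gg_\Sigma$ sending an object variable $A$ to the one-wire diagram labelled $A$ and a morphism variable $f:A\ii B$ to the single-box diagram $\wmetamorph{f}{A}{B}$.

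Next I would establish the universal property. Given any category $\Dd$ and any interpretation $j:\Sigma\ii\Dd$, I define $F:\Gg_\Sigma\ii\Dd$ on objects by $F(A)=j_0(A)$, and on a morphism of $\Gg_\Sigma$ by choosing a representative diagram, which by the normal-form observation in the proof of Theorem~\ref{thm-coherence-categories} is isomorphic to a linear chain of boxes $((f_n\cp\cdots)\cp f_2)\cp f_1$, and setting $F$ of it to be the corresponding composite $j_1(f_n)\cp\cdots\cp j_1(f_1)$ in $\Dd$ (with $n=0$ giving an identity). I must check this is well-defined on isomorphism classes: two diagrams in the same class have the same underlying chain of labelled boxes in the same order, so they yield literally the same composite in $\Dd$. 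Functoriality ($F$ preserves identities and composition) is then immediate from the way juxtaposition of chains corresponds to composition of the associated morphisms in $\Dd$, and $j = F\cp i$ holds by construction on generators. For uniqueness, any functor $F'$ with $j=F'\cp i$ must send each generating wire and box to the prescribed object/morphism, and since every morphism of $\Gg_\Sigma$ is a composite of generators (again via the normal form), $F'$ is forced to agree with $F$ everywhere.

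The one genuinely non-routine point — and the step I expect to be the main obstacle — is the claim that every equation holding in the graphical language is provable from the category axioms, i.e.\ the completeness direction, which I would simply cite as the already-stated Theorem~\ref{thm-coherence-categories}: this is what guarantees that distinct morphism terms that are \emph{not} provably equal give rise to non-isomorphic diagrams, so that $\Gg_\Sigma$ is not ``too small'', while soundness guarantees it is not ``too large''. Modulo that theorem, the remaining work is bookkeeping: being careful that ``morphism of $\Gg_\Sigma$'' is unambiguously defined (the domain and codomain of a diagram are the labelled wire-endpoints on its left and right boundary), and that the normal-form reduction is compatible with isomorphism of diagrams. In fact the cleanest presentation is to note that $\Gg_\Sigma$ and the syntactic category of morphism terms modulo the axioms are isomorphic categories — this is a restatement of Theorem~\ref{thm-coherence-categories} — and that the syntactic category is manifestly the free one; the theorem then follows by transport of the universal property along that isomorphism.
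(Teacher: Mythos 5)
Your proof is correct, and it fills in an argument that the paper itself omits: the paper states this freeness theorem without proof and uses it in the opposite direction, remarking afterwards that the coherence theorem (Theorem~\ref{thm-coherence-categories}) is a consequence of freeness. Your key tool --- every diagram of the language of Table~\ref{tab-graphical-cats} is, up to isomorphism, a linear chain of boxes, so an isomorphism class is determined by the finite sequence of its box labels --- is exactly the normal-form observation the paper uses in its sketch of the coherence theorem, so the two arguments share the same combinatorial core; you simply package it as a direct verification of the universal property (define $F$ by evaluating the chain in $\Dd$, well-definedness because isomorphic chains carry the same label sequence, uniqueness because every diagram is a composite of generating boxes). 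Two small remarks. First, your appeal to the completeness half of Theorem~\ref{thm-coherence-categories} is not actually needed for the universal-property argument as you set it up: well-definedness and uniqueness follow from the combinatorial description of chain diagrams alone, and completeness enters only in your alternative presentation via the syntactic category (which is also fine, and not circular, since the paper proves coherence directly, by the normal form, before stating freeness). Second, for a simple categorical signature the objects of the graphical category are single object variables, not lists of them (lists would be the free \emph{monoidal} case), so ``from one object-variable-list to another'' should read ``from one object variable to another''; nothing else in your argument depends on this.
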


Theorem~\ref{thm-coherence-categories} is indeed a consequence of this
theorem: by definition of freeness, an equation holds in all
categories if and only if it holds in the free category. By the
characterization of the free category, an equation holds in the free
category if and only if it holds in the graphical language.

%----------------------------------------------------------------------
\section{Monoidal categories}\label{sec-progressive}

In this section, we consider various notions of monoidal categories.
We sometimes refer to these notions as ``progressive'', which means
they have graphical languages where all arrows point left-to-right.
This serves to distinguish them from ``autonomous'' notions, which
will be discussed in Section~\ref{sec-autonomous}, and ``traced''
notions, which will be discussed in Section~\ref{sec-traced}.

%----------------------------------------------------------------------
\subsection{(Planar) monoidal categories}\label{subsec-planar-monoidal}

A {\em monoidal category} (also sometimes called {\em tensor
  category}) is a category with an associative unital tensor product.
More specifically:

\begin{definition}[\cite{ML71,JS93}]
  A {\em monoidal category} is a category with the following
  additional structure:
  \begin{itemize}
  \item a new operation $A\x B$ on objects and a new object constant $I$;
  \item a new operation on morphisms: if $f:A\ii C$ and $g:B\ii D$, then
    \[ f\x g:A\x B\ii C\x D;
    \]
  \item and isomorphisms
    \[ \begin{array}{ll}
      \alpha_{A,B,C}:&(A\x B)\x C\catarrow{\iso} A\x(B\x C),\\
      \lambda_A:&I\x A\catarrow{\iso} A,\\
      \rho_A:&A\x I\catarrow{\iso} A,\\
    \end{array}
    \]
  \end{itemize}
  subject to a number of equations:
  \begin{itemize}
  \item $\x$ is a bifunctor, which means $\id_A\x\id_B = \id_{A\x
      B}$ and $(k\x h)\cp(g\x f) = (k\cp g)\x(h\cp f)$;
  \item $\alpha$, $\lambda$, and $\rho$ are natural transformations,
    i.e., $(f\x (g\x h))\cp\alpha_{A,B,C} = \alpha_{A',B',C'}\cp((f\x
    g)\x h)$, $f\cp\lambda_{A} = \lambda_{A'}\cp(\id_I\x f)$, and
    $f\cp\rho_{A} = \rho_{A'}\cp(f\x\id_I)$;
  \item plus the following two coherence axioms, called the ``pentagon
    axiom'' and the ``triangle axiom'':
    \[\xymatrix@R-5mm@C-20mm{
      &(A\x (B\x C))\x D
      \ar[rr]\ar@{}@<1ex>[rr]^{\alpha_{A,B\x C,D}}&&
      A\x((B\x C)\x D)
      \ar[dr]^<>(.9){A\x\alpha_{B,C,D}}
      \\
      ((A\x B)\x C)\x D
      \ar[ur]^<>(.1){\alpha_{A,B,C}\x D}
      \ar[drr]_<>(.3){\alpha_{A\x B,C,D}}&&&&
      A\x(B\x (C\x D))\\
      &&(A\x B)\x (C\x D)
      \ar[rru]_<>(.7){\alpha_{A,B,C\x D}}
      \\
      }
    \]
    \[\xymatrix@R-5mm@C-10mm{
      (A\x I)\x B\ar[dr]_{\rho_A\x\id_B} \ar[rr]^{\alpha_{A,I,B}}
      && A\x (I\x B)\ar[dl]^{\id_A\x\lambda_B} \\
      & A\x B \\
      }
    \]
  \end{itemize}
\end{definition}

When we specifically want to emphasize that a monoidal category is not
assumed to be braided, symmetric, etc., we sometimes also refer to it
as a {\em planar monoidal category}.

\begin{examples}
  Examples of monoidal categories include: the category $\Set$ (of
  sets and functions), together with the cartesian product $\times$;
  the category $\Set$ together with the disjoint union operation $+$;
  the category $\Rel$ with either $\times$ or $+$; the category
  $\Vect$ (of vectors spaces and linear functions) with either
  $\oplus$ or $\x$; the category $\Hilb$ of Hilbert spaces with either
  $\oplus$ or $\x$; the categories $\Top$ and $\Cob$ with disjoint
  union $+$. Note that in each case, we need to specify a category and
  a tensor product (in general there are multiple choices).
  Technically, we should also specify associativity maps etc., but
  they are usually clear from the context.
\end{examples}

\paragraph{Graphical language.}

We extend the graphical language of categories as follows. A tensor
product of objects is represented by writing the corresponding wires
in parallel. The unit object is represented by zero wires. A morphism
variable $f:A_1 \x \ldots\x A_n \ii B_1\x \ldots\x B_m$ is represented
as a box with $n$ input wires and $m$ output wires. A tensor product
of morphisms is represented by stacking the corresponding diagrams.
This is shown in Table~\ref{tab-graphical-monoidal}.

%......................................................................
\begin{table}
  \begin{center}
    \begin{tabular}{@{}llc@{}}
      Tensor product& $S\x T$ &
      $\vcenter{\wirechart{@C=1.5cm@R=0.4cm}{\wireright{rr}{T}&&\\\wireright{rr}{S}&&}}$\\\\[-.5ex]
      Unit object& $I$ &
      (empty)\\\\[-.5ex]
      Morphism &$f:$ {\small $A_1\,{\x}\,\ldots\,{\x}\, A_n\,{\ii}\, B_1\,{\x}\,\ldots\,{\x}\, B_m$} &
      $\wirechart{@C=1.5cm@R=0.4cm}{
        *{}\wireright{r}{A_n}&
        \blank\wireright{r}{B_m}&
        \\
        *{}\wireright{r}{A_1}^<>(.8){\vdots}&
        \blank\ulbox{[].[u]}{f}\wireright{r}{B_1}^<>(.2){\vdots}&
        \\
        }$\\\\
      Tensor product &$s\x t$ &
      $\vcenter{\wirechart{@C=1.5cm@R=0.4cm}{
          \vsblank\wireright{r}{C}&\blank\ulbox{[]}{t}\wireright{r}{D}&\\
          \vsblank\wireright{r}{A}&\blank\ulbox{[]}{s}\wireright{r}{B}&\\
          }}$
    \end{tabular}
  \end{center}
  \caption{The graphical language of monoidal categories}
  \label{tab-graphical-monoidal}
\end{table}
%......................................................................

Note that it is our convention to write tensor products in the
bottom-to-top order. Similar conventions apply to objects as to
morphisms: thus, a single wire is labeled by an {\em object variable}
such as $A$, while a more general object such as $A\x B$ or $I$ is
represented by zero or more wires. For more details, see ``Monoidal
signatures'' below.

\paragraph{Coherence.}

It is easy to check that the graphical language for monoidal
categories is sound, up to deformation of diagrams in the plane.  As
an example, consider the following law, which is a consequence of
bifunctoriality:
\[ (\id_C\x g)\cp(f\x\id_B) =
(f\x\id_D)\cp(\id_A\x g).
\]
Translated into the graphical language, this becomes
\[\vcenter{\wirechart{@C-2mm@R6mm}{
    *{}\wireright{rr}{B}&&\blank\ulbox{[]}{g}\wireright{r}{D}&*{}\\
    *{}\wireright{r}{A}&\blank\ulbox{[]}{f}\wireright{rr}{C}&&*{}\\
    }}
\ssep=\ssep
\vcenter{\wirechart{@C-2mm@R7mm}{
    *{}\wireright{r}{B}&\blank\ulbox{[]}{g}\wireright{rr}{D}&&*{}\\
    *{}\wireright{rr}{A}&&\blank\ulbox{[]}{f}\wireright{r}{C,}&*{}\\
    }}
\]
which obviously holds up to deformation of diagrams. We have the
following coherence theorem:

\begin{theorem}[Coherence for planar monoidal categories {\cite[Thm.~1.5]{JS88}}, {\cite[Thm.~1.2]{JS91}}]\label{thm-coherence-planar}
  A well-formed equation between morphism terms in the language of
  monoidal categories follows from the axioms of monoidal categories
  if and only if it holds, up to planar isotopy, in the graphical
  language.
\end{theorem}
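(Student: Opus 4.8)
The plan is to prove the coherence theorem by the standard route already signposted in the excerpt: characterize the free monoidal category over a signature $\Sigma$ as the category of diagrams up to planar isotopy, and then deduce soundness and completeness from the universal property. Concretely, I would first make precise the graphical side. A \emph{progressive planar diagram} over $\Sigma$ consists of a finite graph embedded in the rectangle $\mathbb{R}\times[0,1]$, with boxes (labelled by morphism variables of $\Sigma$) having ordered input wires entering from below and output wires exiting above, with every wire labelled by an object variable compatibly with $\dom$ and $\cod$, and with every wire monotone (``progressive'') in the vertical direction. Two such diagrams are identified when they are related by an isotopy of the rectangle fixing the boundary setwise and respecting the box/wire structure; one checks these equivalence classes form a category $\mathcal{D}(\Sigma)$, with composition by vertical stacking and tensor by horizontal juxtaposition, and that $\mathcal{D}(\Sigma)$ is monoidal with $I$ the empty diagram and $\alpha,\lambda,\rho$ all identities (this is the point where planarity is essential: no crossings are available, so the only freedom is deformation within the plane). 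There is an evident interpretation $i:\Sigma\to\mathcal{D}(\Sigma)$ sending each morphism variable to its one-box diagram.

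Next I would establish the universal property: for any (planar) monoidal category $\mathcal{E}$ and interpretation $j:\Sigma\to\mathcal{E}$, there is a unique strict monoidal functor $F:\mathcal{D}(\Sigma)\to\mathcal{E}$ with $j = F\circ i$. Existence is by reading off a diagram as a composite of tensors: slice the diagram by finitely many generic horizontal levels so that each slice contains exactly one box (possibly tensored with identity wires), interpret each slice in $\mathcal{E}$, and compose. Uniqueness is forced because any object of $\mathcal{D}(\Sigma)$ is a tensor of generators and any morphism is a composite of generators tensored with identities, so $F$ is determined on generators by $j$. The real content is \emph{well-definedness}: the value in $\mathcal{E}$ must not depend on the chosen slicing or on the chosen planar representative. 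Both amount to the same combinatorial fact — any two sliced presentations of isotopic diagrams are connected by a finite sequence of elementary ``moves'': exchanging the vertical order of two boxes lying in disjoint horizontal intervals, and sliding a box past an identity region — and each such move corresponds exactly to an instance of bifunctoriality $(k\circ g)\otimes(h\circ f) = (k\otimes h)\circ(g\otimes f)$ (together with the functoriality $\id_A\otimes\id_B = \id_{A\otimes B}$ and the category axioms) in $\mathcal{E}$. This is the main obstacle: it is the genuinely topological step, and making it fully rigorous requires a transversality/general-position argument showing that any ambient isotopy can be factored through finitely many such elementary moves. I would not grind through it here; instead I would invoke the detailed treatment of Joyal and Street \cite{JS88,JS91}, where the deformation theory of progressive plane diagrams is developed carefully, and note that the planar case is simpler than the braided and symmetric cases treated there because the value group of ``crossings'' is trivial.

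Finally, the coherence statement follows formally. Soundness: every axiom of monoidal categories, translated graphically, is a valid planar isotopy (the naturality squares and bifunctoriality become box-sliding moves, and pentagon and triangle become trivial since the structural isomorphisms are identities in $\mathcal{D}(\Sigma)$), so any equation derivable from the axioms holds up to planar isotopy in $\mathcal{D}(\Sigma)$, hence in the graphical language. Completeness: suppose a well-formed equation $u = v$ of morphism terms holds up to planar isotopy. Choosing the signature whose object and morphism variables are exactly those occurring in $u$ and $v$, the graphical interpretations of $u$ and $v$ are equal in $\mathcal{D}(\Sigma)$; since $\mathcal{D}(\Sigma)$ is the free monoidal category, the identity functor is the unique monoidal functor extending the identity interpretation, and so $u$ and $v$ must already be provably equal from the monoidal axioms (otherwise they would give distinct elements of the free category). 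Thus the equation follows from the axioms, completing the proof.
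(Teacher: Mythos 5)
Your overall route---construct the category of progressive planar diagrams over $\Sigma$, show it is the free monoidal category by a generic-slicing argument whose topological core (factoring a planar isotopy into finitely many interchange moves) is delegated to Joyal and Street, and then deduce soundness and completeness from freeness---is exactly the route the paper takes: the paper gives no independent proof, but cites Joyal and Street and records the same freeness reformulation. Note also that the paper's caveat applies to your appeal to Joyal and Street: their proof covers only isotopies through progressive (``recumbent'') diagrams, which matches your setup since you require wires to stay monotone, but coherence for arbitrary planar deformations is only conjectured.

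There is, however, one step that is wrong as stated. You claim a unique \emph{strict} monoidal functor $F:\mathcal{D}(\Sigma)\to\mathcal{E}$ with $j=F\circ i$ for an arbitrary monoidal category $\mathcal{E}$. Since your $\mathcal{D}(\Sigma)$ has a strictly associative and unital tensor, a strict $F$ would have to satisfy, e.g., $F\bigl((A)\otimes(B,C)\bigr)=F(A)\otimes F(B,C)$ on the nose, forcing $(jA\otimes jB)\otimes jC = jA\otimes(jB\otimes jC)$ in $\mathcal{E}$; so in general no such strict functor exists. The correct universal property---and the one the paper uses to define freeness---asks for a \emph{strong} monoidal functor, unique up to a unique monoidal natural isomorphism; alternatively, first strictify $\mathcal{E}$ by Mac Lane's coherence theorem and observe that an equation holds in $\mathcal{E}$ if and only if it holds in its strictification. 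Relatedly, your completeness paragraph is circular as phrased (``otherwise they would give distinct elements of the free category''): the argument should compare $\mathcal{D}(\Sigma)$ with the syntactic free category of morphism terms modulo derivable equations, which is free by construction, and conclude from the essential uniqueness of free objects that evaluation of terms as diagrams is faithful; equality of diagrams up to planar isotopy then yields derivability from the axioms. With these adjustments your proposal coincides with the paper's intended argument.
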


Here, by ``planar isotopy'', we mean that two diagrams, drawn in a
rectangle in the plane with incoming and outgoing wires attached to
the boundaries of the rectangle, are equivalent if it is possible to
transform one to the other by continuously moving around boxes in the
rectangle, without allowing boxes or wires to cross each other or to
be detached from the boundary of the rectangle during the moving.  To
make these notions mathematically precise, it is usually easier to
represent morphism as points, rather than boxes.  For precise
definitions and a proof of the coherence theorem, see
Joyal and Street {\cite{JS88,JS91}}.

\begin{caveat}
  Technically, Joyal and Street's proof in {\cite{JS88,JS91}} only
  applies to planar isotopies where each intermediate diagram during
  the deformation remains progressive, i.e., with all arrows oriented
  left-to-right.  Joyal and Street call such an isotopy ``recumbent''.
  We conjecture that the result remains true if one allows arbitrary
  planar deformations. Similar caveats also apply to the coherence
  theorems for braided and balanced monoidal categories below.
\end{caveat}

The following is an example of two diagrams that are not isomorphic in
the planar embedded sense:
\begin{equation}\label{eqn-non-spacial}
  \vcenter{\wirechart{@R+0.1cm}{
      \blank\wireright{rr}{B}&&\blank\\
      \ulbox{[u].[d]}{f} & \blank\ulbox{[]}{h} & \ulbox{[u].[d]}{g} \\
      \blank\wireright{rr}{A}&&\blank\\
      }}
  \sep\neq\sep
  \vcenter{\wirechart{@R+0.1cm}{
      \blank\wireright{rr}{B}&&\blank\\
      \blank\ulbox{[u].[]}{f}\wireright{rr}{A}&&\blank\ulbox{[u].[]}{g},\\
      & \blank\ulbox{[]}{h} & \\
      }}
\end{equation}
where $f:I\ii A\x B$, $g:A\x B\ii I$, and $h:I\ii I$. And indeed, the
corresponding equation $g\cp ((\rho_A\cp(\id_A\x
h)\cp\rho\inv_A)\x\id_B) \cp f = g\cp
((\lambda_A\cp(h\x\id_A)\cp\lambda\inv_A)\x\id_B) \cp f$ {\em does
  not follow} from the axioms of monoidal categories. This is an easy
consequence of soundness.

Note that because of the coherence theorem, it is not actually
necessary to memorize the axioms of monoidal categories: indeed, one
could use the coherence theorem as the {\em definition} of monoidal
category! For practical purposes, reasoning in the graphical language
is almost always easier than reasoning from the axioms. On the other
hand, the graphical definition is not very useful when one has to
check whether a given category is monoidal; in this case, checking
finitely many axioms is easier.

\paragraph{Relationship to traditional coherence theorems.}

Many category theorists are familiar with coherence theorems of the
form ``all diagrams of a certain type commute''. Mac~Lane's
traditional coherence theorem for monoidal categories {\cite{ML63}} is
of this form. It states that all diagrams built from only $\alpha$,
$\lambda$, $\rho$, $\id$, $\cp$, and $\x$ commute.
  
The coherence results in this paper are of a more general form (cf.
Kelly {\cite[p.~107]{K72b}}). Here, the object is to characterize {\em
  all} formal equations that follow from a given set of axioms. We
note that the traditional coherence theorem is an easy consequence of
the general coherence result of Theorem~\ref{thm-coherence-planar}:
namely, if a given well-formed equation is built only from $\alpha$,
$\lambda$, $\rho$, $\id$, $\cp$, and $\x$, then both the left-hand
side and right-hand side denote identity diagrams in the graphical
language.  Therefore, by Theorem~\ref{thm-coherence-planar}, the
equation follows from the axioms of monoidal categories. Analogous
remarks hold for all the coherence theorems of this article.

%----------------------------------------------------------------------
\subsection*{Technicalities}

\paragraph{Monoidal signatures.}

To be precise about the labels on diagrams of monoidal categories, and
about the meaning of ``well-formed equation'' in the coherence
theorem, we introduce the concept of a monoidal signature. This
generalizes the simple signatures introduced in
Section~\ref{sec-categories}. Monoidal signatures were introduced
under the name {\em tensor schemes} by Joyal and Street
{\cite{JS88,JS91}}. We give a non-strict version of the definition.

\begin{definition}[{\cite[Def.~1.4]{JS91}}, {\cite[Def.~1.6]{JS88}}]
  Given a set $\Sigma_0$ of {\em object variables}, let
  $\MonTerm(\Sigma_0)$ denote the free $(\x,I)$-algebra generated by
  $\Sigma_0$, i.e., the set of {\em object terms} built from object
  variables and $I$ via the operation $\x$. For example, if
  $A,B\in\Sigma_0$, then the term $(A\x B)\x(I\x A)$ is an element of
  $\MonTerm(\Sigma_0)$.
  
  A {\em monoidal signature} consists of a set $\Sigma_0$ of object
  variables, a set $\Sigma_1$ of {\em morphism variables}, and a pair
  of functions $\dom,\cod:\Sigma_1\ii\MonTerm(\Sigma_0)$.
\end{definition}

The concept of well-formed morphism terms and equations (in the
language of monoidal categories) is defined relative to a given
monoidal signature. In the graphical language, wires and boxes are
labeled by object variables and morphism variables as before. An
object term expands to zero or more parallel wires, by the rules of
Table~\ref{tab-graphical-monoidal}. As before, the labellings must
respect the domain and codomain information, which now involves
possibly multiple wires connected to a box. Just as we sometimes label
a box by a morphism term in schematic drawings to denote a possibly
composite diagram, we sometimes label a wire by an object term, such
as $S$ and $T$ in Table~\ref{tab-graphical-monoidal}. In this case, it
is a short-hand notation for zero or more parallel wires.

Given a monoidal signature $\Sigma$ and a monoidal category $\Cc$, an
{\em interpretation} $i:\Sigma\ii\Cc$ consists of an object function
$i_0:\Sigma_0\ii\obj{\Cc}$, which then extends in a unique way to
$\hat i_0: \MonTerm(\Sigma_0)\ii\obj{\Cc}$ such that $\hat i_0(A\x
B)=\hat i_0(A)\x\hat i_0(B)$ and $\hat i_0(I)=I$, and for any
$f\in\Sigma_1$ a morphism $i_1(f):i_0(\dom f)\ii i_0(\cod f)$.

The remaining graphical languages in this
Section~\ref{sec-progressive} are all given relative to a monoidal
signature.

\paragraph{Monoidal functors and natural transformations.}

\begin{definition}
  A {\em strong monoidal functor} (also sometimes called a {\em tensor
    functor}) between monoidal categories $\Cc$ and $\Dd$ is a functor
  $F:\Cc\ii\Dd$, together with natural isomorphisms $\phi^2:FA\x FB\ii
  F(A\x B)$ and $\phi^0: I\ii FI$, such that the following diagrams
  commute:
  \[ \xymatrix@C-2ex{
    (FA\x FB)\x FC \ar[rr]^{\phi^2\x \id} \ar[d]_{\alpha} &&
    F(A\x B)\x FC \ar[r]^{\phi^2} & 
    F((A\x B)\x C) \ar[d]^{F(\alpha)} \\
    FA\x (FB\x FC) \ar[rr]^{\id\x\phi^2} && 
    FA\x F(B\x C) \ar[r]^{\phi^2} & 
    F(A\x (B\x C)) \\
    }
  \]
  \[ \xymatrix{
    FA\x I \ar[r]^{\rho} \ar[d]_{\id\x\phi^0} & 
    FA \ar[d]^{F(\rho)} \\
    FA\x FI \ar[r]^{\phi^2} & 
    F(A\x I) 
    }
  \sep
  \xymatrix{
    I\x FA \ar[r]^{\lambda} \ar[d]_{\phi^0\x\id} &
    FA \ar[d]^{F(\lambda)} \\
    FI\x FA \ar[r]^{\phi^2} &
    F(I\x A)
    }
  \]
\end{definition}

\begin{definition}
  Let $\Cc$ and $\Dd$ be monoidal categories, and let $F,G:\Cc\ii\Dd$
  be strong monoidal functors. A natural transformation $\natt:F\ii
  G$ is called {\em monoidal} (or a {\em tensor transformation}) if
  the following two diagrams commute for all $A,B$:
  \[ \xymatrix{
    FA\x FB \ar[r]^{\phi^2} \ar[d]_{\natt_A\x\natt_B} & 
    F(A\x B) \ar[d]^{\natt_{A\x B}} \\
    GA\x GB \ar[r]^{\phi^2} & 
    G(A\x B)
    }
  \]
\end{definition}

\paragraph{Coherence and free monoidal categories.}

Similarly to what we stated for categories, the coherence theorem for
monoidal categories is a consequence of a characterization of the free
monoidal category. However, due to the extra coherence conditions in
the definition of a strong monoidal functor, the definition of
freeness is slightly more complicated. 

\begin{definition}
  A monoidal category $\Cc$ is a {\em free monoidal category} over a
  monoidal signature $\Sigma$ if it is equipped with an interpretation
  $i:\Sigma\ii\Cc$ such that for any monoidal category $\Dd$ and
  interpretation $j:\Sigma\ii\Dd$, there exists a strong monoidal
  functor $F:\Cc\ii\Dd$ such that $j=F\cp i$, and $F$ is unique up to
  a unique monoidal natural isomorphism.
\end{definition}

As before, the coherence theorem can be re-formulated as a freeness
theorem.

\begin{theorem}
  The graphical language of monoidal categories over a monoidal
  signature $\Sigma$, with identities, composition, and tensor as
  defined in Tables~\ref{tab-graphical-cats} and
  {\ref{tab-graphical-monoidal}}, and up to planar isotopy of
  diagrams, forms a free monoidal category over $\Sigma$.
\end{theorem}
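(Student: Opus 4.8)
The plan is to show that the graphical language $\Gg(\Sigma)$ of monoidal-category diagrams over $\Sigma$, taken up to planar isotopy, satisfies the universal property of the free monoidal category. First I would observe that $\Gg(\Sigma)$ really is a monoidal category: the operations of identity, composition, and tensor described in Tables~\ref{tab-graphical-cats} and~\ref{tab-graphical-monoidal} are well-defined on planar-isotopy classes of diagrams, the empty family of wires serves as the unit $I$, and the associativity and unit isomorphisms $\alpha,\lambda,\rho$ are all represented by identity diagrams (since reassociating or inserting $I$ changes nothing about which wires are present). One then checks that the pentagon and triangle axioms, and naturality of $\alpha,\lambda,\rho$, hold on the nose in $\Gg(\Sigma)$ — this is immediate because every diagram involved is an identity. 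The canonical interpretation $i:\Sigma\ii\Gg(\Sigma)$ sends an object variable $A$ to the single wire labeled $A$ and a morphism variable $f$ to the one-box diagram for $f$.

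Next I would establish the universal property. Given a monoidal category $\Dd$ and an interpretation $j:\Sigma\ii\Dd$, define $F:\Gg(\Sigma)\ii\Dd$ by structural recursion on diagrams: a wire labeled $A$ goes to $\hat\jmath_0(A)$, a box labeled $f$ goes to $j_1(f)$, horizontal juxtaposition goes to composition in $\Dd$, vertical stacking goes to $\x$ in $\Dd$, and for the bracketing and unit-insertion that a given diagram leaves implicit one inserts the appropriate composite of $\alpha,\lambda,\rho$ in $\Dd$. The content to verify is that this assignment is \emph{well-defined on planar-isotopy classes}: two isotopic diagrams must be sent to equal morphisms of $\Dd$. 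This is exactly where the coherence theorem for planar monoidal categories (Theorem~\ref{thm-coherence-planar}) does the work — or rather, this freeness statement and Theorem~\ref{thm-coherence-planar} are two faces of the same fact, so in practice one cites Joyal and Street \cite{JS88,JS91}, where the normal-form analysis of progressive planar diagrams is carried out. Granting well-definedness, functoriality and the strong-monoidal structure maps $\phi^2,\phi^0$ for $F$ (built again from $\alpha,\lambda,\rho,\id$ in $\Dd$) are routine, as is the equation $j=F\cp i$.

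Finally I would address uniqueness up to unique monoidal natural isomorphism. If $F,F':\Gg(\Sigma)\ii\Dd$ are two strong monoidal functors with $F\cp i = F'\cp i = j$, then on generators they agree; since every object of $\Gg(\Sigma)$ is an iterated tensor of generators (and $I$), and every morphism is built from generators, $\id$, $\cp$, and $\x$, the coherence maps $\phi^2,\phi^0$ of $F$ and of $F'$ assemble into a monoidal natural isomorphism $F\iso F'$, and Mac~Lane's coherence forces it to be the unique such. I would phrase this by noting that any two strong monoidal functors that agree on a generating monoidal signature are canonically monoidally isomorphic.

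The main obstacle is the well-definedness step: showing that planar-isotopic diagrams induce equal morphisms in an arbitrary monoidal category. This amounts to proving that planar isotopy is generated by the local moves corresponding to bifunctoriality, naturality of $\alpha,\lambda,\rho$, and the interchange of vertically separated boxes — i.e., it is precisely the hard direction of Theorem~\ref{thm-coherence-planar}. Since that theorem is available to us (with the recumbent-isotopy caveat already flagged in the excerpt), the honest move is to reduce this freeness theorem to it rather than reprove the topology; I would make that reduction explicit and otherwise keep the argument to the structural-recursion bookkeeping described above.
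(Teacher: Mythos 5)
The paper itself offers no argument for this theorem: it is stated as a re-formulation of Theorem~\ref{thm-coherence-planar}, with the real content delegated to Joyal and Street \cite{JS88,JS91}, whose cited result is in fact the freeness statement (the survey's template, spelled out for plain categories, derives the equational coherence theorem \emph{from} freeness). Your proposal runs the implication in the opposite direction: you take the stated coherence theorem as given, build the diagram category, and recover the universal property from it. That is a legitimate and genuinely different route, and it buys a derivation that stays inside the survey's own statements; what it does not buy is independence from the geometry, since the coherence theorem you invoke is itself proved via the freeness of progressive plane diagrams --- a point you concede, so there is no circularity in your argument, only a different choice of which face of the Joyal--Street result to treat as primitive.

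Two places deserve more care than your sketch gives them. First, ``define $F$ by structural recursion on diagrams'' presupposes that every diagram of the graphical language is (up to planar isotopy) the translation of a well-formed term; the equational coherence theorem does not supply this parsing fact. It is harmless if you read the graphical language, as the paper implicitly does, as consisting of the diagrams generated by the constructions of Tables~\ref{tab-graphical-cats} and~\ref{tab-graphical-monoidal} (so that a term witness exists by construction, and well-definedness on isotopy classes is exactly Theorem~\ref{thm-coherence-planar}); but if diagrams are arbitrary labeled progressive plane graphs, the decomposition-into-terms lemma of \cite{JS88,JS91} must be invoked separately. Second, in the uniqueness step, ``agree on generators'' only pins down the comparison isomorphism $F\iso F'$ once you require its components at the images of $\Sigma$ to be identities (compatibility with the interpretation $i$); without that normalization the monoidal natural isomorphism need not be unique, as a signature with one object variable interpreted in $\Vect$ already shows. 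With those two clarifications, your reduction is sound and matches the level of rigor the paper intends.
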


Most of the coherence theorems (and conjectures) of this article can
be similarly formulated in terms of freeness. An exception to
this are the traced categories without braidings in
Sections~\ref{subsec-right-traced}--\ref{subsec-spacial-traced} and
{\ref{subsec-dagger-traced}}, as explained in
Remark~\ref{rem-planar-not-free}. From now on, we will only mention
freeness when it is not entirely automatic, such as in
Section~\ref{subsec-planar-autonomous}.

%----------------------------------------------------------------------
\subsection{Spacial monoidal categories}

\begin{definition}
  A monoidal category is {\em spacial} if it satisfies the additional
  axiom
  \begin{equation}\label{eqn-spacial}
    \rho_A\cp(\id_A\x h)\cp\rho\inv_A=\lambda_A\cp(h\x\id_A)\cp
    \lambda\inv_A,
  \end{equation}
  for all $h:I\ii I$.
\end{definition}

In the graphical language, this means that
\[
  \vcenter{\wirechart{@R+0.3cm}{
      \blank \\
      \blank & \blank\ulbox{[]}{h} & \blank \\
      \blank\wireright{rr}{A}&&\blank\\
      }}
  =
  \vcenter{\wirechart{@R+0.3cm}{
      \blank\wireright{rr}{A}&&\blank,\\
      \blank & \blank\ulbox{[]}{h} & \blank \\
      \blank \\
      }},
\]
so in particular, it implies that the two terms in
(\ref{eqn-non-spacial}) are equal. The author does not know whether
the concept of a spacial monoidal category appears in the literature,
or if it does, under what name.

\paragraph{Graphical language.}

The graphical language for spacial monoidal categories is the same as
that for monoidal categories, except that planarity is dropped from
the notion of diagram equivalence, i.e., diagrams are considered up to
isomorphism. Obviously the axioms are sound; we conjecture that they
are also complete.

\begin{conjecture}[Coherence for spacial monoidal categories]
  A well-formed equation between morphism terms in the language of
  spacial monoidal categories follows from the axioms of spacial
  monoidal categories if and only if it holds, up to isomorphism of
  diagrams, in the graphical language.
\end{conjecture}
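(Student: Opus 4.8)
The plan is to follow the route used for the earlier coherence theorems, reducing completeness to a characterization of the free spacial monoidal category (soundness is routine: the monoidal axioms already hold up to planar isotopy, and the spacial axiom is precisely the identification of the two isomorphic diagrams displayed above). Write $\mathcal D_{\mathrm{pl}}$ for the graphical language up to planar isotopy, which is the free planar monoidal category by Theorem~\ref{thm-coherence-planar}, and $\mathcal D_{\mathrm{iso}}$ for the graphical language up to isomorphism of diagrams, and let $q:\mathcal D_{\mathrm{pl}}\ii\mathcal D_{\mathrm{iso}}$ be the identity-on-objects full functor that forgets the planar embedding. Since $\mathcal D_{\mathrm{iso}}$ is spacial, $q$ factors through the universal spacial quotient of $\mathcal D_{\mathrm{pl}}$, and completeness amounts to showing this factorization is an isomorphism, i.e.\ that the congruence on $\mathcal D_{\mathrm{pl}}$ identifying two diagrams with isomorphic underlying graphs is generated by the spacial axiom.

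The key lemma I would aim for is geometric: if $D$ and $D'$ are planar progressive diagrams over the same monoidal signature, with the same domain and codomain and with isomorphic underlying labelled graphs (the isomorphism respecting the orders of the ports at each box and of the wires along the boundary), then $D$ can be carried to $D'$ by a finite sequence of planar isotopies and applications of the spacial axiom. I would establish it in three steps. First, \emph{normalization}: a connected component not meeting the boundary (a ``closed component'') denotes a scalar $I\ii I$, and I would show that any closed component can be pushed by spacial moves into the outer face of the ambient rectangle and then slid to the top of the diagram --- a single spacial move being exactly the sliding of a closed component across one wire (after enclosing the component in a box, which is legitimate since it is a well-defined scalar of $\mathcal D_{\mathrm{pl}}$), and the ``face-adjacency graph'' of a progressive diagram being connected because a box, all of whose ports lie on wires, cannot disconnect the plane. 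Apply this to both $D$ and $D'$, stacking the closed components at the top. Second, \emph{commutativity of scalars}: the stacked closed components form a tensor product of elements of $\hom(I,I)$, which is commutative by the Eckmann--Hilton argument (using naturality of $\lambda$ and $\rho$ and the consequence $\lambda_I=\rho_I$ of the monoidal axioms), so the two stacks can be matched up. Third, \emph{rigidity of the open part}: after deleting the closed components, $D$ and $D'$ become planar progressive diagrams without closed components, with the same boundary and isomorphic underlying graphs, and I would appeal to the claim that such a diagram is determined up to planar isotopy by its labelled graph; granting this, the normalized $D$ and $D'$ are planar isotopic and the lemma follows.

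The hard part will be this last rigidity claim. Morally it holds because left-to-right orientation of the wires, together with the port orders at each box and the order along the boundary, forces the rotation system and the outer face, so the embedding is rigid up to isotopy, while for disconnected diagrams the boundary order pins down the relative placement of the open components; but making this precise seems to require either invoking and slightly extending the technical theory of ``plane progressive graphs'' and their deformations developed by Joyal and Street~\cite{JS88,JS91}, or a careful induction on the number of boxes in which one peels off a box whose outgoing wires can be isotoped against the boundary. A side point to watch is the reading of ``isomorphism of diagrams'': in the presence of a monoidal signature it must preserve the port orders at the boxes and the boundary order, for otherwise transposing the two inputs of a box $f:A\x A\ii I$ would count as an isomorphism of diagrams and soundness itself would fail. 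Once the key lemma is available, completeness follows as before: a chain of planar isotopies becomes a derivation from the planar-monoidal axioms (Theorem~\ref{thm-coherence-planar}), and each spacial move becomes an application of the spacial axiom, so every equation that holds up to isomorphism of diagrams is provable from the axioms of spacial monoidal categories.
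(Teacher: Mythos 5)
This statement is stated in the paper as a \emph{conjecture}: the author explicitly writes that soundness is obvious and that completeness is only conjectured, and no proof is given anywhere in the text. So there is no proof of the paper's to compare yours against, and the question is whether your attempt actually closes the conjecture. As written, it does not. Your overall strategy (quotient the free planar monoidal category of Theorem~\ref{thm-coherence-planar} by the spacial axiom, normalize closed components to the top, commute scalars by Eckmann--Hilton, and then invoke rigidity of the open part) is a sensible reduction, and the soundness direction and the scalar-commutativity step are fine. But the argument has an acknowledged hole exactly where the mathematical difficulty sits: the ``rigidity of the open part'' claim --- that a planar progressive diagram with no closed components is determined up to planar isotopy by its labelled graph together with the port and boundary orders --- is asserted, not proved, and you yourself note it would require extending the Joyal--Street machinery or a delicate induction. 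Until that lemma is established, the completeness direction is not proved, so the proposal is a proof strategy rather than a proof.

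Two further steps in the normalization phase also need real arguments rather than gestures. First, ``enclosing the component in a box'' presupposes that a closed connected component of the diagram of a term is the image of a subterm of type $I\ii I$ (or can be made one after rewriting the term); this is not automatic from the term syntax and needs a lemma (presumably via Theorem~\ref{thm-coherence-planar}, by re-deriving a term in which the component appears as a tensor factor $\id\x h$ in a single slice). Second, the claim that each passage of a closed component across a single wire segment is literally an instance of the spacial axiom, and that the face-adjacency graph is connected so that finitely many such moves reach the outer face, are both plausible but are exactly the kind of topological bookkeeping (nested closed components inside bubbles, faces bounded partly by boxes, disconnected open parts) that keeps this a conjecture in the paper. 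Naming these as the points to be proved --- the rigidity lemma above all --- is the honest assessment: your outline is a reasonable roadmap toward the conjecture, but the statement remains unproved by it.
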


Note that, in the case of planar diagrams, the notion of isomorphism
of diagrams coincides with ambient isotopy in 3 dimensions. This
explains the term ``spacial''.

%----------------------------------------------------------------------
\subsection{Braided monoidal categories}

\begin{definition}[\cite{JS93}]
  A {\em braiding} on a monoidal category is a natural family of
  isomorphisms $\sym_{A,B}:A\x B\ii B\x A$, satisfying the
  following two ``hexagon axioms'':
  \[\xymatrix@R-3mm@C-5mm{
    &(B\x A)\x C\ar[rr]^{\alpha_{B,A,C}}&&
    B\x(A\x C) \ar[dr]^{\id_B\x\sym_{A,C}}\\
    (A\x B)\x C
    \ar[ur]^{\sym_{A,B}\x\id_C}
    \ar[dr]_{\alpha_{A,B,C}}&&&&
    B\x(C\x A).\\
    &A\x(B\x C)\ar[rr]^{\sym_{A,B\x C}}&&
    (B\x C)\x A \ar[ur]_{\alpha_{B,C,A}}\\
    }
  \]
  \[\xymatrix@R-3mm@C-5mm{
    &(B\x A)\x C\ar[rr]^{\alpha_{B,A,C}}&&
    B\x(A\x C) \ar[dr]^{\id_B\x\symi_{C,A}}\\
    (A\x B)\x C
    \ar[ur]^{\symi_{B,A}\x\id_C}
    \ar[dr]_{\alpha_{A,B,C}}&&&&
    B\x(C\x A).\\
    &A\x(B\x C)\ar[rr]^{\symi_{B\x C,A}}&&
    (B\x C)\x A \ar[ur]_{\alpha_{B,C,A}}\\
    }
  \]
\end{definition}

Note that every braided monoidal category is spacial; this follows
from the naturality (in $I$) of $\sym_{A,I}:A\x I\ii I\x A$.

A braided monoidal functor between braided monoidal categories is a
monoidal functor that is compatible with the braiding in the following
sense:
\[ \xymatrix{
  FA\x FB \ar[r]^{\phi^2}\ar[d]_{\sym_{FA,FB}} &
  F(A\x B)\ar[d]^{F\sym_{A,B}} \\
  FB\x FA \ar[r]^{\phi^2} &
  F(B\x A).
}
\]

\paragraph{Graphical language.}

One extends the graphical language of monoidal categories with the {\em
  braiding}:

\begin{center}
  \begin{tabular}{@{}llc@{}}
    Braiding & $\sym_{A,B}$ &
    $\vcenter{\wirechart{@C=1.5cm@R=0.8cm}{
        *{}\wireright{r}{B}&\blank\wirecross{d}\wireright{r}{A}&\\
        *{}\wireright{r}{A}&\blank\wirebraid{u}{.3}\wireright{r}{B}&
        }}$ \\
  \end{tabular}
\end{center}

In general, if $A$ and $B$ are composite object terms, the braiding
$\sym_{A,B}$ is represented as the appropriate number of wires
crossing each other.

Note that the braiding satisfies
$\sym_{A,B}\cp\symi_{A,B}=\id_{A\x B}$, but not
$\sym_{A,B}\cp\sym_{B,A}=\id_{A\x B}$. Graphically:

\[\vcenter{\wirechart{@C=1.5cm@R=0.8cm}{
    *{}\wireright{r}{B}&
    \blank\wirecross{d}\wireright{r}{A}&
    \blank\wirebraid{d}{.3}\wireright{r}{B}&
    \\
    *{}\wireright{r}{A}&
    \blank\wirebraid{u}{.3}\wireright{r}{B}&
    \blank\wirecross{u}\wireright{r}{A}&
    \\
    }}
= \id_{A\x B},
\]
\[\vcenter{\wirechart{@C=1.5cm@R=0.8cm}{
    *{}\wireright{r}{B}&
    \blank\wirecross{d}\wireright{r}{A}&
    \blank\wirecross{d}\wireright{r}{B}&
    \\
    *{}\wireright{r}{A}&
    \blank\wirebraid{u}{.3}\wireright{r}{B}&
    \blank\wirebraid{u}{.3}\wireright{r}{A}&
    \\
    }}
\neq \id_{A\x B}.
\]

\begin{example}
  The hexagon axiom translates into the following in the graphical
  language:

  \[ (\id_B\x\sym_{A,C}) \cp \alpha_{B,A,C} \cp (\sym_{A,B}\x\id_C)
  = \alpha_{B,C,A} \cp (\sym_{B,C\x A}) \cp \alpha_{A,B,C}
  \]
  \[\vcenter{\wirechart{@R=6mm}{
    \wireright{r}{C}&
    \wireid\wireright{r}{C}&
    \blank\wirecross{d}{.3}\wireright{r}{A}&
    \\
    \wireright{r}{B}&
    \blank\wirecross{d}{.3}\wireright{r}{A}&
    \blank\wirebraid{u}{.3}\wireright{r}{C}&
    \\
    \wireright{r}{A}&
    \blank\wirebraid{u}{.3}\wireright{r}{B}&
    \wireid\wireright{r}{B}&
    \\
    }}
  \ssep=\ssep
  \vcenter{\wirechart{@R=6mm}{
    \wireright{r}{C}&
    \blank\wirecross{d}{.3}\wireright{r}{A}&
    \\
    \wireright{r}{B}&
    \blank\wirecross{d}{.3}\wireright{r}{C}&
    \\
    \wireright{r}{A}&
    \blank\wirebraid{uu}{.2}\wireright{r}{B}&
    \\
    }}
  \]
\end{example}

\begin{example}
  The {\em Yang-Baxter equation} is the following equation, which is a
  consequence of the hexagon axiom and naturality:
  \[ (\sym_{B,C}\x\id_A)\cp(\id_B\x\sym_{A,C})\cp(\sym_{A,B}\x\id_C) =
  (\id_C\x\sym_{A,B})\cp(\sym_{A,C}\x\id_B)\cp(\id_A\x\sym_{B,C}).
  \]
  In the graphical language, it becomes:
  \[\vcenter{\wirechart{@C-4mm@R=6mm}{
    \wireright{r}{C}&
    \wireid\wireright{r}{C}&
    \blank\wirecross{d}{.3}\wireright{r}{A}&
    \wireid\wireright{r}{A}&
    \\
    \wireright{r}{B}&
    \blank\wirecross{d}{.3}\wireright{r}{A}&
    \blank\wirebraid{u}{.3}\wireright{r}{C}&
    \blank\wirecross{d}{.3}\wireright{r}{B}&
    \\
    \wireright{r}{A}&
    \blank\wirebraid{u}{.3}\wireright{r}{B}&
    \wireid\wireright{r}{B}&
    \blank\wirebraid{u}{.3}\wireright{r}{C}&
    \\
    }}
  \ssep=\ssep
  \vcenter{\wirechart{@C-4mm@R=6mm}{
    \wireright{r}{C}&
    \blank\wirecross{d}{.3}\wireright{r}{B}&
    \wireid\wireright{r}{B}&
    \blank\wirecross{d}{.3}\wireright{r}{A}&
    \\
    \wireright{r}{B}&
    \blank\wirebraid{u}{.3}\wireright{r}{C}&
    \blank\wirecross{d}{.3}\wireright{r}{A}&
    \blank\wirebraid{u}{.3}\wireright{r}{B}&
    \\
    \wireright{r}{A}&
    \wireid\wireright{r}{A}&
    \blank\wirebraid{u}{.3}\wireright{r}{C}&
    \wireid\wireright{r}{C}&
    \\
    }}
  \]
\end{example}

\begin{theorem}[Coherence for braided monoidal categories {\cite[Thm.~3.7]{JS91}}]
  A well-formed equation between morphisms in the language of braided
  monoidal categories follows from the axioms of braided monoidal
  categories if and only if it holds in the graphical language up to
  isotopy in 3 dimensions.
\end{theorem}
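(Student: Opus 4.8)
The plan is to derive this coherence theorem from a characterization of the free braided monoidal category over a monoidal signature $\Sigma$, exactly as was done for categories and for planar monoidal categories. By the reformulation in terms of freeness, it suffices to show that the graphical language of braided monoidal categories over $\Sigma$, taken up to isotopy in $3$ dimensions, \emph{is} the free braided monoidal category on $\Sigma$; an equation then follows from the axioms if and only if it holds in every braided monoidal category, if and only if it holds in this free one, if and only if the two diagrams are isotopic.

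The construction proceeds in two halves. For \emph{soundness}, I would check that each defining axiom of a braided monoidal category (bifunctoriality, naturality of $\alpha,\lambda,\rho,\sym$, pentagon, triangle, and the two hexagons) corresponds to a $3$-dimensional isotopy of the associated diagrams; the hexagon axiom is the crucial new case, and its graphical form --- sliding a box past a crossing --- is visibly an isotopy in space, as is the Reidemeister-II type identity $\sym\cp\symi=\id$ and the Yang--Baxter move. This endows the set of isotopy classes of diagrams with the structure of a braided monoidal category $\Bb_\Sigma$, together with the evident interpretation $i:\Sigma\ii\Bb_\Sigma$. For \emph{completeness}, I would prove the universal property: given any braided monoidal category $\Dd$ and interpretation $j:\Sigma\ii\Dd$, one defines a strong monoidal functor $F:\Bb_\Sigma\ii\Dd$ by reading off a diagram as a composite of tensor products of generators, braidings, and structural isomorphisms; one must verify that isotopic diagrams are sent to equal morphisms (this uses exactly the braided axioms, via a generators-and-relations presentation of isotopy classes of such diagrams), and that $F$ is unique up to unique monoidal natural isomorphism. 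All of this is carried out in detail by Joyal and Street in \cite{JS91,JS93}, where the key technical input is their theorem identifying, for each pair of object-words $u,v$, the set of isotopy classes of diagrams from $u$ to $v$ with the appropriate hom-set of the free braided monoidal category, presented combinatorially via the braid groups.

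The main obstacle is the same as the one flagged in the caveat following Theorem~\ref{thm-coherence-planar}, now lifted one dimension: making precise the topology. One must specify the ambient space in which the diagrams live (a box $[0,1]^2\times[0,1]$, say, with wires meeting only the top and bottom faces of the $[0,1]^2$ slice), specify that strings are disjoint smooth embedded arcs transverse to the boundary, place the point-like morphism-nodes, and then define when two such configurations are isotopic through configurations of the same type --- in particular whether one insists on ``recumbent'' isotopies that stay progressive throughout, or allows arbitrary ambient isotopies. With the progressive restriction the argument is a clean induction reducing a generic isotopy to a finite sequence of elementary moves (exchange of heights of two nodes, Reidemeister II and III among strings, node-past-crossing), each of which is one of the braided axioms or a consequence of naturality; relaxing to arbitrary deformations is expected to give the same equivalence but, as with the planar case, is left as a conjecture. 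I would therefore present the proof as: (1) set up the topological category of braided diagrams; (2) establish soundness by matching axioms to elementary moves; (3) invoke the Joyal--Street normal-form/presentation result to get completeness and the universal property; (4) conclude the coherence statement, with the recumbent-versus-arbitrary-isotopy point recorded as a caveat.
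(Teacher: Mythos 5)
Your proposal matches the paper's treatment: the paper gives no proof of its own, deferring entirely to Joyal and Street's characterization of the free braided monoidal category over a tensor scheme \cite{JS91}, which is exactly the soundness-plus-freeness route you outline, and your handling of the topological fine print (progressive/recumbent versus arbitrary isotopies, smoothness) matches the caveats the paper records. So this is the same approach, correctly reproduced.
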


Here, by ``isotopy in 3 dimensions'', we mean that two diagrams, drawn
in a 3-dimensional box with incoming and outgoing wires attached to
the boundaries of the box, are isotopic if it is possible to transform
one to the other by moving around nodes in the box, without allowing
nodes or edges to cross each other or to be detached from the boundary
during the moving. Also, the linear order of the edges entering and
exiting each node must be respected. This is made more precise in
Joyal and Street {\cite{JS91}}.

\begin{caveat}
  The proof by Joyal and Street {\cite{JS91}} is subject to some minor
  technical assumptions: graphs are assumed to be {\em smooth}, and
  the isotopies are progressive, with continuously changing tangent
  vectors.
\end{caveat}

%----------------------------------------------------------------------
\subsection{Balanced monoidal categories}\label{subsec-balanced}

\begin{definition}[\cite{JS93}]
  A {\em twist} on a braided monoidal category is a natural family of
  isomorphisms $\theta_A:A\ii A$, satisfying $\theta_I=\id_I$ and such
  that the following diagram commutes for all $A,B$:
  \begin{equation}\label{eqn-balanced}
    \xymatrix{
      A\x B \ar[d]_{\theta_{A\x B}}\ar[r]^{\sym_{A,B}} & B\x A \ar[d]^{\theta_B\x\theta_A} \\
      A\x B & B\x A. \ar[l]^{\sym_{B,A}}
      }
  \end{equation}
  A {\em balanced monoidal category} is a braided monoidal category
  with twist.
\end{definition}

A balanced monoidal functor between balanced monoidal categories is a
braided monoidal functor that is also compatible with the twist, i.e.,
such that $F(\theta_A)=\theta_{FA}$ for all $A$. 

\paragraph{Graphical language.}

The graphical language of balanced monoidal categories is similar to
that of braided monoidal categories, except that morphisms are
represented by flat ribbons, rather than 1-dimensional wires.  A
ribbon can be thought of as a pair of parallel wires that are
infinitesimally close to each other, or as a wire that is equipped
with a {\em framing} {\cite{JS91}}. For example, the braiding looks
like this:
\[ \sym_{A,B} =
\mnew{\resizebox{2cm}{!}{\includegraphics{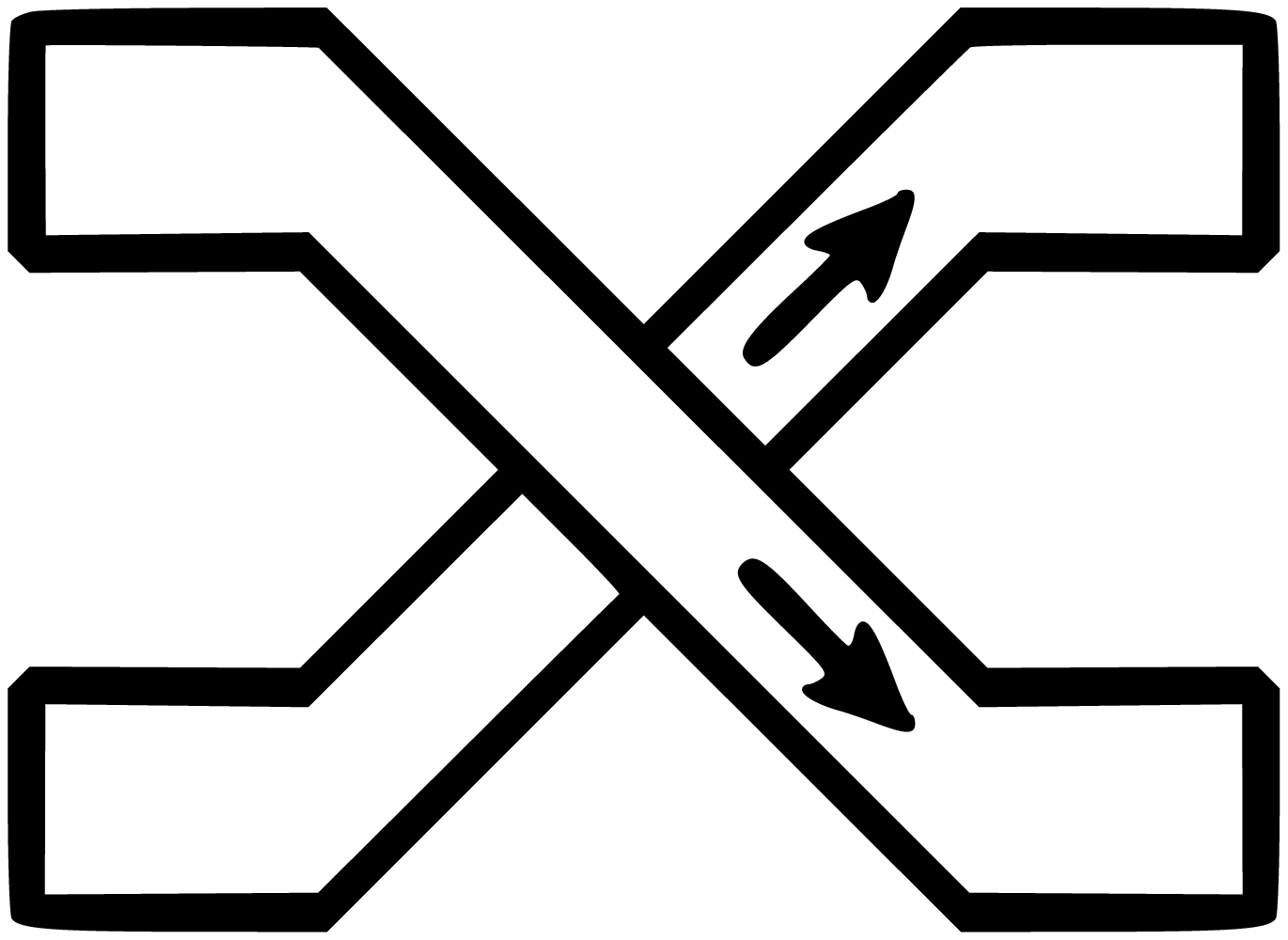}}}.
\]
The twist map $\theta_A$ is represented as a 360-degree twist in a
ribbon, or in several ribbons together, if $A$ is a composite object
term. This is easiest seen in the following illustration.
\[ \theta_A =
\resizebox{3cm}{!}{\includegraphics{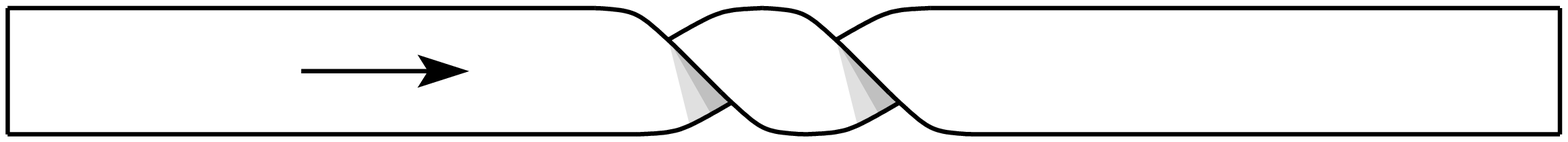}}~,
\sep
\theta_{A\x B} =
\mnew{\resizebox{4.5cm}{!}{\includegraphics{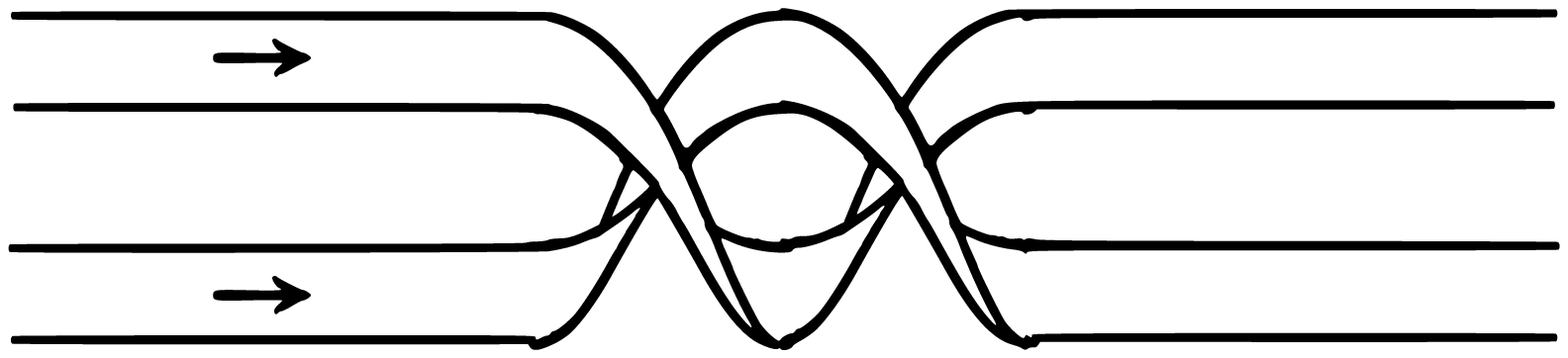}}}.
\]
The meaning of (\ref{eqn-balanced}) should then be obvious.

\begin{theorem}[Coherence for balanced monoidal categories {\cite[Thm.~4.5]{JS91}}]
  A well-formed equation between morphisms in the language of balanced
  monoidal categories follows from the axioms of balanced monoidal
  categories if and only if it holds in the graphical language up to
  framed isotopy in 3 dimensions.
\end{theorem}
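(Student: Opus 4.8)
The plan is to derive this coherence theorem from the corresponding coherence theorem for braided monoidal categories, by exhibiting a forgetful/free adjunction at the level of the graphical languages. Concretely, I would first set up the free balanced monoidal category $\Bb(\Sigma)$ over a monoidal signature $\Sigma$, defined as the quotient of formal morphism terms (built from $\Sigma_1$, $\id$, $\cp$, $\x$, $\alpha^{\pm1}$, $\lambda^{\pm1}$, $\rho^{\pm1}$, $\sym^{\pm1}$, $\theta^{\pm1}$) by the balanced-monoidal axioms. As in the previous sections, soundness (the ``only if'' direction) is the easy half: one checks that each axiom of balanced monoidal categories --- bifunctoriality, naturality of $\alpha,\lambda,\rho,\sym,\theta$, the pentagon, triangle, hexagons, $\theta_I=\id_I$, and the defining square \eqref{eqn-balanced} --- corresponds to a framed isotopy of ribbon diagrams in the $3$-box. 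The only genuinely new point compared with the braided case is that \eqref{eqn-balanced} and the $\theta_I=\id_I$ axiom are exactly what is needed to make the $360^\circ$-twist behave as a framing, so that the equivalence relation ``framed isotopy'' (rather than plain ambient isotopy) is the sound one. I would verify this by the standard picture: sliding a box through a full twist of the ribbon it sits on reproduces naturality of $\theta$, and splitting/merging twists on a composite ribbon reproduces \eqref{eqn-balanced}.

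For the ``if'' (completeness) direction, the key step is to reduce framed ribbon isotopy to $3$-dimensional isotopy of ordinary (wire) diagrams by the classical ribbon-to-framing correspondence. The idea is: a framed diagram in the $3$-box is the same data as a diagram of wires each carrying an integer ``rotation number'' recording the total framing accumulated along that wire, modulo the moves that let framing slide freely except where it is trapped by a box or by a braiding crossing. Equivalently, $\Bb(\Sigma)$ should be identified with the free braided monoidal category on the signature $\Sigma$ together with one extra formal endo-generator $\theta_A$ for each object, subject only to $\theta_I=\id_I$, naturality, and \eqref{eqn-balanced}; and framed isotopy classes of ribbon diagrams over $\Sigma$ should be identified with plain $3$-dimensional isotopy classes of wire diagrams over that enlarged signature. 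Granting this identification, the braided coherence theorem applied to the enlarged signature gives exactly that two balanced morphism terms are provably equal iff their associated framed ribbon diagrams are framed-isotopic. So completeness for balanced categories follows from completeness for braided ones.

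I expect the main obstacle to be the ribbon-to-framing translation made precise enough to carry the coherence statement: one must show that \emph{framed isotopy} of ribbon diagrams is generated exactly by (i) ambient $3$-dimensional isotopy of the underlying wires, (ii) creation/annihilation of canceling $\pm1$ twists on a wire, (iii) sliding a twist along a wire past a box or crossing in the way dictated by naturality and the balancing square, and nothing more --- in particular that no ``hidden'' relation among framings is forced by the topology of the $3$-box. This is essentially a framed analogue of the Reidemeister-type analysis underlying \cite{JS91}, and it is where the same technical caveats (smoothness of graphs, progressive isotopies, continuously varying tangent and normal framing vectors) must be invoked. Once that translation is in hand, the rest is bookkeeping: transport the interpretation functors, check that the enlarged-signature free braided category really is the free balanced category (the universal property follows since any balanced functor is in particular a braided functor respecting the chosen $\theta$'s), and quote Joyal--Street. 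As with the braided and balanced cases already discussed, I would state this with the caveat that the cited proof covers the progressive/smooth case, leaving the fully general deformation statement as folklore.
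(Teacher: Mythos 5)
First, note that the paper offers no proof of this theorem at all: it is quoted as Joyal and Street's result {\cite[Thm.~4.5]{JS91}}, with only a caveat about smoothness and progressive isotopies. So the only question is whether your proposed argument stands on its own, and there I see a genuine gap. Your completeness step identifies the free balanced category over $\Sigma$ with ``the free braided monoidal category on $\Sigma$ together with one extra endo-generator $\theta_A$ for each object, subject only to $\theta_I=\id_I$, naturality, and (\ref{eqn-balanced})''. But a free braided category admits no relations on its generators beyond the braided monoidal axioms; imposing naturality of $\theta$ (an infinite schema quantified over all morphisms, i.e.\ all diagrams) and the balancing square means you are working in a \emph{quotient} of the free braided category on the enlarged signature. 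The braided coherence theorem characterizes equality in the free braided category as $3$-dimensional isotopy; it says nothing about equality in this quotient, which is exactly what you need. So the sentence ``Granting this identification, the braided coherence theorem applied to the enlarged signature gives exactly\ldots'' does not follow: the added relations must be matched against extra diagrammatic moves, and proving that the resulting combinatorial equivalence coincides with framed isotopy is precisely the hard content of the theorem.

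You do flag this yourself (``the main obstacle \ldots one must show that framed isotopy of ribbon diagrams is generated exactly by (i)--(iii) and nothing more''), but that lemma is not a technical footnote --- it \emph{is} the theorem, in only lightly disguised form, and your proposal gives no argument for it. Joyal and Street's actual proof does not proceed by reduction to the braided case via formal twist generators; they analyze framed (ribbon) progressive diagrams directly, extending the geometric normal-form analysis they use for braids to the framed setting. A repaired version of your strategy would need either (a) a combinatorial presentation of framed isotopy of progressive ribbon diagrams (a framed Reidemeister-type theorem adapted to diagrams with boxes and boundary), together with a rewriting or normal-form argument showing the quotient of the free braided category by the $\theta$-relations has exactly these equivalence classes, or (b) a different encoding (e.g.\ representing each ribbon by its two boundary wires) that lands you in an honest free braided category rather than a quotient. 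As written, the soundness half is fine, but the completeness half rests on an unproved claim of essentially the same depth as the statement being proved, plus a freeness assertion that is false as stated.
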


%----------------------------------------------------------------------
\subsection{Symmetric monoidal categories}\label{subsec-symmetric-monoidal}

\begin{definition}
  A {\em symmetric monoidal category} is a braided monoidal category
  where the braiding is self-inverse, i.e.:
  \[ \sym_{A,B} = \symi_{B,A}
  \]
  In this case, the braiding is called a {\em symmetry}.
\end{definition}

\begin{remark}\label{rem-symmetric-from-balanced}
  Because of equation (\ref{eqn-balanced}), a symmetric monoidal
  category can be equivalently defined as a balanced monoidal category
  in which $\theta_A=\id_A$ for all $A$.
\end{remark}

\begin{remark}\label{rem-twisted-symmetric}
  The previous remark notwithstanding, there exist symmetric monoidal
  categories that possess a non-trivial twist (in addition to the
  trivial twist $\theta_A=\id_A$). Thus, in a balanced monoidal
  category, the symmetry condition $\sym_{A,B} = \symi_{B,A}$ does not
  in general imply $\theta_A=\id_A$. In other words, a balanced
  monoidal category that is symmetric as a braided monoidal category
  is not necessarily symmetric as a balanced monoidal category.  An
  example is the category of finite dimensional vector spaces and
  linear bijections, with $\theta_A(x)=nx$, where $n=\dim(A)$.
\end{remark}

\begin{examples}
  On the monoidal category $(\Set,\times)$ of sets with cartesian
  product, a symmetry is given by $\sym(x,y)=(y,x)$. On the category
  $(\Vect,\x)$ of vector spaces with tensor product, a symmetry is
  given by $\sym(x\x y)=y\x x$.
\end{examples}

\paragraph{Graphical language.}

The symmetry is graphically represented by a crossing:

\begin{center}
  \begin{tabular}{@{}llc@{}}
    Symmetry & $\sym_{A,B}$ &
    $\vcenter{\wirechart{@C=1.5cm@R=0.8cm}{
        *{}\wireright{r}{B}&\blank\wirecross{d}\wireright{r}{A}&\\
        *{}\wireright{r}{A}&\blank\wirecross{u}\wireright{r}{B}& }}$ \\
  \end{tabular}
\end{center}

\begin{theorem}[Coherence for symmetric monoidal categories {\cite[Thm.~2.3]{JS91}}]
  \label{thm-coherence-symmetric-monoidal}
  A well-formed equation between morphisms in the language of
  symmetric monoidal categories follows from the axioms of symmetric
  monoidal categories if and only if it holds, up to isomorphism of
  diagrams, in the graphical language.
\end{theorem}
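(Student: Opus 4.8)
The plan is to deduce coherence for symmetric monoidal categories from the already-established coherence theorem for braided monoidal categories (Theorem~3.7 of \cite{JS91}, quoted above) together with the observation that a symmetry is just a braiding satisfying the extra relation $\sym_{A,B}=\symi_{B,A}$. First I would set up the two sides precisely: on the syntactic side, a well-formed equation in the language of symmetric monoidal categories is provable from the symmetric monoidal axioms; on the graphical side, we work with the same diagrams as for braided monoidal categories, but now identify a positive crossing with its corresponding negative crossing, so that isotopy in $3$ dimensions collapses to a purely combinatorial notion. The key point is that once both crossings of a pair of wires are identified, the two braid generators $\sym$ and $\symi$ become equal, the braid group on the strands degenerates to the symmetric group, and an ``isotopy class'' of such a diagram is completely determined by (i) the underlying abstract graph of boxes and wires, (ii) for each box, the linear order of incoming and of outgoing wires, and (iii) the induced permutation recording how wires are routed. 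That is, the quotient of $3$-dimensional isotopy by the relation $\sym=\symi$ is exactly isomorphism of diagrams in the sense already defined for categories. So soundness is immediate (the symmetry axiom clearly holds up to diagram isomorphism, and all other axioms were already sound for the braided language), and completeness is what needs an argument.

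For completeness, I would argue as follows. Suppose a well-formed equation $u=v$ holds up to isomorphism of diagrams in the symmetric graphical language. Lift $u$ and $v$ to the braided language by choosing, for each symmetry occurring in $u$ or in $v$, a specific braiding generator $\sym$ (rather than $\symi$); call the lifts $\tilde u$ and $\tilde v$. These need not be equal up to $3$-dimensional isotopy — the lifts have made arbitrary over/under choices — but after post-composing with the evident relations $\sym_{A,B}\cp\symi_{B,A}=\id$ and the symmetric axiom, $\tilde u$ and $\tilde v$ become equal up to $3$-dimensional isotopy in the \emph{extended} braided language in which we have freely added the relation $\sym=\symi$. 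The cleanest way to make this rigorous is to observe that the free symmetric monoidal category on a monoidal signature $\Sigma$ is the quotient of the free braided monoidal category on $\Sigma$ by the congruence generated by $\sym_{A,B}=\symi_{A,B}$, and dually the graphical language of symmetric monoidal categories up to diagram isomorphism is the corresponding quotient of the graphical language of braided monoidal categories up to $3$-dimensional isotopy. Since the braided coherence theorem identifies these two braided objects, their quotients by the ``same'' relation are identified as well, which is exactly the statement to be proved. Equivalently, in the freeness formulation: one shows directly that the graphical language of symmetric monoidal categories satisfies the universal property of the free symmetric monoidal category over $\Sigma$, by factoring any interpretation $j:\Sigma\ii\Dd$ through the (known) free braided monoidal category and checking that the resulting strong monoidal functor kills the distinction between a crossing and its inverse — which it must, since $\Dd$ is symmetric.

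The main obstacle is the claim that $3$-dimensional isotopy modulo $\sym=\symi$ really does coincide, on the nose, with the combinatorial notion of diagram isomorphism; equivalently, that no ``hidden'' topological identifications survive the quotient. Concretely: a braid diagram on $n$ strands, read up to isotopy, records an element of the braid group $B_n$ on those strands (together with the box data); imposing $\sigma_i=\sigma_i^{-1}$ for each generator quotients $B_n$ onto the symmetric group $S_n$, and one must check that the resulting invariant — underlying graph, box orderings, and the permutation in $S_n$ — is a \emph{complete} invariant, i.e.\ two symmetric diagrams with the same such data are genuinely equal in the quotient, with no further relations. This is where one has to be a little careful about the technical assumptions in \cite{JS91} (smoothness, recumbent/progressive isotopies, continuously varying tangent vectors) flagged in the caveats above; I would note that, as in the braided case, the theorem as stated in \cite{JS91} carries these hypotheses, and that for the symmetric case the combinatorics is simple enough — $S_n$ rather than $B_n$ — that the completeness of the invariant is essentially elementary once the braided coherence theorem is granted. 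The remaining routine verifications (that the lifting/quotient correspondence is well-defined on well-formed equations, and that it respects domains and codomains) I would leave to the reader, consistent with the survey's stated policy of omitting most proofs.
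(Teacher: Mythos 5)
The paper itself contains no proof of this theorem: it is quoted from Joyal and Street \cite[Thm.~2.3]{JS91}, who prove it directly, by showing that the free symmetric monoidal category over a monoidal signature (tensor scheme) has hom-sets given by abstract progressive diagrams up to isomorphism --- a purely combinatorial argument that is logically independent of, and in their paper prior to, the braided case. Your route is genuinely different: you derive the symmetric theorem from the braided one, using that the free symmetric monoidal category is the free braided monoidal category modulo the monoidal congruence generated by $\sym_{A,B}=\symi_{B,A}$, and that, on the graphical side, $3$-dimensional isotopy together with crossing changes identifies exactly those diagrams whose underlying abstract diagrams are isomorphic. This is a legitimate strategy, and you correctly isolate its crux in that second identification; but note that closing it requires real work that your sketch only gestures at: (i) a layered normal-form argument showing that any single crossing in an isotopy representative can be isolated as a tensor-and-composition factor, so that a crossing change is literally an instance of the congruence, and (ii) the group-theoretic fact that the kernel of $B_n\to S_n$ (the pure braid group) is the normal closure of the squared generators, so that two braided routings with the same underlying permutation, boxes and orderings are related by isotopy plus finitely many crossing changes. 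As to what each approach buys: your quotient argument gives a pleasant conceptual explanation of why the symmetric language is ``isomorphism of diagrams'' (equivalently isotopy in dimension $4$), but it makes the clean symmetric result depend on the harder braided theorem and thereby inherits its technical caveats (smooth diagrams, progressive isotopies, the restrictions flagged in the paper's caveat); the direct combinatorial proof of \cite{JS91} needs no $3$-dimensional topology at all, which is precisely why the symmetric case can be, and there is, proved independently and without those hypotheses.
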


Note that the graphical language for symmetric monoidal categories is
up to isomorphism of diagrams, without any reference to 2- or
3-dimensional structure. However, isomorphism of diagrams is
equivalent to ambient isotopy in 4 dimensions, so we can still regard
it as a geometric notion.

%----------------------------------------------------------------------
\section{Autonomous categories}\label{sec-autonomous}

Autonomous categories are monoidal categories in which the objects
have {\em duals}. In terms of graphical language, this means that some
wires are allowed to run from right to left.

%----------------------------------------------------------------------
\subsection{(Planar) autonomous categories}\label{subsec-planar-autonomous}

\begin{definition}[\cite{JS93}]
  In a (without loss of generality strict) monoidal category, an {\em
    exact pairing} between two objects $A$ and $B$ is given by a pair
  of morphisms $\eta:I\ii B\x A$ and $\eps:A\x B\ii I$, such that the
  following two adjunction triangles commute:
  \begin{equation}\label{eqn-pairing}
    \vcenter{\xymatrix{
      A \ar[r]^<>(.5){\id_A\x\eta} \ar[dr]_{\id_A} &
      A\x B\x A \ar[d]^{\eps\x\id_A} \\
      & A,
      }}
    \hspace{.7in}
    \vcenter{\xymatrix{
      B \ar[r]^<>(.5){\eta\x\id_{B}} \ar[dr]_{\id_{B}} &
      B\x A\x B \ar[d]^{\id_{B}\x\eps} \\
      & B.
      }}
  \end{equation}
  In such an exact pairing, $B$ is called the {\em right dual} of $A$
  and $A$ is called the {\em left dual} of $B$.
\end{definition}

\begin{remark}
  The maps $\eta$ and $\eps$ determine each other uniquely, and they
  are respectively called the {\em unit} and the {\em counit} of the
  adjunction. Moreover, the triple $(B,\eta,\eps)$, if it exists, is
  uniquely determined by $A$ up to isomorphism. The existence of duals
  is therefore a property of a monoidal category, rather than an
  additional structure on it. Moreover, every strong monoidal functor
  automatically preserves existing duals.
\end{remark}

\begin{definition}[\cite{JS86,JS88,JS93}]
  A monoidal category is {\em right autonomous} if every object $A$
  has a right dual, which we then denote $\rA$. It is {\em left
    autonomous} if every object $A$ has a left dual, which we then
  denote $\lA$. Finally, the category is {\em autonomous} if it is
  both right and left autonomous.
\end{definition}

\begin{remark}[Terminology]
  A [right, left, --] autonomous category is also called [right, left,
  --] rigid, see e.g.~{\cite[p.~78]{SR72}}. Also, the term
  ``autonomous'' is sometimes used in the weaker sense of ``monoidal
  closed''.  Although this latter usage is no longer common, it still
  lives on in the terminology ``*-autonomous category'' (Barr
  {\cite{Bar79}}, see also Section~\ref{sec-star-autonomous}).

  If we wish to emphasize that an autonomous category is not
  necessarily symmetric or braided, we sometimes call it a {\em planar
    autonomous category}.
\end{remark}

\paragraph{Graphical language.}

If $A$ is an object variable, the objects $\rA$ and $\lA$ are both
represented in the same way: by a wire labeled $A$ running from right
to left. The unit and counit are represented as half turns:
\begin{center}
  \begin{tabular}{@{}ll@{}cl@{}c@{}}
    Dual& $\rA, \lA$ &
    $\wirechart{@C=1cm}{*{}\wireleft{rr}{A}&&}$ \\\\
    Unit&
    $\eta_A:I\ii \rA\x A$ &
    $\vcenter{\wirechart{@C=1.5cm@R=0.5cm}{
        \blank\wireopen{d}\wireright{r}{A}&\\
        \blank\wireleft{r}{A}&
        }}
    $ &
    $\eta'_A:I\ii A\x \lA$ &
    $\vcenter{\wirechart{@C=1.5cm@R=0.5cm}{
        \blank\wireopen{d}\wireleft{r}{A}&\\
        \blank\wireright{r}{A}&
        }}
    $\\\\
    Counit&
    $\eps_A:A\x \rA\ii I$ &
    $\vcenter{\wirechart{@C=1.5cm@R=0.5cm}{
        \wireleft{r}{A}&\blank\wireclose{d}\\
        \wireright{r}{A}&\blank
        }}
    $ &
    $\eps'_A:\lA\x A\ii I$ &
    $\vcenter{\wirechart{@C=1.5cm@R=0.5cm}{
        \wireright{r}{A}&\blank\wireclose{d}\\
        \wireleft{r}{A}&\blank
        }}
    $\\\\
  \end{tabular}
\end{center}

More generally, if $A$ is a composite object represented by a number
of wires, then $\rA$ and $\lA$ are represented by the same set of
wires running backward (rotated by 180 degrees), and the units and
counits are represented as multiple wires turning.

\begin{example}
  The two diagrams in (\ref{eqn-pairing}), where $B=\rA$, translate
  into the graphical language as follows:
  \[
  \vcenter{\wirechart{@C=1.0cm@R=0.5cm}{
      \blank\wireopen{d}\wireright{r}{A}&\\
      \blank\wireleft{r}{A}&\blank\\
      \wireright{r}{A}&\blank\wireclose{u}
      }}
  = \ssep
  {\wirechart{@C=1.0cm@R=0.5cm}{
      \wireid\wireright{r}{A}&\wireid
      }},
  \hspace{.5cm}
  \vcenter{\wirechart{@C=1.0cm@R=0.5cm}{
      \wireleft{r}{A}&\blank\wireclose{d}\\
      \blank\wireright{r}{A}&\blank\\
      \blank\wireopen{u}\wireleft{r}{A}&
      }}
  = \ssep
  {\wirechart{@C=1.0cm@R=0.5cm}{
      \wireid\wireleft{r}{A}&\wireid
      }}.
  \]
\end{example}

\begin{example}
  For any morphism $f:A\ii B$, it is possible to define morphisms
  $\rd{f}:\rd{B}\ii\rd{A}$ and $\ld{f}:\ld{B}\ii\ld{A}$, called the
  {\em adjoint mates} of $f$, as follows:
  \[ \rd{f} \ssep =
  \vcenter{\wirechart{@C=0.5cm@R=0.5cm}{
      \vsblank\wireleft{r}{B}&\wireid\wire{r}{}&\blank\wireclose{d}\\
      \blank\wireright{r}{A}&\blank\ulbox{[]}{f}\wireright{r}{B}&\blank\\
      \blank\wireopen{u}\wire{r}{}&\wireid\wireleft{r}{A}&\vsblank
      }}
  \sep
  \ld{f} \ssep =
  \vcenter{\wirechart{@C=0.5cm@R=0.5cm}{
      \blank\wireopen{d}\wire{r}{}&\wireid\wireleft{r}{A}&\vsblank\\
      \blank\wireright{r}{A}&\blank\ulbox{[]}{f}\wireright{r}{B}&\blank\\
      \vsblank\wireleft{r}{B}&\wireid\wire{r}{}&\blank\wireclose{u}
      }}
  \]
  With these definitions, $\rd{(-)}$ and $\ld{(-)}$ become
  contravariant functors.
\end{example}

\begin{theorem}[Coherence for planar autonomous categories {\cite[Thm.~2.7]{JS88}}]
  \label{thm-coherence-planar-autonomous}
  A well-formed equation between morphisms in the language of
  autonomous categories follows from the axioms of autonomous
  categories if and only if it holds in the graphical language up to
  planar isotopy.
\end{theorem}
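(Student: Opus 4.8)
The plan is to follow the same strategy that makes the coherence theorem for planar monoidal categories (Theorem~\ref{thm-coherence-planar}) work, namely to identify the graphical language, modulo planar isotopy, with the \emph{free} planar autonomous category over the given monoidal signature $\Sigma$, and then to invoke the universal property: an equation between morphism terms holds in all autonomous categories if and only if it holds in the free one, and the free one is (by the characterization we are about to establish) exactly the diagram category. Soundness — that planar isotopic diagrams denote equal morphisms — is the easy direction and is checked by inspecting the generating moves: the Reidemeister-0 type moves (sliding boxes past each other and along wires) are bifunctoriality and naturality, and the ``zig-zag'' or ``snake'' moves that straighten a wire turning back on itself are precisely the two adjunction triangles in~(\ref{eqn-pairing}); every planar isotopy of a progressive-with-turnbacks diagram decomposes into finitely many such elementary moves.

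The substantive direction is completeness, and here the plan is to give a normal form for diagrams up to planar isotopy and to show that any two terms with isotopic diagrams are provably equal. First I would reduce to the strict case (legitimate by Theorem~\ref{thm-coherence-planar} applied to the underlying monoidal structure) and use the presentation of the free autonomous category in terms of the generators $\eta_A,\eta'_A,\eps_A,\eps'_A$ together with the signature morphisms. The key combinatorial fact is that a planar diagram with boundary on a rectangle, built from boxes and cups/caps, can be isotoped into a canonical layered form — for instance, a maximal block of $\eta$/$\eta'$ creations at the bottom, then a progressive (left-to-right) diagram in the middle involving only the generators of $\Sigma$ suitably ``bent'', then a block of $\eps$/$\eps'$ annihilations at the top — and that two diagrams are planar isotopic iff their canonical forms are literally isomorphic as labelled graphs. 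This is a Joyal–Street style argument: one represents morphisms as points rather than boxes, endows the ambient rectangle with its standard orientation, and observes that a planar isotopy class of such a graph is completely recorded by its underlying combinatorial structure — the cyclic/linear order of wire-ends at each node and the non-crossing matching of wire-ends — together with the fixed boundary data; this is essentially the statement that planar isotopy classes of these diagrams form a free gadget, which is the content of \cite[Thm.~2.7]{JS88}.

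The main obstacle, as flagged in the caveat to Theorem~\ref{thm-coherence-planar}, is the topological bookkeeping: one must be careful that the isotopies allowed are genuine planar isotopies and not merely the ``recumbent'' (everywhere-progressive) ones, and that bending a box's input/output wires around to produce the adjoint mates $\rd f$ and $\ld f$ is coherent — i.e., that the contravariant functoriality of $\rd{(-)}$ and $\ld{(-)}$ and the natural isomorphism $\rd{(\rd A)}\cong A$ are all consequences of, rather than additions to, the snake equations. Concretely, the delicate lemma is that every wire in a normalized diagram, however many times it turns back, can be straightened using only the two triangle identities, which in turn requires knowing that a wire turning back an even number of times contributes nothing and that the two possible ways of straightening a doubled turn-back agree — and this agreement is where one genuinely uses \emph{both} adjunction triangles and the uniqueness of the dual structure from the Remark following the definition of exact pairing. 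I would expect to cite \cite{JS88} for the full rigorous treatment of this point and merely sketch the layered normal form here, in keeping with the survey's stated level of detail.
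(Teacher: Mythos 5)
First, a point of comparison: the paper itself gives no proof of Theorem~\ref{thm-coherence-planar-autonomous}; it is attributed outright to Joyal and Street \cite{JS88}, with a caveat that their proof assumes piecewise linear diagrams. So your decision to defer the hard topological work to \cite{JS88} and only sketch the strategy is consistent with the survey's level of detail, and your soundness discussion (snake moves are the adjunction triangles (\ref{eqn-pairing}), exchange moves are bifunctoriality and naturality) is fine.

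However, the backbone of your plan has a genuine flaw that the paper itself flags. You propose to ``identify the graphical language, modulo planar isotopy, with the free planar autonomous category over the given monoidal signature $\Sigma$, and then invoke the universal property.'' Two problems. First, the relevant notion is an \emph{autonomous} signature, not a monoidal one: morphism variables may have duals in their (co)domains, and this is what the diagrams are labelled over. Second, and more seriously, the graphical language as defined (wires, boxes, cups and caps, up to planar isotopy) is \emph{not} the free autonomous category over the signature: there are diagrams, such as (\ref{eqn-not-autonomous}), which are not the translation of any well-formed term, so the universal property simply fails for the plain diagram category. The free autonomous category is obtained only after enriching the diagrams with winding numbers (Theorem~\ref{thm-free-autonomous}), and the coherence theorem is then recovered by observing that a diagram which \emph{does} arise from a well-formed term has uniquely reconstructible winding numbers, so that they may be dropped without losing information. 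Your argument can be repaired along exactly these lines --- either work with winding-number-labelled diagrams and deduce coherence afterwards, or prove completeness directly only for diagrams in the image of the term translation (which is all Theorem~\ref{thm-coherence-planar-autonomous} requires) --- but as stated, the first step of your proof is false, and your proposed ``canonical layered form'' normalization would also have to be checked to stay within term-definable diagrams (a turn-back like (\ref{eqn-not-autonomous}) cannot be straightened by the triangle identities at all, since it is not even well-typed as a term). Also keep in mind that planar isotopy here excludes rotation of boxes; your bending of a box's wires to form adjoint mates is legitimate, but any normal-form argument must not silently identify a box with its rotated version, which is a pivotal, not autonomous, identification.
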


Here, the notion of planar isotopy is the same as before, except that
the wires are of course no longer restricted to being oriented
left-to-right during the deformation. However, the ability to turn
wires upside down does not extend to boxes: the notion of isotopy for
this theorem does not include the ability to rotate boxes. See
Joyal and Street {\cite{JS88}} for a more precise statement.

\begin{caveat}
  The proof by Joyal and Street {\cite{JS88}} assumes that the
  diagrams are piecewise linear.
\end{caveat}

Note that the same theorem applies to left autonomous, right
autonomous, or autonomous categories. Indeed, each individual term in
the language of autonomous categories involves only finitely many
duals, and thus may be translated into a term of (say) left
autonomous categories by replacing each object variable $A$ by
$A^{***\ldots*}$, for a sufficiently large, even number of $*$'s.  The
resulting term maps to the same diagram.

The same coherence theorem also holds for categories that are only
right (or left) autonomous. This is a consequence of the following
proposition.

\begin{proposition}\label{prop-right-autonomous}
  Each right (or left) autonomous category can be fully embedded in an
  autonomous category.
\end{proposition}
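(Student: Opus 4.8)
The plan is to construct, for a given right autonomous category $\Cc$, a larger category $\Dd$ that is (fully) autonomous and contains $\Cc$ as a full monoidal subcategory. The natural idea is to freely adjoin left duals. Concretely, I would let the objects of $\Dd$ be formal tensor products of symbols drawn from $\{A, {}^*\!A \mid A \in \obj\Cc\}$, modulo the relations that make $\x$ associative and unital; equivalently, objects of $\Dd$ are finite lists whose entries are objects of $\Cc$ tagged with a parity bit ($+$ for ``covariant'', $-$ for ``the freely-added left dual''). Morphisms of $\Dd$ between two such lists should be defined so that $\Dd$ becomes right autonomous (using the right duals already present in $\Cc$ together with the new left duals) and so that the inclusion $\Cc \hookrightarrow \Dd$ is full and faithful. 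There are two standard ways to get the hom-sets right: either give an explicit presentation by generators (the morphisms of $\Cc$, plus new units/counits $\eta'$, $\eps'$ for the left duals) and relations (the adjunction triangles), or define $\hom_\Dd$ directly by a formula that ``straightens'' each object into $\Cc$. The second is cleaner for proving fullness of the embedding.

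The key steps, in order, are: (1) define $\obj\Dd$ as parity-tagged lists of objects of $\Cc$, with monoidal structure given by concatenation; (2) define a ``partial reduction'' that moves each negatively-tagged $A$ into $\Cc$ — since $\Cc$ is right autonomous, $A$ with parity $-$ can be interpreted using the right dual $\rA$ that already exists in $\Cc$, so every object of $\Dd$ has an underlying object in $\Cc$ once we also formally adjoin left duals; more precisely, define $\hom_\Dd(X,Y)$ by first tensoring/transposing everything to a standard form and then taking a $\Cc$-hom-set, so that composition is inherited from $\Cc$; (3) verify $\Dd$ is a monoidal category and that the obvious $\Cc \to \Dd$ is a strong monoidal embedding that is full and faithful (faithfulness and fullness because a morphism between purely positive objects in $\Dd$ is by construction exactly a morphism of $\Cc$); (4) exhibit right duals in $\Dd$: the right dual of a positive $A$ is the right dual from $\Cc$ (still available), and we only needed to add left duals, so right-autonomy of $\Cc$ is inherited; (5) exhibit left duals in $\Dd$: the left dual of a positive $A$ is the formal symbol $A$ with parity $-$, with the new $\eta'_A, \eps'_A$, and the left dual of a negative $A$ is $A$ with parity $+$, i.e.\ an object of $\Cc$ — so every object of $\Dd$ now has a left dual, and $\Dd$ is autonomous. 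The left/right cases of the Proposition are symmetric, so proving one suffices.

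I expect the main obstacle to be step (2)/(3): pinning down the hom-sets of $\Dd$ so that simultaneously (a) composition is well-defined and associative, (b) the new $\eta', \eps'$ satisfy the adjunction triangles, and (c) the embedding is full, i.e.\ no new morphisms sneak in between old objects. The subtlety is exactly the one the paper has been flagging — one must not accidentally force extra equations (e.g.\ one must not make $\Dd$ pivotal or collapse left and right duals). The safest route is to define $\hom_\Dd(X,Y) = \hom_\Cc(\sigma X, \sigma Y)$ where $\sigma$ sends a tagged list to its ``canonical transpose'' in $\Cc$ (reading negative entries as right duals and transposing them to the opposite side), but then one has to check that this assignment is functorial and monoidal and that the would-be left-dual data really does satisfy (\ref{eqn-pairing}); this is a bookkeeping argument about how the canonical transpose interacts with $\x$ and with the existing $\eta, \eps$. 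Alternatively one can cite the analogous ``free adjunction of duals'' constructions in the literature (this is closely related to constructions of Joyal--Street and of Delpeuch, and to the folklore that the free autonomous category over a monoidal one is obtained by freely adding duals). I would present the construction sketch and then refer the reader to such sources for the routine verifications, consistent with the survey's stated policy of omitting most proofs.
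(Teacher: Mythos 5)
There is a genuine gap, and it is located exactly where your step (5) claims to finish the job. You assert that ``the left dual of a negative $A$ is $A$ with parity $+$'', but this is false in the construction you describe: if ${}^*\!A$ is freely adjoined as a left dual of $A$, then $A$ is the \emph{right} dual of ${}^*\!A$, not its left dual. For $A$ to be a left dual of ${}^*\!A$ you would need pairing maps $I\ii {}^*\!A\x A$ and $A\x{}^*\!A\ii I$, and neither is available: the generators you adjoin give only $\eta'_A:I\ii A\x{}^*\!A$ and $\eps'_A:{}^*\!A\x A\ii I$, and the same conclusion comes out of your hom-set formula (straightening $\hom_\Dd(I,{}^*\!A\x A)$ lands in $\hom_\Cc(A^*\x A,I)$, which has no canonical element in a merely right autonomous category). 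So with one parity bit per tensor factor, $\Dd$ is not autonomous: the left dual of ${}^*\!A$ has to be a genuinely new object ${}^{**}\!A$, and so on forever --- this is precisely the winding-number phenomenon the survey discusses around Theorem~\ref{thm-free-autonomous}. Adding the missing maps by fiat would impose a pivotal-type identification ${}^{**}\!A\cong A$, which is exactly the collapse you said you wanted to avoid and which would moreover endanger fullness. Separately, even for the one-step construction, the part you defer as ``routine verification'' (well-definedness of composition under straightening, and fullness, i.e.\ that no new maps appear between objects of $\Cc$) is the actual mathematical content; it amounts to a coherence theorem of Joyal--Street type and is not routine.

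Your approach can be repaired --- either tag objects with integer winding numbers rather than a single parity and invoke a Joyal--Street/Delpeuch-style conservativity result, or iterate the ``add left duals once'' construction and pass to a colimit --- but note that the paper avoids all of this with a much shorter argument: the endofunctor $F:\Cc\ii\Cc$, $F(A)=A^{**}$, is strong monoidal, full and faithful, and every object in its image already has a left dual in $\Cc$ (namely $A^*$ is a left dual of $A^{**}$); taking the colimit of the chain $\Cc\catarrow{F}\Cc\catarrow{F}\Cc\catarrow{F}\cdots$ in the category of right autonomous categories and strong monoidal functors yields an autonomous category into which $\Cc$ embeds fully and faithfully, with no free construction and no coherence analysis needed.
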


\begin{proof}
  Let $\Cc$ be a right autonomous category, and consider the strong
  monoidal functor $F:\Cc\ii\Cc$ given by $F(A)=A^{**}$. This functor
  is full and faithful, and every object in the image of $F$ has a
  left dual. Now let $\hat\Cc$ be the colimit (in the large category
  of right autonomous categories and strong monoidal functors) of the
  sequence
  \[ \Cc\catarrow{F}\Cc\catarrow{F}\Cc\catarrow{F}\ldots
  \]
  Then $\hat\Cc$ is autonomous, and $\Cc$ is fully and faithfully
  embedded in $\hat\Cc$. The proof for left autonomous categories is
  analogous. \eot
\end{proof}

\begin{corollary}[Coherence for right (left) autonomous categories]
  A well-formed equation between morphisms in the language of right
  (left) autonomous categories follows from the axioms of right (left)
  autonomous categories if and only if it holds in the graphical
  language up to planar isotopy.
\end{corollary}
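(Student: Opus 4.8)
The plan is to obtain both implications from Theorem~\ref{thm-coherence-planar-autonomous} and Proposition~\ref{prop-right-autonomous}, using the latter as a conservativity statement. Fix the monoidal signature $\Sigma$ over which the morphism terms $s,t$ of the right autonomous language are formed. I will freely use the standard fact that an equation ``follows from the axioms of right autonomous categories'' if and only if it is valid under every interpretation of $\Sigma$ in every right autonomous category --- equivalently, that it holds in the free right autonomous category over $\Sigma$.

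For soundness I would observe that every axiom of right autonomous categories is in particular an axiom of autonomous categories, and each of these translates, under the graphical translation, into a planar isotopy of diagrams; since planar isotopy is a congruence with respect to composition, tensor, and substitution, every consequence of the right autonomous axioms holds up to planar isotopy. (This is just the soundness half of Theorem~\ref{thm-coherence-planar-autonomous} restricted to the sublanguage.)

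For completeness, suppose $s=t$ holds up to planar isotopy. A right autonomous term is in particular an autonomous term, and neither its associated diagram nor the planar isotopy relation changes under this reading, so Theorem~\ref{thm-coherence-planar-autonomous} applies and shows that $s=t$ follows from the axioms of autonomous categories; hence $i(s)=i(t)$ for every interpretation $i$ of $\Sigma$ in any autonomous category. Now let $\Cc$ be an arbitrary right autonomous category and $i:\Sigma\ii\Cc$ an interpretation. By Proposition~\ref{prop-right-autonomous} there is a full and faithful strong monoidal functor $E:\Cc\ii\hat\Cc$ with $\hat\Cc$ autonomous. Since strong monoidal functors preserve existing duals, $E$ respects the right autonomous structure, so the interpretation of a right autonomous term commutes with $E$: writing $j=E\cp i:\Sigma\ii\hat\Cc$, we have $E(i(s))=j(s)$ and $E(i(t))=j(t)$. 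As $\hat\Cc$ is autonomous, $j(s)=j(t)$ by the previous step, so $E(i(s))=E(i(t))$, and faithfulness of $E$ gives $i(s)=i(t)$. Since $\Cc$ and $i$ were arbitrary, $s=t$ holds in every right autonomous category, hence follows from its axioms. The left autonomous case is entirely symmetric, using the left autonomous half of Proposition~\ref{prop-right-autonomous}.

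The points requiring genuine care are the two I have taken for granted: that ``follows from the axioms of right autonomous categories'' is faithfully captured by validity in all right autonomous categories, i.e., that the free right autonomous category over $\Sigma$ exists and detects provability --- slightly subtle because autonomy is a property rather than extra structure --- and that the functor $E$ produced by Proposition~\ref{prop-right-autonomous} is genuinely strong monoidal (so that it transports $\eta$ and $\eps$ correctly) and faithful. Both are delivered by the proposition and the remarks preceding it. Conceptually, the embedding is exactly the device that removes the \emph{a priori} worry that a derivation from the full autonomous axioms might make essential use of left duals: once $\Cc$ sits fully faithfully inside an autonomous category, any equation forced by the autonomous axioms is already forced in $\Cc$.
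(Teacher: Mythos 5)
Your proof is correct and follows essentially the same route as the paper: reduce to Theorem~\ref{thm-coherence-planar-autonomous} by showing that an equation of the right autonomous language is valid in all right autonomous categories if and only if it is valid in all autonomous categories, the nontrivial direction being supplied by the full and faithful embedding of Proposition~\ref{prop-right-autonomous} (your extra remarks on preservation of duals and on freeness just make explicit what the paper leaves tacit). One trivial slip: the terms should be taken over a \emph{right autonomous signature} rather than a monoidal one, since domains and codomains of morphism variables may involve duals; this does not affect the argument.
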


\begin{proof}
  It suffices to show that an equation (in the language of right
  autonomous categories) holds in all right autonomous categories if
  and only if it holds in all autonomous categories. The ``only if''
  direction is trivial, since every autonomous category is right
  autonomous. For the opposite direction, suppose some equation holds
  in all autonomous categories, and let $\Cc$ be a right autonomous
  category. Then $\Cc$ can be faithfully embedded in an autonomous
  category $\hat\Cc$. By assumption, the equation holds in $\hat\Cc$,
  and therefore also in $\Cc$, since the embedding is faithful.\eot
\end{proof}

%----------------------------------------------------------------------
\subsection*{Technicalities}

\paragraph{Autonomous signatures.}

The diagrams of autonomous categories, and the concept of well-formed
equation in the coherence theorem, are defined relative to the notion
of an autonomous signature. These were called {\em autonomous tensor
  schemes} by Joyal and Street {\cite{JS88}}. We give a non-strict
version of the definition.

\begin{definition}{\cite[Def.~2.5]{JS88}}
  Given a set $\Sigma_0$ of {\em object variables}, let
  $\AutTerm(\Sigma_0)$ denote the free
  $(\x,I,\ld{(-)},\rd{(-)})$-algebra generated by $\Sigma_0$, i.e.,
  the set of {\em object terms} built from object variables and $I$
  via the operations $\x$, $\ld{(-)}$, and $\rd{(-)})$. For example,
  if $A,B\in\Sigma_0$, then the term $\rd{B}\x\rd{({}^{**}I\x A)}$
  is an element of $\AutTerm(\Sigma_0)$.
  
  An {\em autonomous signature} consists of a set $\Sigma_0$ of object
  variables, a set $\Sigma_1$ of {\em morphism variables}, and a pair
  of functions $\dom,\cod:\Sigma_1\ii\AutTerm(\Sigma_0)$.
\end{definition}

The concept of a {\em right autonomous signature} and {\em left
  autonomous signature} are defined analogously.  The remaining
graphical languages in this Section~\ref{sec-autonomous} are all given
relative to an autonomous signature.

\paragraph{Functors and natural transformations of autonomous categories.}

Any strong monoidal functor preserves exact pairings: if $\eta:I\ii
B\x A$ and $\eps:A\x B\ii I$ define an exact pairing, then so do
\[ \hat F\eta: 
  I \catarrow{\phi^0}
  FI \catarrow{F\eta} 
  F(B\x A) \catarrow{(\phi^2)\inv} 
  FB\x FA
\]
and
\[ \hat F\eps:
  FA\x FB \catarrow{\phi^2} 
  F(A\x B) \catarrow{F\eps}
  FI \catarrow{(\phi^0)\inv}
  I.
\]
In particular, if $\Cc$ and $\Dd$ are autonomous categories and
$F:\Cc\ii\Dd$ is a monoidal functor, by uniqueness of duals, there
will be a unique induced natural isomorphism $F(\rd{A})\iso \rd{(FA)}$
such that
\[ \mnew{\xymatrix{
    I\ar[dr]_<>(.5){\eta_{FA}}\ar[r]^<>(.5){\hat F\eta_A} & 
    F(\rd{A})\x FA\ar[d]^{{\iso}\x\id} \\
    & \rd{(FA)}\x FA 
    }}
\sep\mbox{and}\sep
\mnew{\xymatrix{
    FA\x F(\rd{A}) \ar[d]^{\id\x{\iso}} \ar[r]^<>(.5){\hat F\eps_A} &
    I, \\
    FA\x \rd{(FA)} \ar[ur]_<>(.5){\eps_{FA}} 
    }}
\]
and similarly for $F(\ld{A})\iso\ld{(FA)}$.

For natural transformations, we have the following lemma:
\begin{lemma}[Saavedra Rivano {\cite[Prop.~5.2.3]{SR72}}, see also {\cite[Prop.~7.1]{JS93}}]
  \label{lem-saavedra-rivano}
  Suppose $\natt:F\ii G$ is a monoidal natural transformation between
  strong monoidal functors $F,G:\Cc\ii\Dd$. If $A$ has a right dual
  $A^*$ in $\Cc$, then $\natt_{A^*}$ and $(\natt_A)^*$ are mutually
  inverse in $\Dd$ (up to the above canonical isomorphism), or more
  precisely:
  \[ \xymatrix{
    F(A^*)\ar[r]^{\natt_{A^*}} \ar[d]_{\iso} & 
    G(A^*) \ar[d]^{\iso} \\
    (FA)^* &
    (GA)^*\ar[l]^{(\natt_A)^*} & 
    }
  \]
  In particular, if $\Cc$ is autonomous, then any such monoidal
  natural transformation is invertible.
\end{lemma}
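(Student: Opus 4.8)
The plan is to prove the naturality square directly from the hypotheses, using the uniqueness of duals (the remark following the definition of exact pairing) as the principal tool. First I would set up the claim precisely: $F$ and $G$ being strong monoidal, each sends the exact pairing $(\eta_A,\eps_A)$ witnessing $A^* \dashv A$ to an exact pairing in $\Dd$, namely $(\hat F\eta_A, \hat F\eps_A)$ for $F(A^*)$ against $FA$, and similarly for $G$. So both $F(A^*)$ and $(FA)^*$ are right duals of $FA$, whence the canonical iso on the left edge of the square; likewise for $G$ on the right edge. What I want to show is that, modulo these canonical isos, $\natt_{A^*}$ and $(\natt_A)^*$ are mutually inverse. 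By uniqueness of duals it suffices to exhibit $G(A^*)$ (transported along the iso to $(GA)^*$) as a right dual of $GA$ witnessed by $(\natt_A)^*$-data in a way compatible with the $F$-side data via $\natt$.

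Concretely, I would compute one composite, say $(\natt_A)^* \cp \natt_{A^*}$ (with the canonical isos inserted so the types match, landing in $\mathrm{hom}(F(A^*),F(A^*))$ after transport, or more honestly $(GA)^* \to (FA)^*$ composed appropriately), and show it equals the identity by a snake-type diagram chase. The key inputs are: (i) the monoidality squares for $\natt$ (the square in the definition of a monoidal natural transformation, plus the unit compatibility $\natt$ with $\phi^0$), which let me slide $\natt$ past tensor products; (ii) the naturality square of $\natt$ at the morphisms $\eta$, $\eps$ — but these live in $\Cc$, so I actually use naturality of $\natt$ at the components of $\eta_A, \eps_A$ viewed through $F$ and $G$; and (iii) the two adjunction triangles (\ref{eqn-pairing}) for the exact pairing. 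The chase is the graphical ``yanking'' argument: $(\natt_A)^*$ is defined (as in the adjoint-mate construction in the earlier example) by bending $\natt_A: FA\to GA$ around using $\eps$ on one side and $\eta$ on the other; composing with $\natt_{A^*}$ and yanking the resulting S-shaped wire straight, using naturality of $\natt$ to move it past the cups and caps, collapses the composite to $\id$.

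For the final sentence — if $\Cc$ is autonomous then any monoidal natural transformation $\natt$ is invertible — I would argue: every object $B$ of $\Cc$ has the form (up to iso) of a right dual, since $B \cong (B^*)^*$ wait, rather: for every $B$, setting $A = {}^*B$ (the left dual, which exists by autonomy), $B$ is the right dual of $A$, so the lemma applies to show $\natt_B = \natt_{A^*}$ is an isomorphism, with inverse (the transport of) $(\natt_A)^*$. Since this holds for all $B$, $\natt$ is a natural isomorphism. This step is short once the main lemma is in hand.

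The main obstacle I anticipate is bookkeeping the canonical isomorphisms $F(A^*)\iso (FA)^*$ and $G(A^*)\iso (GA)^*$ correctly throughout — they are defined via the commuting triangles displayed just before the lemma, and the whole content of the statement is an equation ``up to'' these isos, so one must be scrupulous about which maps are being composed and in which hom-set the final identity lives. Getting the snake argument to close requires invoking the defining triangles of those canonical isos at exactly the right moment; doing it in the strict case first (where $\phi^2,\phi^0$ are identities and $A^* = (A)^*$ on the nose) and then remarking that the general case follows by coherence is probably the cleanest route, which is presumably why Saavedra Rivano and Joyal–Street state it for the strict setting. I would present the strict-case chase in detail and wave at the non-strict case.
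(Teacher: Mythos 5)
The paper itself gives no proof of this lemma (it is quoted from Saavedra Rivano and Joyal--Street), so there is nothing to compare against line by line; judged on its own, your sketch is the standard argument and is correct. The snake computation you outline does close: naturality of $\natt$ at $\eps_A$ combined with monoidality of $\natt$ (including the unit compatibility with $\phi^0$) gives $\hat F\eps_A=\hat G\eps_A\cp(\natt_A\x\natt_{A^*})$, which reduces one composite to $F$ applied to a triangle identity of (\ref{eqn-pairing}), while naturality at $\eta_A$ gives $(\natt_{A^*}\x\natt_A)\cp\hat F\eta_A=\hat G\eta_A$ and handles the other composite --- note that ``mutually inverse'' requires both chases, one per naturality square --- and your treatment of the autonomous case (every object is a right dual of its left dual, so the lemma applies to each component) is the standard one. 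Your opening suggestion to argue via uniqueness of duals is a detour you rightly abandon: uniqueness produces \emph{some} isomorphism, but identifying it with $\natt_{A^*}$ still requires exactly the direct computation you then describe, and working strictly first with a remark on coherence is indeed the clean way to manage the canonical isomorphisms.
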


\paragraph{Coherence and free autonomous categories.}

The graphical language, as we have defined it above for autonomous
categories, is sufficient for the purposes of
Theorem~\ref{thm-coherence-planar-autonomous}. However, it does not
characterize the free autonomous category over an autonomous signature
as stated. For example, consider a signature with a single morphism
variable $f:A\ii A$. The problem is that there are clearly some
diagrams, such as
\begin{equation}\label{eqn-not-autonomous}
  \vcenter{\wirechart{@R=.5cm}{
      \blank\wireopen{d}\wireleft{rr}{} && \blank\wireclose{d} \\
      \blank\wireright{r}{A} & \blank\ulbox{[]}{f}\wireright{r}{A} & \blank,
      }
    }
\end{equation}
which are not translations of any well-formed term of autonomous
categories. Indeed, for this diagram to correspond to a well-formed
term, we would have to have e.g.~$f:A^{**}\ii A$ or $f:A\ii {}^{**}A$.

Joyal and Street {\cite{JS88}} characterize the free autonomous
category by equipping each edge with a winding number. Effectively,
the horizontal segments of edges are labeled with pairs $(A,n)$, where
$A$ is an object variables and $n$ is an integer winding number.
Left-to-right segments have even winding numbers, right-to-left
segments have odd winding numbers, and winding numbers increase by one
on counterclockwise turns, and decrease by one on clockwise turns. The
winding numbers on the input and output of each box, and on the global
inputs and outputs, are restricted to be consistent with the domain
and codomain information, where e.g.~$A^{**}$ corresponds to $(A,2)$,
and ${}^{***}B$ to $(B,-3)$. See {\cite{JS88}} for precise details.
Here is an example of a well-formed diagram of type $I\ii B^{**}\x A$,
where $g:I\ii A\x B$: 
\[ \m{\resizebox{1.0in}{!}{\includegraphics{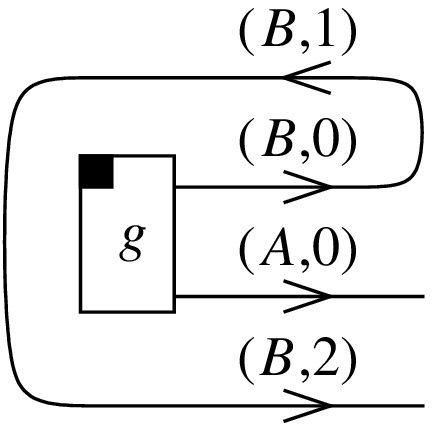}}}
\]

\begin{theorem}\label{thm-free-autonomous}
  The graphical language (with winding numbers) of autonomous
  categories over an autonomous signature $\Sigma$, up to planar
  isotopy of diagrams, forms a free autonomous category over $\Sigma$.
\end{theorem}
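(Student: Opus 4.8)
The plan is to exhibit the graphical language (with winding numbers, up to planar isotopy) as a category $\mathcal{G}(\Sigma)$, equip it with an obvious interpretation $i\colon\Sigma\ii\mathcal{G}(\Sigma)$, and then verify the universal property directly. First I would carefully set up $\mathcal{G}(\Sigma)$: objects are the elements of $\AutTerm(\Sigma_0)$ (equivalently, finite lists of pairs $(A,n)$ with $A\in\Sigma_0$ and $n$ an integer of the correct parity encoding the iterated duals), and morphisms are planar-isotopy classes of the winding-number-labelled diagrams described just before the theorem statement, with composition given by vertical juxtaposition (glueing output wires to input wires, then rescaling) and identities given by straight families of parallel wires. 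One checks that composition is well-defined on isotopy classes and associative, and that the straight diagrams are units — this is exactly the content of the already-established coherence theorem for categories applied fibrewise, so it is routine. The tensor product is horizontal stacking, and the duals $\ld{(-)},\rd{(-)}$ act on objects by reversing the list and shifting winding numbers by $\pm 1$, on morphisms by the $180^\circ$ rotation; the units and counits $\eta,\eta',\eps,\eps'$ are the half-turn diagrams from the table. Soundness of all the autonomous axioms in $\mathcal{G}(\Sigma)$ — bifunctoriality, naturality of $\alpha,\lambda,\rho$, the triangle and pentagon, and the two adjunction triangles of (\ref{eqn-pairing}) — is checked by inspection: each axiom becomes an equality of diagrams that is visibly a planar isotopy (the adjunction triangles are literally the ``straightening a zig-zag'' move). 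So $\mathcal{G}(\Sigma)$ is an autonomous category, and $i$ sends each object variable to its one-wire diagram with winding number $0$ and each morphism variable $f$ to its box.

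Next I would prove the universal property. Given an autonomous category $\Dd$ and an interpretation $j\colon\Sigma\ii\Dd$, one defines $F\colon\mathcal{G}(\Sigma)\ii\Dd$ on objects by $\hat\jmath_0$ extended along $\x,I,\ld{(-)},\rd{(-)}$ (using the canonical duals of $\Dd$, with the canonical isomorphisms $F(\ld A)\iso\ld{(FA)}$, $F(\rd A)\iso\rd{(FA)}$ from the Technicalities subsection above to make the winding-number bookkeeping unambiguous up to canonical iso), and on a diagram by reading it as a composite of elementary pieces — boxes $f$, identities, units, counits — tensored and composed according to a chosen decomposition into horizontal and vertical strips. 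The key lemma is that this value is independent of the chosen decomposition and invariant under planar isotopy; this is where one invokes Theorem~\ref{thm-coherence-planar-autonomous} (coherence for planar autonomous categories): any two decompositions of the same diagram, or decompositions of two planar-isotopic diagrams, are related by a well-formed equation in the language of autonomous categories, hence hold in $\Dd$. Then $F$ is manifestly a strong monoidal functor (horizontal stacking $\mapsto$ $\x$ up to $\phi^2$), it preserves $i$ strictly, and it preserves the chosen duals up to the canonical isomorphisms. For uniqueness: any strong monoidal $G$ with $G\cp i = j$ must agree with $F$ on the generating boxes and on $\eta,\eps$ (these are forced up to canonical iso by the universal property of duals in $\Dd$, per the Remark following the exact-pairing definition), and a strong monoidal functor is determined on all composites and tensors up to coherent isomorphism by its values on generators; so $G\iso F$ by a unique monoidal natural isomorphism, which by Lemma~\ref{lem-saavedra-rivano} is automatically compatible with duals. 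That is the required ``unique up to unique monoidal natural isomorphism''.

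The main obstacle — and the step where genuine work hides — is the well-definedness of $F$ on isotopy classes, i.e., showing that two different planar presentations of ``the same'' winding-number diagram yield the same morphism of $\Dd$. The subtlety is twofold: one must first argue that every diagram admits at least one decomposition into a composite of tensors of elementary pieces (a Morse-theoretic / generic-height-function argument on the piecewise-linear diagrams, as in the caveat to Theorem~\ref{thm-coherence-planar-autonomous}), and then that the list of ``moves'' relating any two such decompositions — sliding a box past a turn, cancelling a pair of opposite turns, exchanging the heights of two parallel pieces — is exactly generated by the autonomous axioms, so that Theorem~\ref{thm-coherence-planar-autonomous} applies. Properly speaking this is a reduction to the already-cited coherence theorem rather than a fresh argument, so in the survey style of this paper I would state the theorem, indicate the reduction to Theorem~\ref{thm-coherence-planar-autonomous} and the canonical-duals bookkeeping, and refer to Joyal and Street {\cite{JS88}} for the full technical details, with the standing piecewise-linear caveat.
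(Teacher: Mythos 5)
Your proposal is essentially correct as a proof sketch, but it takes the opposite route from the one the paper (implicitly) takes: the paper offers no argument of its own for Theorem~\ref{thm-free-autonomous}, deferring to Joyal and Street \cite{JS88}, where the freeness of the winding-number graphical category is established \emph{directly} by geometric work (generic-position decompositions and an analysis of the isotopy moves), and the coherence theorem, Theorem~\ref{thm-coherence-planar-autonomous}, is then deduced as a corollary. You instead take the coherence theorem as given and derive freeness from it. Within the survey's structure this is a legitimate and rather economical reduction, and your treatment of the universal property (well-definedness of $F$ on isotopy classes via completeness, uniqueness up to a unique monoidal natural isomorphism via preservation of duals and Lemma~\ref{lem-saavedra-rivano}) is the right bookkeeping. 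Two remarks on where the real content sits. First, the step that is genuinely \emph{not} subsumed by Theorem~\ref{thm-coherence-planar-autonomous} is representability: every winding-number-labelled diagram decomposes (generic height function, piecewise-linear caveat) into elementary pieces whose reading is a \emph{well-formed} term, and it is exactly the winding-number typing constraints that make this true --- without them, diagrams such as (\ref{eqn-not-autonomous}) decompose just as well but yield no well-typed term. So your closing claim that the whole argument is ``a reduction to the already-cited coherence theorem'' undersells this step; it is the one piece of fresh geometric input, and it is precisely what distinguishes the freeness statement from the coherence statement. Second, once you do invoke coherence for well-definedness, there is no need to additionally argue that the moves relating two decompositions ``are generated by the autonomous axioms'' --- that would amount to re-proving coherence; two decompositions of planar-isotopic winding-numbered diagrams are well-formed terms whose plain diagrams are planar-isotopic (winding numbers being uniquely reconstructable, as the paper notes), so completeness already gives provable equality and hence equality of values in $\Dd$. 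What the two approaches buy: Joyal and Street's direct construction yields both theorems at once and avoids any appearance of circularity; your reduction is shorter and fits the survey's style, at the cost of resting all the geometry on the cited coherence theorem plus the representability lemma.
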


We remark that if a diagram of planar autonomous categories can be
labeled with winding numbers, then this labeling is necessarily
unique. In particular, for the purposes of
Theorem~\ref{thm-coherence-planar-autonomous}, there is no harm in
dropping the winding numbers, because by hypothesis, the theorem only
considers diagrams that are the translation of well-formed terms,
whose winding numbers can therefore uniquely reconstructed.

%----------------------------------------------------------------------
\subsection{(Planar) pivotal categories}

A pivotal category is an autonomous category with a suitable
isomorphism $A\iso A^{**}$.

\begin{definition}[\cite{FY89,FY92,JSXX}]
  A {\em pivotal category} is a right autonomous category equipped
  with a monoidal natural isomorphism $i_A:A\ii A^{**}$.
\end{definition}

Note that any pivotal category is immediately left autonomous,
therefore autonomous.  The requirement that $i_A$ is a {\em monoidal}
natural transformation here means that $i_I$ is the canonical
isomorphism $I\iso I^{**}$, and that the following diagram commutes,
where the horizontal arrow is the canonical isomorphism derived from
the autonomous structure:
\begin{equation}\label{eqn-pivotal-monoidal}
  \mnew{\xymatrix@C=0cm{
    &A\x B\ar[dl]_{i_A\x i_B}\ar[dr]^{i_{A\x B}}\\
    A^{**}\x B^{**}\ar[rr]^{\iso}&&(A\x B)^{**}.
    }}
\end{equation}

The following property, which is sometimes taken as part of the
definition of pivotal categories {\cite[Def.~3.1.1]{JSXX}}, is a
direct consequence of Saavedra Rivano's Lemma
(Lemma~\ref{lem-saavedra-rivano}).

\begin{lemma}
  In any pivotal category, the following diagram commutes:
  \[ \xymatrix{
    A^* \ar[r]^{i_{A^*}} \ar[dr]_{\id_{A^*}}&
    A^{***} \ar[d]^{i_A^*} \\
    & A^*.
    }
  \]
\end{lemma}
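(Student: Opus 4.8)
The plan is to derive the commutativity of the triangle directly from Saavedra Rivano's Lemma (Lemma~\ref{lem-saavedra-rivano}), applied to the monoidal natural isomorphism $i:\Id\ii (-)^{**}$ itself. Recall that the lemma, applied to a monoidal natural transformation $\natt:F\ii G$ between strong monoidal functors, asserts that $\natt_{A^*}$ and $(\natt_A)^*$ are mutually inverse (up to the canonical isomorphisms identifying $F(A^*)$ with $(FA)^*$ and $G(A^*)$ with $(GA)^*$).

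**First I would** specialize this to $F=\Id$, $G=(-)^{**}$, and $\natt=i$. Here $F$ is trivially strong monoidal, and $G=(-)^{**}$ is strong monoidal because the double-dual functor on a right autonomous category carries a canonical monoidal structure (the isomorphism $(A\x B)^{**}\iso A^{**}\x B^{**}$ from the autonomous structure, together with $I^{**}\iso I$); that $i$ is a monoidal natural transformation with respect to exactly this structure is precisely the content of~(\ref{eqn-pivotal-monoidal}) and the condition on $i_I$ in the definition of pivotal category. The canonical isomorphism $F(A^*)\iso(FA)^*$ is just the identity on $A^*$, while the canonical isomorphism $G(A^*)=(A^*)^{**}=A^{***}\iso (GA)^*=(A^{**})^*=A^{***}$ is (after checking) the identity as well — or at worst a canonical iso that I would absorb silently, as the excerpt's own diagrams do. With these identifications, the lemma says that $i_{A^*}:A^*\ii A^{***}$ and $(i_A)^*:(A^{**})^*\ii A^*$, i.e.\ $i_A^*:A^{***}\ii A^*$, are mutually inverse. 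Hence $i_A^*\cp i_{A^*}=\id_{A^*}$, which is exactly the commutativity of the stated triangle.

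**The one point requiring care** — and the step I expect to be the main obstacle — is the bookkeeping of the canonical isomorphisms: one must verify that the canonical iso $(A^*)^{**}\iso(A^{**})^*$ induced by viewing $(-)^{**}$ as a monoidal (indeed, in this case, as an identity-on-objects-up-to-iso) functor agrees with the identity map on $A^{***}$, so that the conclusion of the lemma really does reduce to the bare equation $i_A^*\cp i_{A^*}=\id_{A^*}$ rather than an equation twisted by a nontrivial coherence isomorphism. This is a routine but slightly fiddly diagram chase using the uniqueness of duals; in keeping with the informal style of the survey, I would simply assert it as a direct consequence of Lemma~\ref{lem-saavedra-rivano} and leave the verification to the reader, citing~{\cite[Prop.~7.1]{JS93}} for details. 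Everything else is immediate.
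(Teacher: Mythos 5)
Your proposal is correct and follows exactly the paper's route: the paper likewise obtains the triangle as a direct consequence of Saavedra Rivano's Lemma (Lemma~\ref{lem-saavedra-rivano}) applied to the monoidal natural isomorphism $i:\mathrm{Id}\Rightarrow(-)^{**}$, whose hypotheses are supplied precisely by the pivotal axioms. Your extra care with the canonical isomorphism $(A^*)^{**}\cong(A^{**})^*$ is a reasonable addition that the paper silently elides.
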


\begin{remark}
  One can equivalently define a pivotal category as an autonomous
  category equipped with a monoidal natural isomorphism (of
  contravariant monoidal functors) $\phi:\rd{A}\catarrow{\iso}\ld{A}$.
  This was done by Freyd and Yetter {\cite{FY92}}. Condition (S) of
  {\cite[Def.~4.1]{FY92}} is also a consequence of Saavedra Rivano's
  Lemma, and is therefore redundant.
\end{remark}

\begin{remark}[Terminology]
  Freyd and Yetter {\cite{FY92}} also introduced the term {\em
    sovereign category} for a pivotal category.
\end{remark}

A pivotal functor between pivotal categories is a monoidal functor
that also satisfies
\[ \xymatrix{
  FA \ar[r]^{F(i_A)}\ar[rd]_{i_{FA}} &
  F(A^{**})\ar[d]^{\iso} \\
  & (FA)^{**}.  
}
\]

\paragraph{Graphical language.}

The graphical language for pivotal categories is the same as that for
autonomous categories, where the isomorphism $i_A:A\ii A^{**}$ is
represented like an identity map. Of course, there are now additional
diagrams that are the translation of well-formed terms. For example,
when $f:A\ii A$, then (\ref{eqn-not-autonomous}) is a well-formed
diagram of pivotal categories, but not of autonomous categories.
Indeed, in the case of pivotal categories, the problem of winding
numbers (discussed before Theorem~\ref{thm-free-autonomous})
disappears, as winding numbers are taken modulo 2, and hence add
nothing beyond orientation.

\begin{theorem}[Coherence for pivotal categories]
  \label{thm-coherence-pivotal}
  A well-formed equation between morphisms in the language of pivotal
  categories follows from the axioms of pivotal categories if and only
  if it holds in the graphical language up to planar isotopy,
  including rotation of boxes.
\end{theorem}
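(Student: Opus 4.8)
The plan is to derive the coherence theorem for pivotal categories from the coherence theorem for planar autonomous categories (Theorem~\ref{thm-coherence-planar-autonomous}) by a syntactic translation, in the same spirit that the corollary for right autonomous categories was derived. First I would make precise the extra equivalence relation on diagrams: since in a pivotal category $i_A:A\ii A^{**}$ is drawn as an identity wire and is monoidal, a diagram of autonomous categories is ``the same'' as a pivotal diagram obtained by inserting or deleting such identity wires; more importantly, the adjoint mate construction shows that rotating a box $f:A\ii B$ by $180^\circ$ (turning it into $\rd f$ precomposed and postcomposed with units and counits) is, in the presence of $i$, equal to $f$ viewed at a shifted winding number. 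So the first step is to verify \emph{soundness}: every pivotal axiom — that $i$ is natural, monoidal (equation~(\ref{eqn-pivotal-monoidal})), and the triangle of the preceding lemma — holds up to planar isotopy including rotation of boxes. This is a routine check using the graphical calculus already set up.

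For \emph{completeness}, the key observation is the one already noted in the text: in a pivotal category winding numbers are taken modulo~$2$, so the obstruction that prevented the naive autonomous graphical language from being free simply disappears. Concretely, I would set up a translation that takes a pivotal signature $\Sigma$ and produces an autonomous signature $\Sigma'$: for each morphism variable $f:A\ii B$ of $\Sigma$ we keep $f:A\ii B$ but we must account for the fact that in pivotal terms $f$ may be ``used at'' domain $A^{**}$, etc. The cleanest route is via free categories: exhibit the graphical language of pivotal diagrams (up to planar isotopy with box rotation) as the \emph{free pivotal category} $\PivFree(\Sigma)$, then invoke that an equation holds in all pivotal categories iff it holds in $\PivFree(\Sigma)$. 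To identify $\PivFree(\Sigma)$ with the diagram category, I would show it is a quotient of the free autonomous category $\AutFree(\Sigma)$ (Theorem~\ref{thm-free-autonomous}) by the congruence generated by the mates relation; geometrically this quotient is exactly ``forget winding numbers, remember orientation, and allow boxes to rotate,'' because a box rotated twice returns to itself (winding number $+2 \equiv 0$) which is precisely what having $i_A:A\iso A^{**}$ buys.

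The main obstacle will be making the box-rotation equivalence rigorous and checking that the resulting relation on autonomous diagrams is exactly a \emph{congruence} of autonomous categories (closed under $\cp$, $\x$, and the dual functors), not something coarser or finer. In particular one must verify that rotating a single box can always be ``paid for'' by the units, counits, and copies of $i_A$ in a way compatible with composition, and that no additional identifications of distinct pivotal terms are forced — i.e.\ that the translation is faithful, so that a pivotal equation failing in the graphical language also fails in some pivotal category. For the latter one can use a concrete separating family of pivotal categories (e.g.\ suitable categories of framed tangles, or $\End$-categories of objects in categories of representations), but the proof that such a family is separating is essentially the content of the coherence theorem and is the part that, in the literature, is only available for special cases — which is presumably why this theorem is stated here without attribution and why a caveat in the spirit of the autonomous and balanced cases would be appropriate. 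Accordingly, I would present the soundness direction in full and sketch completeness as a reduction to Theorem~\ref{thm-free-autonomous} modulo the winding-number-mod-$2$ identification, flagging the faithfulness of the separating family as the step that requires the detailed topological argument found (for related cases) in Joyal and Street and in Freyd and Yetter.
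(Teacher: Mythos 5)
First, a point of comparison: the paper does not prove this theorem at all. It is stated together with a caveat recording that only the special case of simple signatures (Freyd--Yetter, one input and one output per box) has a published proof, and that Joyal and Street's draft claims the general statement without proof. So your closing judgment --- present soundness in full, reduce completeness to a winding-numbers-mod-2 identification, and flag the remaining topological step as exactly what is missing from the literature --- is well calibrated and matches the paper's own assessment; no blind proof of the full statement could have been checked against a paper proof here.

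That said, one step of your reduction would fail as formulated. You propose to identify the free pivotal category over a signature $\Sigma$ with a \emph{quotient} of the free autonomous category of Theorem~\ref{thm-free-autonomous} by the congruence generated by the mates/rotation relation. But a pivotal structure is extra \emph{structure}, not merely extra equations: the isomorphisms $i_A:A\ii A^{**}$ are genuinely new morphisms, and a congruence quotient can only identify existing morphisms, never create new ones. Concretely, in the free autonomous category over a signature with one object variable $A$ and no morphism variables, $\hom(A,A^{**})$ is empty --- an embedded planar arc whose endpoints both meet the boundary left-to-right cannot change its winding number by $2$ --- whereas in the free pivotal category this hom-set contains $i_A$. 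So the free pivotal category is not a quotient of the free autonomous category; one must instead freely adjoin the pivotal structure (equivalently, work directly with diagrams whose edge labels carry only an orientation, i.e.\ winding numbers taken modulo $2$, and whose boxes may be rotated) and then prove freeness of that diagram category. That freeness proof is precisely the general statement that is unproved in the literature, so the repair does not trivialize anything, but the reduction as you wrote it conflates ``adding structure'' with ``imposing relations'' and should be restated. Your soundness check and your remark that the winding-number obstruction of the autonomous case disappears in the pivotal case are both correct and agree with the discussion in the text preceding the theorem.
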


\begin{caveat}
  Only special cases of this theorem have been proved in the
  literature. Freyd and Yetter {\cite[Thm.~4.4]{FY92}} considered the
  case of the free pivotal category generated by a category. In our
  terminology, this means that they only considered diagrams for
  pivotal categories over {\em simple signatures}, rather than
  over {\em autonomous signatures}. In other words, they only
  considered boxes of the form
  \[ \wmetamorph{f}{A}{B},
  \]
  with exactly one input and one output. Joyal and Street's draft report
  {\cite{JSXX}} claims the general result but contains no proof. 
\end{caveat}

The notion of planar isotopy for pivotal categories includes the
ability to rotate boxes in the plane of the diagram. For example, the
following two diagrams are isotopic in this sense:
\begin{equation}\label{eqn-rotation}
  \vcenter{\wirechart{}{
      \blank\wireopen{ddd}\wireleft{rr}{} &&
      \blank\wireclose{d}
      \\
      & \blank\ulbox{[].[d]}{f}\wireright{r}{} & \blank\\
      & \blank \wireright{r}{} & *{} \\
      \blank \wireright{rr}{} && *{}
      }
    }
  =
  \vcenter{\wirechart{}{
      \blank\wireopen{ddd}\wireright{rr}{} &&
      *{}
      \\
      & \blank\ulbox{[].[d]}{f}\wireright{r}{} & *{}\\
      & \blank \wireright{r}{} & \blank\wireclose{d} \\
      \blank \wireleft{rr}{} && \blank
      }
    }
\end{equation}
This also explains why we have marked a corner of each box. With the
ability to rotate boxes, we need to keep track of their ``natural''
orientation, so that the diagrams from (\ref{eqn-rotation}) can also
be represented like this:
\[   \vcenter{\wirechart{}{
      \blank\wireopen{d}\wireright{rr}{} && *{}
      \\
      \blank\wireleft{r}{}& \blank\drbox{[].[d]}{f}\\
      \blank\wireleft{r}{}& \blank\\
      \blank\wireopen{u}\wireright{rr}{} && *{}
      }
    }
\]
More generally, the adjoint mate of $f:A\ii B$ can be represented by a
rotated box:
\begin{equation}\label{eqn-adjoint-mate}
  \rd{f} \ssep =
  \vcenter{\wirechart{@C=0.5cm@R=0.5cm}{
      \vsblank\wireleft{r}{B}&\wireid\wire{r}{}&\blank\wireclose{d}\\
      \blank\wireright{r}{A}&\blank\ulbox{[]}{f}\wireright{r}{B}&\blank\\
      \blank\wireopen{u}\wire{r}{}&\wireid\wireleft{r}{A}&\vsblank
      }}
  \ssep=\ssep
  \wirechart{@C=1.4cm@R=0.5cm}{
    \wireleft{r}{B}&\blank\drbox{[]}{f}\wireleft{r}{A}&
    }
\end{equation}
Also note that is $f$ is a composite diagram, then the whole diagram
may be rotated to obtain $f^*$.

%----------------------------------------------------------------------
\subsection{Spherical pivotal categories}\label{subsec-spherical-pivotal}

\begin{definition}[Barrett and Westbury \cite{BW99}]
  A pivotal category is {\em spherical} if for all objects $A$ and
  morphisms $f:A\ii A$,
  \begin{equation}\label{eqn-spherical}
    \vcenter{\wirechart{@R=.5cm}{
        \blank\wireopen{d}\wireleft{rr}{} && \blank\wireclose{d} \\
        \blank\wireright{r}{A} & \blank\ulbox{[]}{f}\wireright{r}{A} & \blank
        }
      }
    =  \vcenter{\wirechart{@R=.5cm}{
        \blank\wireopen{d}\wireright{r}{A} &\blank\ulbox{[]}{f}\wireright{r}{A}& \blank\wireclose{d} \\
        \blank\wireleft{rr}{} & & \blank \\
        }
      }
  \end{equation}
\end{definition}

The intuition behind the ``spherical'' axioms is that diagrams should
be embedded in a 2-sphere, rather than the plane. It is then obvious
that the left-hand side of (\ref{eqn-spherical}) can be continuously
transformed into the right-hand side, namely by moving the loop across
the back of the 2-sphere.

\paragraph{Failure of coherence.}

The spherical axiom is not sound for the graphical language of
diagrams embedded in the 2-sphere. The problem is that the notion of
``diagram embedded in the 2-sphere'' is not compatible with
composition or tensor.  The following is a consequence of the
spherical axiom, but does not hold up to isotopy in the 2-sphere.
\[
\vcenter{\wirechart{@C=.5cm@R=.8cm}{
    \blank\wireopen{dd}\wire{rr}{}&&
    \blank\wireclose{dd}
    \\
    &\blank\ulbox{[]}{g}\\
    \blank\wireright{r}{A}&
    \blank\ulbox{[]}{f}\wireright{r}{A}&
    \blank
    \\}}
=
\vcenter{\wirechart{@C=.5cm@R=.8cm}{
    &\blank\ulbox{[]}{g}\\
    \blank\wireright{r}{A}&
    \blank\ulbox{[]}{f}\wireright{r}{A}&
    \blank
    \\
    \blank\wireopen{u}\wire{rr}{}&&
    \blank\wireclose{u}
    \\}}
=
\vcenter{\wirechart{@C=.5cm@R=.8cm}{
    &\blank\ulbox{[]}{g}\\
    \blank\wireopen{d}\wire{rr}{}&&
    \blank\wireclose{d}
    \\
    \blank\wireright{r}{A}&
    \blank\ulbox{[]}{f}\wireright{r}{A}&
    \blank
    \\}}
\]
Note that this counterexample is similar to the spacial axiom
(\ref{eqn-spacial}), but does not quite imply it. If one adds the
spacial axiom, as we are about to do, then any notion of isotopy is
lost and equivalence of diagrams collapses to isomorphism.

%----------------------------------------------------------------------
\subsection{Spacial pivotal categories}

\begin{definition}
  A pivotal category is {\em spacial} if it satisfies the spacial
  axiom (\ref{eqn-spacial}) and the spherical axiom
  (\ref{eqn-spherical}).
\end{definition}

\paragraph{Graphical language and coherence.}

The graphical language for spacial pivotal categories is the same as
that for planar pivotal categories, except that equivalence of
diagrams is now taken up to isomorphism. Clearly, the axioms are sound
for the graphical language. We conjecture that they are also complete.

\begin{conjecture}[Coherence for spacial pivotal categories]
  \label{conj-coherence-spacial-pivotal}
  A well-formed equation between morphisms in the language of spacial
  pivotal categories follows from the axioms of spacial pivotal
  categories if and only if it holds in the graphical language up to
  isomorphism.
\end{conjecture}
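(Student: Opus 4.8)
The plan is to follow the template used for the other "collapse to isomorphism" coherence results in this paper, such as the one for spacial monoidal categories and for symmetric monoidal categories, whose graphical languages are also taken up to mere isomorphism of diagrams. Soundness is immediate: one checks that each axiom of spacial pivotal categories (the pivotal axioms, the spacial axiom \eqref{eqn-spacial}, and the spherical axiom \eqref{eqn-spherical}) holds up to isomorphism of diagrams in the graphical language, which is routine since isomorphism is the coarsest reasonable equivalence and each axiom merely permutes or slides boxes and turns. So the content is completeness: if two well-formed morphism terms denote isomorphic diagrams, then the equation between them is derivable from the axioms.

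The natural strategy is to start from the coherence theorem for planar pivotal categories (Theorem~\ref{thm-coherence-pivotal}), which identifies derivability with planar isotopy (including rotation of boxes). It then suffices to show that any isomorphism of pivotal diagrams can be realized as a composite of planar isotopies and applications of the spacial and spherical axioms. Concretely, I would argue that two pivotal diagrams are isomorphic exactly when they have the same underlying graph with the same cyclic/linear incidence data at each box, but possibly embedded differently in the plane; and that any two planar embeddings of the same abstract pivotal diagram can be connected by a sequence of elementary moves, each of which is either a planar isotopy or one instance of \eqref{eqn-spacial} or \eqref{eqn-spherical}. The spherical axiom handles ``moving a loop across the back of the sphere'', i.e.\ changing which region of the planar diagram is regarded as the unbounded one; the spacial axiom handles sliding a scalar (a closed component, or more generally a connected subdiagram attached only via the unit object $I$) freely past other parts of the diagram. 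Since connected components attached only through $I$ are the only obstruction to reconciling two isomorphic-but-not-planar-isotopic pivotal diagrams (this is precisely the phenomenon illustrated in \eqref{eqn-non-spacial} and in the counterexample following \eqref{eqn-spherical}), these two axioms should suffice.

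I would phrase the completeness argument, as elsewhere in the paper, in terms of the free category: show that the graphical language of spacial pivotal diagrams up to isomorphism is the free spacial pivotal category over the given autonomous signature. Given an interpretation $i:\Sigma\ii\Dd$ into a spacial pivotal category, one defines the induced functor on a diagram by first choosing any planar representative, interpreting it via the free planar pivotal category, and then checking that the result is independent of the choice: the two axioms of $\Dd$ are exactly what is needed to show invariance under the elementary moves above. Well-definedness and functoriality (respecting composition and tensor) then follow, and uniqueness is as in the planar case.

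The main obstacle is the combinatorial-topological lemma at the heart of the argument: that any two planar embeddings of the same abstract pivotal diagram (with its box-corner markings and incidence data) are related by a finite sequence of planar isotopies, spacial moves, and spherical moves. Making ``abstract pivotal diagram'' and the elementary moves precise requires the machinery of Joyal and Street that the paper deliberately omits, and one must be careful about subtleties with winding numbers and nested scalar components --- which is presumably why this is stated only as a conjecture rather than a theorem. A clean proof would likely go through a normal form for spacial pivotal diagrams (scalars collected together and slid to a canonical position, outer region fixed by a spherical normalization) and show that planar isotopy suffices once diagrams are in normal form, reducing the general statement to Theorem~\ref{thm-coherence-pivotal}.
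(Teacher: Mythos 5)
The statement you were asked to prove is, in the paper, only a \emph{conjecture}: the author checks soundness in one sentence and explicitly leaves completeness open. Your proposal does not close that gap. The soundness half is fine and agrees with the paper's remark, and the reduction strategy (derive completeness from Theorem~\ref{thm-coherence-pivotal} by showing that any isomorphism of diagrams factors as planar isotopies interleaved with instances of (\ref{eqn-spacial}) and (\ref{eqn-spherical})) is a sensible plan. But that factorization lemma \emph{is} the entire content of the conjecture, and you state it without proof, correctly flagging it yourself as ``the main obstacle.'' As written, the proposal is a proof outline conditional on an unproved combinatorial-topological lemma, not a proof.

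Two further points of caution about the strategy itself. First, the axioms (\ref{eqn-spacial}) and (\ref{eqn-spherical}) are stated only for scalars $h:I\ii I$ and single-wire loops around a morphism $f:A\ii A$, respectively; your argument needs them for sliding arbitrary subdiagrams attached only via $I$ and for moving arbitrary outermost wires ``across the back of the sphere,'' and deriving these more general moves from the two stated axioms (together with planar isotopy and the pivotal structure) is exactly the kind of step that cannot be waved through --- the paper's discussion of spherical pivotal categories shows that the sphere-embedding intuition is unreliable, since ``diagram embedded in the 2-sphere'' is not even compatible with composition, so a normal-form argument that tacitly reasons on the sphere needs independent justification. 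Second, even the base case you reduce to, Theorem~\ref{thm-coherence-pivotal}, carries a caveat in the paper: it has only been proved in the literature for simple signatures. So at best your approach yields a conditional result, and the unconditional statement remains open, consistent with the paper's decision to present it as a conjecture.
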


%----------------------------------------------------------------------
\subsection{Braided autonomous categories}\label{subsec-braided-autonomous}

An braided autonomous category is an autonomous category that is also
braided (as a monoidal category). The notion of braided autonomous
categories is not extremely natural, as the graphical language is only
sound for a restricted form of isotopy called {\em regular isotopy}.
Nevertheless, it is useful to collect some facts about braided
autonomous categories.

\begin{lemma}[{\cite[Prop.~7.2]{JS93}}]\label{lem-braided-right-autonomous}
  A braided monoidal category is autonomous if and only if it is right
  autonomous.
\end{lemma}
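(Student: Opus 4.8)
The plan is to show that a right dual of an object $A$ in a braided monoidal category automatically serves as a left dual as well, by using the braiding to convert the right-autonomous structure into a left-autonomous one. The ``only if'' direction is trivial: every autonomous category is by definition right autonomous. So I would concentrate on the ``if'' direction, i.e., assuming every object has a right dual, I would construct a left dual for every object.

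**First I would** fix an object $A$ and let $A^*$ denote its right dual, with unit $\eta_A : I \ii A^* \x A$ and counit $\eps_A : A \x A^* \ii I$ satisfying the two adjunction triangles of (\ref{eqn-pairing}) (with $B = A^*$). The claim will be that $A^*$ is also a \emph{left} dual of $A$, i.e., that there is an exact pairing $\eta' : I \ii A \x A^*$ and $\eps' : A^* \x A \ii I$ exhibiting $A$ as the right dual of $A^*$. The natural candidates are obtained by composing $\eta_A$ and $\eps_A$ with braidings: set $\eta'_A := \sym_{A^*,A} \cp \eta_A : I \ii A^* \x A \ii A \x A^*$ and $\eps'_A := \eps_A \cp \sym_{A^*, A} : A^* \x A \ii A \x A^* \ii I$ (one could also use $\symi$ in place of $\sym$; either choice works, though it affects which isotopies are sound — hence the remark in the text about regular isotopy). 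Graphically, this amounts to taking the cap and cup of the right-dual structure and sliding one strand over (or under) the other via a crossing.

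**The main work** is then to verify the two adjunction triangles for $(\eta'_A, \eps'_A)$. This is a diagram chase: one writes out, say, the composite $(\eps'_A \x \id_{A^*}) \cp (\id_{A^*} \x \eta'_A) : A^* \ii A^* \x A \x A^* \ii A^*$, substitutes the definitions, and then uses naturality of the braiding together with the hexagon axioms to ``straighten out'' the crossings until the original triangle identity for $(\eta_A, \eps_A)$ can be applied, yielding $\id_{A^*}$. The cleanest way to present this is entirely in the graphical language of braided monoidal categories (already justified by the coherence theorem for braided monoidal categories stated earlier): the left-dual zig-zag for $A^*$, drawn with the braided cap and cup, can be isotoped in $3$ dimensions — pulling the crossing through — into the right-dual zig-zag for $A$, which is already an identity. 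The second triangle is symmetric. I expect this step to be the only real obstacle, and it is a mild one; the subtlety worth flagging is merely that the resulting left-autonomous structure is compatible with the braiding only ``up to regular isotopy,'' which is why braided autonomous categories are somewhat unnatural, as the surrounding text notes.

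**Finally,** since $A$ was arbitrary, every object has a left dual, so the category is left autonomous, and being already right autonomous, it is autonomous. The symmetric variant (using $\symi$ throughout, or arguing with the opposite braiding) gives the analogous statement, and one may also observe that the construction dualizes: a right-autonomous braided category is automatically left autonomous, and by the same token a left-autonomous braided category is automatically right autonomous.
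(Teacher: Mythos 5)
Your overall strategy --- transporting the given right-dual pairing across the braiding --- is the same as the paper's, but your specific formulas are wrong, and the error is exactly the point this lemma turns on. You set $\eta'_A = \sym_{A^*,A}\cp\eta_A$ and $\eps'_A = \eps_A\cp\sym_{A^*,A}$, using the \emph{same} crossing in both places; the paper instead uses crossings of opposite handedness: $\symi_{A,A^*}\cp\eta_A$ together with $\eps_A\cp\sym_{A^*,A}$ (your counit agrees with the paper's, your unit does not). With the same-handed choice the adjunction triangles fail in a general braided category: the composite $(\id_A\x\eps'_A)\cp(\eta'_A\x\id_A):A\ii A$ is, graphically, a single strand carrying two kinks of the \emph{same} sign, so it has writhe $\pm 2$ and cannot be straightened by the moves actually available (naturality of the braiding, the hexagons, cancellation of $\sym$ with $\symi$, Yang--Baxter, and the original triangles); removing such kinks would require a Reidemeister~(R1) move, which is not a consequence of these axioms. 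Concretely, in any tortile category this composite equals $\theta_A^{\pm 2}$ rather than $\id_A$ --- for instance in the free braided pivotal category on one object (tangle diagrams up to regular isotopy, where writhe is an invariant {\cite{FY92}}), or in quantum-group representation categories at generic parameter, where $\theta_A^2\neq\id_A$. So your proposed $(\eta'_A,\eps'_A)$ is not an exact pairing, and the parenthetical ``one could also use $\symi$ in place of $\sym$; either choice works, though it affects which isotopies are sound'' is precisely the misunderstanding: the handedness is not a cosmetic convention affecting soundness of isotopies, but a correctness condition. Exactly one of the two new structure maps must use the inverse braiding, so that the two kinks in the zig-zag have opposite signs and cancel by an (R2)-type move; the two valid choices are $(\symi_{A,A^*}\cp\eta_A,\ \eps_A\cp\sym_{A^*,A})$ and its mirror $(\sym_{A^*,A}\cp\eta_A,\ \eps_A\cp\symi_{A,A^*})$, while $(\sym,\sym)$ and $(\symi,\symi)$ both fail. (In a symmetric category $\sym=\symi$ and your formulas collapse to the correct ones, which may be the source of the slip, but the lemma concerns braided categories.)

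With that one crossing corrected, the rest of your outline is fine: the ``only if'' direction is indeed trivial, and the verification of the two triangles by naturality plus the hexagons plus the original triangle identities is exactly the (omitted) computation behind the paper's one-line proof. One smaller point: the ``regular isotopy'' caveat you attach to the construction concerns the graphical language of braided \emph{autonomous} categories later in that section, not the validity of this lemma; the statement that a right dual is also a left dual holds outright, with no isotopy-theoretic qualification, once the crossings are chosen with opposite handedness.
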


\begin{proof}
  If $\eta:I\ii B\x A$ and $\eps:A\x B\ii I$ form an exact pairing,
  then so do $\symi_{A,B}\cp\eta:I\ii A\x B$ and
  $\eps\cp\sym_{B,A}:B\x A\ii I$. Therefore any right dual of $A$ is
  also a left dual of $A$. \eot
\end{proof}

In any braided autonomous category $\Cc$, we can define a natural
isomorphism $\lop_A:A^{**}\ii A$.  This follows from the proof of
Lemma~\ref{lem-braided-right-autonomous}, using the fact that both $A$
and $A^{**}$ are right duals of $A^*$. More concretely, $\lop_A$
and its inverse are defined by:
\[ \begin{array}{lll}
  \lop_A &=& A^{**}\catarrow{\eta_A\x\id}A^*\x A\x A^{**}
  \catarrow{\id\x\sym_{A,A^{**}}}A^*\x A^{**}\x A\catarrow{\eps_{A^*}\x\id}
  A, \\
  \lopi_A &=& A\catarrow{\id\x\eta_{A^*}}A\x A^{**}\x A^*
  \catarrow{\symi_{A^{**},A}\x\id}A^{**}\x A\x A^*\catarrow{\id\x\eps_A}
  A^{**}.
\end{array}
\]
Here we have written, without loss of generality, as if $\Cc$ were
strict monoidal.  Graphically, $\lop_A$ and its inverse look like this:
\[ \lop_A =
\vcenter{\wirechart{@C-4ex}{
    *{}\wireright{rr}{A^{**}}&&
    \blank\wirecross{d}\wireright{rr}{A}&&
    *{}
    \\&
    \blank\wire{r}{}&
    \blank\wirebraid{u}{.3}\wire{r}{}&
    \blank
    \\&
    \blank\wireopen{u}\wire{rr}{A^*}&&
    \blank\wireclose{u}
    }}
\sep
\lopi_A =
\vcenter{\wirechart{@C-4ex}{
    &\blank\wireopen{d}\wire{rr}{A^*}&&
    \blank\wireclose{d}
    \\
    &\blank\wire{r}{}&
    \blank\wirebraid{d}{.3}\wire{r}{}&
    \blank
    \\
    *{}\wireright{rr}{A}&&
    \blank\wirecross{u}\wireright{rr}{A^{**}}&&
    *{}
    }}
\]
We must note that although $\lop_A$ is a natural isomorphism, it is
not canonical. In general, there exist infinitely many natural
isomorphisms $A\iso A^{**}$. Also, $\lop$ is not a {\em monoidal}
natural transformation, and therefore does not define a pivotal
structure on $\Cc$. A general braided autonomous category is not
pivotal.

\paragraph{Graphical language and coherence.}

The graphical language braided autonomous categories is obtained
simply by adding braids to the graphical language of autonomous
categories. However, the correct notion of equivalence of diagrams is
neither planar isotopy (like for autonomous categories), nor
3-dimensional isotopy (like for braided monoidal categories), but
an in-between notion called {\em regular isotopy} {\cite{Kau86}}.

%......................................................................
\begin{table}
  \[
  \begin{array}{lc}
    \m{(R1)}& \m{\resizebox{1.7in}{!}{\includegraphics{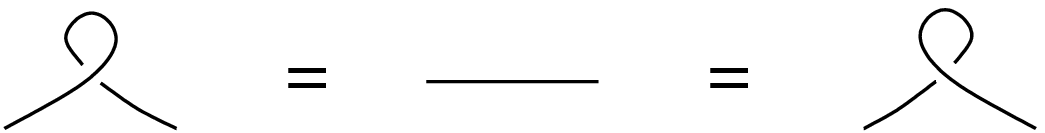}}}\\[3ex]
    \m{(R2)}& \m{\resizebox{1.0in}{!}{\includegraphics{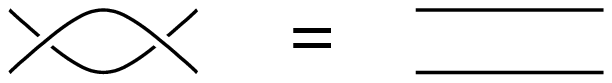}}}\\[3ex]
    \m{(R3)}& \m{\resizebox{1.0in}{!}{\includegraphics{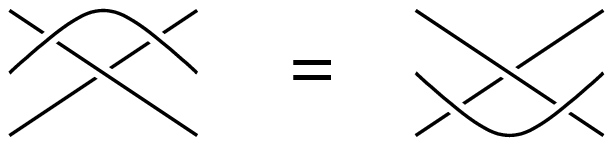}}}\\
  \end{array}
  \sep
  \begin{array}{lc}
    \m{($\Lambda$1)}& \m{\resizebox{1.7in}{!}{\includegraphics{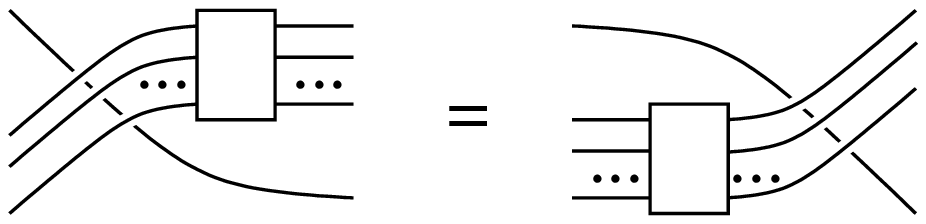}}}\\[4.5ex]
    \m{($\Lambda$2)}& \m{\resizebox{1.7in}{!}{\includegraphics{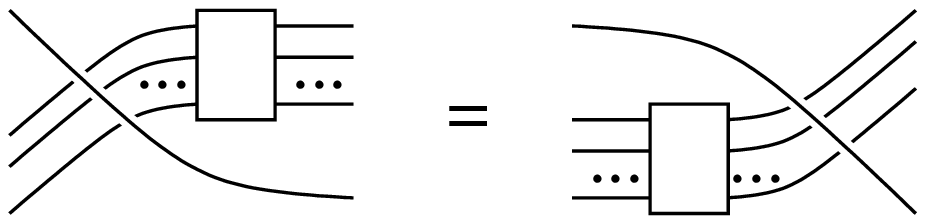}}}\\
  \end{array}
  \]
  \caption{Reidemeister moves and $\Lambda$-moves}
  \label{tab-reidemeister}
\end{table}
%......................................................................

It is well-known that 3-dimensional isotopy of links and tangles is
equivalent to planar isotopy of their (non-degenerate) projections
onto a 2-dimensional plane, plus the three {\em Reidemeister moves}
{\cite{Rei32}} shown as (R1)--(R3) in Figure~\ref{tab-reidemeister}.
To extend this to diagrams with nodes, one also has to add the moves
($\Lambda$1) and ($\Lambda$2).

{\em Regular isotopy} is defined to be the equivalence obtained by
dropping Reidemeister move (R1). Note that regular isotopy is an
equivalence on 2-dimensional representation of 3-dimensional diagrams
(and not of 3-dimensional diagrams themselves).

\begin{theorem}[Coherence for braided autonomous categories]
  A well-formed equation between morphisms in the language of braided
  autonomous categories follows from the axioms of braided autonomous
  categories if and only if it holds in the graphical language up to
  regular isotopy.
\end{theorem}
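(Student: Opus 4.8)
The plan is to reduce the coherence theorem for braided autonomous categories to the already-established coherence theorem for braided monoidal categories (isotopy in 3 dimensions), together with the coherence theorem for planar autonomous categories. The essential observation is that a braided autonomous category carries \emph{two} competing structures relating $A$ and $A^{**}$: the not-necessarily-monoidal isomorphism $\lop_A$ built from the braiding, and the ``bare'' autonomous structure. The difference between regular isotopy and full $3$-dimensional isotopy is precisely Reidemeister move (R1), which corresponds to inserting a curl (a composite of a half-turn $\eta$ or $\eps$ with a braiding crossing), and the failure of (R1) in a braided autonomous category is exactly the failure of $\lop$ to be coherent with the canonical identification one would get in a pivotal or ribbon setting.

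First I would set up the free braided autonomous category over an autonomous signature $\Sigma$, analogously to Theorems~\ref{thm-free-autonomous}: one builds formal diagrams with braids, half-turns, and boxes carrying winding-number labels, and quotients by regular isotopy. As in the autonomous case, a diagram arising from a well-formed term carries canonically reconstructible winding numbers, so for the purposes of the coherence theorem these may be suppressed. Soundness — that every regular-isotopy move is provable from the axioms — is checked move by move: planar isotopy moves follow from Theorem~\ref{thm-coherence-planar-autonomous} (applied locally), (R2) and (R3) follow from $\sym\cp\symi=\id$, the Yang--Baxter equation, and naturality of the braiding (as in the braided monoidal case), and the node-moves ($\Lambda 1$), ($\Lambda 2$) follow from naturality of $\sym$ with respect to arbitrary morphisms. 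Crucially, (R1) is \emph{not} included, matching the fact that $\lop_A\cp\lopi_A=\id$ need not hold ``for free'' — or more precisely, that the curl is not provably an identity.

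The heart of the argument is completeness: given a well-formed equation that holds up to regular isotopy, derive it from the axioms. The strategy is to factor each diagram through a ``straightened'' normal form. Using the exact-pairing triangles, every wire can be routed so that it is monotone (progressive) except possibly near boxes, and the adjoint-mate construction lets us replace each box $f:A\ii B$ and its rotated avatars by a single chosen representative. The braiding crossings then assemble into an underlying braided-monoidal diagram to which the coherence theorem for braided monoidal categories applies. The one subtlety — and this is where the restriction to \emph{regular} isotopy is forced — is that two different routings of the same wire past a braiding crossing differ by a curl, i.e.\ by an application of (R1); since (R1) is not a regular-isotopy move, such routings are genuinely distinct, so the normal form must also record the total curl/winding data, and one checks the provable equations match regular isotopy exactly and no more.

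The main obstacle I expect is making the ``straightening'' precise in the presence of \emph{both} the half-turns and the braiding, i.e.\ controlling how a wire can be isotoped past a crossing without accidentally invoking (R1), and verifying that the resulting bookkeeping (curls counted with sign, boxes in natural orientation) is a genuine complete invariant for regular isotopy of diagrams-with-nodes. This is essentially the content of the Reidemeister-type theorem with the $\Lambda$-moves, extended to our boxed diagrams; in the literature this is folklore but, as the caveat pattern elsewhere in this survey suggests, a fully detailed proof for general autonomous signatures does not appear to have been written down. I would therefore expect to present this theorem with a caveat: soundness is routine, and completeness reduces cleanly to braided monoidal coherence plus a Reidemeister-style classification, but the latter has only been verified in detail in restricted cases (e.g.\ simple signatures, or the tangle categories of Freyd--Yetter and Shum).
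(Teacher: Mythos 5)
The paper does not actually prove this theorem: it is one of the caveated results, stated with the remark that only the special case of diagrams over a \emph{simple} signature has been proved in the literature (Freyd and Yetter {\cite[Thm.~3.8]{FY92}}), the general case over an arbitrary autonomous signature being open. Your proposal is therefore not in conflict with the paper --- indeed you arrive at essentially the same endpoint, since the step you flag as the main obstacle (a regular-isotopy, Reidemeister-plus-$\Lambda$-moves classification of boxed diagrams over a general autonomous signature, equivalently an explicit description of the free braided autonomous category with winding-number data) is exactly the piece the paper says is missing, and your suggestion to state the result with a caveat citing Freyd--Yetter matches the paper's treatment. Your soundness discussion is fine, and the reduction-to-braided-monoidal-coherence strategy for completeness is the plausible route (it is how the Freyd--Yetter and Shum arguments proceed in the simple-signature and tortile cases), but as written it remains a plan rather than a proof, with the hard combinatorial/topological core deferred --- which is honest, but means the theorem is no better established by your argument than it is in the paper.

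One small correction to your framing of the failure of (R1): the maps $\lop_A$ and $\lopi_A$ of the paper \emph{are} mutually inverse, so $\lop_A\cp\lopi_A=\id_A$ does hold in any braided autonomous category; that is not the obstruction. The point is rather that the curl is not an endomorphism of $A$ at all in the merely autonomous setting (its source and target differ by a double dual, which is why winding numbers must be tracked), and in the braided pivotal setting the curl composed with the pivotal map is a twist $\theta_A$ that need not be the identity. Your parenthetical ``more precisely, the curl is not provably an identity'' is the correct statement; the preceding clause should be dropped.
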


\begin{caveat}
  Only special cases of this theorem have been proved in the
  literature. Freyd and Yetter {\cite[Thm.~3.8]{FY92}} proved this
  only for diagrams over a simple signature. 
\end{caveat}

%----------------------------------------------------------------------
\subsection{Braided pivotal categories}\label{subsec-braided-pivotal}

\begin{lemma}[Deligne, see {\cite[Prop.~2.11]{Yet92}}]\label{lem-braided-pivotal}
  Let $\Cc$ be a braided autonomous category. Then giving a twist
  $\theta_A:A\ii A$ on $\Cc$ (making $\Cc$ into a balanced category)
  is equivalent to giving a pivotal structure $i_A:A\ii A^{**}$
  (making $\Cc$ into a pivotal category).
\end{lemma}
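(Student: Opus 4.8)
The plan is to exhibit mutually inverse constructions between twists and pivotal structures on a fixed braided autonomous category $\Cc$, and then check that each construction lands in the correct class (i.e. that the twist axiom~(\ref{eqn-balanced}) corresponds exactly to the monoidality axiom~(\ref{eqn-pivotal-monoidal}) for $i$). Throughout I would work as if $\Cc$ were strict monoidal, which is harmless by the coherence theorem for monoidal categories (Theorem~\ref{thm-coherence-planar}), and I would freely use the isomorphism $\lop_A:A^{**}\ii A$ constructed in Section~\ref{subsec-braided-autonomous}, which depends only on the braiding and the right-autonomous structure.

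First I would describe the passage from a twist to a pivotal structure. Given a twist $\theta$, define $i_A := \lopi_A \cp \theta_A : A\ii A^{**}$ (or, depending on sign conventions, $\lopi_A$ composed with $\theta_A$ on the other side — I would pin down the exact variant by a small diagrammatic check). Since $\lop$ is a natural isomorphism and $\theta$ is a natural isomorphism with $\theta_I=\id_I$, the composite $i$ is a natural isomorphism with $i_I$ the canonical map $I\iso I^{**}$. Conversely, given a pivotal structure $i$, define $\theta_A := \lop_A\cp i_A:A\ii A$; naturality and $\theta_I=\id_I$ are immediate, and these two assignments are visibly inverse to one another because $\lop$ is invertible. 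The real content is that $\theta$ defined from $i$ satisfies the twist compatibility~(\ref{eqn-balanced}) if and only if $i$ satisfies the monoidality square~(\ref{eqn-pivotal-monoidal}). I expect this to be the main obstacle, and I would attack it graphically: express both $\lop_{A\x B}$ and $\lop_A\x\lop_B$ as string diagrams built from $\eta$, $\eps$, and a single braiding crossing, use the hexagon axioms and naturality of the braiding to slide the crossings past each other, and thereby show that the canonical iso $A^{**}\x B^{**}\iso (A\x B)^{**}$ conjugates $\lop_A\x\lop_B$ into $\lop_{A\x B}$ up to precisely one extra full braiding of $A$ past $B$ — which is exactly the discrepancy measured by $\sym_{B,A}\cp\sym_{A,B}$, i.e.\ by the twist axiom.

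Concretely, the key steps in order are: (1) reduce to the strict case and fix conventions; (2) record the two assignments $\theta\mapsto i$ and $i\mapsto\theta$ and check they are inverse bijections on natural isomorphisms sending $I$ to the canonical iso; (3) prove the diagrammatic identity relating $\lop_{A\x B}$ to $\lop_A\x\lop_B$ via the canonical iso, isolating the ``double-braiding of $A$ past $B$'' correction term (this is the hexagon-plus-naturality computation, and the step I expect to be hardest to get clean); (4) deduce that $i$ is monoidal in the sense of~(\ref{eqn-pivotal-monoidal}) exactly when $\theta$ satisfies~(\ref{eqn-balanced}); and (5) conclude that the bijection of step~(2) restricts to a bijection between twists on $\Cc$ and pivotal structures on $\Cc$. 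For a source other than brute force on step~(3), one can appeal to the coherence theorem for braided monoidal categories, which reduces the needed identity to a computation in the free braided autonomous category, i.e.\ to a statement about tangles that can be verified by inspection of a small picture.
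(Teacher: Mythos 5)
Your proposal is correct and follows essentially the same route as the paper: the same mutually inverse assignments $i_A=\lopi_A\cp\theta_A$ and $\theta_A=\lop_A\cp i_A$, with naturality and the unit condition handled the same way, and the equivalence of (\ref{eqn-balanced}) with (\ref{eqn-pivotal-monoidal}) reduced to the same key identity relating $\lop_{A\x B}$ to $\lop_A\x\lop_B$ via the canonical isomorphism and braidings. The paper likewise dismisses that identity as ``easily verified,'' so your step (3) is exactly its key fact.
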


The lemma is remarkable because the concept of a braided autonomous
category does not include any assumption relating the braided
structure to the autonomous structure.  Moreover, the axioms for a
twist depend only on the braided structure, whereas the axioms for a
pivotal structure depend only on the autonomous structure. Yet, they
are equivalent if $\Cc$ is braided autonomous.

\begin{proofof}{Lemma~\ref{lem-braided-pivotal}}
  Recall the natural isomorphism $\lop_A:A^{**}\ii A$ that was defined
  in Section~\ref{subsec-braided-autonomous} for any braided
  autonomous category. Given a twist $\theta_A:A\ii A$, we define a
  pivotal structure by
  \begin{equation}\label{eqn-i-from-theta}
    i_A = A\catarrow{\theta_A}A\catarrow{\lopi_A}A^{**}.
  \end{equation}
  Conversely, given a pivotal structure $i_A:A\ii A^{**}$, we define
  a twist by
  \begin{equation}\label{eqn-theta-from-i}
    \theta_A = A\catarrow{i_A}A^{**}\catarrow{\lop_A}A.
  \end{equation}
  The two constructions are clearly each other's inverse. To verify
  their properties, it is obvious that $i_A$ is a natural isomorphism
  if and only if $\theta_A$ is a natural isomorphism. Moreover,
  $\theta_I=\id$ iff $i_I=\lopi_I$, and $\lopi_I$ is the canonical
  isomorphism $I\iso I^{**}$. What remains to be shown is that
  $\theta$ satisfies equation (\ref{eqn-balanced}) if and only if $i$
  satisfies equation (\ref{eqn-pivotal-monoidal}). However, this is a
  direct consequence of the following fact about $\lop$, which is
  easily verified:
  \[ \xymatrix@R-5mm{
    A^{**}\x B^{**} \ar[d]_{\iso}\ar[r]^{\sym_{A,B}} &
    B^{**}\x A^{**} \ar[dd]^{\lop_B\x\lop_A} \\
    (A\x B)^{**}\ar[d]_{\lop_{A\x B}}
    \\
    A\x B & B\x A. \ar[l]^{\sym_{B,A}}
    }
  \]
  \eottwo
\end{proofof}

\begin{corollary}
  A braided pivotal category is the same thing as a balanced
  autonomous category. \eot
\end{corollary}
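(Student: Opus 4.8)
The content of this corollary is essentially a repackaging of Lemma~\ref{lem-braided-pivotal}, so the plan is short. A braided pivotal category and a balanced autonomous category have literally the same underlying data: a braided monoidal category that is right autonomous (hence, by Lemma~\ref{lem-braided-right-autonomous}, two-sided autonomous), decorated with one extra piece of structure --- namely a pivotal isomorphism $i_A\colon A\ii A^{**}$ in the first case, and a twist $\theta_A\colon A\ii A$ in the second. First I would make this identification of underlying structures explicit, so that the only thing in question is the extra datum.

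Next I would invoke Lemma~\ref{lem-braided-pivotal} directly: on a fixed braided autonomous category $\Cc$, equations (\ref{eqn-i-from-theta}) and (\ref{eqn-theta-from-i}) set up a mutually inverse pair of assignments between twists on $\Cc$ and pivotal structures on $\Cc$, mediated by the canonical isomorphism $\lop_A\colon A^{**}\ii A$. This already gives the object-level statement that the two notions coincide.

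To upgrade this to an honest sameness of concepts, I would check that the bijection is compatible with the relevant functors (and, if desired, natural transformations). A braided monoidal functor $F\colon\Cc\ii\Dd$ is \emph{pivotal} when it commutes with $i$, and \emph{balanced} when $F(\theta_A)=\theta_{FA}$. The key observation is that $\lop_A$ is assembled entirely out of the braided autonomous structure --- the unit and counit of the exact pairing and the braiding $\sym$ --- all of which are preserved, up to the canonical coherence isomorphism $F(A^{**})\iso(FA)^{**}$, by any braided monoidal functor (strong monoidal functors preserve exact pairings, and braided functors preserve the braiding). Hence $F$ preserves $\lop$ up to that canonical isomorphism, and from (\ref{eqn-i-from-theta})--(\ref{eqn-theta-from-i}) it follows that $F$ is pivotal if and only if it is balanced. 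For natural transformations the matter is even easier: by Lemma~\ref{lem-saavedra-rivano} any monoidal natural transformation between such functors is automatically invertible, and the pivotal and balanced compatibility conditions transport into one another in exactly the same way via $\lop$.

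The main obstacle is not depth but bookkeeping: one must say precisely what ``the same thing'' means (an isomorphism of the two $2$-categories, say) and then carefully track the coherence isomorphism $F(A^{**})\iso(FA)^{**}$ so that ``$F$ commutes with $\lop$'' genuinely carries ``$F$ commutes with $i$'' to ``$F$ commutes with $\theta$''. Given Lemma~\ref{lem-braided-pivotal}, everything else is routine unwinding of the definitions. \eot
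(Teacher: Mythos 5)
Your proposal is correct and follows the paper's own route: the paper proves this corollary simply by citing Lemma~\ref{lem-braided-pivotal} (the underlying braided autonomous data coincide, and the lemma exchanges twists with pivotal structures), which is exactly your core argument. The additional check of compatibility with functors and natural transformations goes beyond what the paper records, but it is sound and only strengthens the ``same thing'' claim.
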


\begin{remark}\label{rem-braided-pivotal-other-theta}
  While Lemma~\ref{lem-braided-pivotal} establishes a one-to-one
  correspondence between twists and pivotal structures, the
  correspondence is not canonical. Indeed, instead of
  (\ref{eqn-i-from-theta}) and (\ref{eqn-theta-from-i}), we could have
  equally well used
  \begin{equation}\label{eqn-i-from-theta-alt}
    i_A = A\catarrow{\theta\inv_A}A\catarrow{\lopalt_A}A^{**}
  \end{equation}
  and
  \begin{equation}\label{eqn-theta-from-i-alt}
    \theta_A = A\catarrow{\lopalt_A}A^{**}\catarrow{i\inv_A}A,
  \end{equation}
  where
  \[ \lopalt_A = \m{\resizebox{.4in}{!}{\includegraphics{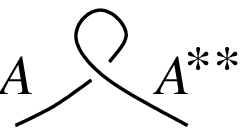}}}.
  \]
  In fact, there are a countable number of such similar one-to-one
  correspondences, all induced by the existence of a monoidal natural
  transformation $\lopalt_A{}\inv\cp i_A\cp \lop_A\cp i_A:A\ii A$.
  They all coincide if and only if the category is tortile, as
  discussed in the next section.
\end{remark}

\paragraph{Graphical language and coherence.}

The graphical language for braided pivotal categories is the same as
the graphical language for pivotal categories, with the addition of
braids. Equivalence of diagrams is up to regular isotopy, just as for
braided autonomous categories (see
Section~\ref{subsec-braided-autonomous}).

\begin{theorem}[Coherence for braided pivotal categories]
  \label{thm-coherence-braided-pivotal}
  A well-formed equation between morphisms in the language of braided
  pivotal categories follows from the axioms of braided pivotal
  categories if and only if it holds in the graphical language up to
  regular isotopy.
\end{theorem}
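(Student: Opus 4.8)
The plan is to reduce the coherence problem for braided pivotal categories to one of the already-established coherence theorems via the equivalences set up earlier in the paper. By Lemma~\ref{lem-braided-pivotal} and its corollary, a braided pivotal category is the same thing as a balanced autonomous category. So the first step is to rephrase the target language: every well-formed term and equation in the language of braided pivotal categories can be translated into one in the language of balanced autonomous categories, and vice versa, with the translations respecting provability from the respective axioms (this uses the formulas (\ref{eqn-i-from-theta}) and (\ref{eqn-theta-from-i}) expressing $i_A$ in terms of $\theta_A$ and $\lop_A$, together with the fact that $\lop_A$ is built from the braiding and the autonomous structure). Once this is done, it suffices to prove a coherence theorem for balanced autonomous categories with respect to regular isotopy.

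For the balanced autonomous case, the natural strategy is to combine the two "halves" already available: the coherence theorem for balanced monoidal categories (framed isotopy in $3$ dimensions, \cite[Thm.~4.5]{JS91}) and the coherence theorem for braided autonomous categories (regular isotopy). First I would note that a balanced autonomous category, forgetting the autonomous structure but keeping the ribbon/framing data from the twist, is a balanced monoidal category; and forgetting the twist but keeping duals, it is a braided autonomous category. The graphical language proposed here — braided autonomous diagrams with ribbons, taken up to regular isotopy — should be shown to be the free balanced autonomous category over the given autonomous signature. Soundness is the routine direction: one checks that each axiom (the hexagons, the adjunction triangles (\ref{eqn-pairing}), the balanced axiom (\ref{eqn-balanced}), naturality of $\theta$ and $\sym$, bifunctoriality) corresponds to a regular-isotopy move, and that the dropped Reidemeister move (R1) is exactly the one that would fail because of the nontrivial twist $\theta$, i.e.\ a curl is $\theta$, not the identity — this is why regular rather than full isotopy is the correct notion, mirroring the discussion in Section~\ref{subsec-braided-autonomous}.

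For completeness I would argue that any two diagrams equal up to regular isotopy denote provably equal terms, using the normal-form technique of Joyal--Street: represent morphisms as points (nodes) in a framed tangle-like diagram, put the diagram in a generic position with a height function, and read off a composite of elementary slices (cups, caps, crossings, twists, and boxes). Regular isotopy of the diagram then decomposes into a finite sequence of elementary moves — the Reidemeister moves (R2), (R3), the $\Lambda$-moves of Table~\ref{tab-reidemeister}, planar isotopies of the underlying autonomous diagram (handled by Theorem~\ref{thm-free-autonomous}), and framing moves — each of which is an instance of, or a consequence of, the braided-pivotal axioms. The winding-number bookkeeping from Theorem~\ref{thm-free-autonomous} is here simplified because a pivotal structure collapses winding numbers mod $2$, so only the orientation of each wire must be tracked.

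The main obstacle I expect is the completeness direction, specifically the interaction of the twist with box rotation and with the dualization of composite morphisms. In a pivotal category one may rotate boxes (as in (\ref{eqn-rotation}) and (\ref{eqn-adjoint-mate})), and a full $2\pi$ rotation of a box $f:A\to B$ must be shown to equal $\theta_B\cp f\cp\theta_A^{-1}$ rather than $f$ itself; verifying that this is forced by, and consistent with, the axioms — and that the resulting equivalence on diagrams is exactly regular isotopy and nothing coarser — is the delicate point. As the caveat for braided autonomous and pivotal categories already indicates, the literature only treats simple signatures, so a fully rigorous proof over arbitrary autonomous signatures would require carefully extending the Joyal--Street machinery; I would therefore present this as following the pattern of \cite{JS88,JS91,FY92} and flag, via a caveat, that the general case has not appeared in full detail.

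\begin{caveat}
  As with braided autonomous and pivotal categories, only special
  cases of this theorem have been proved in the literature (e.g.\
  Freyd and Yetter {\cite{FY92}} for simple signatures, and Shum
  {\cite{Shum94}} for the related tortile case). The general statement
  over autonomous signatures is expected but, to our knowledge, has
  not been written down in full.
\end{caveat}
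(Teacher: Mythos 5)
The paper does not actually prove Theorem~\ref{thm-coherence-braided-pivotal}: it states the theorem and immediately attaches Caveat~\ref{cav-coherence-braided-pivotal}, attributing only the simple-signature case to Freyd and Yetter {\cite[Thm.~4.4]{FY92}} and leaving the general case (over autonomous signatures) unproved. Your proposal lands in essentially the same place --- a strategy sketch plus a caveat acknowledging that only special cases exist in the literature --- so there is no conflict with the paper; if anything you offer more, namely an explicit plan (translate braided pivotal into balanced autonomous via Lemma~\ref{lem-braided-pivotal}, then run a Joyal--Street generic-position, elementary-moves argument up to regular isotopy), which is the route a full proof would plausibly take, and you correctly identify the rotation-of-boxes and signature-generality issues as the unexecuted delicate points.

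Two cautions on the sketch itself. First, be careful with ``ribbons'': in this paper ribbons and framed isotopy belong to balanced monoidal and tortile categories, whereas the braided pivotal language uses ordinary wires, with equivalence taken on $2$-dimensional projections up to regular isotopy. Framed $3$-dimensional isotopy would be unsound here, since it identifies the two loop representations of the twist, which coincide only in the tortile case (this is exactly the paper's remark after the twist pictures in the tortile section); so ``combining'' the balanced monoidal coherence theorem with braided autonomous coherence is not a straightforward conjunction --- the framed picture validates strictly more equations than hold in a braided pivotal category. Since you ultimately invoke regular isotopy, the slip is not fatal, but the balanced-monoidal half of your plan should be dropped or rephrased. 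Second, the translation step via Lemma~\ref{lem-braided-pivotal} should fix one of the countably many twist/pivotal correspondences noted in Remark~\ref{rem-braided-pivotal-other-theta}; with a fixed choice such as (\ref{eqn-i-from-theta}) and (\ref{eqn-theta-from-i}) the two-way translation of terms and provability is unproblematic. These are refinements of an outline rather than gaps relative to the paper, because the paper itself offers no proof beyond the citation and the caveat.
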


\begin{caveat}\label{cav-coherence-braided-pivotal}
  Only special cases of this theorem have been proved in the
  literature. Freyd and Yetter {\cite[Thm.~4.4]{FY92}} proved this
  only for diagrams over a simple signature. 
\end{caveat}

\begin{remark}
  The equation
  \[ \resizebox{1.8in}{!}{\includegraphics{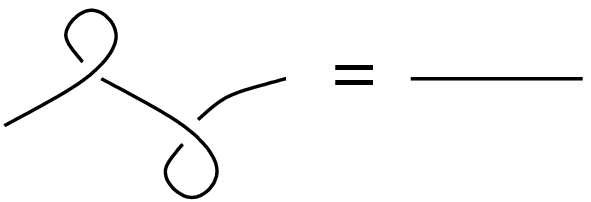}}
  \]
  holds up to regular isotopy, as it can be proved using only the
  Reidemeister moves (R2) and (R3). It is therefore valid in braided
  pivotal categories (or even braided autonomous categories).  On the
  other hand, the equation
  \[ \resizebox{1.8in}{!}{\includegraphics{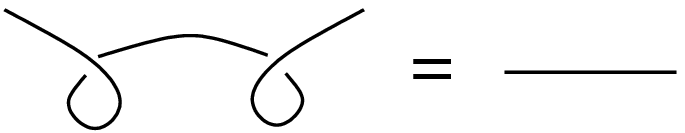}}
  \]
  holds up to isotopy, but not up to regular isotopy (because regular
  isotopy preserves total curvature, as pointed out by Freyd and Yetter
  {\cite[p.~169]{FY89}}). It is therefore not valid in braided pivotal
  categories. The use of regular isotopy does not seem natural, and
  this is precisely the reason why Joyal and Street introduced tortile
  categories, which we discuss in the next section.
\end{remark}

\begin{remark}\label{braided-not-spherical}
  A braided pivotal category is not in general spherical (and
  therefore also not spacial). Indeed, instead of the spherical axiom
  (\ref{eqn-spherical}), only the following holds up to regular
  isotopy:
  \[ \resizebox{2.4in}{!}{\includegraphics{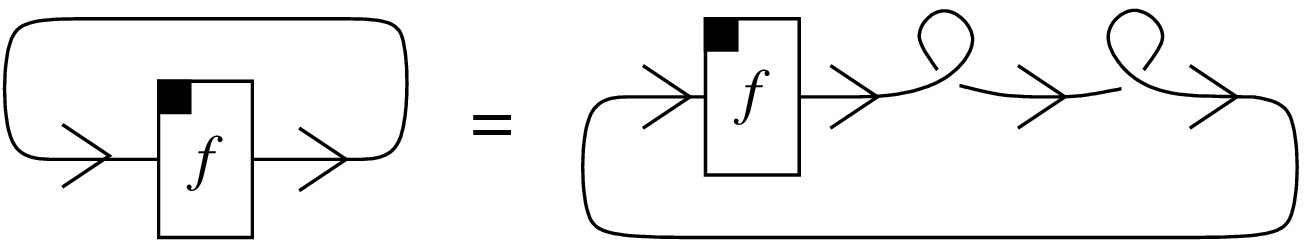}}
  \]
  Along with Remark~\ref{rem-braided-pivotal-other-theta}, this is
  further evidence that braided pivotal categories (and braided
  autonomous categories) are not ``natural'' notions.
\end{remark}

%----------------------------------------------------------------------
\subsection{Tortile categories}

\begin{lemma}\label{lem-tortile}
  Consider a braided pivotal category, which is equivalently balanced
  autonomous via (\ref{eqn-i-from-theta}) and
  (\ref{eqn-theta-from-i}).  For any object $A$ the following are
  equivalent:
  \begin{enumerate}\alphalabels
  \item $
      (\eps_{A^*}\x\id_A) \cp
      (\id_{A^*}\x\symi_{A^{**},A}) \cp
      (\eta_{A}\x\id_{A^{**}})\cp
      i_A \cp
      (\eps_{A^*}\x\id_A) \cp
      (\id_{A^*}\x\sym_{A,A^{**}}) \cp
      (\eta_{A}\x\id_{A^{**}})\cp
      i_A
      = \id_A,
    $
    or graphically:
    \[ \resizebox{1.8in}{!}{\includegraphics{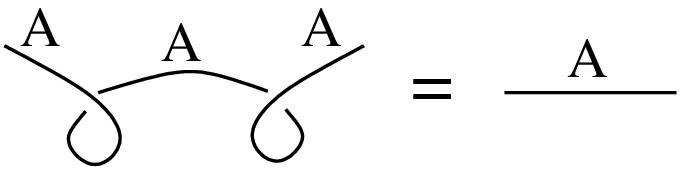}}
    \]
  \item $\theta_{A^*}=(\theta_A)^*$.
  \end{enumerate}
\end{lemma}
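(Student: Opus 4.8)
The plan is to interpret both conditions graphically and recognize that condition (a) says precisely that a certain natural endomorphism of $A$ equals the identity, where that endomorphism can be identified --- using the dictionary between the braided autonomous structure and the balanced autonomous structure --- with the composite $\lopalt_A{}\inv\cp i_A\cp\lop_A\cp i_A$ from Remark~\ref{rem-braided-pivotal-other-theta}, or equivalently with $\theta_{A^*}{}^*\cp(\theta_A)^{*}{}^{-\ast}$-type data on $A$. Concretely, first I would recall from Section~\ref{subsec-braided-autonomous} the explicit form of $\lop_A:A^{**}\ii A$ and its ``mirror'' variant $\lopalt_A$, which differ by replacing a positive crossing by a negative one. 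Unwinding the big composite in (a): the two occurrences of the pattern $(\eps_{A^*}\x\id_A)\cp(\id_{A^*}\x\sym^{\pm1})\cp(\eta_A\x\id)\cp i_A$ are exactly $\lopalt_A\cp i_A$ and $\lop_A\cp i_A$ (one with each sign of the braiding), so condition (a) reads $\lopalt_A\cp i_A\cp\lop_A\cp i_A=\id_A$, i.e.\ $\lopalt_A{}\inv = i_A\cp\lop_A\cp i_A$.

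Next I would translate this into a statement about twists via (\ref{eqn-i-from-theta})--(\ref{eqn-theta-from-i}): since $i_A=\lopi_A\cp\theta_A$, the identity $\lopalt_A\cp i_A\cp\lop_A\cp i_A=\id_A$ becomes, after substituting and cancelling $\lop_A\cp\lopi_A=\id$, a relation of the form $\lopalt_A\cp\lopi_A\cp\theta_A\cp\theta_A=\mathrm{(something\ canonical)}$. The key computation is to evaluate $\lopalt_A\cp\lopi_A:A\ii A$ graphically; using the Reidemeister-$2$ move to cancel the oppositely-signed crossings against the $\eta,\eps$ cups and caps, one finds this composite is a full curl, i.e.\ it is exactly (the inverse of) the doubled twist $\theta_A^{\pm2}$ times a dual-twist correction, so that after simplification condition (a) is equivalent to $(\theta_A)^{*}=\theta_{A^*}$. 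Here I would lean on Lemma~\ref{lem-braided-pivotal} and the displayed compatibility square for $\lop$ with the braiding at the end of its proof, together with the definition of the adjoint mate (\ref{eqn-adjoint-mate}) and the fact (from Lemma~\ref{lem-braided-pivotal}) that $\theta$ on $A^{*}$ is computed from the same $\lop$ and $i$ data; the naturality of $\theta$ and of the braiding handle the passage between $A$ and $A^{**}$.

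The main obstacle I anticipate is the bookkeeping of the canonical isomorphisms $A\cong A^{**}\cong A^{****}$ and the signs of the braidings: one must be careful that the ``$(\theta_A)^{*}$'' appearing in (b) is the adjoint mate in the sense of (\ref{eqn-adjoint-mate}), and that under the identification $A\cong A^{**}$ it matches the endomorphism extracted from (a) on the nose rather than up to an extra curl. The cleanest way to control this is to do the entire argument graphically in the braided pivotal language, where by Theorem~\ref{thm-coherence-braided-pivotal} equality up to regular isotopy suffices, and to check that every move used is (R2) or (R3) --- never (R1) --- so that the argument is valid at the level of regular isotopy and hence in an arbitrary braided pivotal category. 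Once the graphical identity ``big composite in (a) $=$ curl-free picture of $(\theta_A)^{*}\cp\theta_{A^*}{}\inv$'' is established, both implications (a)$\Rightarrow$(b) and (b)$\Rightarrow$(a) follow immediately by pre/post-composing with the (invertible) twist.
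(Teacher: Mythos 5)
Your plan is essentially the paper's own argument: the paper's proof just records the graphical forms of $\theta_A$, $(\theta_A)^*$, $\theta_{A^*}$, $(\theta_{A^*})\inv$ as curls and observes that, up to regular isotopy, equation (b) is exactly the adjoint mate of equation (a) --- which is precisely the mechanism of your final paragraph (graphical computation using only (R2)/(R3), then faithfulness of adjoint mating). The detour through $\lopalt$ and the substitution $i_A=\lopi_A\cp\theta_A$ is unnecessary and carries small type slips (the paper's $\lopalt_A$ goes $A\ii A^{**}$, and the composite in (a) is an endomorphism of $A$ while $(\theta_A)^*\cp\theta_{A^*}\inv$ lives on $A^*$, so the identification must be stated via mates, with composition order reversed), but these do not affect the strategy, which matches the paper's.
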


\begin{proof}
  The proof is a straightforward calculation, but it is best explained
  by the fact that the following hold in the graphical language:
  \[ \theta_A = \m{\resizebox{.4in}{!}{\includegraphics{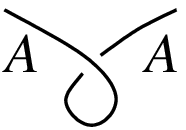}}} \sep
  (\theta_A)^* = \m{\resizebox{.4in}{!}{\includegraphics{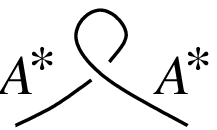}}} \sep
  \theta_{A^*} = \m{\resizebox{.4in}{!}{\includegraphics{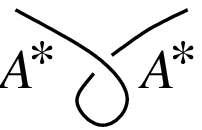}}} \sep
  (\theta_{A^*})\inv = \m{\resizebox{.4in}{!}{\includegraphics{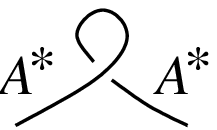}}}.
  \]
  Therefore, the equation (b) is equivalent to
  \[ \resizebox{1.3in}{!}{\includegraphics{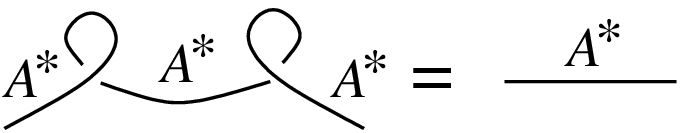}},
  \]
  which is the adjoint mate of (a).\eot
\end{proof}

\begin{remark}
  The condition in Lemma~\ref{lem-tortile}(a) holds if and only if the
  two definitions of $\theta_A$ from (\ref{eqn-theta-from-i}) and
  (\ref{eqn-theta-from-i-alt}) coincide.
\end{remark}

\begin{definition}[{\cite{JS93}}]
  A {\em tortile category} is a braided pivotal category satisfying
  the condition of Lemma~\ref{lem-tortile}(a). Equivalently, a tortile
  category is a balanced autonomous category satisfying the condition
  of Lemma~\ref{lem-tortile}(b).
\end{definition}

\begin{remark}[Terminology]
  A tortile category is also sometimes called a {\em ribbon category},
  see e.g.~{\cite{Tur94}}.
\end{remark}

\paragraph{Graphical language and coherence.}

The graphical language for tortile categories is like the graphical
language for braided pivotal categories, except that morphisms are
represented by ribbons, rather than wires. These ribbons are just like
the ones for balanced categories from Section~\ref{subsec-balanced}.
Units and counits are represented in the obvious way, for example
\[ \eta_A = \m{\resizebox{.4in}{!}{\includegraphics{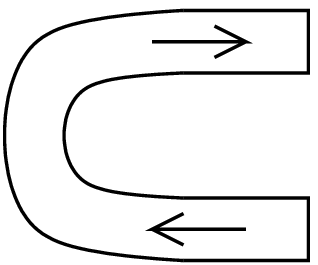}}},
\sep \eps_A = \m{\resizebox{.4in}{!}{\includegraphics{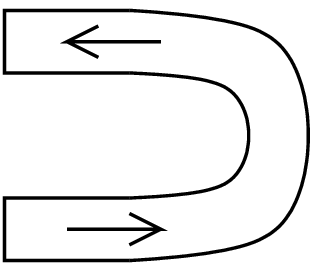}}}.
\]
The twist map $\theta_A:A\ii A$ can be represented in several
equivalent ways:
\[ \theta_A = \m{\resizebox{2.5cm}{!}{\includegraphics{twistA}}}
  = \m{\resizebox{2.3cm}{!}{\includegraphics{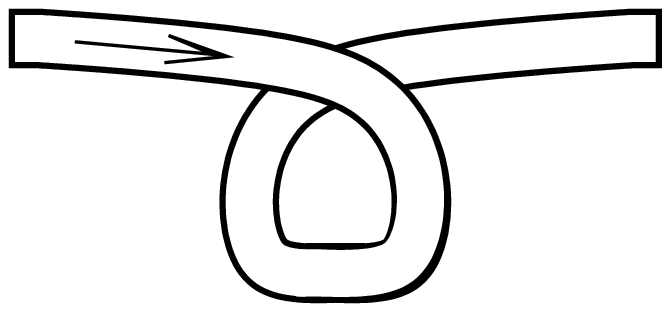}}}
  = \m{\resizebox{2.3cm}{!}{\includegraphics{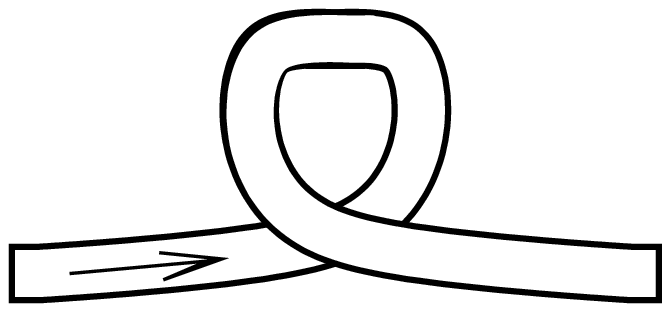}}}.
\]
Note that these diagrams are equivalent up to framed 3-dimensional
isotopy, and define the same morphism in a tortile category. (On the
other hand, in a mere braided pivotal category, the latter two
diagrams are not equal). Also note that the map $\lop_A$ from
Section~\ref{subsec-braided-autonomous} is also represented in the
graphical language as
\[ \lop_A = \m{\resizebox{2.3cm}{!}{\includegraphics{twist-loop-down}}},
\]
but this is of type $\lop_A:A^{**}\ii A$, whereas $\theta_A:A\ii A$.
They differ, of course, only by an invisible pivotal map $i_A:A\ii
A^{**}$.

\begin{theorem}[Coherence for tortile categories]
  \label{thm-coherence-tortile}
  A well-formed equation between morphisms in the language of tortile
  categories follows from the axioms of tortile categories if and only
  if it holds in the graphical language up to framed 3-dimensional
  isotopy.
\end{theorem}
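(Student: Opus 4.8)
The plan is to deduce the coherence theorem for tortile categories from the already-established coherence theorem for balanced monoidal categories (Theorem on coherence for balanced monoidal categories), together with the coherence theorem for pivotal (and autonomous) categories, using the fact that a tortile category is simultaneously balanced and autonomous, and that the autonomous structure on a tortile category is determined by the balanced structure via Lemma~\ref{lem-braided-pivotal}. Concretely, I would first observe that a tortile signature is an autonomous signature, and that every object term built from $\x$, $I$, $\ld{(-)}$, $\rd{(-)}$ can be normalised: since a tortile category is pivotal, winding numbers collapse modulo $2$, so up to canonical isomorphism every dual is a right dual and every object is equivalent to one of the form $X$ or $\rd{X}$ with $X$ built from object variables and their right duals only. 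Thus every morphism term of tortile categories is provably equal, using the tortile axioms, to one expressed purely in terms of the braided/balanced operations applied to the ``primitive'' morphisms $\eta_A,\eps_A$ and the variables from the signature.

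Second, I would set up the graphical side. A diagram of tortile categories is a framed tangle-with-nodes in a $3$-box, taken up to framed $3$-dimensional isotopy. The key structural fact to establish (this is essentially Shum's theorem, \cite{Shu94}, which I would cite) is that the free tortile category on a signature $\Sigma$ is the category of framed oriented tangles with $\Sigma$-labelled coupons: every such tangle can be decomposed, up to framed isotopy, as a composite of tensor products of the generating morphisms $\sym$, $\symi$, $\theta$, $\theta\inv$, $\eta$, $\eps$ and the boxes of $\Sigma$, and two decompositions represent framed-isotopic tangles if and only if they are related by the tortile axioms. The soundness direction is routine: one checks that each tortile axiom (the two adjunction triangles, the hexagons, the balanced/twist equation \eqref{eqn-balanced}, the pivotal compatibility \eqref{eqn-pivotal-monoidal}, and the tortile condition of Lemma~\ref{lem-tortile}) corresponds to a framed isotopy of the associated ribbon diagrams — the tortile condition being exactly what makes $\theta_{A^*}=(\theta_A)^*$ hold, i.e.\ what allows the twist on a ribbon to ``slide around'' a cap or cup.

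Third, for completeness I would argue as follows. Suppose a well-formed equation $s=t$ holds up to framed $3$-dimensional isotopy. Using the normalisation from the first step, write $s$ and $t$ in terms of the primitive morphisms; their diagrams are now ribbon tangles with coupons. By the characterisation of the free tortile category as framed tangles, framed-isotopic tangles are connected by a finite sequence of elementary moves, each of which is (a consequence of) a tortile axiom — here I would invoke the presentation of the framed tangle category by generators and relations, which is precisely the content of Shum's theorem. Pulling this sequence back through the interpretation functor shows that $s=t$ is derivable from the tortile axioms. The bridge making this rigorous is that the graphical language of balanced monoidal categories is already known (by the cited balanced coherence theorem) to be the free balanced monoidal category on a monoidal signature; one then adjoints formal duals and checks that imposing the adjunction triangles plus the tortile condition yields exactly the framed tangle category — this reduction step is where the work of relating $3$-dimensional framed isotopy of tangles-with-duals to framed isotopy of diagrams-without-duals takes place.

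The main obstacle, as with the earlier autonomous and pivotal coherence theorems, is that the relevant statements in the literature (Shum \cite{Shu94}, Joyal--Street \cite{JS93,JSXX}, Freyd--Yetter \cite{FY92}) are proved under restrictions — typically for simple signatures, or with smoothness/piecewise-linear hypotheses on the diagrams, or for the free tortile category generated by a mere category rather than by a general tensor scheme. So the honest status of Theorem~\ref{thm-coherence-tortile} in full generality is that it follows from Shum's coherence theorem for the category of framed tangles when the signature is simple, and is expected but not fully documented for arbitrary tortile signatures; I would therefore attach a caveat to this effect, parallel to the caveats on Theorems~\ref{thm-coherence-pivotal} and~\ref{thm-coherence-braided-pivotal}, pointing to Shum \cite{Shu94} for the proof in the case it is available and flagging the general case as following by the same embedding arguments used in Proposition~\ref{prop-right-autonomous}.
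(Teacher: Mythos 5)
The paper does not actually prove Theorem~\ref{thm-coherence-tortile}: it states it bare, and attaches Caveat~\ref{cav-coherence-tortile} recording that only the case of a simple signature is in the literature, namely Shum's theorem {\cite[Thm.~6.1]{Shum94}} on the free tortile category generated by a category. Your assessment of the status of the result --- soundness is a routine check of the axioms against framed isotopy, completeness is exactly Shum's generators-and-relations description of the framed tangle category with coupons, and the fully general (autonomous-signature) case is expected but undocumented, so a caveat is required --- coincides with what the paper actually does, and your proposed caveat is essentially Caveat~\ref{cav-coherence-tortile}.

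Two points in your suggested route for the general case do not hold up. First, the appeal to ``the same embedding arguments used in Proposition~\ref{prop-right-autonomous}'' is misplaced: that proposition passes from one-sided duals to two-sided duals, which is irrelevant here (a tortile category is already autonomous); the genuine obstruction is the generality of the signature, i.e.\ coupons whose domains and codomains are arbitrary object terms involving duals, and no embedding of that sort reduces an autonomous signature to a simple one. Second, your completeness step quietly assumes the presentation of framed tangles with $\Sigma$-labelled coupons by the tortile generators and relations \emph{for arbitrary} $\Sigma$ --- but that presentation is precisely the content of Shum's theorem, which is only proved for simple signatures; so the argument is circular at exactly the point where the generality is needed. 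Since you flag this honestly and fall back on the caveat, the proposal is acceptable as a survey-style treatment, but it should not suggest that Proposition~\ref{prop-right-autonomous} closes the gap. (Also note the citation key in the paper is {\cite{Shum94}}, not Shu94.)
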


\begin{caveat}
  \label{cav-coherence-tortile}
  Only special cases of this theorem have been proved in the
  literature. Shum {\cite[Thm.~6.1]{Shum94}} proved it for the case of
  the free tortile category generated by a category, i.e., for
  diagrams over a simple signature only.
\end{caveat}

%----------------------------------------------------------------------
\subsection{Compact closed categories}\label{subsec-compact-closed}

A compact closed category is a tortile category that is symmetric (as
a balanced monoidal category) in the sense of
Section~\ref{subsec-symmetric-monoidal}.  Equivalently, because of
Remark~\ref{rem-symmetric-from-balanced}, a compact closed category is
a tortile category in which $\theta_A=\id_A$ for all $A$.

The definition can be simplified. Notice that a right autonomous
symmetric monoidal category is automatically autonomous (by
Lemma~\ref{lem-braided-right-autonomous}), balanced (with
$\theta_A=\id_A$) and therefore pivotal (by
Lemma~\ref{lem-braided-pivotal}). Moreover, it is tortile (because
$\theta_{A^*}=(\theta_A)^*=\id_{A^*}$). We can therefore define:

\begin{definition}
  A {\em compact closed category} is a right autonomous symmetric
  monoidal category.
\end{definition}

\begin{remark}
  By analogy with Remark~\ref{rem-twisted-symmetric}, it is possible
  for a compact closed category to possess a non-trivial twist (with
  the associated non-trivial pivotal structure), in addition to the
  trivial twist $\theta_A=\id_A$, making it into a tortile category.
  In other words, for a given tortile category, the symmetry condition
  $\sym_{A,B} = \symi_{B,A}$ does not in general imply
  $\theta_A=\id_A$. However, it does imply $\theta_A^2=\id_A$, as the
  following argument shows:
  \[ \theta_A^2 = \m{\resizebox{1in}{!}{\includegraphics{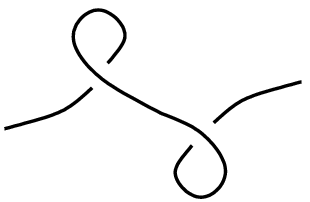}}}
  = \m{\resizebox{.9in}{!}{\includegraphics{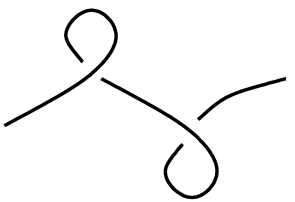}}} = \id_A.
  \]
  To construct an example where $\theta\neq\id$, consider the category
  $\Cc$ of finite-dimensional real vector spaces and linear functions.
  Define an equivalence relation on objects by $A\sim B$ iff $\dim(A\x
  B)$ is a square. Then define a subcategory $\Cc_{\sim}$ by
  \[ \hom_{\Cc_{\sim}}(A,B) = \left\{\begin{array}{lp{1in}}
      \hom_{\Cc}(A,B) & if $A\sim B$,\\
      \emptyset & else.
    \end{array}\right.
  \]
  Then $\Cc_{\sim}$ is compact closed. Let $\N^+=\s{1,2,3,\ldots}$ be
  the positive integers, and consider some multiplicative homomorphism
  $\phi:\N^+\ii\s{-1,1}$. Any such homomorphism is determined by a
  sequence $a_1,a_2,\ldots\in\s{-1,1}$ via
  \[ \phi(p_1^{n_1}p_2^{n_2}\cdots p_k^{n_k}) = a_1^{n_1}a_2^{n_2}\cdots
  a_k^{n_k},
  \]
  where $p_i$ is the $i$th prime number. Finally, define the twist map
  $\theta_A$ as multiplication by the scalar $\phi(\dim(A))$, or as
  $\id_A$ if $A$ is 0-dimensional. With this twist, $\Cc_{\sim}$ is
  tortile. In fact, this shows that there exists a continuum of
  possible twists on $\Cc_{\sim}$.
\end{remark}

\begin{examples}
  The monoidal category $(\Rel,\times)$ is compact closed with
  $A^*=A$.  The category $(\FdVect,\x)$ of finite dimensional vectors
  spaces is compact closed with $A^*$ the dual space of $A$, and
  similarly for the category of finite dimensional Hilbert spaces
  $(\FdHilb,\x)$. The corresponding categories of possibly infinite
  dimensional spaces are not autonomous. $(\Cob,+)$ is compact closed
  with $A^*$ equal to $A$ with reversed orientation.
\end{examples}

\paragraph{Graphical language and coherence.}

The graphical language for compact closed categories is like that of
tortile categories, except that we remove the framing and twist maps,
and use symmetries instead of braidings.

\begin{theorem}[Coherence for compact closed categories]
  \label{thm-coherence-compact-closed}
  A well-formed equation between morphisms in the language of compact
  closed categories follows from the axioms of compact closed
  categories if and only if it holds, up to isomorphism of diagrams,
  in the graphical language.
\end{theorem}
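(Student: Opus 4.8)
The plan is to prove soundness and completeness separately, reducing the latter to coherence theorems already available in this paper together with a normal-form argument. Soundness is immediate: a compact closed category carries no axioms beyond those of a symmetric monoidal category (Theorem~\ref{thm-coherence-symmetric-monoidal}) and those of a right autonomous category (Theorem~\ref{thm-coherence-planar-autonomous}), and both sets of axioms were already observed to hold up to isomorphism of diagrams --- the new adjunction triangles~(\ref{eqn-pairing}) are literally the straightening of a bent wire, and the axiom $\sym_{A,B}=\symi_{B,A}$ is visibly true once a crossing carries no over/under information. So only completeness requires work.

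For completeness, the first step is to put objects into normal form. In a right autonomous symmetric monoidal category every object has a canonical two-sided dual (Lemma~\ref{lem-braided-right-autonomous}), the dual of a tensor is canonically the (reordered) tensor of the duals, and $A^{**}$ is canonically isomorphic to $A$; with the help of Theorem~\ref{thm-coherence-symmetric-monoidal} for the bookkeeping, one shows that every object term is connected by a canonical isomorphism, provably unique, to a normal form $A_1^{e_1}\x\cdots\x A_n^{e_n}$ with each $A_i$ an object variable and each $e_i\in\{+,-\}$. The second step is to put morphisms into normal form. Using the ``name''/``coname'' bijection $\hom(X,Y)\cong\hom(I,X^*\x Y)$ --- which is provable from the axioms, since it is built only from $\eta$, $\eps$ and the triangle identities --- it suffices to treat morphisms of the form $I\ii Z$, and such a morphism in the free compact closed category is, modulo the axioms, determined by a finite multiset of occurrences of the morphism variables, a \emph{linking} (a fixed-point-free involution) of all the wire-endpoints carried by those occurrences and by $Z$, respecting the object-variable labels and orientations, and possibly some number of bare closed loops. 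One proves that the axioms suffice to rewrite an arbitrary term into this combinatorial data --- pushing all units toward the domain, all counits toward the codomain, and using naturality of the symmetry to untangle the generators --- and that two terms produce literally the same data precisely when the corresponding diagrams are isomorphic. Completeness then follows: isomorphic diagrams have the same normal form, hence denote provably equal morphisms. This is the theorem of Kelly and Laplaza, proved there for arbitrary monoidal signatures.

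I expect the main obstacle to be this normalisation of morphisms: showing that the rewriting of an arbitrary term to its linking is well defined --- terminating and confluent, with an outcome depending only on the isomorphism class of the diagram and not on the syntactic presentation. The delicate interactions are between the cups and caps (the ``yanking'' moves of~(\ref{eqn-pairing})) and the symmetries, and the correct accounting of bare closed loops, such as traces $\dim(A)$ regarded as scalars $I\ii I$; it is here that one uses that a symmetric monoidal category is in particular spacial, so that a scalar component may be slid anywhere in a diagram without changing the morphism it denotes. A much softer route, valid at least for simple signatures, is to invoke coherence for tortile categories (Theorem~\ref{thm-coherence-tortile}) in the special case $\theta_A=\id_A$: with a trivial twist and a self-inverse braiding, framed 3-dimensional isotopy collapses to 3-dimensional isotopy of diagrams in which over- and under-crossings are identified, which is precisely ambient isotopy in 4 dimensions, i.e.~isomorphism of diagrams.
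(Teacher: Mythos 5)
There is a genuine gap, and it sits exactly where you flag ``the main obstacle''. The paper itself gives no proof of this theorem: it is stated subject to Caveat~\ref{cav-coherence-compact-closed}, which records that only the case of diagrams over a \emph{simple} signature is in the literature (Kelly and Laplaza {\cite[Thm.~8.2]{KL80}}), and that the general case does not appear in the literature. Your completeness argument ultimately rests on the claim that the normal-form/linking characterization ``is the theorem of Kelly and Laplaza, proved there for arbitrary monoidal signatures''; that attribution is wrong on the decisive point. Kelly and Laplaza characterize the free compact closed category generated by a \emph{category}, i.e.\ their generators are boxes with exactly one input and one output wire, whereas the theorem here is over a monoidal (indeed autonomous) signature with boxes of arbitrary arity. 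The step you defer --- showing that the rewriting of an arbitrary term to its linking-plus-loops data is well defined, invariant under the axioms, and depends only on the isomorphism class of the diagram, with the correct bookkeeping of closed loops --- is precisely the content of the general theorem; your sketch names it as delicate but does not carry it out, so the proposal proves nothing beyond what the cited special case already gives. (Your soundness discussion, and the remark that isomorphism of diagrams coincides with ambient isotopy in 4 dimensions, are fine.)

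The ``softer route'' does not repair this. Theorem~\ref{thm-coherence-tortile} is itself only established for simple signatures (Shum, Caveat~\ref{cav-coherence-tortile}), so it inherits exactly the same restriction; and even granting it, deducing the compact closed statement needs more than setting $\theta_A=\id_A$ and identifying over- with under-crossings. One must show that the free compact closed category on the signature is related to the free tortile category in such a way that, on the graphical side, imposing symmetry and trivial twist is \emph{exactly} the operation of forgetting framing and crossing information: that every pair of diagrams with isomorphic underlying (unframed, crossing-free) diagrams becomes provably equal, and, conversely, that the passage does not force any further identifications between compact closed terms. The second half --- a faithfulness statement about the comparison of the two free categories --- is not automatic and is not argued in your proposal; without it, the collapse ``framed 3-dimensional isotopy becomes isomorphism of diagrams'' is an assertion about the graphical languages, not yet a proof about the equational theories.
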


\begin{caveat}
  \label{cav-coherence-compact-closed}
  The special case of diagrams over a simple signature was proven
  by Kelly and Laplaza {\cite[Thm.~8.2]{KL80}}. The general case does
  not appear in the literature.
\end{caveat}

%----------------------------------------------------------------------
\section{Traced categories}\label{sec-traced}

The graphical languages considered in Section~\ref{sec-progressive}
were {\em progressive}, which means that all wires were oriented
left-to-right. By contrast, the graphical languages of autonomous
categories in Section~\ref{sec-autonomous} allow wires to be oriented
left-to-right or right-to-left.  We now turn out attention to an
intermediate notion, namely {\em traced} categories.

Like autonomous graphical languages, traced graphical languages permit
loops, but with a restriction: all wires must be directed
left-to-right at their endpoints. In other words, traced diagrams are
like autonomous diagrams, but are taken relative to a {\em monoidal
  signature} (see Section~\ref{subsec-planar-monoidal}), rather than
an {\em autonomous signature} (see
Section~\ref{subsec-planar-autonomous}).
Table~\ref{tab-traced-vs-autonomous} shows a typical example of a
traced diagram, and a typical example of an autonomous diagram that is
not a traced diagram.

%......................................................................
\begin{table}
  \[ (a)\ssep\m{\wirechart{@C=1cm@R=.8cm}{
    \blank\wireopen{d}\wire{rr}{}&&
    \blank\wireclose{d}
    \\
    \blank\wireright{r}{}&
    \blank\wireright{r}{}&
    \blank
    \\
    *{}\wireright{r}{}&
    \blank\ulbox{[].[u]}{f}\wireright{r}{}&
    *{}
    \\}}\sep
  (b)\ssep\raisebox{-5mm}{\m{\wirechart{@C=1cm@R=.8cm}{
      \wireright{r}{}&
      \blank\ulbox{[].[d]}{f}\wireright{r}{}&
      \blank\wireclose{d}\\
      \wireleft{r}{}&
      \blank\wireleft{r}{} &
      \blank
      }}}
  \]
  \caption{(a) A traced diagram. (b) An autonomous diagram that is not traced.}
  \label{tab-traced-vs-autonomous}
\end{table}
%......................................................................

Logically, we should have considered traced categories before pivotal
categories, because traced categories have less structure than pivotal
categories (i.e., every pivotal category is traced, and not the other
way around). However, many of the coherence theorems of this section
are consequences of the corresponding theorems for pivotal categories,
and therefore it made sense to present the pivotal notions first.

Symmetric traced categories and their graphical language (in the
strict monoidal case, and with one additional axiom) were first
introduced in the 1980's by {\Stefanescu} and {\Cazanescu} under the
name ``biflow'' {\cite{6,12,8}}.  Joyal, Street, and Verity later
rediscovered this notion independently, generalized it to balanced
monoidal categories, and proved the fundamental embedding theorem
relating balanced traced categories to tortile categories
{\cite{JSV96}}.

\begin{remark}
  Joyal, Street, and Verity use the term {\em traced monoidal
    category}. However, I prefer {\em traced category}, usually
  prefixed by an adjective such as planar, spacial, balanced,
  symmetric. The word ``monoidal'' is redundant, because one cannot
  have a traced structure without a monoidal structure. Also, by
  putting the adjective before the word ``traced'', rather than after
  it, we make it clear that the traced structure, and not just the
  underlying monoidal structure, if being modified.
\end{remark}

%----------------------------------------------------------------------
\subsection{Right traced categories}\label{subsec-right-traced}

\begin{definition}
  A {\em right trace} on a monoidal category is a family of operations
  \[      \TrR^X:\hom(A\x X,B\x X)\ii\hom(A,B),
  \]
  satisfying the following four axioms. For notational convenience, we
  assume without loss of generality that the monoidal structure is
  strict.
  \begin{enumerate}\alphalabels
  \item Tightening (naturality in $A,B$): $\TrR^X((g\x\id_X)\cp f\cp
    (h\x\id_X))=g\cp (\TrR^Xf)\cp h$;
  \item Sliding (dinaturality in $X$): $\TrR^Y(f\cp(\id_A\x
    g))=\TrR^X((\id_B\x g)\cp f)$, where $f:A\x X\ii B\x Y$ and $g:Y\ii
    X$;
  \item Vanishing: $\TrR^I f=f$ and $\TrR^{X\x Y} f=\TrR^X(\TrR^Y(f))$;
  \item Strength. $\TrR^X(g\x f)=g\x\TrR^X f$.
  \end{enumerate}
  A {\em (planar) right traced category} is a monoidal category equipped
  with a right trace.
\end{definition}

These axioms are similar to those of Joyal, Street, and Verity
{\cite{JSV96}}, except that we have omitted the yanking axioms which
does not apply in the planar case, and we have replaced the non-planar
``superposing'' axiom by the planar ``strength'' axiom. I do not know
whether this set of planar axioms appears in the literature.

\paragraph{Graphical language and coherence.}

The right trace of a diagram $f:A\x X\ii B\x X$ is graphically
represented by drawing a loop from the output $X$ to the input $X$, as
follows:

\begin{equation}\label{eqn-right-trace}
\TrR^X f = \vcenter{\wirechart{@C=1cm@R=.8cm}{
    \blank\wireopen{d}\wire{rr}{}&&
    \blank\wireclose{d}
    \\
    \blank\wireright{r}{X}&
    \blank\wireright{r}{X}&
    \blank
    \\
    *{}\wireright{r}{A}&
    \blank\ulbox{[].[u]}{f}\wireright{r}{B}&
    *{}
    \\}}
\end{equation}

Note that in the graphical language of right traced categories, parts
of wires can be oriented right-to-left, but each wire must be oriented
left-to-right near the endpoints.  The four axioms of right traced
categories are illustrated in the graphical language in
Table~\ref{tab-right-traced}.
%......................................................................
\begin{table}
  \hfill\resizebox{0.95\textwidth}{!}{\includegraphics{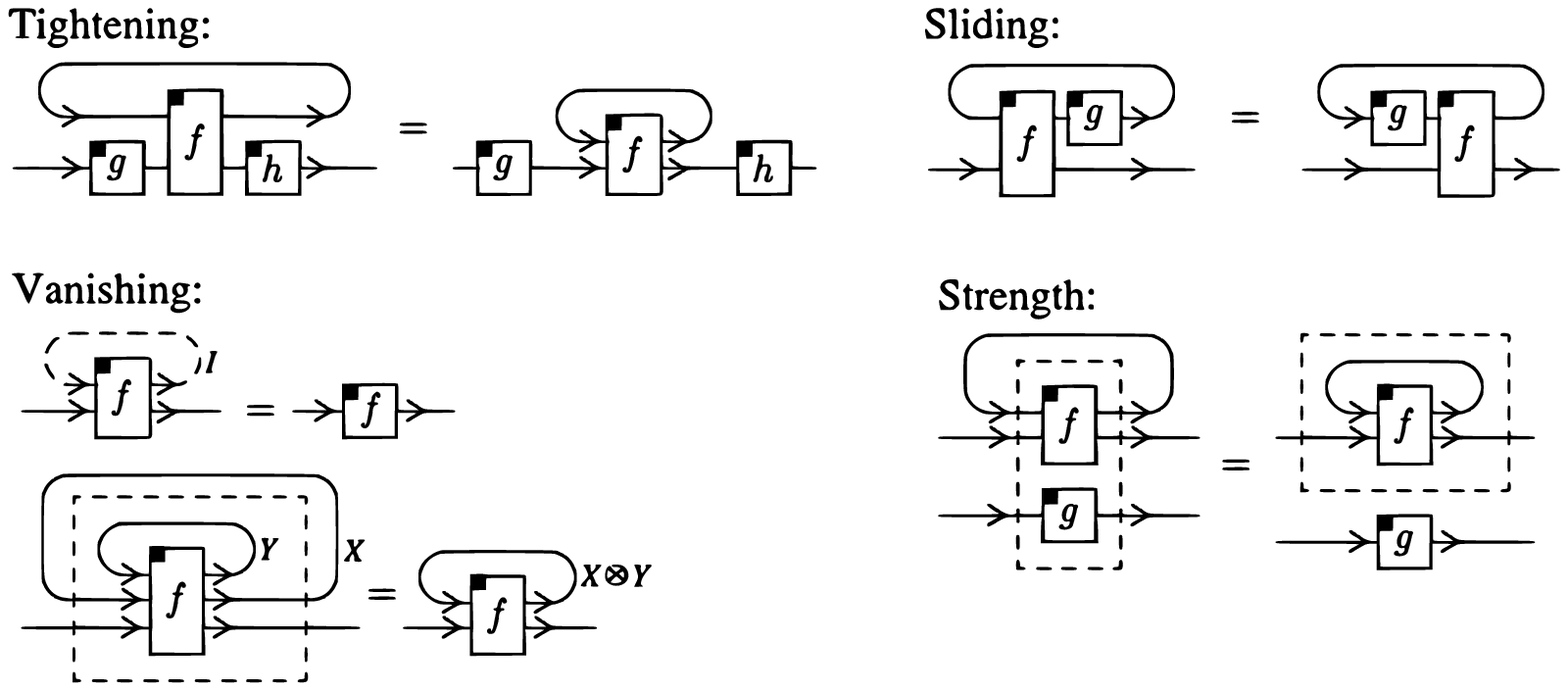}}\hfill
  \caption{The axioms of right traced categories}
  \label{tab-right-traced}
\end{table}
%......................................................................
The axioms of right traced categories are obviously sound for the
graphical language, up to planar isotopy. We conjecture that they are
also complete.

\begin{conjecture}[Coherence for right traced categories]
  A well-formed equation between morphism terms in the language of
  right traced categories follows from the axioms of right traced
  categories if and only if it holds in the graphical language up
  planar isotopy.
\end{conjecture}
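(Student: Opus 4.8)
The strategy is the usual one for a coherence theorem of this shape: prove soundness directly, then prove completeness by identifying the free right traced category with the category of trace diagrams modulo planar isotopy, using the coherence theorems for planar monoidal categories (Theorem~\ref{thm-coherence-planar}) and planar autonomous categories (Theorem~\ref{thm-coherence-planar-autonomous}) as auxiliary tools.

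Soundness is routine. Each axiom is preserved by planar isotopy: tightening and $\TrR^I f=f$ are essentially tautologies once the loop of (\ref{eqn-right-trace}) is drawn; the nesting law $\TrR^{X\x Y}f=\TrR^X(\TrR^Y f)$ just redraws two concentric loops as one loop on a composite wire; sliding is the move that pushes a box around the loop; and strength says that stacking a disjoint diagram above a traced diagram commutes with tracing. This gives the ``only if'' direction, and also shows that every right traced term determines a well-defined planar isotopy class of trace diagrams, hence a functor $u\colon\mathrm{RTr}(\Sigma)\ii\mathcal{T}(\Sigma)$ from the syntactically defined free right traced category to the category whose morphisms are planar isotopy classes of trace diagrams over $\Sigma$. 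Since every trace diagram can be read off as a term and composition and trace are defined geometrically, $u$ is full and essentially surjective; completeness is precisely the statement that $u$ is faithful.

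To prove faithfulness I would develop a normal form for right traced terms. The idea is to use vanishing to coalesce all traces into a single outer block, and tightening, sliding, and strength to migrate trace loops outward, aiming to rewrite an arbitrary term as $\TrR^{Y_1}\cdots\TrR^{Y_k}(g)$ with $g$ progressive. Regarding $\Sigma$ as an autonomous signature with no starred generators, such a normal form is the translation of a well-formed autonomous term, the loop of (\ref{eqn-right-trace}) being read as a unit followed by a backward wire followed by a counit. Theorem~\ref{thm-coherence-planar-autonomous} then identifies equality of two normal forms in $\mathrm{RTr}(\Sigma)$ with planar isotopy of the associated autonomous diagrams; and since those diagrams keep every wire left-to-right at the boundary and never rotate a box, such an isotopy is exactly a planar isotopy of trace diagrams. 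It then remains to check that the rewriting steps preserve the planar isotopy class and that any two planar-isotopic trace diagrams admit normal forms whose progressive cores are planar isotopic (handled by Theorem~\ref{thm-coherence-planar}). A more structural alternative would be a planar ``geometry of interaction'' functor $\mathrm{RTr}(\Sigma)\ii\mathrm{Aut}(\Sigma)$ into the free planar autonomous category together with a direct proof of its faithfulness --- the planar analogue of the Joyal--Street--Verity embedding \cite{JSV96} --- but that faithfulness is the same difficulty and cannot be quoted from \cite{JSV96}, whose proof relies essentially on the braiding.

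The main obstacle is the normal-form lemma. In the strictly planar setting the strength axiom is weak: it lets one pull a trace loop past a tensor factor lying entirely above the traced region, but not past one wedged alongside it, so it is not clear that every term reduces to one in which all traces are outermost, and loops genuinely stuck in the interior must be shown to be classified up to planar isotopy and nothing coarser. This is the phenomenon that Remark~\ref{rem-planar-not-free} points to in noting that planar traced categories are less well-behaved than their braided analogues, and it is why the result is stated here only as a conjecture.
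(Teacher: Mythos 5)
The statement you are trying to prove is not a theorem in the paper: it is stated there as a conjecture, and the author explicitly remarks that there is ``not much empirical evidence to support it, nor is there an obvious strategy for a proof.'' The paper's only claim is the soundness half, which it calls obvious; your soundness argument (each axiom of Table~\ref{tab-right-traced} is an isotopy of the pictures in (\ref{eqn-right-trace})) is correct and matches that. So the question is whether your completeness sketch actually closes the gap the paper leaves open --- and by your own admission it does not: the entire weight of the argument rests on the normal-form lemma, which you leave unproved, and on the faithfulness of the comparison with the free planar autonomous category, which you correctly observe cannot be quoted from \cite{JSV96} because the Int-construction uses the braiding in an essential way (this is exactly why the paper can prove coherence for braided, balanced, and symmetric traced categories via Theorems~\ref{thm-embedding-braided-traced} and \ref{thm-embedding-balanced-traced} but not for the planar ones).

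To make the gap concrete: in the planar setting there is no reason why an arbitrary right traced term should rewrite, using only tightening, sliding, vanishing, and strength, to a form $\TrR^{Y_1}\cdots\TrR^{Y_k}(g)$ with $g$ progressive --- strength only moves a loop past a tensor factor stacked entirely above it, and sliding only moves boxes around a loop, so loops created inside a composite and then composed with further traced pieces can remain interleaved. Worse, even if a syntactic normal form existed, the identification of the free right traced category with a full subcategory of diagrams fails at the geometric level: as Remark~\ref{rem-planar-not-free} shows for the closely related planar traced case, there are planar-isotopy classes of ``traced-looking'' diagrams that are not the diagram of any term, so an isotopy connecting the diagrams of two terms may pass through pictures with no syntactic counterpart, and your plan gives no mechanism for converting such an isotopy into a chain of axiom applications. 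This is precisely the difficulty that makes the statement a conjecture rather than a theorem; your proposal identifies the obstruction accurately but does not overcome it, so it should be read as a strategy discussion, not a proof.
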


This is a weak conjecture, in the sense that there is not much
empirical evidence to support it, nor is there an obvious strategy for
a proof. If this conjecture turns out to be false, the axioms for
right traced categories should be amended until it becomes true.

The concept of a {\em left trace} is defined similarly as a family of
operations
\[      \TrL^X:\hom(X\x A,X\x B)\ii\hom(A,B),
\]
satisfying symmetric axioms. A left trace is graphically depicted as
follows:
\begin{equation}\label{eqn-left-trace}
\TrL^X g = \vcenter{\wirechart{@C=1cm@R=.8cm}{
    *{}\wireright{r}{A}&
    \blank\wireright{r}{B}&
    *{}
    \\
    \blank\wireright{r}{X}&
    \blank\ulbox{[].[u]}{g}\wireright{r}{X}&
    \blank
    \\
    \blank\wireopen{u}\wire{rr}{}&&
    \blank\wireclose{u}
    \\}}
\end{equation}

We say that a monoidal functor $F$ {\em preserves right traces} if
$F(\TrR^X f) = \TrR^{FX}((\phi^2)\inv\cp Ff\cp\phi^2)$, and similarly
for left traces.

%----------------------------------------------------------------------
\subsection{Planar traced categories}

\begin{definition}
  A {\em planar traced category} is a monoidal category equipped with
  a right trace and a left trace, such that the two traces satisfy
  three additional axioms:
  \begin{enumerate}\alphalabels
  \item Interchange: $\TrR^X(\TrL^Y f)=\TrL^Y(\TrR^X f)$, for all
    $f:Y\x A\x X\ii Y\x B\x X$;
  \item Left pivoting: $\TrR^B(\id_B\x f)=\TrL^A(f\x\id_A)$, for all
    $f:I\ii A\x B$;
  \item Right pivoting: $\TrR^B(\id_B\x f)=\TrL^A(f\x\id_A)$, for all
    $f:A\x B\ii I$.
  \end{enumerate}
\end{definition}

\paragraph{Graphical language and coherence.}

The graphical language of planar traced categories consists of
diagrams using the left and right trace together, modulo planar
isotopy. The axioms of interchange, left pivoting, and right pivoting
are shown graphically in Table~\ref{tab-planar-traced}. Compare also
equation~(\ref{eqn-rotation}) on page~\ref{eqn-rotation}.
%......................................................................
\begin{table}
  \[ \begin{array}{ccc}
    \m{\resizebox{4cm}{!}{\includegraphics{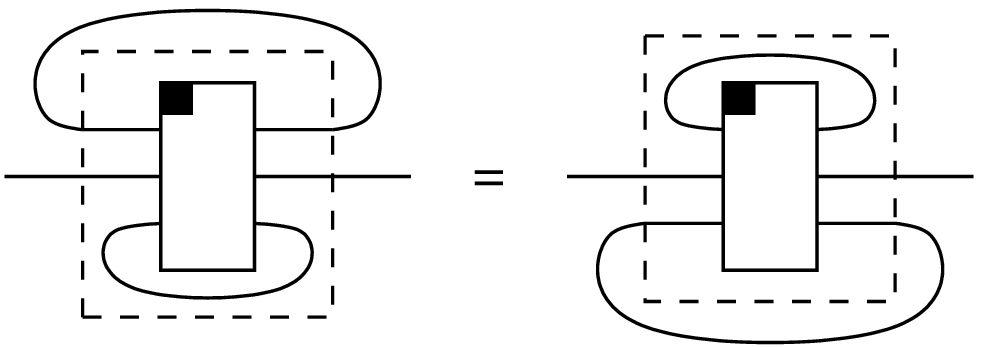}}} &
    \m{\resizebox{3cm}{!}{\includegraphics{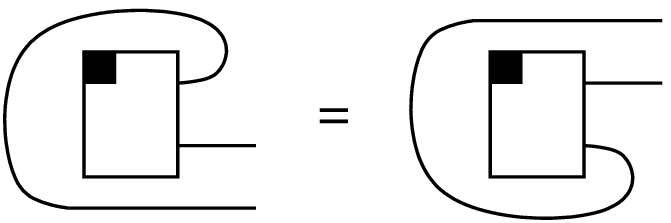}}} &
    \m{\resizebox{3cm}{!}{\includegraphics{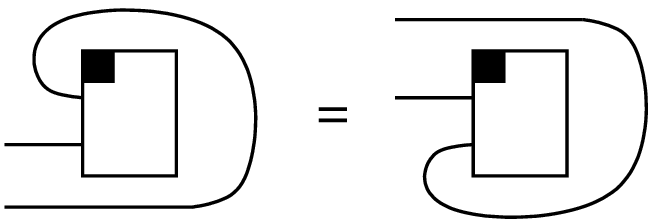}}} \\
    \mbox{(a) interchange} &
    \mbox{(b) left pivoting} &
    \mbox{(c) right pivoting}
  \end{array}
  \]
  \caption{Axioms relating left and right trace}
  \label{tab-planar-traced}
\end{table}
%......................................................................
The axioms are clearly sound; we conjecture that they are also
complete:

\begin{conjecture}[Coherence for planar traced categories]
  \label{conj-coherence-planar-traced}
  A well-formed equation between morphism terms in the language of
  planar traced categories follows from the axioms of planar traced
  categories if and only if it holds in the graphical language up
  planar isotopy.
\end{conjecture}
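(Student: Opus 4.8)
The plan is to deduce Conjecture~\ref{conj-coherence-planar-traced} from the coherence theorem for planar pivotal categories, Theorem~\ref{thm-coherence-pivotal}, by carrying the ``$\mathrm{Int}$'' construction of Joyal, Street and Verity \cite{JSV96} over from the braided to the planar setting. Soundness is a routine check: each axiom of a planar traced category --- the four right-trace axioms, the four symmetric left-trace axioms, interchange, and the two pivoting axioms --- is manifestly preserved under planar isotopy of the diagrams in Tables~\ref{tab-right-traced} and~\ref{tab-planar-traced}, and the categorical and monoidal operations are sound by Theorems~\ref{thm-coherence-categories} and~\ref{thm-coherence-planar}, so every provable equation holds up to planar isotopy. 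All the content is in the converse. Because of the subtlety noted in Remark~\ref{rem-planar-not-free} --- the free planar traced category is not captured by the expected universal property --- I would not phrase the argument as a freeness statement, but work directly with the category $\mathcal{T}=\mathcal{T}(\Sigma)$ whose morphisms are well-formed planar-traced terms over $\Sigma$ modulo the axioms, and prove that the evident sound, surjective functor from $\mathcal{T}$ to the category of traced diagrams modulo planar isotopy is faithful.

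First I would construct, for an arbitrary planar traced category $\Cc$, a planar pivotal category $\mathrm{Int}(\Cc)$ together with a strong monoidal functor $N\colon\Cc\ii\mathrm{Int}(\Cc)$. As in \cite{JSV96}, an object of $\mathrm{Int}(\Cc)$ is a pair $(A^{+},A^{-})$ of objects of $\Cc$, a morphism $(A^{+},A^{-})\ii(B^{+},B^{-})$ is a morphism $A^{+}\x B^{-}\ii B^{+}\x A^{-}$ of $\Cc$, composition and identities are defined by means of the traces, and $N A=(A,I)$. The departure from the braided case is that, with no symmetry available to transport strands, the dual of $(A^{+},A^{-})$ must be taken to be $(A^{-},A^{+})$, with unit and counit built from identities alone, and it is precisely the interchange and the two pivoting axioms --- the planar residue of the ``yanking'' axiom of \cite{JSV96} --- that make the triangle identities go through and equip $\mathrm{Int}(\Cc)$ with a pivotal, not merely autonomous, structure. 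I would then verify that $N$ is full and faithful, using the vanishing and tightening axioms to manufacture a two-sided inverse on hom-sets exactly as in \cite{JSV96}, and that $N$ \emph{reflects the trace}: the right (resp.\ left) trace of $f\colon A\x X\ii B\x X$ in $\Cc$ is recovered from $N f$ by pre- and postcomposing with the canonical unit and counit of the dual pair $(X,I)\dashv(I,X)$ in $\mathrm{Int}(\Cc)$. (Should the pivoting axioms turn out to yield only an autonomous structure, one reduces instead to Theorem~\ref{thm-coherence-planar-autonomous}, which is moreover already proved; the remainder of the argument is unaffected.)

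Applying this with $\Cc=\mathcal{T}(\Sigma)$ produces a planar pivotal category $\mathrm{Int}(\mathcal{T}(\Sigma))$ carrying an interpretation of $\Sigma$, hence a strong monoidal comparison functor $P(\Sigma)\ii\mathrm{Int}(\mathcal{T}(\Sigma))$ out of the free planar pivotal category $P(\Sigma)$, compatible with the diagrammatic interpretations on both sides. Now suppose a well-formed planar-traced equation $s=t$ holds up to planar isotopy of traced diagrams. A traced diagram has dual-free boundary (its boundary objects are tensor products of object variables) and no rotated boxes, so it is in particular a pivotal diagram over $\Sigma$, and a traced planar isotopy is a special pivotal planar isotopy; thus by Theorem~\ref{thm-coherence-pivotal} the equation $s=t$ is derivable from the pivotal axioms inside $P(\Sigma)$. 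What remains is the \emph{descent}: every such pivotal derivation between terms with dual-free boundary must be reproduced using only the axioms of planar traced categories. Equivalently --- and this is the form I would aim for --- the full monoidal subcategory of $P(\Sigma)$ spanned by the dual-free objects, endowed with the left and right traces induced by its autonomous structure, must be isomorphic to $\mathcal{T}(\Sigma)$: by the computations above it is a planar traced category, it is generated by $\Sigma$ as such (every morphism between dual-free objects can be written with the generators, $\x$, $\cp$, $\TrR$, $\TrL$ by pushing all cups and caps to an outermost layer), so $\mathcal{T}(\Sigma)$ maps onto it, and the point is the injectivity of that map.

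I expect the descent to be the main obstacle, and the honest way to settle it seems to be a normal-form argument carried out directly on planar trace-shaped diagrams. One would push every cup and cap of a dual-free-boundary planar pivotal diagram out to a single outermost layer, so that the diagram is literally exhibited as an iterated application of $\TrR$ and $\TrL$ to one morphism assembled from the generators by $\x$ and $\cp$ alone, and then show that any two such presentations of planar-isotopic diagrams are connected by a finite sequence of tightening, sliding, vanishing, strength, interchange and pivoting moves --- that is, that the associated rewriting system is confluent and that its normal forms classify planar-isotopy classes of trace-shaped diagrams. This same normalization would also give a self-contained proof bypassing $\mathrm{Int}$ altogether, but either route leaves one with the same purely planar combinatorial statement, for which at present neither a proof nor an obvious strategy is known; this is precisely why the result is stated here only as a conjecture, and, failing a proof, the axioms should be strengthened until the normalization succeeds.
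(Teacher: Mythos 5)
The statement you are proving is not proved in the paper at all: it is stated as a conjecture, explicitly described as ``weak'' in the sense that ``there is not much empirical evidence to support it, nor is there an obvious strategy for a proof.'' Your proposal is, in the end, consistent with that status --- you verify soundness (which is indeed routine) and you candidly concede that the completeness direction reduces to a planar combinatorial statement for which you have no proof. So you have not closed a gap the paper left open; what you have written is a strategy sketch, and its honest ending is the correct assessment.

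That said, one intermediate step of your sketch is presented with more confidence than it deserves, and it is worth flagging because the paper itself signals the difficulty. You propose to build $\mathrm{Int}(\Cc)$ for a planar traced $\Cc$ ``as in'' Joyal--Street--Verity and only worry later about the descent. But the JSV Int-construction uses the braiding essentially: composition in $\mathrm{Int}(\Cc)$, the tensor of morphisms (whose interleaving of positive and negative components requires transporting strands across one another), and the verification of the dual-pair identities are all defined via symmetries, and the planar analogue of these definitions is exactly where wires would have to cross. The paper reflects this: the Int-route is invoked only for braided, balanced, and symmetric traced categories (Theorems~\ref{thm-embedding-braided-traced}--\ref{thm-embedding-symmetric-traced} and the ``int'' entries of Table~\ref{tab-chart}), while the remark following Conjecture~\ref{conj-coherence-spacial-traced} explicitly records that it is \emph{not known} whether a traced category without braiding embeds faithfully into the corresponding pivotal category; Remark~\ref{rem-planar-not-free} points at the same obstruction from the diagrammatic side. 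So your plan has two open gaps, not one: the planar embedding itself (your hedge about ``only an autonomous structure'' does not address whether composition in the planar $\mathrm{Int}$ is even definable), and the descent/normal-form argument you already identified. Neither is fatal to the conjecture, but neither is a step you may take as established.
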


As for right traced categories, this conjecture is weak. If it turns
out to be false, then one should amend the axioms of planar traced
categories accordingly.

\begin{remark}\label{rem-planar-not-free}
  Even if the conjecture is true, the graphical language does not in
  itself give an easy description of the free planar traced category.
  This is because there are diagrams, such as the following, that
  ``look'' planar traced, but are not actually the diagram of any planar
  traced term (not even up to planar isotopy).
  \[ \resizebox{1.6in}{!}{\includegraphics{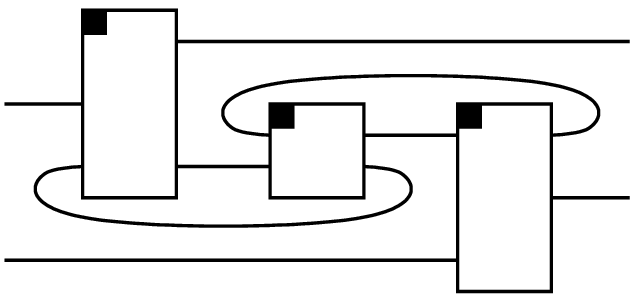}}
  \]
  It is not obvious how to characterize the ``planar traced'' diagrams
  intrinsically, or how to extend the notion of planar traced
  categories to encompass all such diagrams.
\end{remark}

\begin{remark}
  \label{rem-pivotal-is-traced}
  An autonomous category is not necessarily traced. However, every
  pivotal category is planar traced with the obvious definitions of
  left and right trace:
  \[ \begin{array}{lll}
    \TrR^X f &=& (\id_B\x\eps_X)\cp((f\cp (\id_A\x i\inv_X))\x\id_{X^*})\cp(\id_A\x\eta_{X^*}),\\
    \TrL^X f &=& (\eps_{X^*}\x\id_B)\cp(\id_{X^*}\x((i_X\x\id_B)\cp f))\cp(\eta_{X}\x\id_A).\\
  \end{array}
  \]
  In the graphical language, this looks just like the diagrams
  (\ref{eqn-right-trace}) and (\ref{eqn-left-trace}). As a
  consequence, each diagram of planar traced categories can be
  regarded as a diagram of planar pivotal categories, but not the
  other way around.
\end{remark}

%----------------------------------------------------------------------
\subsection{Spherical traced categories}

The concept of a spherical traced category is analogous to that of
spherical pivotal categories from
Section~\ref{subsec-spherical-pivotal}.

\begin{definition}
  A planar traced category satisfies the {\em spherical axiom} if for
  all $f:A\ii A$,
  \begin{equation}\label{eqn-spherical-traced}
    \TrL^A f = \TrR^A f,
  \end{equation}
  or equivalently, in the graphical language:
  \[
  \vcenter{\wirechart{@R=.5cm}{
      \blank\wireopen{d}\wireright{r}{A} &\blank\ulbox{[]}{f}\wireright{r}{A}& \blank\wireclose{d} \\
      \blank\wire{rr}{} & & \blank \\
      }
    }
  =    \vcenter{\wirechart{@R=.5cm}{
      \blank\wireopen{d}\wire{rr}{} && \blank\wireclose{d} \\
      \blank\wireright{r}{A} & \blank\ulbox{[]}{f}\wireright{r}{A} & \blank
      }
    }
  \]
  A {\em spherical traced category} is a planar traced category
  satisfying the spherical axiom.
\end{definition}

Every spherical pivotal category is spherical traced.

\paragraph{Failure of coherence.}

Just like for spherical pivotal categories, the graphical language of
spherical traced categories is not coherent for any geometrically
useful notion of equivalence of diagrams.

%----------------------------------------------------------------------
\subsection{Spacial traced categories}\label{subsec-spacial-traced}

\begin{definition}
  A {\em spacial traced category} is a planar traced category if it
  satisfies the spacial axiom (\ref{eqn-spacial}) and the spherical
  axiom (\ref{eqn-spherical-traced}).
\end{definition}

\paragraph{Graphical language and coherence.}

The graphical language for spacial traced categories is the same as
that for planar traced categories, except that equivalence of diagrams
is now taken up to isomorphism.

\begin{conjecture}[Coherence for spacial traced categories]
  \label{conj-coherence-spacial-traced}
  A well-formed equation between morphism terms in the language of
  spacial traced categories follows from the axioms of spacial traced
  categories if and only if it holds, up to isomorphism of diagrams,
  in the graphical language.
\end{conjecture}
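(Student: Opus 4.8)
The strategy is to reduce the statement to Conjecture~\ref{conj-coherence-spacial-pivotal} (coherence for spacial pivotal categories) by way of a full and faithful, trace-preserving embedding of an arbitrary spacial traced category into a spacial pivotal one --- the planar analogue of the $\mathrm{Int}$ construction of Joyal, Street, and Verity \cite{JSV96}. Soundness is immediate: each defining axiom --- the right- and left-trace axioms, interchange, the two pivoting axioms, and the spacial and spherical axioms --- visibly holds once diagrams are taken up to isomorphism. For completeness, suppose a well-formed equation $s=t$ in the language of spacial traced categories holds up to isomorphism in the graphical language; we must show that $s=t$ holds in every spacial traced category $\Cc$.

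First I would carry out the embedding. Given a spacial traced category $\Cc$, form the category $\hat\Cc$ whose objects are pairs $(A^+,A^-)$ of objects of $\Cc$, a morphism $(A^+,A^-)\ii(B^+,B^-)$ being a morphism $A^+\x B^-\ii B^+\x A^-$ of $\Cc$, with composition defined via the trace exactly as in \cite{JSV96}, the evident tensor, and duals $(A^+,A^-)^*=(A^-,A^+)$. The strong monoidal functor $A\mapsto(A,I)$ embeds $\Cc$ fully and faithfully into $\hat\Cc$, and the right and left pivotal traces on $\hat\Cc$ (Remark~\ref{rem-pivotal-is-traced}) restrict along this embedding to the given $\TrR$ and $\TrL$ on $\Cc$ --- this is precisely what the two pivoting axioms of planar traced categories buy us, and it is also what makes $\hat\Cc$ pivotal rather than merely autonomous. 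Finally, I would check that the spacial axiom on $\Cc$ forces the spacial axiom~(\ref{eqn-spacial}) on $\hat\Cc$ and that the spherical axiom~(\ref{eqn-spherical-traced}) on $\Cc$ forces the spherical axiom~(\ref{eqn-spherical}) on $\hat\Cc$, so that $\hat\Cc$ is a spacial pivotal category.

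With the embedding in hand the transfer is routine. By Remark~\ref{rem-pivotal-is-traced}, the terms $s$ and $t$, built over a monoidal signature, are also well-formed terms in the language of spacial pivotal categories over the same signature regarded as autonomous, and their graphical diagrams --- loops drawn as in~(\ref{eqn-right-trace}) and~(\ref{eqn-left-trace}) --- coincide with the corresponding pivotal diagrams, with isomorphism of diagrams meaning the same thing on both sides. Hence the hypothesis gives that $s=t$ holds up to isomorphism in the graphical language of spacial pivotal categories, so by Conjecture~\ref{conj-coherence-spacial-pivotal} the equation $s=t$ follows from the axioms of spacial pivotal categories, and therefore holds in $\hat\Cc$. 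Since $s$ and $t$ are traced terms, their interpretations in $\hat\Cc$ (computed through the embedding of $\Cc$) lie in the image of $\Cc$; by faithfulness, $s=t$ already holds in $\Cc$. As $\Cc$ was arbitrary, $s=t$ follows from the axioms of spacial traced categories.

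The main obstacle is twofold. The hard geometric and algebraic content is the embedding step: constructing the planar $\mathrm{Int}$ category, verifying that the pivoting and interchange axioms make it pivotal with the pivotal traces restricting correctly, and --- the delicate part --- verifying that the \emph{spacial} and \emph{spherical} axioms are exactly the hypotheses needed to transport spaciality and sphericality to $\hat\Cc$, with nothing in the non-free nature of the planar traced graphical language (cf.\ Remark~\ref{rem-planar-not-free}) obstructing fullness of the embedding. Beyond that, the argument is only as strong as Conjecture~\ref{conj-coherence-spacial-pivotal} on which it rests, so a proof of the present conjecture would most naturally be obtained in tandem with, or as a corollary of, a proof of that one.
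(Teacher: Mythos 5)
The statement you are proving is stated in the paper as an open \emph{conjecture}, and the paper offers no proof; immediately after it, the author remarks that the conjecture \emph{would} follow from Conjecture~\ref{conj-coherence-spacial-pivotal} \emph{if} every spacial traced category could be faithfully embedded in a spacial pivotal category --- and explicitly says he does not know whether such an embedding exists. Your proposal is exactly this conditional reduction, with the missing embedding asserted rather than established (``First I would carry out the embedding\ldots''), so the genuinely hard content is precisely the part you defer. Moreover, even if the embedding were supplied, the argument would remain conditional on Conjecture~\ref{conj-coherence-spacial-pivotal}, which is itself unproven; so at best you have reproduced the paper's own remark, not a proof.

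The concrete obstruction to your embedding step is that the $\mathrm{Int}$-construction of Joyal, Street, and Verity does not make sense planarly: composition in $\mathrm{Int}(\Cc)$ of $f:A^+\x B^-\ii B^+\x A^-$ with $g:B^+\x C^-\ii C^+\x B^-$ is defined by tracing out the $B$-components of a composite that must first \emph{permute} tensor factors, and those permutations are braidings/symmetries of $\Cc$. A spacial traced category has no braiding, so ``composition defined via the trace exactly as in \cite{JSV96}'' is not even well-defined; graphically, the required wiring forces wires to cross, which is exactly what the planar setting forbids. This is why the paper proves $\mathrm{Int}$-type representation theorems only for braided, balanced, and symmetric traced categories (Theorems~\ref{thm-embedding-braided-traced} and~\ref{thm-embedding-balanced-traced} and~\ref{thm-embedding-symmetric-traced}), and why the analogous embedding of a spacial traced category into a spacial pivotal category is flagged as unknown. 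Your claim that the pivoting and interchange axioms ``buy'' the restriction of the pivotal traces along $A\mapsto(A,I)$ is plausible as a heuristic, but it presupposes that $\hat\Cc$ exists as a (pivotal) category in the first place, which is the unresolved point. Unless you can either define a genuinely planar substitute for the $\mathrm{Int}$ composition (perhaps using only $\TrR$, $\TrL$, interchange, pivoting, spaciality, and sphericality) or find a different completeness argument not routed through pivotal categories, the proposal does not close the gap.
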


\begin{remark}
  Every spacial pivotal category is clearly spacial traced. I do not
  know whether conversely every spacial traced category can be
  faithfully embedded in a spacial pivotal category. If this is true,
  then Conjecture~\ref{conj-coherence-spacial-traced} follows from
  Conjecture~\ref{conj-coherence-spacial-pivotal}.
\end{remark}

%----------------------------------------------------------------------
\subsection{Braided traced categories}

Braided traced categories, like braided pivotal categories, are a
somewhat unnatural notion, because coherence is only satisfied up to
regular isotopy. (If one considers full isotopy, one obtains the more
natural notion of balanced traced categories, which we will consider
in the next section). Nevertheless, we include this section on braided
traced categories, not least because it is the first traced notion for
which we can actually prove a coherence theorem (modulo
Caveat~\ref{cav-coherence-braided-pivotal}).

\begin{definition}
  A {\em braided traced category} is a planar traced category with a
  braiding (as a monoidal category), such that
  \begin{equation}\label{eqn-braided-traced}
    (\TrL^A\sym_{A,A})\cp(\TrR^A\symi_{A,A}) = \id_A,
  \end{equation}
  or graphically:
  \[ \resizebox{1.8in}{!}{\includegraphics{braided-pivotal}}.
  \]
\end{definition}

\begin{lemma}
  \begin{enumerate}\alphalabels
  \item The axiom (\ref{eqn-braided-traced}) does not follow from the
    remaining axioms.
  \item In the presence of the remaining axioms,
    (\ref{eqn-braided-traced}) is equivalent to
    \begin{equation}\label{eqn-braided-traced-2}
      (\TrL^A\symi_{A,A})\cp(\TrR^A\sym_{A,A}) = \id_A,
    \end{equation}
    or graphically:
    \[ \resizebox{1.8in}{!}{\includegraphics{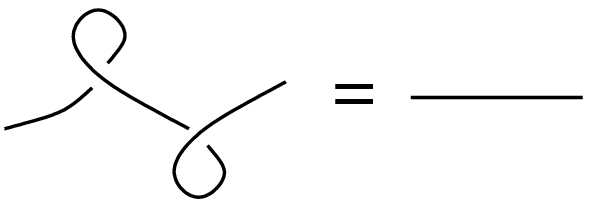}}.
    \]
  \item In the presence of the remaining axioms of braided traced
    categories, the left and right pivoting axioms are redundant.
  \end{enumerate}
\end{lemma}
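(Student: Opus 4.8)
The plan is to handle the three parts separately; only part (c) requires real work, and parts (b) and (c) together amount to the statement that, once (\ref{eqn-braided-traced}) is imposed, the left and right traces become interdefinable via the braiding.

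\emph{Part (a).} I would argue by soundness. All of the remaining axioms --- the four right-trace axioms, their left-trace counterparts, interchange, left and right pivoting, and the braiding axioms --- are sound for a graphical language of braided trace diagrams under a notion of diagram equivalence that is strictly finer than regular isotopy: it permits planar isotopy of trace diagrams together with the braid relations (which supply the Reidemeister moves (R2) and (R3)), but not the reconnection needed to merge a positive curl and a negative curl lying on \emph{opposite} sides of a strand. Under this equivalence the left-hand side of (\ref{eqn-braided-traced}) is a strand decorated by two such curls, while the right-hand side is the bare strand $\id_A$, and these are not equivalent; hence by soundness (\ref{eqn-braided-traced}) is not a consequence of the other axioms. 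The delicate point is to pin down precisely the equivalence for which the remaining axioms --- in particular left and right pivoting --- are genuinely sound and to confirm that it separates the two diagrams; alternatively one may exhibit an explicit model, e.g.\ a braided traced candidate built from a braided autonomous category with a trace whose left and right components are ``mismatched'' in the spirit of Remark~\ref{rem-braided-pivotal-other-theta}, chosen so that pivoting still holds but (\ref{eqn-braided-traced}) fails.

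\emph{Part (b).} This should be a direct diagram chase. The left-hand sides of (\ref{eqn-braided-traced}) and (\ref{eqn-braided-traced-2}) differ only by the global ``mirror'' operation that exchanges $\sym_{A,A}$ with $\symi_{A,A}$ and reflects the diagram top-to-bottom. I would implement this mirror inside the theory: reroute the two trace loops using sliding (dinaturality) and vanishing, move their turn-backs between the two sides using left and right pivoting, and slide the crossings past using naturality of the braiding. Carrying this out transforms $(\TrL^A\sym_{A,A})\cp(\TrR^A\symi_{A,A})=\id_A$ verbatim into $(\TrL^A\symi_{A,A})\cp(\TrR^A\sym_{A,A})=\id_A$, and the reverse implication is the same computation run backwards. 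One can also phrase the content as: modulo the remaining axioms, each of the two equations asserts that the ``positive curl'' and ``negative curl'' endomorphisms of $A$ are mutually inverse, which is a symmetric condition.

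\emph{Part (c).} Here is where the argument has content. The goal is to show that, given (\ref{eqn-braided-traced}) and the remaining trace and braiding axioms, a right-trace loop can be rerouted into a left-trace loop (and conversely) by carrying the traced strand around its neighbour with the braiding. For $f:I\ii A\x B$ I would start from $\TrL^A(f\x\id_A)$, use the braiding to pass the traced $A$-strand around the $B$-strand, apply naturality of the braiding together with tightening, sliding, vanishing, and strength to slide $f$ through and collect the two resulting crossings into a single curl on the loop, and then invoke (\ref{eqn-braided-traced}) to absorb that curl; the outcome is $\TrR^B(\id_B\x f)$, which is exactly the left pivoting axiom, and the case $f:A\x B\ii I$ is the upside-down version yielding right pivoting. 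The main obstacle is the bookkeeping of the residual twist: one must arrange that the two braidings introduced by the reroute are gathered into precisely the curl that (\ref{eqn-braided-traced}) is designed to kill, with nothing left over --- this is also why the \emph{spherical} axiom is \emph{not} derivable (Remark~\ref{braided-not-spherical}), so the computation must stop exactly short of that stronger cancellation.
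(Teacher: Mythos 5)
Your parts (b) and (c) are in the same spirit as the paper's own treatment, which likewise consists of graphical manipulations using the remaining axioms (with the paper explicitly noting that sliding is the key step in the derivation of pivoting), so apart from the fact that you do not carry out the chase, I have no objection there --- though your closing gloss on (b), that both equations say ``the positive and negative curls are mutually inverse, which is a symmetric condition'', conflates the four distinct curls $\TrR^A\sym_{A,A}$, $\TrL^A\sym_{A,A}$, $\TrR^A\symi_{A,A}$, $\TrL^A\symi_{A,A}$: equations (\ref{eqn-braided-traced}) and (\ref{eqn-braided-traced-2}) pair them off differently, and the $\sym\leftrightarrow\symi$ mirror symmetry only shows the two implications are equivalent to each other, not that either holds.

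The genuine gap is in part (a). The separating invariant you propose cannot exist in the form you describe. You ask for a diagram equivalence that contains planar isotopy together with the braid relations (R2) and (R3) but refuses ``the reconnection needed to merge a positive curl and a negative curl lying on opposite sides of a strand''. But that reconnection is itself a consequence of planar isotopy plus (R2): the graphical form of (\ref{eqn-braided-traced}) is exactly the equation that the paper observes holds up to regular isotopy, provable from (R2) and (R3) alone (this is why it is valid in every braided pivotal, indeed braided autonomous, category, and why it must be imposed as an extra axiom to obtain completeness). So any equivalence validating planar isotopy and (R2) already identifies the two sides, and your soundness argument collapses; the fallback of a ``mismatched'' model is not constructed. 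The paper avoids geometry entirely here: over a signature with one object generator and no morphism generators it assigns to each term a \emph{degree}, the tensor product of all traced-out objects ($\deg(\id)=I$, $\deg(f\cp g)=\deg(f)\x\deg(g)$, $\deg(\TrR^X f)=X\x\deg(f)$, etc.), checks that every remaining axiom preserves degree up to isomorphism (sliding and pivoting only because, with no morphism generators, every term has isomorphic domain and codomain, so these axioms relate terms of equal degree or are vacuous), and notes that (\ref{eqn-braided-traced}) has degree $A\x A$ on the left and $I$ on the right. Some such syntactic or model-theoretic invariant, rather than a sub-equivalence of regular isotopy, is what part (a) actually requires.
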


\begin{proof}
  (a) To see this, consider morphism terms in the language of braided
  traced categories with one object generator and no morphism
  generators.  Define the {\em degree} of a term to the be tensor
  product of all traced-out objects, i.e., $\deg(\id)=I$, $\deg(f\cp
  g)=\deg(f)\x \deg(g)$, $\deg(\TrR^Xf) = X\x \deg(f)$, etc. This is
  well-defined up to isomorphism.  All the axioms of planar traced
  categories and braided categories respect degree; the only axioms
  where the left-hand side and right-hand side could potentially have
  different degree are sliding in Table~\ref{tab-right-traced} and
  pivoting in Table~\ref{tab-planar-traced}. However, in the absence
  of morphism generators, it is easy to show that all morphism terms
  are of the form $f:A\ii B$ where $A\iso B$. Therefore, neither
  sliding nor pivoting change the degree (the latter because it is
  vacuous).  Therefore degree is an invariant. On the other hand,
  (\ref{eqn-braided-traced}) is not degree-preserving; therefore it
  cannot follow from the other axioms.

  (b) The following graphical proof sketch can be turned into an
  algebraic proof:
  \[ \resizebox{3in}{!}{\includegraphics{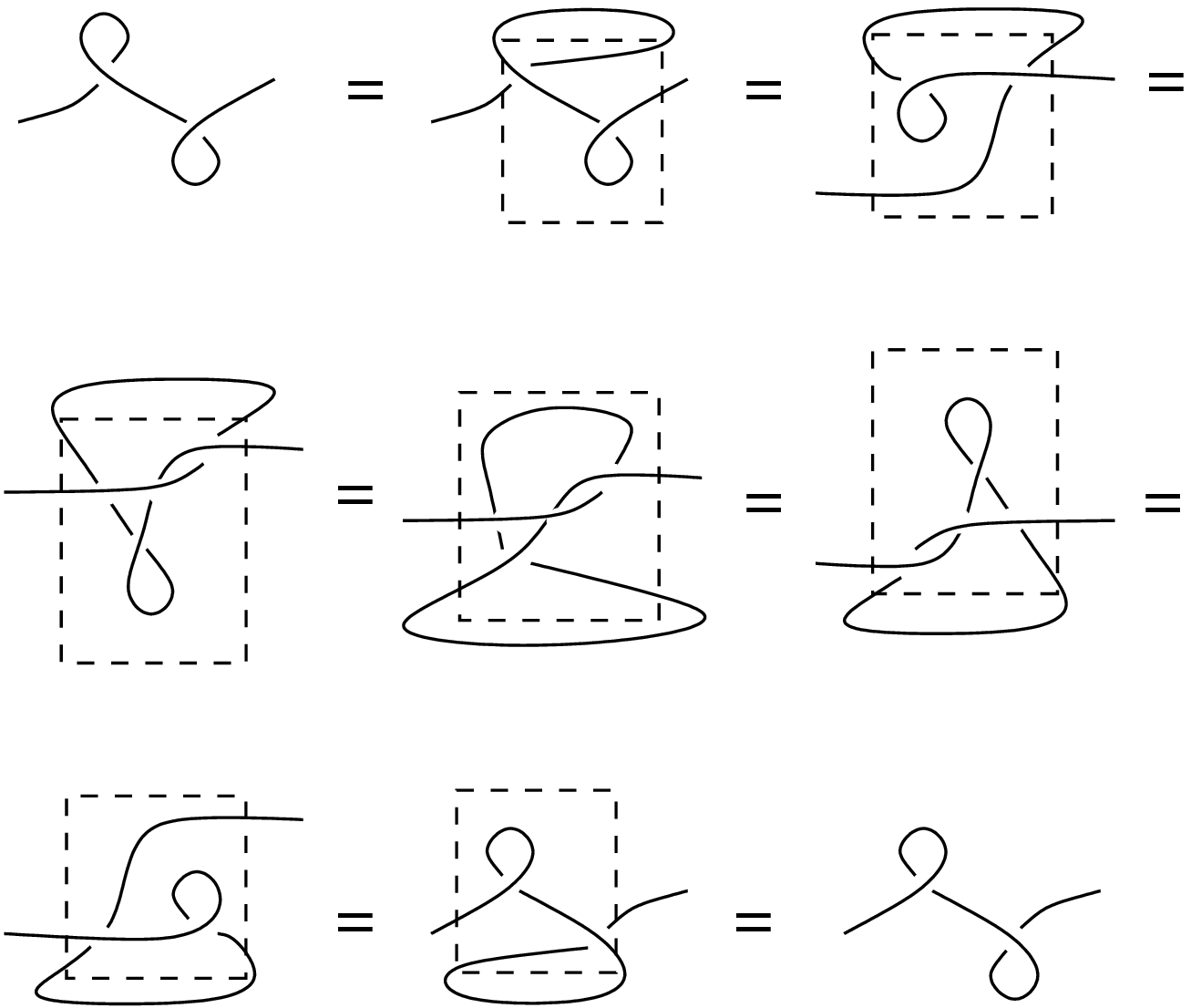}}
  \]

  (c) Here is a proof sketch for the left pivoting axiom. Notably, the
  second to last step uses dinaturality (sliding).
  \[ \resizebox{4in}{!}{\includegraphics{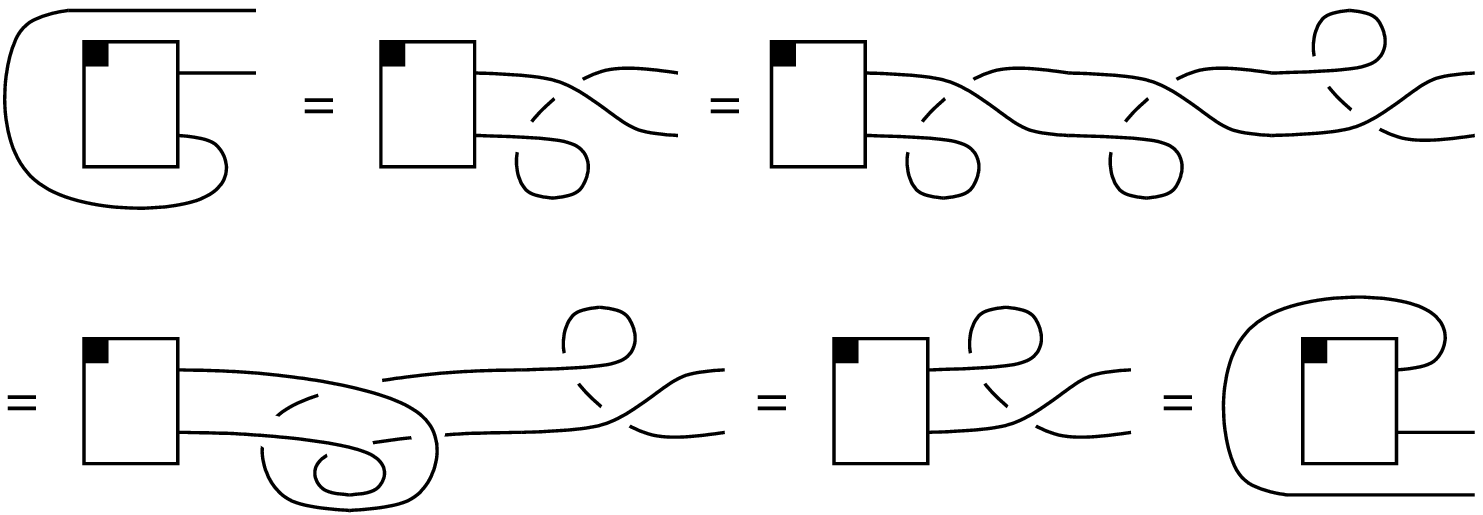}}
  \]
\end{proof}

\begin{remark}
  Each braided traced category possesses a balanced structure (as a
  braided monoidal category) given by $\theta_A=\TrL^A\symi_{A,A}$,
  with inverse $\theta\inv_A=\TrR^A\sym_{A,A}$ (cf.
  (\ref{eqn-braided-traced})). However, this twist is not canonical;
  for example, another twist can be defined by
  $\theta'_A=\TrR^A\sym_{A,A}$ with inverse
  $\theta'_A{}\inv=\TrL^A\symi_{A,A}$ (cf.
  (\ref{eqn-braided-traced-2})). In fact, there are countably many
  other possible twists. This is entirely analogous to
  Remark~\ref{rem-braided-pivotal-other-theta}. The various twists
  coincide if and only if the yanking equation (\ref{eqn-yanking})
  holds, yielding a balanced traced category as discussed in
  Section~\ref{subsec-balanced-traced} below.
\end{remark}

We note that every braided pivotal category is braided traced, with
the traced structure as given in Remark~\ref{rem-pivotal-is-traced}.
Moreover, there is an embedding theorem giving a partial converse:

\begin{theorem}[Representation of braided traced categories]\label{thm-embedding-braided-traced}
  Every braided traced category $\Cc$ can by fully and faithfully
  embedded into a braided pivotal category $\Int(\Cc)$, via a braided
  traced functor.
\end{theorem}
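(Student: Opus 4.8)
The plan is to adapt the Int-construction of Joyal, Street, and Verity \cite{JSV96}, which embeds a balanced traced category into a tortile category, to the braided (non-balanced) setting. Recall that for a traced category $\Cc$, the objects of $\Int(\Cc)$ are pairs $(A^+,A^-)$ of objects of $\Cc$, a morphism $(A^+,A^-)\ii(B^+,B^-)$ is a morphism $A^+\x B^-\ii B^+\x A^-$ in $\Cc$, and composition is defined by tracing out the ``feedback'' object. The tensor product is $(A^+,A^-)\x(B^+,B^-)=(A^+\x B^+,B^-\x A^-)$, and the dual of $(A^+,A^-)$ is $(A^-,A^+)$, with unit and counit built from identities; this makes $\Int(\Cc)$ autonomous. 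The embedding $\Cc\ii\Int(\Cc)$ sends $A$ to $(A,I)$ and is full and faithful because $\hom_{\Int(\Cc)}((A,I),(B,I))=\hom_{\Cc}(A\x I,B\x I)\cong\hom_{\Cc}(A,B)$.

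First I would check that the braiding on $\Cc$ lifts to a braiding on $\Int(\Cc)$. The candidate is the obvious one: the component at $(A^+,A^-),(B^+,B^-)$ is assembled from $\sym$ and $\symi$ in $\Cc$ in the pattern dictated by the variances of the four slots (positive slots braid forward, negative slots braid backward), and one verifies the two hexagon axioms by a diagrammatic calculation in $\Cc$ using naturality of $\sym$ and the traced axioms. Next I would show that this braiding together with the autonomous structure makes $\Int(\Cc)$ into a \emph{braided pivotal} category. By Lemma~\ref{lem-braided-pivotal}, it suffices to exhibit a twist, equivalently a pivotal structure $i$, on $\Int(\Cc)$; the natural candidate is $i_{(A^+,A^-)}$ built from the twist $\theta_A=\TrL^A\symi_{A,A}$ that every braided traced category carries (as noted in the Remark preceding the theorem), or more directly one simply observes that the canonical isomorphism $(A^+,A^-)^{**}=(A^+,A^-)$ holds \emph{on the nose} in $\Int(\Cc)$ because double-dualizing a pair returns the same pair, so the pivotal structure can be taken to be an identity — but one must then check it is a \emph{monoidal} natural transformation, i.e.\ that it is compatible with the braiding via the defining diagram \eqref{eqn-pivotal-monoidal}, which is where the chosen twist data on $\Cc$ actually enters. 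Finally I would verify that the embedding $\Cc\ii\Int(\Cc)$ is a braided traced functor: it preserves the braiding by construction, and it preserves the trace because tracing out $X$ in $\Cc$ corresponds, under the embedding, to composing with the unit and counit of $(X,I)$ in $\Int(\Cc)$, which is exactly how the trace is recovered from the autonomous-plus-braided structure (Remark~\ref{rem-pivotal-is-traced}).

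The main obstacle is the compatibility of the pivotal structure with the braiding, i.e.\ checking \eqref{eqn-pivotal-monoidal} in $\Int(\Cc)$. Because a braided traced category carries a whole family of non-canonical twists (the Remark before the theorem, analogous to Remark~\ref{rem-braided-pivotal-other-theta}), there is a genuine choice to be made, and one must pick the twist data so that the resulting $i$ is monoidal natural \emph{and} so that the embedding is still a braided traced functor; verifying these two requirements simultaneously, unwinding the definitions of composition and duals in $\Int(\Cc)$, is the delicate diagrammatic bookkeeping. One subtlety worth flagging is that $\Int(\Cc)$ will generally \emph{not} be tortile (only braided pivotal), reflecting the fact that braided traced categories, unlike balanced traced ones, do not satisfy the yanking axiom; so the target of the embedding is correctly braided pivotal and no more, and one should resist the temptation to prove a tortile embedding. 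A clean way to organize the whole argument is to prove it as a corollary of the universal property of $\Int$: $\Int(\Cc)$ is already known to be the free ``compact-closure'' type completion, so the braided and pivotal structure transport formally once the twist on $\Cc$ is fixed, reducing the problem to the identification of that twist datum.
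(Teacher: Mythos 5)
Your main line of attack is exactly the paper's: the paper's proof is itself only a sketch, saying that one mimics the Int-construction of Joyal, Street, and Verity, replaces the twist by the loop $\TrL^A\symi_{A,A}$, and checks that only the braided traced axioms are used; your first candidate for the pivotal structure (the one built from $\theta_A=\TrL^A\symi_{A,A}$) and your verification plan for the braiding, the embedding, and trace preservation coincide with that sketch, and like the paper you leave the lengthy diagrammatic verifications unperformed.

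Two of your side remarks, however, would fail if pursued. First, the ``more direct'' option of taking the pivotal structure to be the identity does not work: in $\Int(\Cc)$ the double dual is indeed the identity on objects, since $(A^+,A^-)^{**}=(A^+,A^-)$, but the double-dualization functor is not the identity on morphisms --- computing the mate twice conjugates the representing morphism $A^+\x B^-\ii B^+\x A^-$ by double braidings, which are nontrivial in a genuinely braided category --- so the identity family is not even natural, let alone monoidal. The only viable choice is the one you list first, with the loop inserted where JSV use the twist, which is precisely the paper's prescription. Second, the proposed ``clean'' organization via the universal property of $\Int$ is not available: JSV establish that property for balanced traced categories relative to tortile ones, and no analogous freeness statement for braided traced categories relative to braided pivotal ones is known in advance --- proving it would amount to (more than) the theorem at hand, so the structure on $\Int(\Cc)$ must be constructed and verified by hand, as both you (in your main plan) and the paper do.
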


\begin{proof}
  The proof exactly mimics the Int-construction of Joyal, Street, and
  Verity {\cite{JSV96}}, except that we must replace the twist by
  \m{\resizebox{!}{1em}{\includegraphics{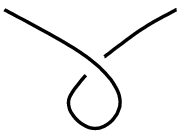}}}, and be careful only
  to use the braided traced axioms. We omit the details, which are
  both lengthy and tedious.\eot
\end{proof}

\begin{remark}
  A braided traced category is not necessarily spherical (and
  therefore not spacial). This is analogous to
  Remark~\ref{braided-not-spherical}.
\end{remark}

\paragraph{Graphical language and coherence.}

The graphical language for braided traced categories is obtained by
adding braids to the graphical language of planar traced categories.
Equivalence of diagrams is up to {\em regular isotopy} (see
Section~\ref{subsec-braided-autonomous}).

\begin{theorem}[Coherence for braided traced categories]
  \label{thm-coherence-braided-traced}
  A well-formed equation between morphisms in the language of braided
  traced categories follows from the axioms of braided traced
  categories if and only if it holds in the graphical language up to
  regular isotopy.
\end{theorem}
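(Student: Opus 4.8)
The plan is to reduce this statement to the coherence theorem for braided pivotal categories (Theorem~\ref{thm-coherence-braided-pivotal}) by way of the representation Theorem~\ref{thm-embedding-braided-traced}. Soundness (the ``only if'' direction) is routine: one checks that each axiom of braided traced categories, in its graphical form, is valid up to regular isotopy, the node-moves ($\Lambda$1) and ($\Lambda$2) accounting for the boxes, and the special axiom (\ref{eqn-braided-traced}) being exactly the picture that holds up to regular isotopy using only Reidemeister moves (R2) and (R3). So the work is in completeness: given morphism terms $s,t$ in the language of braided traced categories over a monoidal signature $\Sigma$ whose diagrams agree up to regular isotopy, we must show $s=t$ follows from the axioms.

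First I would reinterpret $s$ and $t$ as morphism terms in the language of \emph{braided pivotal} categories. By Remark~\ref{rem-pivotal-is-traced}, every braided pivotal category carries a canonical braided traced structure whose left and right traces are drawn exactly as the traced loops, and a monoidal signature is in particular an autonomous signature; so $s$ and $t$ are legitimate braided pivotal terms, with each trace abbreviating the evident composite of units, counits and braidings. The diagrams they denote as braided pivotal terms are literally the same pictures as their traced diagrams, hence still agree up to regular isotopy --- regular isotopy, as defined in Section~\ref{subsec-braided-autonomous}, is a purely geometric relation on two-dimensional pictures and does not care whether we read a diagram as ``traced'' or ``pivotal''. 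By Theorem~\ref{thm-coherence-braided-pivotal} it follows that $s=t$ is a consequence of the braided pivotal axioms, and in particular that $s=t$ holds in \emph{every} braided pivotal category.

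Next I would transport this back. Let $\Cc$ be the free braided traced category over $\Sigma$. By Theorem~\ref{thm-embedding-braided-traced} there is a braided pivotal category $\Int(\Cc)$ together with a full and faithful braided traced functor $E\colon\Cc\ii\Int(\Cc)$. Since $E$ preserves the monoidal structure, the braiding, and the traces, it sends the value of the term $s$ in $\Cc$ to the value of $s$ computed in $\Int(\Cc)$ using the braided traced structure of $\Int(\Cc)$; by Remark~\ref{rem-pivotal-is-traced} that structure is the one induced by the braided pivotal structure of $\Int(\Cc)$, so this value is exactly the interpretation of $s$ read as a braided pivotal term. The same holds for $t$, and by the previous paragraph these two interpretations in $\Int(\Cc)$ coincide. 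Hence $E$ identifies the values of $s$ and $t$ in $\Cc$, and faithfulness of $E$ yields $s=t$ in $\Cc$. As $\Cc$ is the free braided traced category over $\Sigma$, this means $s=t$ holds in all braided traced categories, i.e.\ follows from the axioms.

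The main obstacle is that the whole argument rests on Theorem~\ref{thm-coherence-braided-pivotal}, which by Caveat~\ref{cav-coherence-braided-pivotal} is only established in the literature for simple signatures; consequently the present theorem inherits precisely that caveat, and is fully proved only over simple signatures. Apart from this, the serious labour is already packaged inside Theorem~\ref{thm-embedding-braided-traced} --- the $\Int$-construction run with a loop in place of the twist and using only the braided traced axioms --- which the paper frankly calls lengthy and tedious. The only remaining point that needs a moment's care is the passage from ``regular isotopy of traced diagrams'' to ``regular isotopy of the corresponding pivotal diagrams'' in the first step; but this causes no trouble, since we use only the easy direction, namely that a regular isotopy through traced pictures is a fortiori a regular isotopy of pivotal pictures.
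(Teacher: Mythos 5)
Your proposal is correct and follows essentially the same route as the paper: soundness by inspection of the axioms, and completeness by reading the two diagrams as braided pivotal diagrams, invoking Theorem~\ref{thm-coherence-braided-pivotal}, and then transporting the resulting equation back along the full and faithful embedding of Theorem~\ref{thm-embedding-braided-traced} (your detour through the free braided traced category is just a repackaging of the paper's ``holds in all braided traced categories'' step). You also correctly note that Caveat~\ref{cav-coherence-braided-pivotal} is inherited, exactly as the paper records.
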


\begin{proof}
  Soundness is easy to check by inspection of the axioms.
  Completeness is a consequence of
  Theorems~\ref{thm-coherence-braided-pivotal} and
  {\ref{thm-embedding-braided-traced}}. Namely, consider an
  equation in the language of braided traced categories that holds in
  the graphical language up to regular isotopy.  The diagrams
  corresponding to the left-hand side and right-hand side of the
  equation can also be regarded as diagrams of braided pivotal
  categories, and since they are regularly isotopic, the equation
  holds in all braided pivotal categories by
  Theorem~\ref{thm-coherence-braided-pivotal}. Since any braided
  traced category $\Cc$ can be faithfully embedded in a braided
  pivotal category $\Int(\Cc)$ by
  Theorem~\ref{thm-embedding-braided-traced}, an equation that
  holds in $\Int(\Cc)$ must also hold in $\Cc$. It follows that the
  equation in question holds in all braided traced categories $\Cc$,
  and therefore, it is a consequence of the axioms.\eot
\end{proof}

\begin{caveat}
  Because of the dependence on
  Theorem~\ref{thm-coherence-braided-pivotal},
  Caveat~\ref{cav-coherence-braided-pivotal} also applies here.
\end{caveat}

%----------------------------------------------------------------------
\subsection{Balanced traced categories}\label{subsec-balanced-traced}

\begin{definition}[\cite{JSV96}]
  A {\em balanced traced category} is a balanced monoidal category
  equipped with a right trace $\Tr$, and satisfying the following {\em
    yanking axioms}:
  \begin{equation}\label{eqn-yanking}
    \Tr^X(\sym_{X,X}) = \theta_X
    \sep\mbox{and}\sep
    \Tr^X(\symi_{X,X}) = \theta\inv_X
  \end{equation}
\end{definition}

\paragraph{Graphical language and coherence.}

The graphical language of balanced traced categories combines the
ribbons and twists of balanced categories with the loops of traced
categories. The trace is represented as expected:
\[ \Tr^X f =
\raisebox{-10mm}{\resizebox{3cm}{!}{\includegraphics{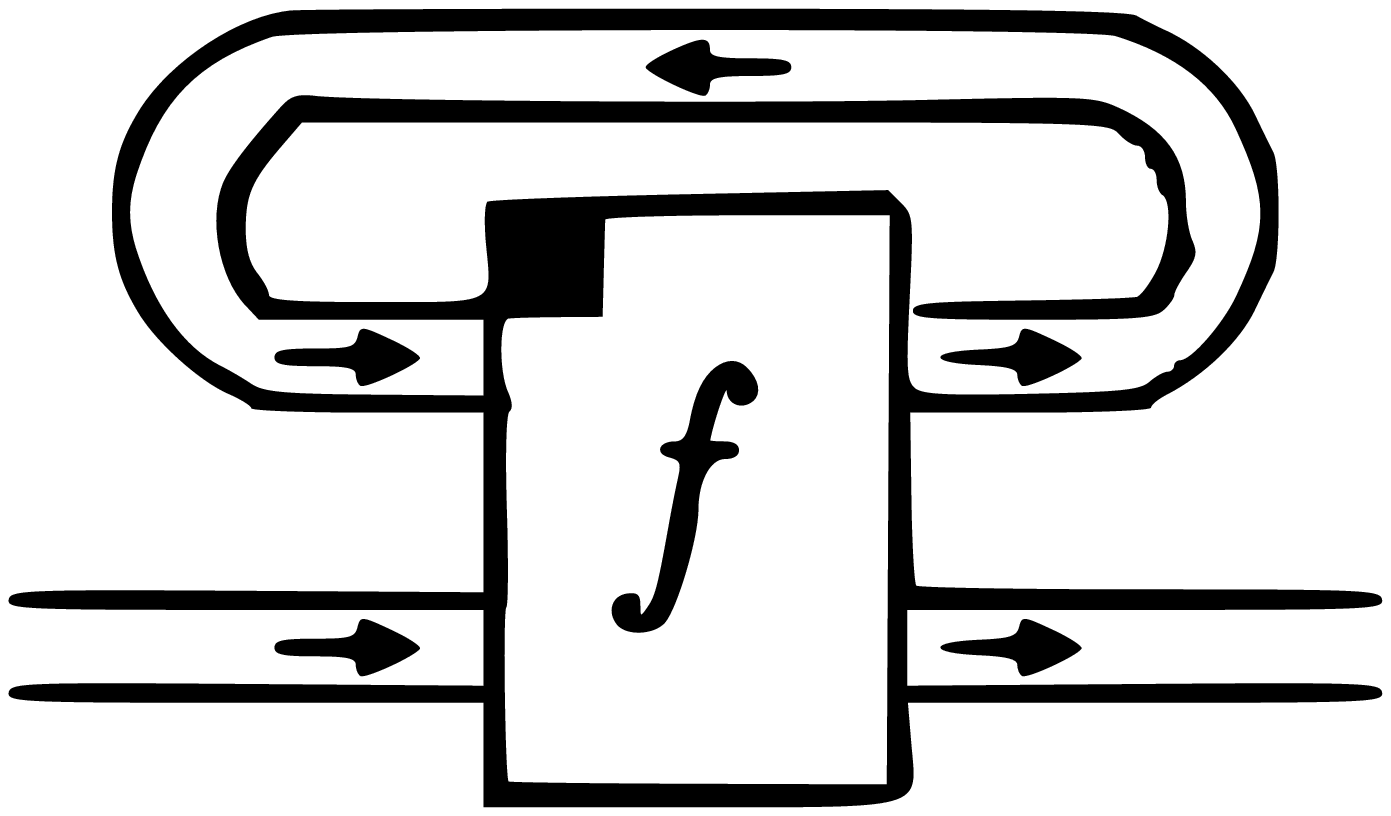}}}.
\]
Note that there is no need to postulate a left trace, because a left
trace is definable from the right trace and braidings as follows:
\[ \TrL^X f =
\raisebox{-10mm}{\resizebox{3cm}{!}{\includegraphics{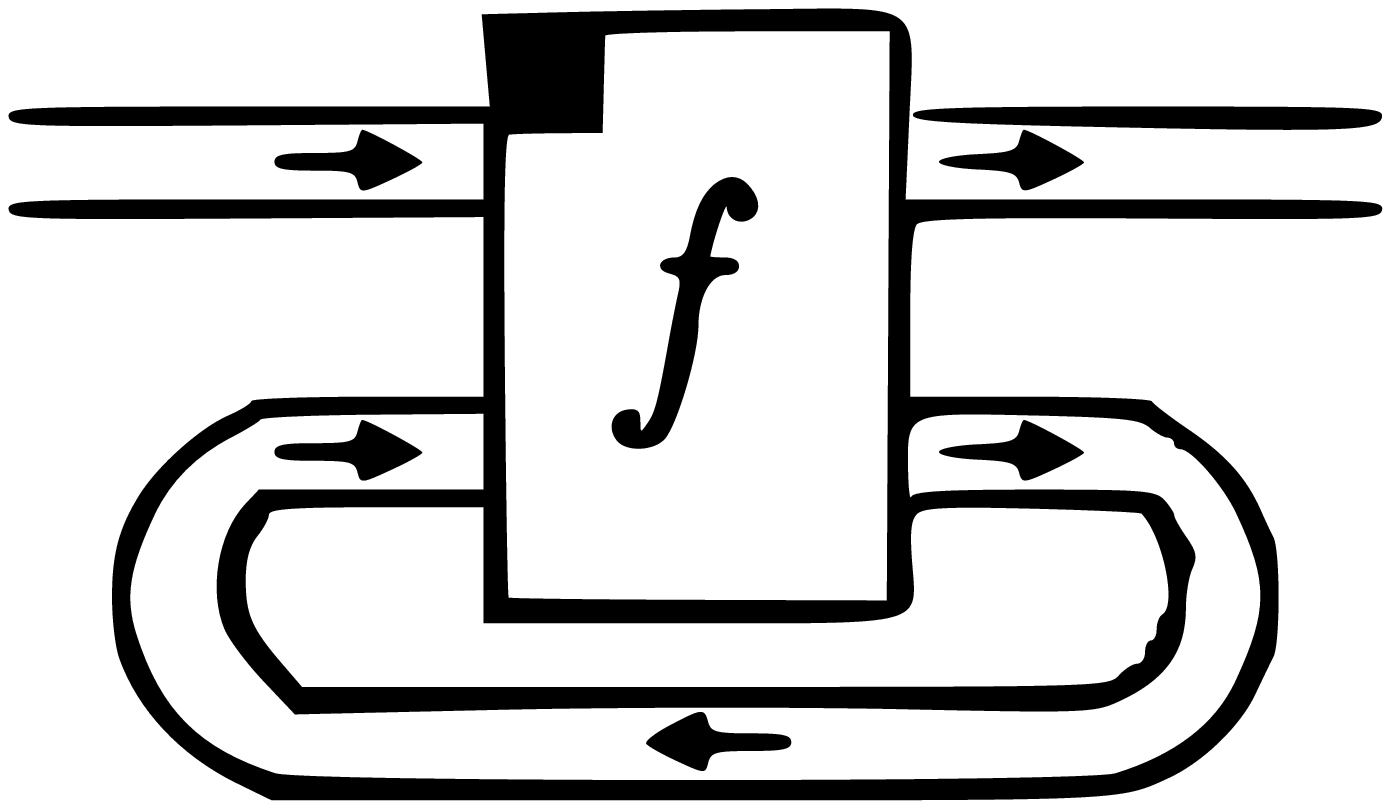}}}
:=
\raisebox{-8mm}{\resizebox{4cm}{!}{\includegraphics{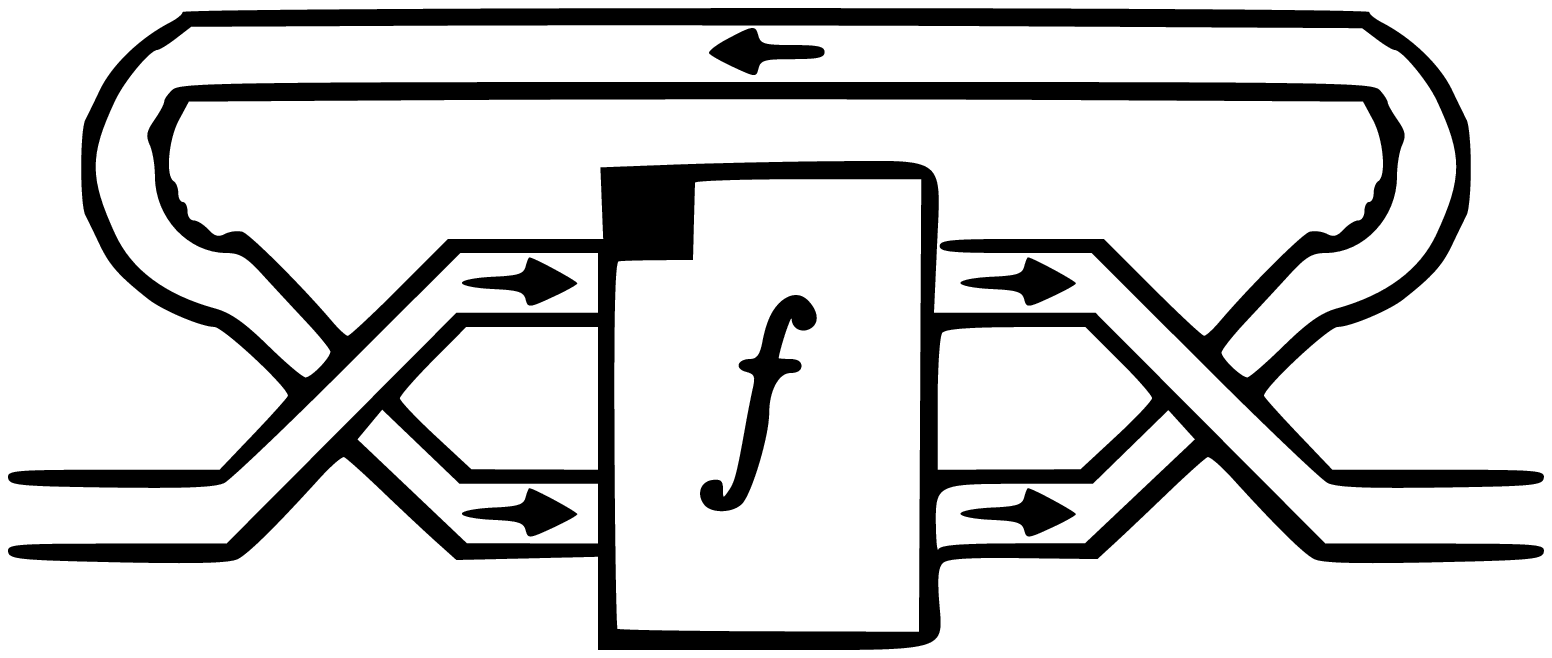}}}
\]

\begin{remark}
  The defined left trace automatically satisfies interchange and the
  pivoting axioms (Table~\ref{tab-planar-traced}), as well as the
  spherical axiom (\ref{eqn-spherical-traced}) and the braided traced
  axiom (\ref{eqn-braided-traced}). The spacial axiom
  (\ref{eqn-spacial}) is satisfied by any braided monoidal category.
  Therefore, any balanced traced category is spacial traced and
  braided traced.
\end{remark}

The graphical validity of the yanking axiom is easily verified using a
shoe string:
\[
\raisebox{-1mm}{\resizebox{1.5cm}{!}{\includegraphics{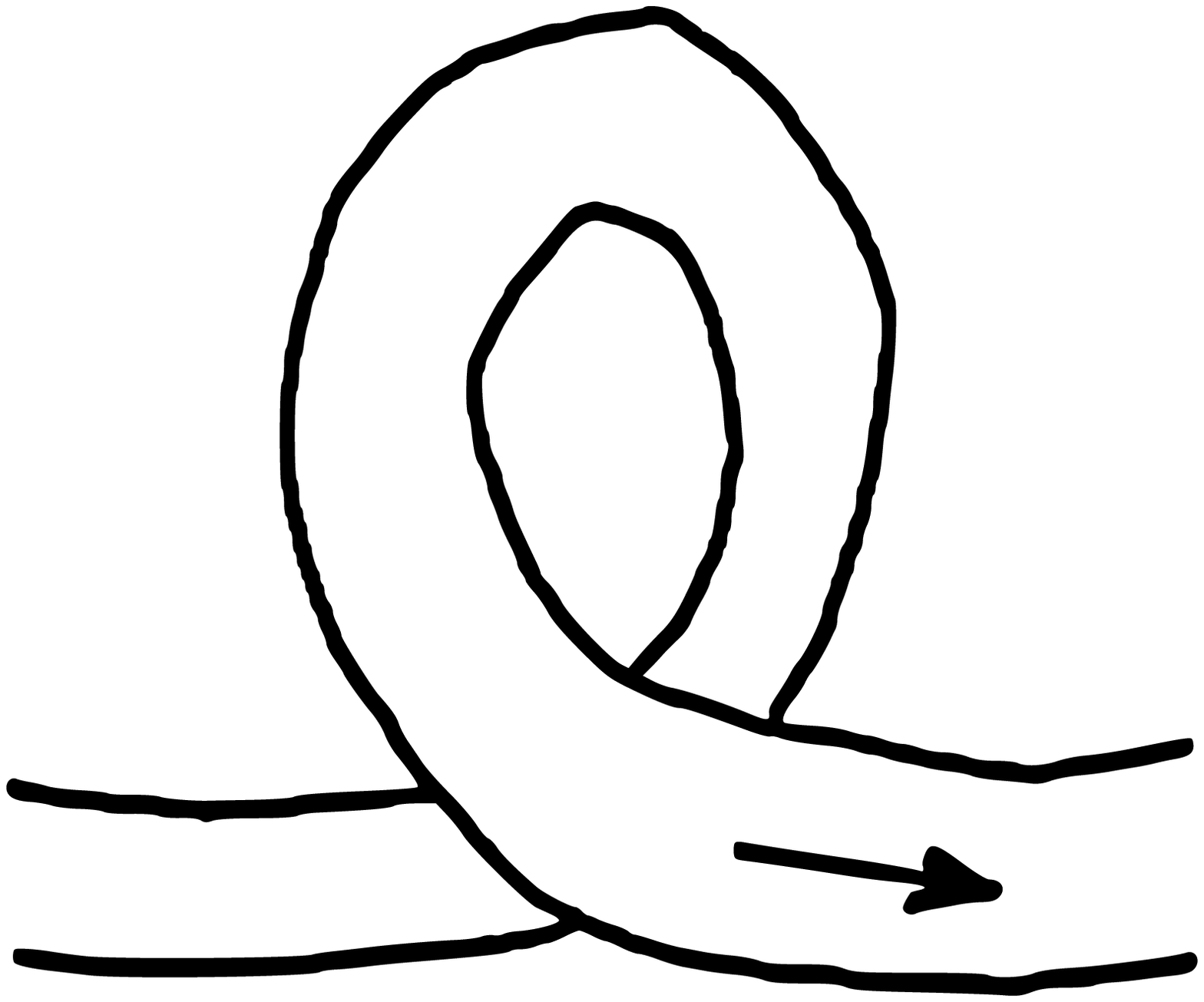}}}
~=
\resizebox{2.7cm}{!}{\includegraphics{twistA}}~,
\sep
\raisebox{-1mm}{\resizebox{1.5cm}{!}{\includegraphics{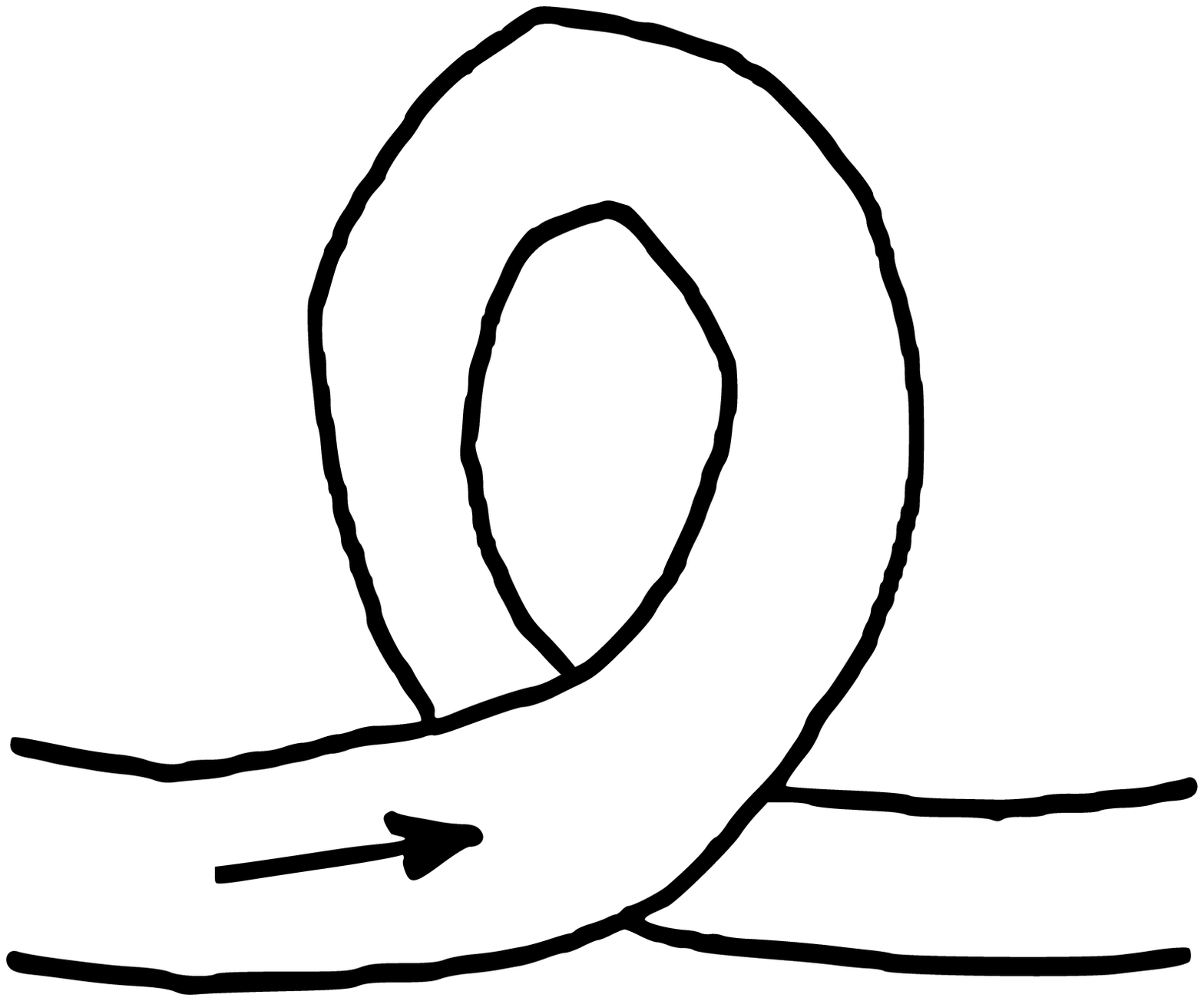}}}
~=
\resizebox{2.7cm}{!}{\includegraphics{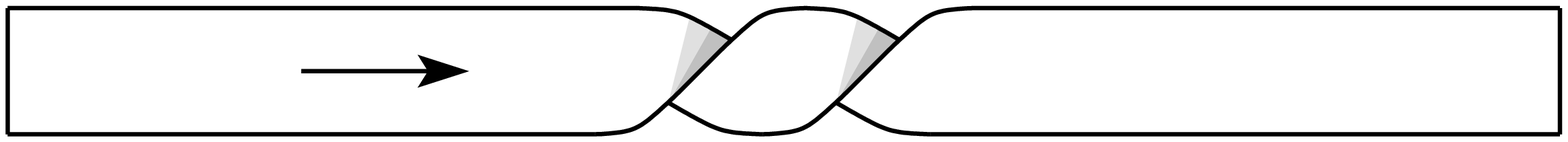}}~.
\]

Every tortile category is balanced traced, with the traced structure as
given in Remark~\ref{rem-pivotal-is-traced}. Moreover, there is an
embedding theorem:

\begin{theorem}[Representation of balanced traced categories {\cite[Prop.~5.1]{JSV96}}]
  \label{thm-embedding-balanced-traced}
  Every balanced traced category can be fully and faithfully embedded
  into a tortile category, via a balanced traced functor.
\end{theorem}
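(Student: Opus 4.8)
The plan is to mimic the classical $\mathrm{Int}$-construction of Joyal, Street, and Verity \cite{JSV96}, exactly as was done in the proof of Theorem~\ref{thm-embedding-braided-traced}, but now keeping track of the balanced (rather than merely braided) structure throughout. Given a balanced traced category $\Cc$, I would define $\mathrm{Int}(\Cc)$ to have as objects pairs $(A^+,A^-)$ of objects of $\Cc$, and as morphisms $(A^+,A^-)\ii(B^+,B^-)$ the morphisms $A^+\x B^-\ii B^+\x A^-$ of $\Cc$, with composition defined by feeding the shared component back through the trace. The tensor product is componentwise (with the appropriate braiding-mediated reshuffling), the unit is $(I,I)$, and the dual of $(A^+,A^-)$ is $(A^-,A^+)$; the unit and counit of the exact pairing are built from identities using the trace, so that $\mathrm{Int}(\Cc)$ is autonomous. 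The embedding $\Cc\ii\mathrm{Int}(\Cc)$ sends $A\mapsto(A,I)$.

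The key steps, in order: first, verify that $\mathrm{Int}(\Cc)$ is a well-defined category — associativity and unit laws of composition follow from the trace axioms (tightening, sliding, vanishing), and this is precisely the computation in \cite{JSV96}. Second, equip $\mathrm{Int}(\Cc)$ with a monoidal structure, using strength/superposing to make $\x$ functorial; again this is as in \cite{JSV96}. Third — and this is the genuinely new ingredient relative to Theorem~\ref{thm-embedding-braided-traced} — define the braiding on $\mathrm{Int}(\Cc)$ from the braiding of $\Cc$, and define the twist $\theta_{(A^+,A^-)}$ from the twist $\theta$ of $\Cc$ (on the positive part) together with $\theta^{-1}$ (on the negative part), and check the hexagon axioms and equation~(\ref{eqn-balanced}); here the yanking axioms (\ref{eqn-yanking}) are exactly what is needed to make the twist behave correctly with respect to the feedback loops. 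Fourth, check that the exact pairings make $\mathrm{Int}(\Cc)$ right autonomous, hence autonomous by Lemma~\ref{lem-braided-right-autonomous}, and verify the tortile condition of Lemma~\ref{lem-tortile}(b), i.e.\ $\theta_{(A^+,A^-)^*}=(\theta_{(A^+,A^-)})^*$; this should reduce, via the symmetry of the construction in the $\pm$ components, to the compatibility of $\theta$ and $\theta^{-1}$ already built in. Fifth, verify that the embedding $A\mapsto(A,I)$ is a balanced traced functor and is full and faithful: faithfulness is immediate since $\hom_{\mathrm{Int}(\Cc)}((A,I),(B,I)) = \hom_{\Cc}(A\x I, B\x I)\cong\hom_{\Cc}(A,B)$, and fullness follows similarly; that it preserves the trace is a short diagram chase using vanishing.

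The main obstacle is step three: getting the twist on $\mathrm{Int}(\Cc)$ right and verifying all the balanced axioms, because the twist must interact correctly with the trace loops that implement composition, and a naive choice of $\theta_{(A^+,A^-)}$ will fail one of the coherence conditions. The yanking axioms are the crucial lever — they let one convert a trace of a braiding into a twist and back — so the verification amounts to a sequence of string-diagram manipulations where each step is licensed either by a balanced-category axiom or by yanking. I would present this computation graphically, as in \cite{JSV96}, rather than algebraically. A secondary, purely bookkeeping obstacle is that $\Cc$ is not assumed strict, so all the above must be done up to the coherence isomorphisms; following \cite{JSV96}, I would invoke the strictification theorem for balanced monoidal categories at the outset and carry out the construction in the strict case without loss of generality. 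Since the statement is explicitly attributed to \cite[Prop.~5.1]{JSV96}, the cleanest exposition simply cites that reference for the bulk of the verification and only highlights where the balanced data enters.
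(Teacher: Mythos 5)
Your outline is correct and follows exactly the route the paper intends: the paper itself gives no proof but cites Joyal, Street, and Verity's Int-construction \cite[Prop.~5.1]{JSV96}, and your construction (objects $(A^+,A^-)$, morphisms $A^+\x B^-\ii B^+\x A^-$, composition by trace, twist built from $\theta$ and $\theta\inv$ on the two components, embedding $A\mapsto(A,I)$) is precisely that construction, matching the way the paper sketches the analogous braided case in Theorem~\ref{thm-embedding-braided-traced}. No substantive difference from the paper's approach.
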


\begin{theorem}[Coherence for balanced traced categories]
  \label{thm-coherence-balanced-traced}
  A well-formed equation between morphisms in the language of balanced
  traced categories follows from the axioms of balanced
  traced categories if and only if it holds in the
  graphical language up to framed isotopy in 3 dimensions.
\end{theorem}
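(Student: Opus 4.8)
The plan is to mirror the proof of coherence for braided traced categories (Theorem~\ref{thm-coherence-braided-traced}), replacing the use of braided pivotal categories by tortile categories and the use of regular isotopy by framed $3$-dimensional isotopy. Soundness is the routine direction: one checks, axiom by axiom, that each of the defining equations of a balanced traced category --- tightening, sliding, vanishing, strength, the yanking axioms (\ref{eqn-yanking}), and the equations governing the balanced structure --- corresponds to a framed isotopy of the associated ribbon diagrams. This is no harder than the corresponding verification for tortile categories in Theorem~\ref{thm-coherence-tortile}, together with the ``shoe string'' picture already displayed for yanking; I would simply assert it with a reference to inspection of the axioms.

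For completeness, suppose an equation in the language of balanced traced categories holds up to framed $3$-dimensional isotopy in the graphical language. First I would observe that, by Remark~\ref{rem-pivotal-is-traced} and the surrounding discussion, every diagram of balanced traced categories can equally be read as a diagram of tortile categories (the trace being expressed via $\eta$, $\eps$ as in that remark), and this reading is compatible with the respective graphical calculi. Hence the left-hand and right-hand diagrams of our equation, viewed as tortile diagrams, are framed isotopic, and therefore by Coherence for tortile categories (Theorem~\ref{thm-coherence-tortile}) the equation holds in every tortile category. Now invoke the embedding theorem, Theorem~\ref{thm-embedding-balanced-traced}: an arbitrary balanced traced category $\Cc$ embeds fully and faithfully, via a balanced traced functor, into a tortile category $\Tc$. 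Since balanced traced functors preserve the trace and the balanced structure, the two sides of the equation in $\Cc$ map to the corresponding two sides in $\Tc$; these are equal in $\Tc$ by the previous step, and faithfulness of the embedding pulls the equality back to $\Cc$. Thus the equation holds in all balanced traced categories, hence is a consequence of the axioms.

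The main obstacle is the same as the one flagged in Caveat~\ref{cav-coherence-tortile}: the coherence theorem for tortile categories has only been proved in the literature for the free tortile category over a \emph{simple} signature (Shum), not over an arbitrary monoidal signature with boxes of general arity. Consequently this completeness argument inherits that limitation, and the present theorem should carry an analogous caveat. A secondary, more technical point to be careful about is the precise matching of the two graphical calculi under the ``read a traced diagram as a tortile diagram'' translation --- in particular that framed isotopy of balanced traced diagrams corresponds exactly to framed isotopy of the resulting tortile ribbon diagrams, with no extra identifications introduced or lost; but this is the ribbon analogue of the planar-traced-versus-pivotal comparison already used for braided traced categories, and presents no essential new difficulty.

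\begin{caveat}
  Because of the dependence on Theorem~\ref{thm-coherence-tortile},
  Caveat~\ref{cav-coherence-tortile} also applies here.
\end{caveat}
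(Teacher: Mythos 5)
Your proposal is correct and follows essentially the same route as the paper: soundness by inspection, completeness by reading the traced diagrams as tortile diagrams, invoking Theorem~\ref{thm-coherence-tortile}, and then pulling the equation back along the full and faithful embedding of Theorem~\ref{thm-embedding-balanced-traced}, exactly as in the proof of Theorem~\ref{thm-coherence-braided-traced}. The caveat you attach (inheriting Caveat~\ref{cav-coherence-tortile}) is also precisely the one the paper records.
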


\begin{proof}
  This follows from Theorems~\ref{thm-coherence-tortile} and
  {\ref{thm-embedding-balanced-traced}}, by the exact same
  argument that was used in the proof of
  Theorem~\ref{thm-coherence-braided-traced}.\eot
\end{proof}

\begin{caveat}
  Because of the dependence on Theorem~\ref{thm-coherence-tortile},
  Caveat~\ref{cav-coherence-tortile} also applies here.
\end{caveat}

\begin{remark}
  In any braided monoidal category with a right trace, the twist and
  its inverse are definable by equation (\ref{eqn-yanking}). These
  maps are automatically natural and satisfy $\theta_I=\id_I$ and
  (\ref{eqn-balanced}). However, they are not automatically inverse to
  each other. Therefore, a balanced traced category could be
  equivalently defined as a braided monoidal category with a right
  trace, satisfying
  \[
    \Tr^X(\symi_{X,X}) = \Tr^X(\sym_{X,X})\inv.
  \]
\end{remark}

%----------------------------------------------------------------------
\subsection{Symmetric traced categories}

\begin{definition}[\cite{8,12,JSV96}]
  A {\em symmetric traced category} is a symmetric monoidal
  category with a right trace $\Tr$, satisfying the {\em symmetric
    yanking axiom}:
  \[ \Tr^X(\sym_{X,X})=\id_X.
  \]
\end{definition}

\begin{remark}
  Because of Remark~\ref{rem-symmetric-from-balanced}, a symmetric
  traced category can be equivalently defined as a balanced
  traced category in which $\theta_A=\id_A$ for all $A$.
\end{remark}

Obviously every compact closed category is symmetric traced with the
structure from Remark~\ref{rem-pivotal-is-traced}. Here, too, we have
an embedding theorem:

\begin{theorem}[Representation of symmetric traced categories \cite{JSV96}]
  \label{thm-embedding-symmetric-traced}
  Every symmetric traced category can be fully and faithfully embedded
  into a compact closed category, via a symmetric traced functor.
\end{theorem}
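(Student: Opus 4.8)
The plan is to build the desired compact closed category explicitly via the \emph{Int construction} of Joyal, Street, and Verity \cite{JSV96}, exactly as in the proof of Theorem~\ref{thm-embedding-braided-traced} but specialized to the symmetric setting. Given a symmetric traced category $\Cc$, one defines a category $\Int(\Cc)$ whose objects are pairs $(A^+,A^-)$ of objects of $\Cc$ and whose morphisms $(A^+,A^-)\ii(B^+,B^-)$ are the morphisms $A^+\x B^-\ii B^+\x A^-$ of $\Cc$. The identity on $(A^+,A^-)$ is, up to symmetries, $\id_{A^+\x A^-}$, and the composite of $f:A^+\x B^-\ii B^+\x A^-$ with $g:B^+\x C^-\ii C^+\x B^-$ is obtained by inserting the appropriate symmetries and then applying $\TrR^{B^+}$ and $\TrR^{B^-}$ to trace out the shared components. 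One sets $(A^+,A^-)\x(B^+,B^-)=(A^+\x B^+,A^-\x B^-)$ with unit $(I,I)$, takes the symmetry from that of $\Cc$, and puts $(A^+,A^-)^*=(A^-,A^+)$ with unit $\eta$ and counit $\eps$ assembled from identities and symmetries of $\Cc$.

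The verifications then proceed in the standard order. First, $\Int(\Cc)$ is a category: the unit laws and associativity of composition follow from the tightening and vanishing axioms of the trace together with naturality of the symmetry, while well-definedness of composition (independence of the symmetry bookkeeping) is exactly where dinaturality, i.e.\ the sliding axiom, is used. Second, the data above make $\Int(\Cc)$ a symmetric monoidal category, using the superposing and naturality axioms; the symmetric yanking axiom ensures that the loop which in the braided case plays the role of a twist is here trivial, so no balanced structure intervenes. Third, $\eta$ and $\eps$ satisfy the two adjunction triangles of (\ref{eqn-pairing}), so $\Int(\Cc)$ is right autonomous, hence --- being symmetric --- compact closed by the definition in Section~\ref{subsec-compact-closed}. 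Fourth, the assignment $A\mapsto(A,I)$ on objects and $f\mapsto f$ on morphisms is a strict monoidal functor $\Cc\ii\Int(\Cc)$, and it is full and faithful because $\hom_{\Int(\Cc)}((A,I),(B,I))=\hom_{\Cc}(A,B)$ on the nose. Finally, this functor is a symmetric traced functor: evaluating the canonical trace of $\Int(\Cc)$ (coming from its compact closed structure as in Remark~\ref{rem-pivotal-is-traced}) on the image of $f:A\x X\ii B\x X$ unwinds, using vanishing and the symmetric yanking axiom, to $\TrR^X f$ back in $\Cc$.

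The technical heart is the first step --- that composition in $\Int(\Cc)$ is well-defined and associative --- and it is precisely there that the sliding (dinaturality) axiom is indispensable; every remaining point is a routine diagram chase, and each is a symmetric specialization of the corresponding point in the braided case of Theorem~\ref{thm-embedding-braided-traced}, the only simplification being that the braiding is now a symmetry and the twist-like loop of that argument degenerates to the identity. Since the full details are given in \cite{JSV96}, we omit them.
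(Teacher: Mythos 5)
Your proposal is correct and takes essentially the approach the paper itself relies on: the paper states Theorem~\ref{thm-embedding-symmetric-traced} without proof, citing Joyal, Street, and Verity's Int-construction {\cite{JSV96}}, which is precisely what you outline (and which the paper likewise mimics for the braided case in Theorem~\ref{thm-embedding-braided-traced}). The small variations in your sketch --- tracing out both $B^+$ and $B^-$ instead of plugging one of them in directly, and asserting fullness and faithfulness ``on the nose'' rather than up to the unit isomorphisms --- are harmless and equivalent to the standard formulation.
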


\begin{example}[\cite{JSV96}]\label{exa-rel-as-traced}
  Consider the category $\Rel$ of sets and relations, with biproducts
  given by disjoint union $A+B$. Given a relation $R:A+X\ii B+X$,
  define its trace $\Tr^X(R):A\ii B$ by $(a,b)\in\Tr^X(R)$ iff there
  exists $n\geq 0$ and $x_1,\ldots,x_n\in X$ such that
  $a\,R\,x_1\,R\,x_2\,R\,\ldots \,R\,x_n\,R\,b$. This defines a
  symmetric traced category which is not compact closed.
\end{example}

\paragraph{Graphical language and coherence.}

The graphical language is like that of planar traced categories,
combined with the symmetry. A typical diagram looks like this:
\[ \resizebox{2.7cm}{!}{\includegraphics{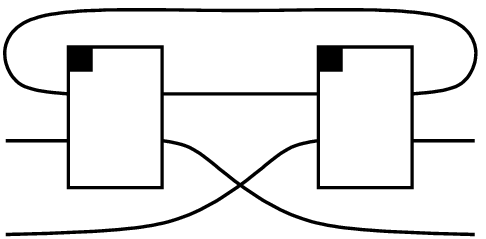}}.
\]
The notion of equivalence of diagrams is isomorphism.

\begin{theorem}[Coherence for symmetric traced categories]
  \label{thm-coherence-symmetric-traced}
  A well-formed equation between morphisms in the language of
  symmetric traced categories follows from the axioms of symmetric
  traced categories if and only if it holds in the graphical language
  up to isomorphism of diagrams.
\end{theorem}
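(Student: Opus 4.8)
The plan is to follow exactly the strategy used for Theorem~\ref{thm-coherence-braided-traced} and Theorem~\ref{thm-coherence-balanced-traced}, with the pair (braided/balanced traced, braided pivotal/tortile) replaced by (symmetric traced, compact closed). Soundness is routine: each axiom of a symmetric traced category---the four right-trace axioms, the symmetry axioms, and the symmetric yanking axiom $\Tr^X(\sym_{X,X})=\id_X$---manifestly holds up to isomorphism of the associated diagrams, so every equation derivable from the axioms holds in the graphical language.

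For completeness, suppose a well-formed equation in the language of symmetric traced categories holds in the graphical language up to isomorphism of diagrams. First I would note that the diagram denoted by each side, which a priori is a diagram of symmetric traced categories over the given monoidal signature, can equally be read as a diagram of compact closed categories: this is the graphical form of the fact that every compact closed category is symmetric traced via the definitions of $\Tr^X$ in Remark~\ref{rem-pivotal-is-traced}. Since the two diagrams are isomorphic, and since the notion of diagram equivalence for compact closed categories in Theorem~\ref{thm-coherence-compact-closed} is also isomorphism of diagrams, the equation holds in every compact closed category.

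Next I would invoke the embedding theorem (Theorem~\ref{thm-embedding-symmetric-traced}): an arbitrary symmetric traced category $\Cc$ admits a full and faithful symmetric traced functor into a compact closed category $\hat\Cc$. An equation valid in $\hat\Cc$ is then valid in $\Cc$, because the functor preserves all the structure appearing in the equation (tensor, composition, identities, symmetry, and the trace) and is faithful. Hence the equation holds in every symmetric traced category, and therefore, since an equation valid in all models of an equational theory is derivable from it, the equation follows from the axioms of symmetric traced categories.

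The main obstacle is not this bookkeeping but the inputs it rests on. One should check that ``isomorphism of diagrams'' is literally the same relation before and after reinterpreting a symmetric traced diagram as a compact closed diagram---this is clear, as neither language carries geometric data beyond the combinatorics of boxes and wires---and one must verify that the translation commutes with composition, tensor, symmetry and trace, so that an equation between terms really does produce a pair of diagrams to which Theorem~\ref{thm-coherence-compact-closed} applies. More substantively, the argument inherits the limitations of its ingredients: the embedding of Theorem~\ref{thm-embedding-symmetric-traced} must be available at the level of arbitrary monoidal signatures (the Int-construction of Joyal--Street--Verity supplies this), and coherence for compact closed categories is presently established in the literature only for simple signatures (Caveat~\ref{cav-coherence-compact-closed}), so the same caveat carries over to the present theorem.
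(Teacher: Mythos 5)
Your proof is correct and is essentially the paper's own argument: the paper proves this theorem exactly by combining coherence for compact closed categories (Theorem~\ref{thm-coherence-compact-closed}) with the embedding of symmetric traced categories into compact closed ones (Theorem~\ref{thm-embedding-symmetric-traced}), following the same pattern as Theorems~\ref{thm-coherence-braided-traced} and \ref{thm-coherence-balanced-traced}. Your closing remark about inheriting Caveat~\ref{cav-coherence-compact-closed} matches the caveat the paper attaches to this theorem as well.
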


\begin{proof}
  A consequence of Theorems~\ref{thm-coherence-compact-closed} and
  {\ref{thm-embedding-symmetric-traced}}, as in
  Theorems~\ref{thm-coherence-braided-traced} and
  {\ref{thm-coherence-balanced-traced}}.
\end{proof}

\begin{caveat}
  Because of the dependence on
  Theorem~\ref{thm-coherence-compact-closed},
  Caveat~\ref{cav-coherence-compact-closed} also applies here.
\end{caveat}

\begin{remark}
  Strict symmetric traced categories, with the additional axiom
  \begin{equation}\label{eqn-stefanescu}
    \Tr^X(\id_{A\x X})=\id_A,
  \end{equation}
  first appear in the work of {\Stefanescu} under the name ``biflow''. A
  precursor of the definition appears in {\cite{6}}, and the axioms
  were given their modern form by {\Cazanescu} and {\Stefanescu}
  {\cite{12,8}}. The paper {\cite{6}} also contains a detailed
  proof sketch of coherence, namely, that the graphical language,
  modulo isomorphism and the equation (\ref{eqn-stefanescu}), forms
  the free biflow over a monoidal signature. This proof sketch remains
  valid with respect to the modern definition, provided that one
  assumes coherence for symmetric monoidal categories. 
\end{remark}

%----------------------------------------------------------------------
\section{Products, coproducts, and biproducts}\label{sec-products}

In this section, we consider graphical languages for monoidal
categories where the monoidal structure is given by a categorical
product, coproduct, or biproduct. The main difference with the
graphical languages of ``purely'' monoidal categories from
Sections~\ref{sec-progressive}--\ref{sec-traced} is that equivalence
of diagrams must now be defined up to diagrammatic equations.

%----------------------------------------------------------------------
\subsection{Products}\label{subsec-products}

%......................................................................
\begin{table}
\[ \begin{array}{lll}
  \mbox{Naturality axioms:}\\
  \Delta_B\cp f = (f\x f)\cp\Delta_A : & A\ii B\x B\\
  \Diamond_B\cp f = \Diamond_A : & A\ii I\\[2ex]
  \mbox{Commutative comonoid axioms:}\\
  (\id_A\x \Delta_A)\cp\Delta_A = (\Delta_A\x\id_A)\cp\Delta_A :& A\ii A\x A\x A\\
  (\id_A\x \Diamond_A)\cp\Delta_A = \rho\inv_A :& A\ii A\x I\\
  (\Diamond_A\x\id_A)\cp\Delta_A = \lambda\inv_A :& A\ii I\x A\\
  \sym_{A,A}\cp\Delta_A = \Delta_A :& A\ii A\x A\\[2ex]
  \mbox{Coherence axioms:}\\
  \Delta_I=\lambda\inv_I:&I\ii I\x I\\
  (\id_A\x \sym_{B,A}\x\id_B)\cp\Delta_{A\x B}=\Delta_A\x\Delta_B :& A\x A\ii B\x B\x A\x B \\
  \Diamond_I=\id_I:&I\ii I\\
  \Diamond_{A\x B} = \lambda_I\cp (\Diamond_A\x\Diamond_B):&A\x B\ii I
\end{array}
\]
\caption{The axioms for products}
\label{tab-product-axioms}
\end{table}
%......................................................................

\begin{definition}
  In a category, a {\em product} of objects $A$ and $B$ is given by an
  object $A\times B$, together with morphisms $\pi_1:A\times B\ii A$
  and $\pi_2:A\times B\ii B$, such that for all objects $C$ and pairs
  of morphisms $f:C\ii A$ and $g:C\ii B$, there exists a unique
  morphism $h:C\ii A\x B$ such that the following diagram commutes:
  \[\xymatrix{
    &C\ar[dl]_{f}\ar[dr]^{g}\ar@{-->}[d]^{h}\\
    A&A\x B\ar[l]_{\pi_1}\ar[r]^{\pi_2}&B.
    }
  \]
  The unique morphism $h$ is often written as $h=\apair{f,g}$.  An
  object $I$ is {\em terminal} if for all objects $C$, there exists a
  unique morphism $h:C\ii I$. A {\em finite product category} (or {\em
    cartesian category}) is a category with a chosen terminal object,
  and a chosen product for each pair of objects.
\end{definition}

Equivalently, a finite product category can be described as a
symmetric monoidal category, together with natural families of {\em
  copy} and {\em erase} maps
\[ \Delta_A:A\ii A\x A,\sep \Diamond_A:A\ii I
\]
subject to a number of axioms, shown in Table~\ref{tab-product-axioms}.

\paragraph{Graphical language.}

We extend the graphical language of symmetric monoidal categories by
adding the following representations of the copy and erase maps.

\begin{center}
  \begin{tabular}{@{}llc@{}}
    Copy & $\Delta_A:A\ii A\x A$
    & $\vcenter{\wirechart{@C=1cm@R=.4cm}{
        &&*{}\\
        *{}\wireright{r}{A}&
        *{\bullet}\wireright{ur}{A}\wireright{dr}{A}\\
        &&*{}\\
        }}$
    \\\\[-.5ex]
    Erase & $\Diamond_A:A\ii I$
    & $\vcenter{\wirechart{@C=1cm@R=.4cm}{
        *{}\wireright{r}{A}&
        *{\bullet}&
        *{}
        }}$
    \\
  \end{tabular}
\end{center}

As usual, if $A$ is a composite object term, a wire labeled $A$ should
be replaced by multiple parallel wires.
Table~\ref{tab-product-examples} contains graphical representations of
some of the axioms for finite product categories.

%......................................................................
\begin{table}
  \[ \begin{array}{c@{\hspace{1cm}}c}
    \m{\resizebox{1.6in}{!}{\includegraphics{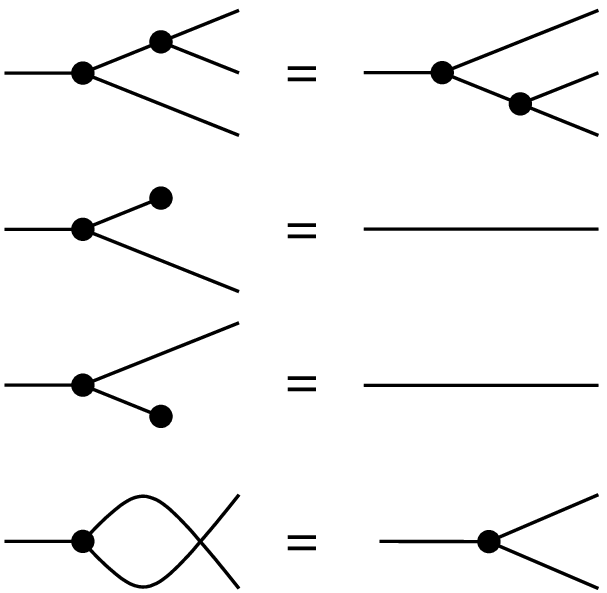}}} &
    \m{\resizebox{1.8in}{!}{\includegraphics{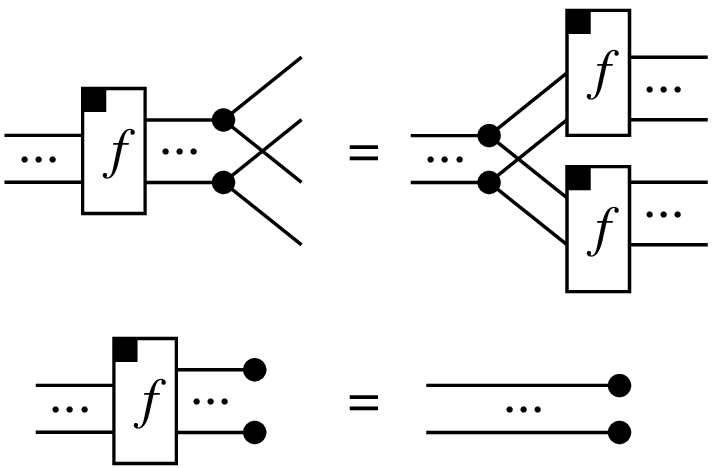}}}
    \\\\[-.5ex]
    \mbox{Commutative comonoid axioms} & \mbox{Naturality}
  \end{array}
  \]
  \caption{Graphical representation of some product axioms}
  \label{tab-product-examples}
\end{table}
%......................................................................

Note that the projections $\pi_1:A\times B\ii A$ and $\pi_2:A\times
B\ii B$, and the pairing $h:C\ii A\x B$ of $f:C\ii A$ and $g:C\ii B$,
are represented graphically as follows:
\[ \m{\resizebox{2.8in}{!}{\includegraphics{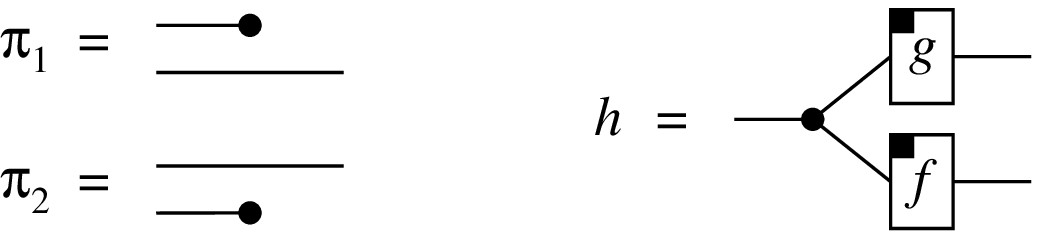}}}
\]

\paragraph{Coherence.}

As the equivalences in Table~\ref{tab-product-examples} demonstrate,
coherence in the graphical language of finite product categories does
not hold up to isomorphism or isotopy of diagrams. Rather, is holds up
to {\em manipulations} of diagrams. So unlike the graphical languages
considered in Sections~\ref{sec-categories}--\ref{sec-traced}, we now
have to consider axioms on diagrams.

\begin{theorem}[Coherence for finite product categories]
  \label{thm-coherence-product}
  A well-formed equation between morphism terms in the language of
  finite product categories follows from the axioms of finite product
  categories if and only if it holds in the graphical language, up to
  isomorphism of diagrams and the diagrammatic manipulations shown
  in Table~\ref{tab-product-examples}.
\end{theorem}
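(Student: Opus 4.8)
The plan is to proceed by the usual normal-form strategy: identify a canonical form for morphism terms in the language of finite product categories, show that every term is provably equal (using the axioms of Table~\ref{tab-product-axioms}) to its canonical form, and then show that two terms in canonical form denote isomorphic diagrams (modulo the manipulations of Table~\ref{tab-product-examples}) if and only if they are literally equal. Soundness --- that each diagrammatic manipulation is a consequence of the axioms --- is immediate by inspection, so the real content is completeness. The cleanest route is to exploit the \emph{universal property} directly rather than reasoning purely syntactically: since $A \x B$ is a product, a morphism $f : C \ii A_1 \x \cdots \x A_n$ is uniquely determined by its $n$ components $\pi_i \cp f : C \ii A_i$, and each such component is a morphism into a \emph{single} object variable (or into $I$, in which case it is the unique map $\Diamond$). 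So the first step is to prove that every morphism term $f : C \ii A_1 \x \cdots \x A_n$ is provably equal to $\apair{f_1, \ldots, f_n}$ where each $f_i$ has codomain an object variable; this is a structural induction on $f$, using naturality of $\Delta$ and $\Diamond$ to push copying and erasing toward the leaves, and using the comonoid and coherence axioms to reassociate.

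Next I would analyze the components $f_i : C \ii A_i$. Reading off the graphical language: a diagram from several input wires to a single output wire labeled by an object variable, built from morphism boxes, copy nodes, erase nodes, and symmetries, is --- up to the comonoid manipulations --- a \emph{tree} whose root is the output wire, whose internal nodes are morphism boxes, and whose leaves are among the input wires (with erasing corresponding to unused inputs and copying corresponding to inputs used more than once). Thus each $f_i$ is provably equal to a term of the shape $g(h_1, \ldots, h_k)$ where $g$ is a morphism variable (or identity) and the $h_j$ are recursively of the same shape, ultimately bottoming out at projections onto the input components. This gives a genuine normal form: a finite forest of such trees, one tree per output wire. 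The key lemma is then that the diagram determines this forest uniquely: a copy node in the diagram is forced by a wire that branches, an erase node by a wire that dead-ends, and the tree structure is exactly the reachability structure of the diagram read from outputs back to inputs --- all of which is invariant under the moves of Table~\ref{tab-product-examples} because those moves are precisely the ones that don't change this combinatorial data.

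The main obstacle, and where care is needed, is making precise the claim that ``the diagram up to the Table~\ref{tab-product-examples} manipulations is the same data as the normal-form forest.'' One must check that the diagrammatic moves generate exactly the equivalence relation that collapses all diagrams with the same forest, neither more nor less. The ``not more'' direction is soundness again. The ``not less'' direction requires showing that any two diagrams with the same forest can be connected by a sequence of moves: this amounts to a confluence/Church--Rosser style argument --- orient the moves so that copy nodes are pushed as far toward the outputs as possible and erase nodes absorbed as early as possible, argue this rewriting terminates, and argue the resulting ``fully propagated'' diagram is uniquely determined by the forest. Once that is in hand, the theorem follows: a well-formed equation holds in all finite product categories iff it holds in the free one, iff (by the universal property, as in the categories case and the monoidal case above) the two sides have the same components, iff the two normal-form forests agree, iff the two diagrams are related by the manipulations of Table~\ref{tab-product-examples}. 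Attributions: this coherence result is essentially folklore, and a precise treatment can be found in the literature on the internal language of cartesian categories (e.g.\ via the correspondence with simply-typed lambda calculus with products), to which I would point the reader for the full details.
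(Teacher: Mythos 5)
Your proposal is correct in outline but takes a genuinely different, and much heavier, route than the paper. The paper disposes of the theorem in two lines: a finite product category is presented as a symmetric monoidal category with copy and erase maps subject to the axioms of Table~\ref{tab-product-axioms}; all of these axioms except the naturality and comonoid axioms (the ones drawn in Table~\ref{tab-product-examples}) already hold up to isomorphism of diagrams, so the statement follows from coherence for symmetric monoidal categories (Theorem~\ref{thm-coherence-symmetric-monoidal}) by passing to the quotient: provability from the extra axioms corresponds exactly to the congruence on diagrams generated by the pictured manipulations. Your normal-form construction proves more (an explicit forest-of-trees description of the free finite product category, and in principle a decision procedure), but none of that machinery is needed for the theorem as stated.

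Two points in your plan need repair. First, the tree normal form as you describe it (``internal nodes are morphism boxes'') is not right over a general monoidal signature, where a morphism variable may have a composite codomain: the head of a component is then a chosen output of a multi-output box, so the tree nodes must be pairs consisting of a box and an output index, and boxes with codomain $I$ or unreachable from any output must be provably discarded, using naturality of $\Diamond$ (any $h:A\ii I$ equals $\Diamond_A$). Second, the confluence argument you single out as the main obstacle --- that any two diagrams with the same forest are connected by the moves --- is really the hard half of a freeness statement and is not needed here. For the direction ``provable implies graphically equal'' it suffices to check each axiom directly against the moves and isomorphism (as you note, this is immediate); for the converse it suffices that the forest is invariant under isomorphism and under each move of Table~\ref{tab-product-examples}, and that every term is provably equal to the canonical term of its forest. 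Rerouted this way your argument closes; as written, the chain of equivalences at the end leans on the unproved confluence step, and the rewriting-modulo-comonoid-equations analysis it would require is decidedly nontrivial.
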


This theorem is a simple consequence of coherence for symmetric
monoidal categories (Theorem~\ref{thm-coherence-symmetric-monoidal}),
together with the fact that all the axioms of finite product
categories, except those shown in Table~\ref{tab-product-examples},
hold up to isomorphism of diagrams.

%----------------------------------------------------------------------
\subsection{Coproducts}

The definition of coproducts and initial objects is dual to that of
products and terminal objects, i.e., it is obtained by reversing all
the arrows in Section~\ref{subsec-products}. Explicitly, an object $0$
is {\em initial} if for all objects $C$, there exists a unique
morphism $h:0\ii C$. A {\em coproduct} of objects $A,B$ is given by an
object $A+B$, together with morphisms $\iota_1:A\ii A+B$ and
$\iota_2:B\ii A+B$, such that for all objects $C$ and pairs of
morphisms $f:A\ii C$ and $g:B\ii C$, there exists a unique morphism
$h:A+B\ii C$ such that $h\cp\iota_1=f$ and $h\cp\iota_2=g$. One often
writes $h=[f,g]$. A category with finite coproducts is also called a
{\em co-cartesian category}.

Dualizing the presentation of Section~\ref{subsec-products}, one can
equivalently define a finite coproduct category as a symmetric
monoidal category with natural families of {\em merge} and {\em
  initial} maps
\[ \nabla_A:A\x A\ii A,\sep \Box_A:I\ii A,
\]
satisfying the duals of the axioms in Table~\ref{tab-product-axioms}.

\paragraph{Graphical language.}
The graphical language of finite coproduct categories is obtained by
dualizing that of finite product categories, with the duals of the
axioms from Table~\ref{tab-product-examples}.

\begin{center}
  \begin{tabular}{@{}llc@{}}
    Merge & $\nabla_A:A\x A\ii A$
    & $\vcenter{\wirechart{@C=1cm@R=.4cm}{
        *{}\wireright{dr}{A}&&\\
        &*{\bullet}\wireright{r}{A}&
        *{}\\
        *{}\wireright{ur}{A}&&\\
        }}$
    \\\\[-.5ex]
    Initial & $\Box_A:I\ii A$
    & $\vcenter{\wirechart{@C=1cm@R=.4cm}{
        *{}&
        *{\bullet}\wireright{r}{A}&
        *{}
        }}$
    \\
  \end{tabular}
\end{center}

%----------------------------------------------------------------------
\subsection{Biproducts}\label{subsec-biproduct}

\begin{definition}
  An object is called a {\defit zero object} if it is initial and
  terminal. If $\zero$ is a zero object, then there is a distinguished
  map $A\ii \zero\ii B$ between any two objects, denoted $0_{A,B}$.  A
  {\em biproduct} of objects $A_1$ and $A_2$ is given by an object
  $A_1\oplus A_2$, together with morphisms $\iota_i:A_i\ii A_1\oplus
  A_2$ and $\pi_i:A_1\oplus A_2\ii A_i$, for $i=1,2$, such that
  $A\oplus B$ is a product with $\pi_1,\pi_2$, a coproduct with
  $\iota_1,\iota_2$ and such that $\pi_i\cp \iota_j=\delta_{ij}$.
  Here $\delta_{ii}=\id_{A_i}$ and $\delta_{ij}=0_{A_j,A_i}$ when
  $i\neq j$.  We say that $\Cc$ is a {\em biproduct category}
  if it has a chosen zero object $\zero$ and a chosen biproduct for
  any pair of objects.
\end{definition}

\begin{remark}
  The axiom $\pi_i\cp \iota_j=\delta_{ij}$ is equivalent to the
  assertion that the symmetric monoidal structure defined by $\oplus$
  ``as a product'' coincides with the symmetric monoidal structure
  defined by $\oplus$ ``as a coproduct''. Therefore, a 
  biproduct category is symmetric monoidal in a canonical way.
\end{remark}

Equivalently, a biproduct category can be defined as a
symmetric monoidal category, together with natural families of
morphisms
\[ \Delta_A:A\ii A\x A,\sep \Diamond_A:A\ii I,\sep
\nabla_A:A\x A\ii A,\sep \Box_A:I\ii A,
\]
satisfying the axioms in Table~\ref{tab-product-axioms}, as well as
their duals.

\paragraph{Graphical language.}

The graphical language of biproducts is obtained by combining the
graphical languages for products and coproducts. In this case, one has
the equalities in Table~\ref{tab-biproduct-naturality}, which are
consequences of the naturality axioms in
Table~\ref{tab-product-examples}. Note that the axiom
$\pi_i\cp\iota_j=\delta_{ij}$ holds automatically in the graphical
language.
%......................................................................
\begin{table}
  \[
  \m{\resizebox{3.7in}{!}{\includegraphics{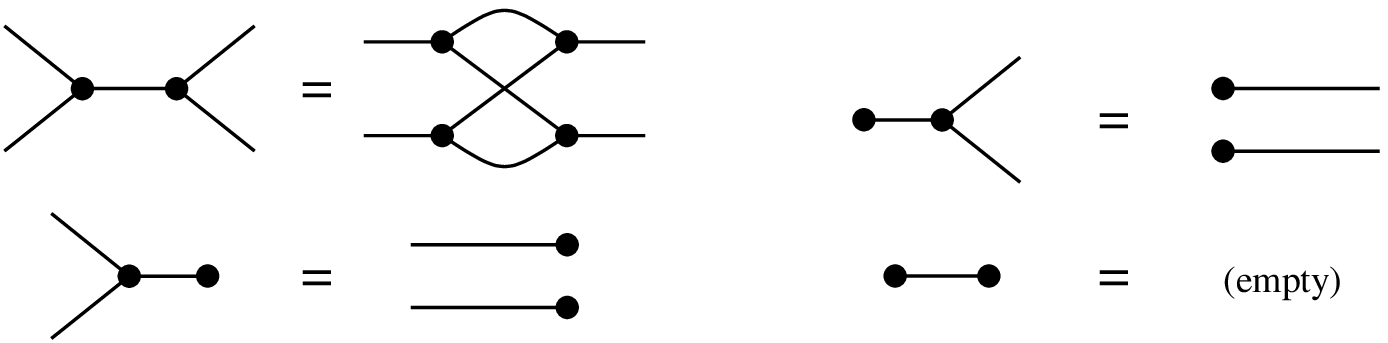}}}
  \]
  \caption{Some biproduct laws}
  \label{tab-biproduct-naturality}
\end{table}
%......................................................................

\begin{theorem}[Coherence for biproduct categories]
  A well-formed equation between morphism terms in the language of
  biproduct categories follows from the axioms of biproduct
  categories if and only if it holds in the graphical language, up to
  isomorphism of diagrams, the diagrammatic manipulations shown
  in Table~\ref{tab-product-examples}, and their duals.
\end{theorem}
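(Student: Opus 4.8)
The plan is to obtain this theorem exactly as Theorem~\ref{thm-coherence-product} was obtained from coherence for symmetric monoidal categories: regard the copy, erase, merge, and initial maps $\Delta_A,\Diamond_A,\nabla_A,\Box_A$ as ordinary morphism generators adjoined to the underlying symmetric monoidal signature, and show that every defining axiom of a biproduct category, once translated into the graphical language, is either (i) an isomorphism of diagrams, (ii) one of the diagrammatic manipulations of Table~\ref{tab-product-examples} or one of their duals, or (iii) automatically valid. Granting this classification, soundness and completeness both follow formally from Theorem~\ref{thm-coherence-symmetric-monoidal} applied to the expanded signature. Completeness holds because every generator of the allowed graphical equivalence is the translation of a biproduct axiom — an isomorphism of diagrams comes from the symmetric monoidal axioms, and the Table~\ref{tab-product-examples} moves together with their duals are exactly the commutative comonoid/monoid and naturality axioms. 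Soundness holds because, conversely, each biproduct axiom lies in that equivalence.

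The steps I would carry out, in order, are the following. First, record that the commutative comonoid axioms and the naturality of $\Delta$ and $\Diamond$ are literally the moves drawn in Table~\ref{tab-product-examples}, and their opposites are the monoid and co-naturality axioms for $\nabla$ and $\Box$. Second, check that the remaining ``coherence'' axioms of Table~\ref{tab-product-axioms} — namely $\Delta_I=\lambda\inv_I$, $(\id_A\x\sym_{B,A}\x\id_B)\cp\Delta_{A\x B}=\Delta_A\x\Delta_B$, $\Diamond_I=\id_I$, $\Diamond_{A\x B}=\lambda_I\cp(\Diamond_A\x\Diamond_B)$, and their duals — translate into mere \emph{isomorphisms} of diagrams once each composite object is expanded into its parallel wires and each bullet expanded accordingly; this is a routine diagram-chase, identical in spirit to the one for products. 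Third, verify that $\pi_i\cp\iota_j=\delta_{ij}$ holds \emph{automatically} in the graphical language: writing $\pi_i$ and $\iota_j$ out in terms of $\Diamond$, $\Box$, and unitors, the composite reduces by a copy-node-followed-by-erase-node collapse (to a bare wire when $i=j$) or an initial-node-followed-by-erase-node collapse (to the zero map $0_{A_j,A_i}$ when $i\neq j$), using only isomorphism and the Table~\ref{tab-product-examples} moves. Fourth, confirm that the mixed biproduct laws of Table~\ref{tab-biproduct-naturality} are derivable from the listed moves (by applying the naturality of $\Delta$ and $\Diamond$, and their duals, to the generators $\nabla$ and $\Box$), so that they need not be postulated as extra manipulations.

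The main obstacle, I expect, is precisely this last bookkeeping: making sure that no \emph{further} diagrammatic axiom is needed to reconcile the product structure with the coproduct structure — that is, that the equations of Table~\ref{tab-biproduct-naturality} and $\pi_i\cp\iota_j=\delta_{ij}$ are genuinely consequences of the moves already in the statement (plus symmetric monoidal coherence) rather than independent. One must therefore perform the reductions of Table~\ref{tab-biproduct-naturality} explicitly, check that each uses only isomorphism of diagrams and the naturality moves of Table~\ref{tab-product-examples} and their duals, and confirm there is no leftover equation. Once this verification is complete, the theorem drops out: an equation of biproduct terms holds graphically up to the stated equivalence if and only if it holds in all biproduct categories (by Theorem~\ref{thm-coherence-symmetric-monoidal} on the expanded signature), if and only if it follows from the axioms of biproduct categories.
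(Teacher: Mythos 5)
Your proposal takes essentially the same route as the paper: the paper proves this theorem as a direct consequence of coherence for symmetric monoidal categories (Theorem~\ref{thm-coherence-symmetric-monoidal}), together with the observation that the moves of Table~\ref{tab-product-examples} and their duals are exactly the graphical renderings of the biproduct axioms that do not already hold up to isomorphism of diagrams, and it likewise notes that $\pi_i\cp\iota_j=\delta_{ij}$ and the laws of Table~\ref{tab-biproduct-naturality} require no separate postulate. The only slip is cosmetic: in your third step the case $i=j$ is disposed of by the erase-after-initial collapse $\Diamond_B\cp\Box_B=\id_I$ (an instance of the naturality move applied to the generator $\Box$), not by a copy-then-erase collapse, while the case $i\neq j$ already holds up to plain isomorphism of diagrams.
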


This theorem is a simple consequence of coherence for symmetric
monoidal categories, together with the fact that the axioms in
Table~\ref{tab-product-examples} (and their duals) are exactly the
graphical representations of the axioms in
Table~\ref{tab-product-axioms} (and their duals) that do not already
hold up to graphical isomorphism.

%----------------------------------------------------------------------
\subsection{Traced product, coproduct, and biproduct categories}

It potentially makes sense to revisit each of the notions of
Sections~\ref{sec-progressive}--{\ref{sec-traced}} and consider the
case where the monoidal structure is given by a product, coproduct, or
biproduct. Since products, coproducts, and biproducts are
automatically symmetric, we do not need to consider the weaker notions
(such as balanced, braided, etc).

Moreover, we do not need to consider any autonomous cases, because an
autonomous category where the tensor is given by a product (or
coproduct) is trivial. Indeed, for any objects $A,B$, the morphisms
$f:A\ii B$ are in one-to-one correspondence with morphism $A\x B^*\ii
I$. Since $I$ is terminal, there is exactly one such morphism, and
therefore there is a unique morphism between any two objects. Such a
category is equivalent to the one-object one-morphism category.

Therefore, the only new notion from
Sections~\ref{sec-progressive}--\ref{sec-traced} that admits
non-trivial examples in the context of products, coproducts, or
biproducts is that of a symmetric traced category.

\begin{definition}
  A {\em traced product [coproduct, biproduct] category} is a
  symmetric traced category where the tensor is given by a categorical
  product [coproduct, biproduct].
\end{definition}

\begin{example}[\cite{JSV96}]\label{exa-rel-as-traced-biprod}
  The symmetric traced category $(\Rel,+)$ from
  Example~\ref{exa-rel-as-traced} is a traced biproduct category.
\end{example}

\begin{example}
  Consider the category $\Set_{\bot}$ whose objects are sets, and
  whose morphisms are partial functions, regarded as a subcategory of
  $\Rel$ from Example~\ref{exa-rel-as-traced-biprod}. In this category, the
  empty set $0$ is a zero object, and the disjoint union operation
  $A+B$ defines a coproduct (but not a product). Trace is given as in
  Example~\ref{exa-rel-as-traced-biprod}. With these definitions,
  $\Set_{\bot}$ is a traced coproduct category.
\end{example}

\paragraph{Graphical language.}

As expected, the graphical language of traced product [coproduct,
biproduct] categories is given by adding a trace (as in
Section~\ref{sec-traced}) to the graphical language of finite product
[finite coproduct, biproduct] categories.

\begin{theorem}[Coherence for traced product {[coproduct, biproduct]} categories]
  A well-formed equation between morphism terms in the language of
  traced product [coproduct, biproduct] categories follows from the
  respective axioms if and only if it holds in the graphical language,
  up to isomorphism of diagrams, and the diagrammatic manipulations
  shown in Table~\ref{tab-product-examples} and/or their duals (as
  appropriate).
\end{theorem}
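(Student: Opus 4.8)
The plan is to deduce this coherence theorem from coherence for symmetric traced categories (Theorem~\ref{thm-coherence-symmetric-traced}), by exactly the method used for the non-traced cases in Theorem~\ref{thm-coherence-product} and the biproduct coherence theorem. First I would regard the copy and erase maps $\Delta_A,\Diamond_A$ (respectively the merge and initial maps $\nabla_A,\Box_A$, or all four) as additional morphism generators over a monoidal signature, so that a traced product [coproduct, biproduct] category is precisely a symmetric traced category equipped with such maps subject to the axioms of Table~\ref{tab-product-axioms} [their duals, both]. Soundness of the graphical language is then routine: the symmetric traced axioms hold up to isomorphism of diagrams by the soundness half of Theorem~\ref{thm-coherence-symmetric-traced}, and the axioms of Table~\ref{tab-product-axioms} split into two groups — the ``coherence axioms'' relating $\Delta_I$, $\Delta_{A\x B}$, $\Diamond_I$, $\Diamond_{A\x B}$ (and duals) to the monoidal structure, which hold up to isomorphism of diagrams once a node on a composite object is read as several parallel nodes; and the naturality and commutative-comonoid axioms, which are exactly the diagrammatic manipulations displayed in Table~\ref{tab-product-examples} (and, for coproducts, their duals).

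For completeness I would argue as in the proof of Theorem~\ref{thm-coherence-braided-traced}: if a well-formed equation $s=t$ between morphism terms holds in the graphical language, then the diagrams of $s$ and $t$ are connected by a finite sequence of moves, each of which is either an isomorphism of the underlying symmetric traced diagrams or one of the manipulations of Table~\ref{tab-product-examples} (or a dual). An isomorphism step is a provable equality in the symmetric traced theory by Theorem~\ref{thm-coherence-symmetric-traced}, hence provable a fortiori in the richer theory; and each Table~\ref{tab-product-examples} step (or dual) is an instance of one of the product [coproduct, biproduct] axioms. Concatenating these, $s=t$ follows from the axioms, which together with soundness gives the theorem.

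The one point that needs genuine care, and which I would isolate as a lemma, is that the naturality of the copy and erase maps (and dually of merge and initial) is respected by the trace: for $f:A\x X\ii B\x X$ one needs $\Delta_B\cp\TrR^X f=(\TrR^X f\x\TrR^X f)\cp\Delta_A$ and $\Diamond_B\cp\TrR^X f=\Diamond_A$, and dually. This is what makes ``graphical validity'' and ``provability'' line up, since the naturality axiom is stated schematically for all morphisms whereas its graphical avatar in Table~\ref{tab-product-examples} only involves a generating box. I expect to prove it by structural induction on $f$: the composition and tensor cases are the usual bookkeeping (using the coherence axioms to decompose $\Delta_{A\x X}$), the symmetry case uses cocommutativity, and the $\Delta$/$\Diamond$ cases use the comonoid axioms. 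The trace case is the real work: one expands the feedback loop using the counit law $(\id\x\Diamond_X)\cp\Delta_X=\id_X$, applies naturality of $\Delta$ at the box to the resulting ``doubled'' output, and then reorganises the two loops that appear using the tightening, sliding, and vanishing axioms — this is the graphical form of the diagonal (Beki\v{c}) property of the fixed-point operator carried by a traced cartesian category. Finally, since the whole argument routes through Theorem~\ref{thm-coherence-symmetric-traced}, and hence ultimately through Theorem~\ref{thm-coherence-compact-closed}, the same caveat is inherited: only the case of a simple signature has so far been treated in the literature.
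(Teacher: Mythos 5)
Your proposal matches the proof the paper intends: the paper states this theorem without a displayed proof, but, exactly as in Theorems~\ref{thm-coherence-product} and the biproduct case, it is meant as a ``simple consequence'' of coherence for symmetric traced categories (Theorem~\ref{thm-coherence-symmetric-traced}) together with the observation that all axioms of Table~\ref{tab-product-axioms} (and duals) other than those displayed in Table~\ref{tab-product-examples} hold up to isomorphism of diagrams; your soundness/completeness bookkeeping and the inherited caveat about simple signatures are exactly right.

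The one place you diverge is the lemma you isolate as ``the real work,'' and it is in fact unnecessary under the paper's reading. The naturality of $\Delta$ and $\Diamond$ is postulated as an axiom scheme for \emph{all} morphisms $f$ (the definition asks for \emph{natural} families of copy and erase maps), so $\Delta_B\cp\TrR^X f=(\TrR^X f\x\TrR^X f)\cp\Delta_A$ and $\Diamond_B\cp\TrR^X f=\Diamond_A$ are literally instances of the axioms, not consequences to be established by structural induction; correspondingly, by the paper's stated convention that a box labeled by a morphism term is shorthand for a composite subdiagram, the manipulations of Table~\ref{tab-product-examples} are schematic in $f$, so there is no mismatch between provability and graphical validity to repair. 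Your induction (with the Beki\v{c}-style reorganization of loops in the trace case) would only become relevant if one weakened naturality to hold merely at generators --- essentially the ``cartesian center'' setting the paper discusses separately --- so including it here adds length without closing any gap, though the statement you would prove is true and your sketch of it is plausible.
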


\begin{remark}\label{rem-while-loop}
  In computer science, traces arise naturally in the context of {\em
    data flow} (as fixed points), and in the context of {\em control
    flow} (as iteration). The two situations correspond to traced
  product categories and traced coproduct categories, respectively.
  The duality between data flow and control flow was first described
  by Bainbridge {\cite{Bai76}}.  The following are typical examples of
  a data flow diagram (on the left) and a control flow diagram (on the
  right). The data flow diagram represents the fixed point expression
  $y = (3+x)(x+y)$, parametric on an input $x$.  The control flow
  diagram represents a generic ``while loop''. Note that data flow
  diagrams have a notion of ``copying'' data, whereas control flow
  diagrams have a dual notion of ``merging'' control paths.
  \[
  \m{\resizebox{!}{.6in}{\includegraphics{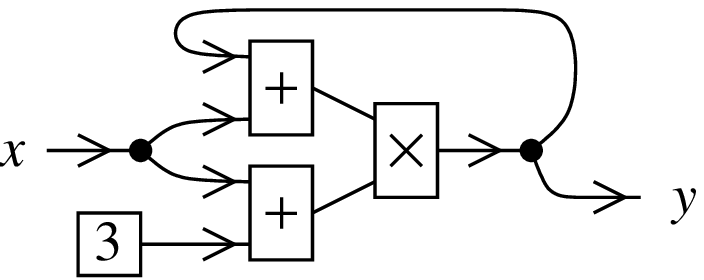}}}
  \sep
  \m{\resizebox{!}{.5in}{\includegraphics{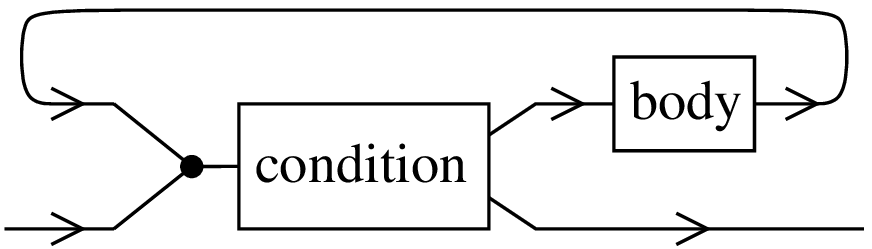}}}
  \]
\end{remark}

\begin{proposition}[{\Cazanescu} and {\Stefanescu} {\cite{12,8}}]
  \label{prop-iteration}
  In a category with finite coproducts, giving a trace is equivalent
  to giving an {\em iteration operator}. Here, an iteration operator
  is a family of operations
  \[ \iter^X : \hom(X,A+X) \ii \hom(X,A),
  \]
  natural in $A$ and dinatural in $X$, satisfying
  \begin{enumerate}
  \item Iteration: 
    $\iter(f) = [\id_A,\iter(f)]\cp f$, for all $f:X\ii A+X$;
  \item Diagonal property: 
    $\iter(\iter(f)) = \iter((\id_A+[\id_X,\id_X])\cp f)$, 
    for all $f:X\ii A+X+X$.
  \end{enumerate}
  Dually, on a finite product category, giving a trace is equivalent
  to a {\em fixed point operator} $\fix^X : \hom(A\times X,X) \ii
  \hom(A,X)$.
\end{proposition}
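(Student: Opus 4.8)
The plan is to produce two explicit, mutually inverse translations between traces and iteration operators in the coproduct case, and then deduce the product (fixed-point) case by applying the result to $\Cc^{\mathrm{op}}$. Work in a category $\Cc$ with finite coproducts; write $\iota_1,\iota_2$ for the injections, $[f,g]$ for copairings, and $\nabla_X=[\id_X,\id_X]:X+X\ii X$. Given a (symmetric) trace $\Tr$ on $\Cc$, set
\[ \iter^X(f)\;=\;\Tr^X(f\cp\nabla_X)\;=\;\Tr^X([f,f])\qquad(f:X\ii A+X), \]
and given an iteration operator $\iter$, set
\[ \Tr^X(h)\;=\;[\id_B,\ \iter^X(h\cp\iota_2)]\cp(h\cp\iota_1)\qquad(h:A+X\ii B+X). \]
Reading $+$ as control flow, $\iter^X(f)$ runs $f$ and loops every $X$-exit back to its $X$-entry, while $\Tr^X(h)$ enters $h$ once at $A$ and thereafter iterates the $X$-to-$X$ fragment $h\cp\iota_2$; these are also the morphisms one reads off the flowchart pictures sanctioned by coherence for traced coproduct categories.

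First I would check that $\Tr\mapsto\iter$ lands in the class of iteration operators. Naturality in $A$ is immediate from the tightening axiom of $\Tr$, and dinaturality in $X$ is exactly the sliding axiom. The iteration equation $\iter(f)=[\id_A,\iter(f)]\cp f$ follows from the \emph{unfolding lemma}: for all $h_1:A\ii B+X$ and $h_2:X\ii B+X$,
\[ \Tr^X([h_1,h_2])\;=\;[\id_B,\ \Tr^X([h_2,h_2])]\cp h_1 \]
(take $h_1=h_2=f$). The diagonal property then follows from the vanishing axiom $\Tr^{X+Y}f=\Tr^X(\Tr^Y f)$ together with a bookkeeping computation with the codiagonals $\nabla$. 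Proving the unfolding lemma from sliding, vanishing, and tightening is where I expect the real work: informally one uses dinaturality to peel one pass through $h_1$ off the feedback loop and then recognizes the remainder as $\Tr^X([h_2,h_2])$.

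For the reverse translation $\iter\mapsto\Tr$, I would verify the axioms of a symmetric trace one at a time. Tightening and sliding come from naturality and dinaturality of $\iter$ together with the copairing calculus; strength $\Tr^X(g+f)=g+\Tr^X f$ comes from naturality of $\iter$ and the universal property of the coproduct (the $g$-component never meets $X$); the $\Tr^I$ clause of vanishing is trivial since $I$ is the initial object; and the symmetric yanking axiom $\Tr^X(\sym_{X,X})=\id_X$, where $\sym_{X,X}=[\iota_2,\iota_1]$, follows from the iteration equation via $\iter^X(\iota_1)=\id_X$. The remaining clause $\Tr^{X+Y}f=\Tr^X(\Tr^Y f)$ is exactly where the diagonal property of $\iter$ (with dinaturality) is needed, and conversely it forces the diagonal property. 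This tight coupling of the nested-trace law with the diagonal law is the genuine content of the proposition and, I expect, the main obstacle; everything else is bookkeeping.

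Finally, I would confirm the two constructions are mutually inverse. Starting from $\iter$, forming $\Tr$ and then $\iter'$, one gets $\iter'^X(f)=\Tr^X(f\cp\nabla_X)$, and since $(f\cp\nabla_X)\cp\iota_i=f$ the formula for $\Tr$ collapses, by the iteration equation, to $[\id_A,\iter^X(f)]\cp f=\iter^X(f)$. Starting from $\Tr$, forming $\iter$ and then $\Tr'$, one recovers $\Tr$ by applying the unfolding lemma to $[h\cp\iota_1,h\cp\iota_2]=h$. This settles the coproduct case. The product case is the coproduct case applied to $\Cc^{\mathrm{op}}$: a symmetric trace on $\Cc$ transports to one on $\Cc^{\mathrm{op}}$, finite coproducts of $\Cc^{\mathrm{op}}$ are finite products of $\Cc$, and an iteration operator of $\Cc^{\mathrm{op}}$ is precisely a fixed-point operator $\fix^X:\hom(A\x X,X)\ii\hom(A,X)$ of $\Cc$. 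Unwinding the duality gives the explicit formulas $\fix^X(g)=\Tr^X(\Delta_X\cp g)$ and $\Tr^X(h)=(\pi_1\cp h)\cp\apair{\id_A,\fix^X(\pi_2\cp h)}$, recovering the equivalence of {\Cazanescu} and {\Stefanescu}.
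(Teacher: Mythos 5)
The paper itself does not prove this proposition---it is stated with a citation to {\Cazanescu} and {\Stefanescu} {\cite{12,8}} only---so there is no in-paper argument to compare against. Your plan follows the standard route from those sources (and Hasegawa's later account of the trace/Conway-operator correspondence): the translation formulas $\iter^X(f)=\Tr^X([f,f])$ and $\Tr^X(h)=[\id_B,\iter^X(h\cp\iota_2)]\cp(h\cp\iota_1)$ are the right ones; the pairing of axioms you describe (tightening $\leftrightarrow$ naturality, sliding $\leftrightarrow$ dinaturality, the $\Tr^{X+Y}$ clause of vanishing $\leftrightarrow$ the diagonal property, yanking $\leftrightarrow$ the iteration identity via $\iter^X(\iota_1)=\id_X$) is the correct skeleton; the round-trip checks you do write out are fine; and the reduction of the product case to the coproduct case via $\Cc^{\mathrm{op}}$ is unproblematic.

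However, as it stands this is an outline rather than a proof: the two steps you yourself flag as ``the real work''---the unfolding lemma, and the equivalence of $\Tr^{X+Y}f=\Tr^X(\Tr^Y f)$ with the diagonal property---are not carried out. More substantively, your stated strategy for the unfolding lemma is not viable: it cannot be derived ``from sliding, vanishing, and tightening'' alone; the symmetric yanking axiom $\Tr^X(\sym_{X,X})=\id_X$ is indispensable, being precisely the trace-side counterpart of the iteration identity. Concretely, on a category with finite biproducts define $\Tr^X(h)$ to be the $(A\ii B)$-component of $h:A\oplus X\ii B\oplus X$ (i.e.\ ``cut the feedback wire''); this satisfies tightening, sliding, both vanishing clauses, and strength, but not yanking, and for it both the unfolding lemma and the iteration identity fail (the induced $\iter(f)$ is just the $A$-component of $f$). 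So when you fill in the peeling argument---say, factor $[h_1,h_2]=[\id_{B+X},h_2]\cp(h_1+\id_X)$, apply tightening, and then evaluate $\Tr^X([\id_{B+X},h_2])$---the step identifying the remainder as $[\id_B,\Tr^X([h_2,h_2])]$ must invoke yanking together with sliding and vanishing. With that correction, and with the vanishing-versus-diagonal computation actually written out, your plan does yield the proposition.
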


This makes precise the intuitive idea that in the presence of
coproducts, the while loop in Remark~\ref{rem-while-loop} is
sufficient for constructing arbitrary traces.

\begin{remark}
  In the presence of the other axioms, the diagonal property is
  equivalent to the so-called Beki\v{c} Lemma:
  \[ \iter[f,g] = [\id_A,\iter([\id_{A+X},\iter(g)]\cp f)]\cp[\inj{2},\iter(g)],
  \] 
  for all $f:X\ii A+X+Y$ and $g:Y\ii A+X+Y$ {\cite[Prop.~B.1]{2a}}.
\end{remark}

\begin{remark}
  Iteration operators in the sense of Proposition~\ref{prop-iteration}
  were first defined, using different but equivalent axioms, by
  {\Cazanescu} and Ungureanu {\cite{22,25}}, under the name
  ``algebraic theory with iterate''. 
\end{remark}

\begin{proposition}[{\cite{8}}]
  In a category with finite biproducts, giving a trace is equivalent
  to giving a {\em repetition operation}, i.e., a family of operators
  \[ *:\hom(A,A)\ii\hom(A,A)
  \]
  satisfying
    \begin{enumerate}
  \item $f^* = \id + ff^*$,
  \item $(f+g)^* = (f^*g)^*f^*$.
  \item $(fg)^*f = f(gf)^*$ (dinaturality).
  \end{enumerate}
  Here, $f+g$ denotes the morphism $\nabla_A\cp(f\oplus
  g)\cp\Delta_A:A\ii A$, for $f,g:A\ii A$.
\end{proposition}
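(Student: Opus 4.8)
The plan is to reduce the statement to Proposition~\ref{prop-iteration}. A biproduct category $\Cc$ has in particular finite coproducts, so by that proposition a trace on $\Cc$ is the same datum as an iteration operator $\iter^X:\hom(X,A+X)\ii\hom(X,A)$, natural in $A$, dinatural in $X$, and satisfying the Iteration and Diagonal axioms; it therefore suffices to exhibit a bijection between iteration operators and repetition operations on $\Cc$. Throughout I use the biproduct freely: a map into $A\oplus X$ is determined by its components $\pi_1(-),\pi_2(-)$; a map out of $A\oplus X$ is a copairing $[-,-]$; composition is bilinear with respect to the monoid addition $f+g=\nabla_A\cp(f\oplus g)\cp\Delta_A$ on each hom-set, and (using $\sym_{A,A}\cp\Delta_A=\Delta_A$ and dually) this addition is commutative. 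As in Proposition~\ref{prop-iteration} I assume $\Cc$ strict monoidal, and I write $(p,q)\colon A\ii A\oplus A$ for the map with components $p$ and $q$.

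From an iteration operator I would define, for $f\colon A\ii A$,
\[ f^*\;:=\;\iter^A\bigl((\id_A,f)\bigr)\colon A\ii A. \]
Axiom~(3): unwinding the naturality and dinaturality of $\iter$ shows that both $(fg)^*\cp f$ and $f\cp(gf)^*$ equal $\iter^A\bigl((f,gf)\bigr)$, so $(fg)^*f=f(gf)^*$; the instance $g=\id_A$ gives $f^*\cp f=f\cp f^*$. Axiom~(1): the Iteration axiom at $(\id_A,f)$ reads $f^*=[\id_A,f^*]\cp(\id_A,f)=\id_A+f^*\cp f$, which by the previous sentence equals $\id_A+f\cp f^*$. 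Axiom~(2): applying the Diagonal axiom to $F=(\id_A,f,g)\colon A\ii A\oplus A\oplus A$ (so that $(\id_A+\nabla_A)\cp F=(\id_A,f+g)$) yields $(f+g)^*=\iter^A\bigl(\iter^A(F)\bigr)=g^*\cp(f\cp g^*)^*$; commutativity of $+$ and one use of axiom~(3) then rewrite the right-hand side as $(f^*\cp g)^*\cp f^*$.

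Conversely, from a repetition operation $*$ I would define, for $f\colon X\ii A\oplus X$ with $a=\pi_1 f$ and $b=\pi_2 f$,
\[ \iter^X(f)\;:=\;a\cp b^*\colon X\ii A. \]
The Iteration axiom is immediate from~(1): $[\id_A,a\cp b^*]\cp f=a+(a\cp b^*)\cp b=a\cp(\id+b^*\cp b)=a\cp b^*$. Naturality in $A$ is associativity of composition, dinaturality in $X$ comes from~(3), and the Diagonal axiom unwinds (by the same bookkeeping with copies and merges, read backwards) to the combination of~(2) and~(3) used above. Finally the two constructions are mutually inverse: starting from $*$ and reading off the repetition operation of the resulting $\iter$ gives $\iter^A\bigl((\id_A,f)\bigr)=\id_A\cp f^*=f^*$; and starting from $\iter$, the iteration operator built from the resulting $*$ sends $f\colon X\ii A\oplus X$ to $a\cp\iter^X\bigl((\id_X,b)\bigr)=\iter^X\bigl((a\oplus\id_X)\cp(\id_X,b)\bigr)=\iter^X(f)$ by naturality in $A$.

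The step I expect to be the real obstacle is the equivalence between the Diagonal axiom for $\iter$ and the sum axiom~(2) for $*$: unlike the other correspondences, which are essentially rearrangements of a single equation, this one requires the standard Conway-style manipulation — in effect the Beki\v{c} Lemma quoted in the remark preceding this proposition — to recognize that a single iteration over $A\oplus A$ equals an iteration over $A$ performed after the substitution $f\mapsto f^*\cp g$. The remaining verifications are routine diagram chases, and for these I would refer the reader to {\cite{12,8}}.
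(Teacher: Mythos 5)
Your proposal is correct in substance. There is nothing in the paper to compare it against: the survey states this proposition without proof, citing \cite{8}, exactly as it does for Proposition~\ref{prop-iteration} on which you rely. Granting that proposition, your bijection between iteration operators and repetition operations does go through: naturality in the parameter gives $f\cp (gf)^* = \iter^A(\apair{f,gf})$, the dinaturality hexagon gives $(fg)^*\cp f=\iter^A(\apair{f,gf})$, and the Diagonal axiom applied to $\apair{\id_A,f,g}$, combined with commutativity of $+$ (which indeed follows from cocommutativity of $\Delta$ and commutativity of $\nabla$) and one use of axiom~(3), yields $(f+g)^*=g^*\cp(f\cp g^*)^*=(f^*\cp g)^*\cp f^*$; the converse direction unwinds the same identities, and your check that the two constructions are mutually inverse is fine. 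One small correction: the verification of the Iteration axiom is not ``immediate from~(1)'' --- the step $a\cp(\id+b^*\cp b)=a\cp b^*$ needs $b^*\cp b=b\cp b^*$, i.e., the instance $g=\id$ of axiom~(3); since (3) is available in that direction this is harmless, but it should be stated. Finally, be aware that your argument is only as complete as Proposition~\ref{prop-iteration}, which the survey likewise leaves to \cite{12,8}; that is consistent with the paper's conventions, but the trace--iteration equivalence is where the traced-category axioms (vanishing, strength, yanking) are actually consumed, so a fully self-contained proof would still have to supply that step.
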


%----------------------------------------------------------------------
\subsection{Uniformity and regular trees}

\begin{definition}
  Suppose we are given a traced category with a distinguished subclass
  of morphisms called the {\em strict} morphisms. Then the trace is
  called {\em uniform} if for all $f:A\x X\ii B\x X$, $g:A\x Y\ii B\x
  Y$, and strict $h:X\ii Y$, the following implication holds:
  \[ (\id_B\x h)\cp f = g \cp (\id_A\x h) \ssep\imp\ssep
  \Tr^X(f) = \Tr^Y(g).
  \]
  Equivalently, in pictures:
  \[
  \m{\resizebox{!}{.4in}{\includegraphics{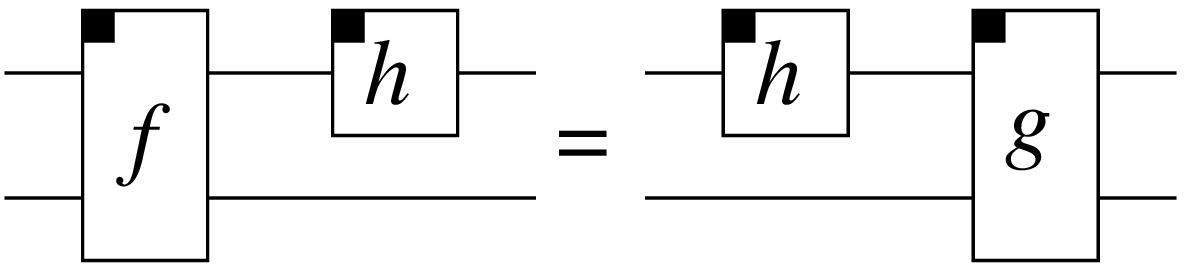}}}
  \sep\imp\sep \m{\resizebox{!}{.4in}{\includegraphics{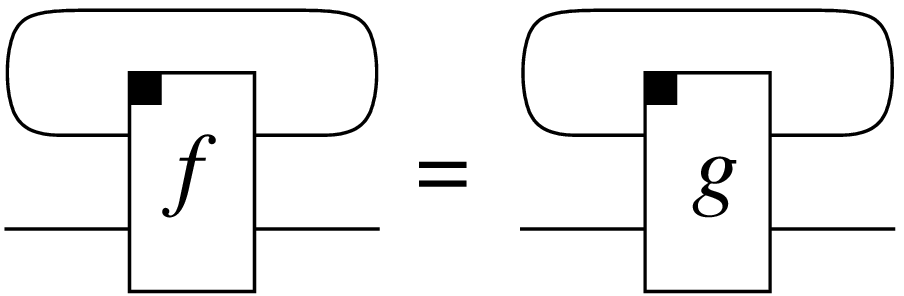}}}\sep.
  \]
  whenever $h$ is strict.  Note that uniformity is not an equational
  property.
\end{definition}

\begin{proposition}[{\cite{8}}]
  A traced coproduct category is uniformly traced if and only if for
  all $f:X\ii A+X$, $g:Y\ii A+Y$, and strict $h:X\ii Y$,
  \[ (\id_A+h)\cp f = g\cp h 
  \ssep\imp\ssep
  \iter^X(f)=\iter^Y(g)\cp h.
  \]
  Moreover, a traced biproduct category is uniformly traced if and
  only if for all $f:X\ii X$, $g:Y\ii Y$, and strict $h:X\ii Y$,
  \[ h\cp f=g\cp h 
  \ssep\imp\ssep
  h\cp f^*=g^*\cp h.
  \]
\end{proposition}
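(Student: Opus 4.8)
The plan is to derive both equivalences from the \emph{uniformity of the trace}, by transporting a uniformity instance back and forth along the trace--iteration and trace--repetition correspondences of Proposition~\ref{prop-iteration}. Each of those correspondences is witnessed by formulas built purely from the (co)product structure: for coproducts one has $\iter^X(f)=\Tr^X(f\cp\nabla_X)$ for $f\colon X\ii A+X$, and dually $\Tr^X(p)=[\id_D,\iter^X(p\cp\iota_2)]\cp p\cp\iota_1$ for $p\colon C+X\ii D+X$. Because these formulas involve no ``generator'' beyond the one map $f$ (resp.\ $p$), a commuting square on one side of the correspondence translates into a commuting square on the other, and the real content of the proof is bookkeeping the objects on which the mediating map acts.

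For the coproduct statement, first assume the trace is uniform and take $f\colon X\ii A+X$, $g\colon Y\ii A+Y$ and strict $h\colon X\ii Y$ with $(\id_A+h)\cp f=g\cp h$. A short computation with codiagonals (using $[h,\id_Y]=\nabla_Y\cp(h+\id_Y)$) shows $(\id_A+h)\cp(f\cp\nabla_X)=(g\cp[h,\id_Y])\cp(\id_X+h)$, so trace-uniformity applied to $f\cp\nabla_X$ and $g\cp[h,\id_Y]$, with mediating strict map $h$, gives $\Tr^X(f\cp\nabla_X)=\Tr^Y(g\cp[h,\id_Y])$. The left-hand side is $\iter^X(f)$, and the right-hand side equals $\Tr^Y(g\cp\nabla_Y)\cp h=\iter^Y(g)\cp h$ by tightening; hence $\iter^X(f)=\iter^Y(g)\cp h$. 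For the converse, assume the iteration-uniformity property and take a trace-uniformity instance $(\id_D+h)\cp p=q\cp(\id_C+h)$. Precomposing with $\iota_2$ turns this square into the hypothesis of iteration-uniformity for $p\cp\iota_2$ and $q\cp\iota_2$, whence $\iter^X(p\cp\iota_2)=\iter^Y(q\cp\iota_2)\cp h$; substituting into the formula for $\Tr^X(p)$, rewriting $[\id_D,s\cp h]=[\id_D,s]\cp(\id_D+h)$, and applying the original square once more collapses $\Tr^X(p)$ to $\Tr^Y(q)$. The trailing $\cp h$ in the iteration-uniformity conclusion is exactly what makes the two translations fit: it records that the loop object traced over --- which vanishes --- is the very object that is the domain of $\iter^X$.

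The biproduct statement runs along the same lines, through the description of the trace in terms of the repetition operator. For $f\colon X\ii X$ one has $f^{*}=\Tr^{X}(\mathsf{m}_f)$, where $\mathsf{m}_f\colon X\oplus X\ii X\oplus X$ is the evident map assembled from $f$, $\id_X$, $\Delta_X$ and $\nabla_X$, and conversely the trace of a map with blocks $a,b,c,d$ is $a+b\cp d^{*}\cp c$. Given $f\colon X\ii X$, $g\colon Y\ii Y$ and strict $h\colon X\ii Y$ with $h\cp f=g\cp h$, tightening rewrites $h\cp f^{*}$ and $g^{*}\cp h$ as traces of two maps which differ by the strict map $h$ in precisely the sense trace-uniformity requires (the resulting square again reduces to $h\cp f=g\cp h$), so $h\cp f^{*}=g^{*}\cp h$. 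Conversely, a trace-uniformity square for maps with blocks $a,b,c,d$ and $a',b',c',d'$ and mediating $h$ forces $a=a'$, $b=b'\cp h$, $h\cp c=c'$ and $h\cp d=d'\cp h$; feeding $h\cp d=d'\cp h$ into the assumed repetition-uniformity property gives $h\cp d^{*}={d'}^{*}\cp h$, and then $a+b\cp d^{*}\cp c=a'+b'\cp{d'}^{*}\cp c'$ by substitution. The two occurrences of $h$ in $h\cp f^{*}=g^{*}\cp h$ record that the loop object is simultaneously the domain and the codomain of $f^{*}$.

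In both cases the argument is essentially bookkeeping, and the point requiring genuine care is to verify in each translation that only $h$ itself need be \emph{strict} --- the structural maps $\iota_1$, $\iota_2$, $\nabla$, $\Delta$ and the auxiliary codiagonals enter the formulas purely equationally, so no strictness hypothesis is needed on them --- and, in the converse directions, that the structural-map form of the uniformity hypothesis genuinely follows from the given trace-uniformity square rather than being strictly stronger. This last step is where the tightening, sliding and vanishing axioms that underlie the correspondences of Proposition~\ref{prop-iteration} are invoked. I expect this placement-of-$h$ bookkeeping to be the only real obstacle.
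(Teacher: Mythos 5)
The paper does not actually prove this proposition --- it is stated with a citation to C\u{a}z\u{a}nescu and \c{S}tef\u{a}nescu and no argument is given --- so there is no in-paper proof to compare against; judged on its own, your argument is correct. Both coproduct directions check out: the square $(\id_A+h)\cp(f\cp\nabla_X)=(g\cp[h,\id_Y])\cp(\id_X+h)$ does reduce to the hypothesis via $[h,\id_Y]\cp(\id_X+h)=h\cp\nabla_X$, and tightening then yields $\Tr^Y(g\cp\nabla_Y\cp(h+\id_Y))=\iter^Y(g)\cp h$; conversely, precomposing the trace-uniformity square with $\iota_2$ gives the iteration-uniformity hypothesis, and the rewriting $[\id_D,s\cp h]=[\id_D,s]\cp(\id_D+h)$ together with $(\id_C+h)\cp\iota_1=\iota_1$ collapses $\Tr^X(p)$ to $\Tr^Y(q)$. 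The biproduct half is likewise fine: the square $(h\oplus h)\cp\mathsf{m}_f=\mathsf{m}_g\cp(h\oplus h)$ follows from $h\cp f=g\cp h$ by naturality of $\Delta$ and $\nabla$, and in the converse direction the blockwise reading $a=a'$, $b=b'\cp h$, $h\cp c=c'$, $h\cp d=d'\cp h$ is exactly right. The one point to flag is that you lean on more than the bare statement of Proposition~\ref{prop-iteration}: you use the \emph{specific} formulas realizing the correspondences ($\iter^X(f)=\Tr^X(f\cp\nabla_X)$, the round-trip identity $\Tr^X(p)=[\id_D,\iter^X(p\cp\iota_2)]\cp p\cp\iota_1$ valid for \emph{every} trace on a coproduct category, and the block formula $\Tr^X = a+b\cp d^*\cp c$ for biproducts). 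These are the standard C\u{a}z\u{a}nescu--\c{S}tef\u{a}nescu constructions and are true, but the paper's proposition only asserts an abstract equivalence, so a fully self-contained proof would still have to derive those identities from the trace axioms (tightening, sliding, vanishing, yanking); you acknowledge this dependence, so it is a matter of explicitness rather than a gap in the reasoning.
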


In the particular case where the class of strict morphisms is taken to
be the smallest co-cartesian subcategory containing all objects,
{\Stefanescu} {\cite{2a,5}} proved that the free uniformly
traced coproduct category over a monoidal signature is given by the
graphical language of traced coproduct categories, modulo a suitable
notion of simulation equivalence on diagrams. This simulation
equivalence is easiest to describe in the case where all morphism
variables are of input arity 1. In this case, two diagrams are
simulation equivalent if and only if they have the same infinite tree
unwinding. There is also an analogous result for biproducts. We refer
the reader to {\cite{2a,2b,4}} for full details.

The following is an example of an equation that holds up to infinite
tree unwinding, but fails in general traced coproduct categories:
\begin{equation}\label{eqn-tree}
\m{\resizebox{!}{.5in}{\includegraphics{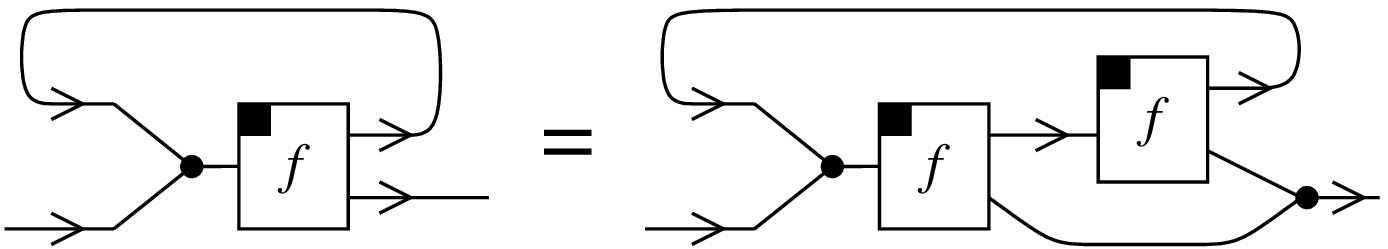}}}
\end{equation}
{\'E}sik's ``iteration theories'' {\cite{15}} are a direct equational
axiomatization of such infinite tree unwindings. They include an
iteration operator as in Proposition~\ref{prop-iteration}, but with an
infinite family of additional properties, such as (\ref{eqn-tree}).

%----------------------------------------------------------------------
\subsection{Cartesian center}

Sometimes it is useful to consider notions that are weaker than
product categories, yet still have copy and erase maps $\Delta_A:A\ii
A\x A$ and $\Diamond_A:A\ii I$. For example, it is common to drop the
naturality axioms, while retaining the commutative comonoid and
coherence axioms (see Tables~\ref{tab-product-axioms} and
{\ref{tab-product-examples}}).  An equivalent way to describe such a
category is as a symmetric monoidal category with (faithful) {\em
  cartesian center} {\cite{Has97}}, i.e., a symmetric monoidal
category with a symmetric monoidal subcategory that contains all the
objects and is cartesian. Similar ideas have occurred, with varying
degrees of explicitness, in the literature on flowcharts, see e.g.
{\cite{22,10,11}}.

Similarly, if one omits naturality from the axioms for coproducts, one
obtains categories with a co-cartesian center. A weakened version of
biproducts is obtained by combining the axioms of cartesian center and
co-cartesian center. In this case, one requires the operations
$\Delta$, $\Diamond$, $\nabla$, $\Box$ to be natural with respect to
one another, yielding the properties from
Table~\ref{tab-biproduct-naturality}. More generally, one may require
any subset of the operations $\Delta$, $\Diamond$, $\nabla$, $\Box$ to
exist, and a further subset to be natural transformations. As the
reader may imagine, this leads to a nearly endless number of
categorical notions and corresponding graphical languages; see
e.g.~{\cite{11,4}}.

%----------------------------------------------------------------------
\section{Dagger categories}

The concept of a dagger category (also called {\em involutive
  category} or {\em *-category} in the literature) is motivated by the
category of Hilbert spaces, where each morphism $f:A\ii B$ has an {\em
  adjoint} $f\da:B\ii A$.

\begin{definition}
  A {\em dagger category} is a category $\Cc$ together with an
  involutive, identity-on-objects, contravariant functor
  $\dagger:\Cc\ii\Cc$.
\end{definition}

Concretely, this means that to every morphism $f:A\ii B$, one
associates a morphism $f\da:B\ii A$, called the {\defit adjoint} of
$f$, such that for all $f:A\ii B$ and $g:B\ii C$:
\[ \begin{array}{l@{~}l}
    \id_A\da = \id_A &: A\ii A, \\
    (g\cp f)\da = f\da\cp g\da &: C\ii A, \\
    f\dada = f&:A\ii B, \\
  \end{array}
\]

\begin{example}
  The category $\Hilb$ of Hilbert spaces and bounded linear maps is a
  dagger category, where $f\da:B\ii A$ is given by the usual
  adjointness property of linear algebra, i.e., $\iprod{f\da
    x}{y}=\iprod{x}{fy}$ for all $x\in B$ and $y\in A$.
\end{example}

\begin{definition}{\bf (Unitary map, self-adjoint map)}
  In a dagger category, a morphism $f:A\ii B$ is called {\defit unitary}
  if it is an isomorphism and $f^{-1}=f\da$. A morphism $f:A\ii A$ is
  called {\defit self-adjoint} or {\defit hermitian} if $f=f\da$.
\end{definition}

A {\em dagger functor} between dagger categories is a functor that
satisfies $F(f\da)=(Ff)\da$ for all $f$.

\paragraph{Graphical language.}

The graphical language of dagger categories extends that of
categories. The adjoint of a morphism variable $f:A\ii B$ is
represented diagrammatically as follows:
\begin{center}
  \begin{tabular}{@{}llc@{}}
    & $f:A\ii B$ &
    $\wirechart{@C=1.7cm@R=0.5cm}{\wireright{r}{A}&\blank\ulbox{[]}{f}\wireright{r}{B}&
      }$\\\\
    & $f\da:B\ii A$ &
    $\wirechart{@C=1.7cm@R=0.5cm}{\wireright{r}{B}&\blank\urbox{[]}{f}\wireright{r}{A}&
      }$\\\\
  \end{tabular}
\end{center}

More generally, the adjoint of any diagram is its mirror image. Note
that the mirror image of a box is visually distinguishable because we
have marked the upper left corner of each box representing a morphism
variable.  Also note that, while we have taken the mirror image of
each box, we have reversed the location, but not the direction, of the
wires. Contrast this with (\ref{eqn-adjoint-mate}).

\begin{theorem}[Coherence for dagger categories]
  \label{thm-coherence-dagger-categories}
  A well-formed equation between two morphism terms in the language of
  dagger categories follows from the axioms of dagger categories if
  and only if it holds in the graphical language up to isomorphism of
  diagrams.
\end{theorem}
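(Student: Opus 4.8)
The plan is to reduce this statement, exactly as in the category case (Theorem~\ref{thm-coherence-categories}), to a normal-form argument. First I would observe that soundness is immediate: the three defining equations of a dagger category, namely $\id_A\da=\id_A$, $(g\cp f)\da=f\da\cp g\da$, and $f\dada=f$, are manifestly respected by the graphical operation ``take the mirror image of the diagram''. Indeed, the mirror image of an identity wire is the same identity wire; the mirror image of a composite diagram reverses the order of the components, which matches the contravariance; and mirroring twice returns the original diagram. So any equation provable from the axioms holds up to isomorphism of diagrams, and I would dispatch this in one sentence.

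For completeness, the key step is to put every morphism term of the dagger language into a canonical form. Using the associativity and unit axioms of the underlying category together with the three dagger axioms, I would show that every morphism term is provably equal to one of the form $((g_n\cp\cdots)\cp g_2)\cp g_1$ with $n\geq 0$, where each $g_k$ is either a morphism variable $f$ or a ``daggered variable'' $f\da$ (and no further occurrences of $\dagger$ survive, since $\dagger$ can be pushed inward past $\cp$ by contravariance, past $\id$ harmlessly, and collapsed on double application). In other words, the normal forms for the dagger language over a signature $\Sigma$ are exactly the normal forms for the plain categorical language over the enlarged signature $\Sigma\da$ obtained from $\Sigma$ by adjoining, for each $f:A\ii B$ in $\Sigma_1$, a new morphism variable $f\da:B\ii A$. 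The diagram of such a normal form is the obvious left-to-right chain of boxes, where a box labeled $f$ carries its corner marking in the standard position and a box labeled $f\da$ is the mirror-image box, as in the table preceding the theorem.

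With this normalization in hand, completeness follows by the same reasoning as for categories. Two normal forms have isomorphic diagrams if and only if they are the identical sequence of labeled (possibly mirrored) boxes, since isomorphism of these linear diagrams simply records the list of boxes in order together with each box's label and orientation. Hence if the diagrams of two morphism terms are isomorphic, their normal forms coincide syntactically, so the two terms are provably equal using only the dagger and category axioms. Equivalently, and more structurally, one can phrase this as a freeness statement: the graphical language of dagger categories over $\Sigma$, modulo isomorphism of diagrams, is the free dagger category over $\Sigma$; this free dagger category is just the free category over $\Sigma\da$ equipped with the evident involution swapping $f$ and $f\da$, and Theorem~\ref{thm-coherence-categories} applied to $\Sigma\da$ gives the result.

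The main obstacle, such as it is, is purely bookkeeping: one must check that pushing $\dagger$ all the way inside and collapsing double daggers genuinely terminates and produces a unique normal form, i.e.\ that the rewriting system $\{(g\cp f)\da\to f\da\cp g\da,\ \id_A\da\to\id_A,\ f\dada\to f\}$ together with associativity/unit normalization is confluent and terminating. This is routine --- the $\dagger$-elimination rules strictly decrease the number of $\dagger$ symbols not applied directly to a variable, and associativity/unit normalization is the standard category argument --- so, as the authors note for the categorical case, the theorem is essentially a triviality included for uniformity. No genuinely hard step arises.
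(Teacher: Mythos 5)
Your proposal is correct and follows essentially the same route as the paper: soundness by inspection, then completeness by using the three dagger axioms to push $\dagger$ inward until it applies only to morphism variables, viewing the result as a term of plain categories over the signature enlarged with daggered variables, and invoking the coherence theorem for categories. The extra normal-form and confluence bookkeeping you supply is harmless detail that the paper leaves implicit.
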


\begin{proof}
  This is a consequence of coherence for categories, from
  Theorem~\ref{thm-coherence-categories}. As usual, soundness is easy
  to check. For completeness, notice that any morphism term $t$ of
  dagger categories can be transformed, via the axioms $(g\cp
  f)\da=f\da\cp g\da$, $\id\da = \id$, and $f\dada = f$, into an
  equivalent term $t'$ with the property that $\dagger$ is only
  applied to morphism variables in $t'$. Such a term can be regarded
  as a term in the language of categories, over the extended set of
  morphism variables $\s{f,f\da,\ldots}$. Now if $t$ and $s$ are two
  terms that have isomorphic diagrams, then by soundness, $t'$ and
  $s'$ have isomorphic diagrams. By
  Theorem~\ref{thm-coherence-categories}, $t'$ and $s'$ are provably
  equal from the axioms of categories. Therefore $t$ and $s$ are
  provably equal from the axioms of dagger categories.\eot
\end{proof}

We now consider ``dagger notions'' for the various monoidal categories
from Sections~\ref{sec-progressive}--\ref{sec-traced}.

%----------------------------------------------------------------------
\subsection{Dagger monoidal categories}

\begin{definition}
  A {\em dagger monoidal category} is a monoidal category that is a
  dagger category, such that the dagger structure is compatible with
  the monoidal structure in the following sense:
  \begin{enumerate}\alphalabels
  \item $(f\x g)\da = f\da\x g\da$, for all $f,g$;
  \item the canonical isomorphisms of the monoidal structure,
    $\alpha_{A,B,C}: (A\x B)\x C\ii A\x(B\x C)$, $\lambda_A:I\x A\ii
    A$, and $\rho_A: A\x I\ii A$, are unitary.
  \end{enumerate}
\end{definition}

\paragraph{Graphical language.}

The graphical language of dagger monoidal categories is like the
graphical language of monoidal categories, with the adjoint of a
diagram given by its mirror image. For example,
\[ \left[\mnew{\wirechart{@R+1ex@C+1em}{
      *{}\wireright{r}{A}&\blank\ulbox{[].[d]}{g}\wire{rr}{}&\blank\wireright{r}{E}&*{}\\
      *{}\wireright{r}{B}&\blank\wireright{r}{D}&\blank\urbox{[].[d]}{g}\wireright{r}{F}&*{}\\
      *{}\wireright{r}{C}\wire{rr}{}&\blank&\blank\wireright{r}{G}&*{}\\
      }}\right]\largeda
\ssep=\ssep
\mnew{\wirechart{@R+1ex@C+1em}{
    *{}\wireright{r}{E}\wire{rr}{}&\blank&\blank\wireright{r}{A}&*{}\\
    *{}\wireright{r}{F}&\blank\wireright{r}{D}&\blank\urbox{[].[u]}{f}\wireright{r}{B}&*{}\\
    *{}\wireright{r}{G}&\blank\ulbox{[].[u]}{g}\wire{rr}{}&\blank\wireright{r}{C}&*{}\\
    }}
\]

\begin{theorem}[Coherence for planar dagger monoidal categories]
  \label{thm-coherence-dagger-monoidal}
  A well-formed equation between morphism terms in the language of
  dagger monoidal categories follows from the axioms of dagger
  monoidal categories if and only if it holds, up to planar isotopy,
  in the graphical language.
\end{theorem}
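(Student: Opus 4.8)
The plan is to follow the proof of Theorem~\ref{thm-coherence-dagger-categories} essentially verbatim, substituting coherence for planar monoidal categories (Theorem~\ref{thm-coherence-planar}) for coherence for categories. That is, I would reduce the dagger monoidal coherence theorem to the non-dagger one by a normalization argument that pushes every occurrence of $\dagger$ down onto morphism variables, at which point $f\da$ can be treated as a fresh morphism variable.

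First I would dispatch soundness by inspection: each defining axiom of a dagger monoidal category, read through the ``$\dagger$ $=$ mirror image'' dictionary, holds up to planar isotopy. It is worth recording here that all wires stay oriented left-to-right, since the mirror image operation interchanges the left and right boundary of a box but does not reverse the orientation of any wire; thus the diagrams are genuinely progressive, and the same recumbent-isotopy caveat that attaches to Theorem~\ref{thm-coherence-planar} attaches here as well.

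For completeness I would first normalize. Using the axioms $(g\cp f)\da=f\da\cp g\da$, $(f\x g)\da=f\da\x g\da$, $\id\da=\id$, and $f\dada=f$, together with the unitarity of $\alpha$, $\lambda$, $\rho$ (which gives $\alpha\da=\alpha\inv$, and likewise for $\lambda$ and $\rho$), any morphism term $t$ of dagger monoidal categories is provably equal to a term $t'$ in which $\dagger$ is applied only to morphism variables. Such a $t'$ can be regarded as a morphism term in the language of ordinary monoidal categories over the extended monoidal signature $\Sigma^\dagger$ obtained from $\Sigma$ by adjoining, for each $f\in\Sigma_1$, a fresh morphism variable ``$f\da$'' with $\dom(f\da)=\cod(f)$ and $\cod(f\da)=\dom(f)$. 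Moreover each rewrite step is diagram-preserving up to isomorphism of diagrams --- mirroring a composite gives the composite of the mirrors in reverse order, mirroring a tensor gives the tensor of the mirrors in the same bottom-to-top order, mirroring an identity wire gives an identity wire, mirroring twice is the identity, and $\alpha$, $\lambda$, $\rho$ and their inverses all translate to identity diagrams regardless --- so the diagram of $t$ in the dagger monoidal language is isomorphic to the diagram of $t'$ over $\Sigma^\dagger$, where the boxes marked in the upper-left corner are the ``$f$''-boxes and their mirror images, marked in the upper-right corner, are the ``$f\da$''-boxes. Then, given $s$ and $t$ with planar-isotopic diagrams, their normal forms $s'$ and $t'$ have planar-isotopic diagrams over $\Sigma^\dagger$; by Theorem~\ref{thm-coherence-planar} the equation $s'=t'$ follows from the axioms of monoidal categories, and since $s=s'$ and $t=t'$ are provable from the dagger monoidal axioms, so is $s=t$.

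The only step that goes beyond the dagger category case --- and hence the place where I would expect to spend actual care --- is the normalization itself: one must check that the rewrite of $(f\x g)\da$ is compatible with the bottom-to-top stacking convention for $\x$, and that, because $\alpha$, $\lambda$, $\rho$ are unitary, no $\dagger$ survives on any non-variable subterm after the rewriting. This bookkeeping is routine, but it is exactly what distinguishes this proof from that of Theorem~\ref{thm-coherence-dagger-categories}; everything else transfers unchanged.
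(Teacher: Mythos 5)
Your proposal is correct and follows the paper's own argument: the paper likewise reduces to Theorem~\ref{thm-coherence-planar} by the same normalization used for Theorem~\ref{thm-coherence-dagger-categories}, noting that the dagger monoidal axioms (including unitarity of $\alpha$, $\lambda$, $\rho$) are exactly what is needed to push every $\dagger$ onto a morphism variable. Your added bookkeeping about the mirror-image convention and the extended signature $\Sigma^\dagger$ is just a more explicit rendering of the same proof.
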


\begin{proof}
  This is a consequence of coherence for planar monoidal categories,
  from Theorem~\ref{thm-coherence-planar}. The proof is analogous to
  that of Theorem~\ref{thm-coherence-dagger-categories}. Note that the
  axioms of dagger monoidal categories are precisely what is needed to
  ensure that all occurrences of $\dagger$ can be removed from a
  morphism term, except where applied directly to a morphism
  variable.\eot
\end{proof}

%----------------------------------------------------------------------
\subsection{Other progressive dagger monoidal notions}

We can now ``daggerize'' the other progressive monoidal notions from
Section~\ref{sec-progressive}:

\begin{definition}
  \begin{itemize}
  \item A dagger monoidal category is {\em spacial} if it is spacial
    as a monoidal category.
  \item A {\em dagger braided monoidal category} is a dagger monoidal
    category with a unitary braiding $\sym_{A,B}:A\x B\ii B\x A$.
  \item A {\em dagger balanced monoidal category} is a dagger braided
    monoidal category with a unitary twist $\theta_A:A\ii A$.
  \item A {\em dagger symmetric monoidal category} {\cite{Sel05}} is a
    dagger braided monoidal category such that the unitary braiding is
    a symmetry.
  \end{itemize}
\end{definition}

\paragraph{Graphical languages.}

In each case, the graphical language extends the corresponding
language from Section~\ref{sec-progressive}, with the dagger of a
diagram taken to be its mirror image. Each notion has a coherence
theorem, proved by the same method as
Theorems~\ref{thm-coherence-dagger-categories}
and~\ref{thm-coherence-dagger-monoidal}. The requirements that the
braiding and twist are unitary ensures that the dagger can be removed
from the corresponding terms. The respective caveats from
Section~\ref{sec-progressive} also apply to the dagger cases.

\begin{example}
  The category $\Hilb$ of Hilbert spaces is dagger symmetric monoidal,
  with the usual tensor product and symmetry.
\end{example}

%----------------------------------------------------------------------
\subsection{Dagger pivotal categories}

In defining dagger variants of the notions of
Section~\ref{sec-autonomous}, we find that the notion of a dagger
autonomous category and a dagger pivotal category coincide. This is
because the presence of a dagger structure on an autonomous category
already induces a canonical isomorphism $A\iso A^{**}$, which
automatically satisfies the pivotal axioms under mild assumptions.

To be more precise, consider a dagger monoidal category that is also
right autonomous (as a monoidal category). Because $\eta_A:I\ii A^*\x
A$ has an adjoint $\eta\da_A:A^*\x A\ii I$, we can define a family of
isomorphisms
\[ i_A = A \catarrow{\iso} I\x A\catarrow{\eta_{A^*}\x \id_A} A^{**}\x A^*\x A \catarrow{\id_{A^{**}}\x \eta\da_A}A^{**}\x I\catarrow{\iso} A^{**}.
\]
We can represent this schematically as follows (but bearing in mind
that we do not yet have a formal graphical language to work with):
\begin{equation}\label{eqn-definition-i}
  \wirechart{}{\wire{r}{A}&\circbox{i_A}\wire{r}{A^{**}}&}
  \ssep=\ssep
  \raisebox{1ex}{\mnew{\wirechart{@C=1.0cm@R=0.5cm}{
        \wire{r}{A}&\blank\wirecloselabel{d}{\eta\da_A}\\
        \blank\wire{r}{A^*}&\blank\\
        \blank\wireopenlabel{u}{\eta_{A^*}}\wire{r}{A^{**}}&
        }}}
\end{equation}

\begin{lemma}\label{lem-dagger-pivotal-1}
  The following are equivalent in a right autonomous, dagger monoidal
  category:
  \begin{itemize}\alphalabels
  \item the family of isomorphisms $i_A:A\ii A^{**}$, as defined
    above, determines a pivotal structure;
  \item for all $A,B$, the canonical isomorphisms $(A\x B)^*\iso B^*\x
    A^*$ and $I^*\iso I$ (determined by the right autonomous
    structure) are unitary, and for all $f:A\ii B$, the equation
    $f^{*\dagger}=f^{\dagger*}$ holds.
  \end{itemize}
\end{lemma}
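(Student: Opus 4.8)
The plan is to check, one at a time, the three ingredients that turn a family $i_A\colon A\ii A^{**}$ into a pivotal structure --- naturality, the unit condition that $i_I$ be the canonical isomorphism $I\iso I^{**}$, and commutativity of the hexagon~(\ref{eqn-pivotal-monoidal}) --- and match each with one clause of~(b). I would argue algebraically, using the adjunction triangles~(\ref{eqn-pairing}), functoriality and involutivity of $\dagger$, the identity $(f\x g)\da=f\da\x g\da$, and unitarity of the structural isomorphisms, invoking coherence for planar monoidal categories (Theorem~\ref{thm-coherence-planar}) only to absorb associativity and unit bookkeeping; the schematic picture~(\ref{eqn-definition-i}) guides the computation even though a formal dagger--autonomous graphical calculus is not yet available.

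First a preliminary remark, which also explains why ``family of isomorphisms'' needs no side hypothesis: applying $\dagger$ to the two triangles~(\ref{eqn-pairing}) of the exact pairing $(\eta_A,\eps_A)$ shows that $(\eps_A\da\colon I\ii A\x A^*,\ \eta_A\da\colon A^*\x A\ii I)$ is again an exact pairing, now exhibiting $A$ as a right dual of $A^*$. Thus both $A$ and $A^{**}$ are right duals of $A^*$, and reading off~(\ref{eqn-definition-i}) one sees that $i_A$ is exactly the canonical comparison isomorphism between these two right duals (the one assembled from $\eta_{A^*}$ and $\eta_A\da$). Hence $i_A$ is an isomorphism before either (a) or (b) is assumed; alternatively, once $i$ is known to be natural and monoidal, invertibility also follows from Saavedra Rivano's Lemma (Lemma~\ref{lem-saavedra-rivano}).

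Now the three conditions. \emph{Naturality.} By definition $f^{**}=(f^*)^*$ is the mate of $f^*\colon B^*\ii A^*$ relative to the standard right-dual structures $(\eta_{A^*},\eps_{A^*})$ and $(\eta_{B^*},\eps_{B^*})$; let $\widetilde f\colon A\ii B$ be the mate of the same $f^*$ relative instead to the daggered structures $(\eps_A\da,\eta_A\da)$ and $(\eps_B\da,\eta_B\da)$. Since mates transform naturally under the canonical comparison isomorphisms of dual structures, and $i$ is exactly that comparison, $i_B\cp\widetilde f=f^{**}\cp i_A$; therefore naturality of $i$ (the equation $i_B\cp f=f^{**}\cp i_A$) is equivalent to $\widetilde f=f$ for all $f$. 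Applying $\dagger$, and using that the left and right adjoint-mate operations are mutually inverse up to canonical isomorphism, $\widetilde f=f$ is in turn equivalent to $f^{*\dagger}=f^{\dagger*}$ --- the last clause of~(b). \emph{Unit.} The canonical isomorphism $u\colon I^*\iso I$ identifies $\eta_I$ (hence $\eta_I\da$) with a structural isomorphism composed with $u$ (resp.\ $u\da$); substituting $A=I$ in~(\ref{eqn-definition-i}) shows $i_I$ equals the canonical map $I\iso I^{**}$ precisely when $u\da=u^{-1}$, i.e.\ $u$ is unitary. \emph{Hexagon.} The isomorphism $(A\x B)^{**}\iso A^{**}\x B^{**}$ in~(\ref{eqn-pivotal-monoidal}) is built from $\mu_{A,B}\colon(A\x B)^*\iso B^*\x A^*$, which by the coherence of the monoidal structure on duals is the unique isomorphism transporting the tensor of $(\eta_A,\eps_A)$ and $(\eta_B,\eps_B)$ to $(\eta_{A\x B},\eps_{A\x B})$. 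Comparing the two right-dual structures that the two legs of the hexagon put on $A\x B$, the hexagon commutes for all $A,B$ exactly when each $\mu_{A,B}$ also transports the tensor of the daggered pairings to $(\eps_{A\x B}\da,\eta_{A\x B}\da)$; a short computation with $\dagger$, using $(f\x g)\da=f\da\x g\da$ and unitarity of the structural maps, turns this into $\mu_{A,B}\da=\mu_{A,B}^{-1}$ --- the first clause of~(b). Combining the three equivalences yields (a)$\Leftrightarrow$(b).

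The only real obstacle is bookkeeping, not ideas: because the duals are planar there is no braiding to reorder tensor factors, so one must be scrupulous about the left/right placement of factors when writing $f^*$, $\widetilde f$, the various comparison maps, and the transported pairings --- the ``short computations'' in the unit and hexagon steps are precisely where this care is needed. Conceptually everything rests on the single repeatedly-used fact that a right dual, with its unit and counit, is unique up to a unique compatible isomorphism; this forces all the maps labelled ``canonical'' above and reduces each of the three steps to a finite diagram chase, one that would collapse to a picture if the dagger--pivotal graphical language introduced later in the paper were already at hand.
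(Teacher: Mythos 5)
Your proposal is correct and follows essentially the same route as the paper: the paper's proof also reduces (a) to the three defining conditions of a pivotal structure (naturality of $i$, the unit condition on $i_I$, and the hexagon~(\ref{eqn-pivotal-monoidal})) and checks, as three separate and independent facts, that these are equivalent respectively to $f^{*\dagger}=f^{\dagger*}$, unitarity of $I^*\iso I$, and unitarity of $(A\x B)^*\iso B^*\x A^*$. The only difference is presentational: where the paper says ``by a direct calculation from the definitions,'' you carry out those calculations conceptually via uniqueness of right duals and the behaviour of adjoint mates under the canonical comparison isomorphism (which also gives your correct preliminary observation that $i_A$ is automatically invertible).
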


\begin{proof}
  By a direct calculation from the definitions, one can check three
  separate and independent facts:
  \begin{itemize}
  \item For any given $f:A\ii B$, the diagram
    \[ \xymatrix{
      A \ar[r]^{i_A} \ar[d]_{f} &
      A^{**} \ar[d]^{f^{**}} \\
      B \ar[r]^{i_B} &
      B^{**}
      }
    \]
    commutes if and only if $f^{*\dagger}=f^{\dagger*}$. In
    particular, the family $i_A$ is a natural transformation if and
    only if this condition holds for all $f$.
  \item The diagram from (\ref{eqn-pivotal-monoidal}),
    \[ \xymatrix@C=0cm{
      &
      A\x B \ar[dl]_{i_A\x i_B} \ar[dr]^{i_{A\x B}} \\
      A^{**}\x B^{**} \ar[rr]^{\iso} &&
      (A\x B)^{**}
      }
    \]
    commutes if and only if the canonical isomorphism $(A\x B)^*\iso B^*\x
    A^*$ is unitary.
  \item The morphism $i_I:I\ii I^{**}$ is equal to the canonical
    isomorphism (from the right autonomous structure) if and only if
    the canonical isomorphism $I\ii I^*$ is unitary.
  \end{itemize}
  Since the three conditions are the defining conditions for a pivotal
  structure, the lemma follows. \eot
\end{proof}

\begin{lemma}\label{lem-dagger-pivotal-2}
  Under the equivalent conditions of Lemma~\ref{lem-dagger-pivotal-1},
  the following hold:
  \begin{enumerate}\alphalabels
  \item[(a)] $i_A$ is unitary.
  \item[(b)] $i_A = A \catarrow{\iso} A\x
    I\catarrow{\id_A\x\eps\da_{A^*}} A\x A^*\x A^{**}
    \catarrow{\eps_A\x\id_{A^{**}}}I\x A^{**}\catarrow{\iso}
    A^{**}$:
    \[  \wirechart{}{\wire{r}{A}&\circbox{i_A}\wire{r}{A^{**}}&}
    \ssep=\ssep
    \raisebox{1ex}{\mnew{\wirechart{@C=1.0cm@R=0.5cm}{
          \blank\wireopenlabel{d}{\eps\da_{A^*}}\wire{r}{A^{**}}&\\
          \blank\wire{r}{A^*}&\blank\\
          \wire{r}{A}&\blank\wirecloselabel{u}{\eps_A}\\
          }}}
    \]
  \item[(c)] $\eta\da_A = \eps_{A^*}\cp(\id_{A^*}\x i_A)$:
    \[\raisebox{1ex}{\mnew{\wirechart{}{
          \wire{r}{A}&\blank\wirecloselabel{d}{\eta\da_A} \\
          \wire{r}{A^*}&\blank
          }}}
    \ssep=\ssep
    \raisebox{1ex}{\mnew{\wirechart{}{
          \wire{r}{A}&\circbox{i_A}\wire{r}{A^{**}}&\blank\wirecloselabel{d}{\eps_{A^*}} \\
          \wire{rr}{A^*}&&\blank
          }}}
    \]
  \item[(d)] $\eps\da_A = (i\inv_A\x\id_{A^*})\cp\eta_{A^*}$:
    \[\raisebox{1ex}{\mnew{\wirechart{}{
          \blank\wireopenlabel{d}{\eps\da_A}\wire{r}{A^*}&\\
          \blank\wire{r}{A}&
          }}}
    \ssep=\ssep
    \raisebox{1ex}{\mnew{\wirechart{}{
          \blank\wireopenlabel{d}{\eta_{A^*}}\wire{rr}{A^*}&&\\
          \blank\wire{r}{A^{**}}&\circbox{i\inv_A}\wire{r}{A}&
          }}}
    \]
  \end{enumerate}
\end{lemma}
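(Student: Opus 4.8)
The plan is to take (c) as the basic identity and to deduce the rest from it, the organizing observation being that in \emph{any} right autonomous dagger monoidal category the pair $(\eps_A^\dagger,\eta_A^\dagger)$ is itself an exact pairing, exhibiting $A$ as a right dual of $A^*$. Indeed, applying $\dagger$ to the two adjunction triangles (\ref{eqn-pairing}) for $(\eta_A,\eps_A)$ and using $(f\x g)^\dagger=f^\dagger\x g^\dagger$ and $f^{\dagger\dagger}=f$ turns them into exactly the two adjunction triangles for $(\eps_A^\dagger,\eta_A^\dagger)$ (the coherence isomorphisms, being unitary, cause no trouble in the non-strict case). Since $A^{**}$ is also a right dual of $A^*$ via $(\eta_{A^*},\eps_{A^*})$, uniqueness of duals produces a canonical isomorphism $\kappa_A\colon A^{**}\to A$, and comparing the two structures shows that its inverse $\kappa_A^{-1}$ is precisely the map $i_A$ of (\ref{eqn-definition-i}). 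Thus $i_A$ is automatically invertible with $i_A^{-1}=\kappa_A$, and statement (d) is just the formula $(\kappa_A\x\id_{A^*})\cp\eta_{A^*}=\eps_A^\dagger$ characterizing $\kappa_A$.

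First I would prove (c) directly, by a zigzag: substituting (\ref{eqn-definition-i}) into $\eps_{A^*}\cp(\id_{A^*}\x i_A)$ and commuting the two disjoint tensor factors, the subterm $(\eps_{A^*}\x\id_{A^*})\cp(\id_{A^*}\x\eta_{A^*})$ appears and collapses to $\id_{A^*}$ by the first adjunction triangle for the dual pair $(A^*,A^{**})$, leaving exactly $\eta_A^\dagger$. This step needs only the autonomous axioms, not the hypotheses of Lemma~\ref{lem-dagger-pivotal-1}. Taking $\dagger$ of (c) yields $\eta_A=(\id_{A^*}\x i_A^\dagger)\cp\eps_{A^*}^\dagger$, which I will reuse below. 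Statement (b) is then obtained by substituting the expression for $\eta_A^\dagger$ supplied by (c) back into (\ref{eqn-definition-i}) and straightening the resulting diagram via the adjunction triangle for the pair $(A,A^*)$; equivalently, (b) recognizes the right-hand side as the comparison isomorphism for $A$ and $A^{**}$ regarded as \emph{left} duals of $A^*$, and one checks that it coincides with $\kappa_A^{-1}$.

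The real work is (a), the unitarity of $i_A$. Using $\dagger$-compatibility, $i_A^\dagger=(\eta_{A^*}^\dagger\x\id_A)\cp(\id_{A^{**}}\x\eta_A)$, so (a) amounts to the identity $(i_A^\dagger\x\id_{A^*})\cp\eta_{A^*}=\eps_A^\dagger$, i.e.\ that $i_A^\dagger=\kappa_A$. Taking $\dagger$, this is equivalent to $\eta_{A^*}^\dagger\cp(i_A\x\id_{A^*})=\eps_A$, and since $\eps_A$ is the unique counit compatible with the coevaluation $\eta_A$, it suffices to check that $\bigl(\eta_A,\ \eta_{A^*}^\dagger\cp(i_A\x\id_{A^*})\bigr)$ satisfies an adjunction triangle. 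Carrying this out requires expressing $\eta_{A^*}^\dagger$ via (c) applied to $A^*$, invoking the pivotal coherence identity $i_A^*\cp i_{A^*}=\id_{A^*}$ to pass between $i_{A^*}$ and $i_A$, and — crucially — using the unitarity of the coherence isomorphisms $(A\x B)^*\iso B^*\x A^*$, $I^*\iso I$ together with the compatibility $f^{*\dagger}=f^{\dagger*}$. These hypotheses cannot be dropped: in an arbitrary right autonomous dagger monoidal category $i_A$ need not be unitary. Once (a) is in hand, the complementary formula $\eps_{A^*}^\dagger=(\id_{A^*}\x i_A)\cp\eta_A$ also falls out of the $\dagger$ of (c), rounding off (b) and (d).

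The main obstacle is therefore (a): it is the single point where the full strength of the hypotheses of Lemma~\ref{lem-dagger-pivotal-1} is used, and the verification is an exercise in keeping track of the iterated duals $A^*,A^{**},A^{***}$ and the canonical isomorphisms among them. All of these manipulations are clearest in the graphical language, but since this lemma is stated before the graphical language of dagger pivotal categories has been set up, the computations have to be carried out algebraically, or in the informal pictures already used in (\ref{eqn-definition-i}).
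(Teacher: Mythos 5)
Your overall strategy is workable and, at bottom, it leans on the same key ingredient as the paper: the identity $i_A^*\cp i_{A^*}=\id_{A^*}$, i.e.\ Saavedra Rivano's Lemma applied to the monoidal natural isomorphism $i$ — which is exactly what the hypotheses of Lemma~\ref{lem-dagger-pivotal-1} buy you. Your handling of (c) and (d) (zigzag consequences of (\ref{eqn-definition-i}), together with the observation that $(\eps\da_A,\eta\da_A)$ exhibits $A$ as a right dual of $A^*$, so that $i_A$ is the canonical comparison map with inverse $\kappa_A$) agrees with the paper, which simply calls them restatements of the definition. Your reduction of (a) to $\eta\da_{A^*}\cp(i_A\x\id_{A^*})=\eps_A$ is also sound, and that identity does follow from (c) applied to $A^*$, the mate identity $\eps_{A^{**}}\cp(i_A\x\id)=\eps_A\cp(\id\x i_A^*)$, and $i_A^*\cp i_{A^*}=\id_{A^*}$; you only describe this verification rather than carry it out, but the ingredients you list suffice. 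For comparison, the paper's proof of (a) is shorter: it computes the adjoint mate $(i_A)^{\dagger*}$ pictorially, observes that it is literally the defining expression for $i_{A^*}$, and then unwinds unitarity of $i_A$ into $i_{A^*}=(i_A^*)\inv$, which is Saavedra Rivano; (b) is then immediate.

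The step that would fail as written is your derivation of (b) in the second paragraph, before (a). Substituting (c) into (\ref{eqn-definition-i}) and straightening with an adjunction triangle is circular: the only triangle that applies is the one for $(A^*,A^{**})$, and it just returns $i_A=i_A$. More fundamentally, no combination of (c), (d) and bending identities can yield (b) at that stage, because the right-hand side of (b) is precisely $\kappa_A\da=(i_A\inv)\da$, so (b) is equivalent to (a); since (c), (d) and the identification $\kappa_A=i_A\inv$ hold in any right autonomous dagger monoidal category whereas (a)/(b) require the hypotheses of Lemma~\ref{lem-dagger-pivotal-1}, (b) cannot be a formal consequence of them. Your ``equivalently'' reformulation — that the right-hand side of (b) is the comparison isomorphism of $A$ and $A^{**}$ viewed as left duals of $A^*$ — is correct, but the assertion that this left-dual comparison coincides with the right-dual comparison $i_A$ is exactly the nontrivial content of the lemma, not a routine check. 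The repair is what your closing sentence gestures at and what the paper does: prove (a) first, and then (b) follows at once from $\mathrm{RHS(b)}=((i_A)\da)\inv$.
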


\begin{proof}
  To prove (a), first consider
  \[ (i_A)\da 
  \ssep=\ssep
  \raisebox{1ex}{\mnew{\wirechart{@C=1.0cm@R=0.5cm}{
        \blank\wireopenlabel{d}{\eta_A}\wire{r}{A}&\\
        \blank\wire{r}{A^*}&\blank\\
        \wire{r}{A^{**}}&\blank\wirecloselabel{u}{\eta\da_{A^*}}\\
        }}}
  \]
  By definition of adjoint mates, we have
  \[ (i_A)^{\dagger*} 
    \ssep=\ssep
    \raisebox{1ex}{\mnew{\wirechart{@C=1.0cm@R=0.5cm}{
          \wire{r}{A^*}&\blank\wirecloselabel{d}{\eta\da_{A^*}}\\
          \blank\wire{r}{A^{**}}&\blank\\
          \blank\wireopenlabel{u}{\eta_{A^{**}}}\wire{r}{A^{***}}&
          }}}
  \]
  But this is just the definition of $i_{A^*}$, therefore
  $(i_A)^{\dagger*}=i_{A^*}$. By definition, $i_A$ is unitary iff
  $(i_A)\da=i_A^{-1}$, iff $(i_A)^{\dagger*}=(i_A^{-1})^*$, iff
  $i_{A^*} = (i_A^{-1})^* = (i_A^*)^{-1}$. Since $i$ is a monoidal
  natural transformation, this holds by Saavedra Rivano's Lemma
  (Lemma~\ref{lem-saavedra-rivano}). 
  
  To prove (b), note that the right-hand side is the inverse of
  $(i_A)\da$. Therefore, (b) is equivalent to (a). 
  
  Finally, equations (c) and (d) are restatements of the definition of
  $i_A$ from (\ref{eqn-definition-i}). \eot
\end{proof}

\begin{remark}
  The equivalence between (a) and (b) in
  Lemma~\ref{lem-dagger-pivotal-2} holds only if $i_A$ is defined as
  in (\ref{eqn-definition-i}). It does not hold for an arbitrary
  pivotal structure on a right autonomous dagger monoidal category.
\end{remark}

Armed with these results, we finally state the two equivalent
definitions of a dagger pivotal category:

\begin{definition}
  A {\em dagger pivotal category} is defined in one of the following
  equivalent ways:
  \begin{enumerate}
  \item as a dagger monoidal, right autonomous category such that the
    natural isomorphisms $(A\x B)^*\iso B^*\x A^*$ and $I^*\iso I$
    (from the right autonomous structure) are unitary, and such that
    $f^{*\dagger}=f^{\dagger*}$ holds for all morphisms $f$; or
  \item as a pivotal, dagger monoidal category satisfying the
    condition in Lemma~\ref{lem-dagger-pivotal-2}(c) (or equivalently,
    (d)).
  \end{enumerate}
\end{definition}

The first form of this definition is much easier to check in practice.
The second form is more suitable for the proof of the coherence
theorem below.

\begin{remark}
  In a dagger pivotal category, the operation $(-)^*$ arises from an
  adjunction (in the categorical sense) of {\em objects}, with
  associated unit, counit, and adjoint mates. On the other hand, the
  operation $(-)\da$ arises from an adjunction (in the linear algebra
  sense) of {\em morphisms}. The two concepts should not be confused
  with each other.
\end{remark}

\paragraph{Graphical language.}

The graphical language of dagger pivotal categories is like that of
pivotal categories, where the adjoint of a diagram is given, as usual,
by its mirror image. For example:
\[ \left[\mnew{\wirechart{@R-1ex}{
      \wireright{r}{A}&\blank\ulbox{[].[dd]}{g}&\blank \\
      &\blank\wireright{r}{C}& \\
      \blank\wireopen{dd}\wireright{r}{B}&\blank \\\\
      \blank\wireleft{rr}{B}&&
      }}\right]\largeda
\ssep=\ssep
\mnew{\wirechart{@R-1ex}{
      &\blank\urbox{[].[dd]}{g}\wireright{r}{A}& \\
      \wireright{r}{C}&\blank& \\
      \blank&\blank\wireright{r}{B}&\blank\wireclose{dd} \\\\
      \wireleft{rr}{B}&&\blank
      }}
\]
Note that in the graphical language, adjoint mates $f^*:B^*\ii A^*$
are represented by rotation and adjoints $f\da:B\ii A$ by mirror
image.  Therefore, each morphism variable $f:A\ii B$ induces four
kinds of boxes: 
\[ \begin{array}{rr}
  f=\vcenter{\wirechart{}{
      \wireright{r}{A}&\blank\ulbox{[]}{f}\wireright{r}{B}&
      }}
  &
  f\da=\vcenter{\wirechart{}{
      \wireright{r}{B}&\blank\urbox{[]}{f}\wireright{r}{A}&
      }}
  \\\\
  f^{*\dagger}=\vcenter{\wirechart{}{
      \wireleft{r}{A}&\blank\dlbox{[]}{f}\wireleft{r}{B}&
      }}
  &
  f^*=\vcenter{\wirechart{}{
      \wireleft{r}{B}&\blank\drbox{[]}{f}\wireleft{r}{A}&
      }}
\end{array}
\]

Also note that, unlike the informal notation used above, the graphical
language does not explicitly display the isomorphism $i_A:A\ii
A^{**}$, and it does not explicitly distinguish $\eta_A:I\ii A^*\x A$
from $\eps\da_{A^*}:I\ii A^*\x A^{**}$. This is justified by the
following coherence theorem.

\begin{theorem}[Coherence for dagger pivotal categories]
  A well-formed equation between morphisms in the language of dagger
  pivotal categories follows from the axioms of dagger pivotal
  categories if and only if it holds in the graphical language up to
  planar isotopy, including rotation of boxes (by multiples of 180
  degrees).
\end{theorem}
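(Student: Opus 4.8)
The plan is to reduce the assertion to coherence for pivotal categories (Theorem~\ref{thm-coherence-pivotal}), in exactly the spirit of the proofs of Theorems~\ref{thm-coherence-dagger-categories} and~\ref{thm-coherence-dagger-monoidal}. Soundness of each axiom is routine to check by inspection; the point of the verification is that the dagger pivotal axioms have been arranged precisely so that the mirror-image operation respects composition, tensor, units, counits and the pivotal isomorphism. For completeness, the first step is a normalization lemma: every morphism term $t$ of dagger pivotal categories is provably equal, from the dagger pivotal axioms, to a term $t'$ in which $\da$ is applied only to morphism variables. Note also that $\da$ is identity-on-objects, so object terms are unchanged and no new object variables are needed.

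To produce $t'$ I would push $\da$ inward, using $(g\cp f)\da=f\da\cp g\da$, $(f\x g)\da=f\da\x g\da$, $\id\da=\id$, $f\dada=f$, the unitarity of $\alpha,\lambda,\rho$ and of $i_A$ (Lemma~\ref{lem-dagger-pivotal-2}(a), which gives $i_A\da=i\inv_A$), the identity $f^{*\dagger}=f^{\dagger *}$ (to slide $\da$ past $(-)^*$ so that it lands on a plain variable), and — the crucial ingredient — the equations $\eta_A\da=\eps_{A^*}\cp(\id_{A^*}\x i_A)$ and $\eps_A\da=(i\inv_A\x\id_{A^*})\cp\eta_{A^*}$ of Lemma~\ref{lem-dagger-pivotal-2}(c),(d), which express the adjoints of the units and counits with no $\da$ at all. (One also freely uses $f^{**}\cong f$, which is invisible in the graphical language.) Once $\da$ occurs only on morphism variables, each subterm $f\da$ may be regarded as a fresh variable $\bar f:B\ii A$, so that $t'$ becomes a morphism term in the language of \emph{pivotal} categories over the autonomous signature $\Sigma'$ obtained from $\Sigma$ by adjoining one such $\bar f$ for every $f:A\ii B$ in $\Sigma_1$.

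Next I would match up the two graphical languages. After normalization the only boxes in the diagram of $t'$ are the four shapes $f$, $f^*$, $f\da$, $f^{*\dagger}$ attached to the variables $f\in\Sigma_1$, together with standard cups and caps; organized by the Klein four-group generated by $180$-degree rotation and horizontal reflection, and relabelling the reflected $f$-box and its $180$-degree rotate as $\bar f$ and its rotate, these are exactly the box shapes available to the variables $f,\bar f$ in the pivotal graphical language over $\Sigma'$. With a little care about the corner markings — in the pivotal language one is free to choose the reference orientation of each new variable, so one may take the reference $\bar f$-box to be the mirror shape of the $f$-box — this relabelling is a bijection on diagrams that carries planar isotopy with $180$-degree box rotation on the dagger pivotal side to the same relation on the pivotal side. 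Granting this, if $t$ and $s$ have equivalent diagrams then so do the normal forms $t'$ and $s'$ (by soundness), hence, read as pivotal terms over $\Sigma'$, they satisfy the hypothesis of Theorem~\ref{thm-coherence-pivotal}; that theorem yields $t'=s'$ in every pivotal category over $\Sigma'$, in particular in every dagger pivotal category over $\Sigma$ with $\bar f$ interpreted as $f\da$, so $t'=s'$, and therefore $t=s$, follows from the dagger pivotal axioms.

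The step I expect to be the main obstacle is the normalization: one must make sure that the axioms listed above, and above all Lemma~\ref{lem-dagger-pivotal-2}(c)/(d), genuinely suffice to remove $\da$ from every \emph{primitive} of the language (the units, counits and pivotal isomorphism), not merely from composites. A secondary, purely bookkeeping nuisance is tracking corner markings and box orientations carefully enough that the identification of diagrams above is honestly compatible with $180$-degree rotation of boxes (compare the rotation move~\eqref{eqn-rotation}). Finally, since the argument passes through Theorem~\ref{thm-coherence-pivotal}, whose proof is only available in special cases, the caveat attached to that theorem applies here as well.
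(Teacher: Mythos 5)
Your proposal matches the paper's own argument: the paper proves this theorem by reduction to Theorem~\ref{thm-coherence-pivotal} via the same dagger-removal method as Theorems~\ref{thm-coherence-dagger-categories} and~\ref{thm-coherence-dagger-monoidal}, using precisely Lemma~\ref{lem-dagger-pivotal-2}(c),(d) and the unitarity of $i_A$ to eliminate $\eta\da_A$, $\eps\da_A$, and $i\da_A$, after which $\dagger$ survives only on morphism variables, which are treated as fresh variables of a pivotal signature. Your extra bookkeeping about the four box shapes and the inherited caveat is consistent with the paper; no gap.
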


\begin{proof}
  This follows from coherence of pivotal categories
  (Theorem~\ref{thm-coherence-pivotal}), by the same argument used in
  the proof of Theorem~\ref{thm-coherence-dagger-monoidal}. The
  equations from Lemma~\ref{lem-dagger-pivotal-2}(c) and (d), and the
  fact that $i_A$ is unitary, can be used to replace $\eta\da_A$,
  $\eps\da_A$, and $i\da_A$ by equivalent terms not containing
  $\dagger$.\eot
\end{proof}

%----------------------------------------------------------------------
\subsection{Other dagger pivotal notions}

It is possible to define dagger variants of the remaining pivotal
notions from Section~\ref{sec-autonomous}:

\begin{definition}
  A dagger pivotal category is {\em spherical} (respectively {\em
    spacial}) if it is spherical (respectively spacial) as a pivotal
  category.
\end{definition}

\begin{definition}
  A {\em dagger braided pivotal category} is a dagger pivotal category
  with a unitary braiding $\sym_{A,B}:A\x B\ii B\x A$.
\end{definition}

\begin{remark}
  Like any braided pivotal category, a dagger braided pivotal category
  is balanced by Lemma~\ref{lem-braided-pivotal}. However, in general
  the resulting twist $\theta_A:A\ii A$ is not unitary. In fact,
  $\theta_A$ is unitary in this situation if and only if
  $\theta_{A^*}=(\theta_A)^*$, i.e., if and only if the category is
  tortile.
\end{remark}

\begin{definition}
  A {\em dagger tortile category} is defined in one of the following
  equivalent ways:
  \begin{enumerate}
  \item as a dagger braided pivotal category in which the canonical
    twist $\theta_A$, defined as in Lemma~\ref{lem-braided-pivotal},
    is unitary;
  \item as a tortile, dagger monoidal category such that the braiding
    is unitary, and such that $\eps_A$ and $\eta_A$ satisfy the
    (equivalent) conditions of Lemma~\ref{lem-dagger-pivotal-2}(c) and
    (d); or
  \item as a dagger balanced monoidal category that is right
    autonomous and satisfies
    \begin{equation}\label{eqn-theta-dagger}
      \theta_A 
      \ssep=\ssep
      \vcenter{\wirechart{@C-4ex}{
          *{}\wire{rr}{A}&&
          \blank\wirecross{d}\wire{rr}{A}&&
          *{}
          \\
          &\blank\wire{r}{}&
          \blank\wirebraid{u}{.3}\wire{r}{}&
          \blank
          \\
          &\blank\wireopenlabel{u}{\eta_A}\wire{rr}{A^*}&&
          \blank\wirecloselabel{u}{\eta\da_A}
          }}
    \end{equation}
  \end{enumerate}
\end{definition}
  
The first form of this definition emphasizes the relationship to
dagger pivotal categories. The second form is easiest to check if a
category is already known to be tortile. Finally, the third form takes
$\eps_A$, $\eta_A$, $\sym_{A,B}$ and $\theta_A$ as primitive
operations and does not mention the pivotal structure $i_A$ at all.
The pivotal structure, in this case, is definable from
(\ref{eqn-i-from-theta}) or (\ref{eqn-definition-i}), with the condition
(\ref{eqn-theta-dagger}) ensuring that the two definitions coincide.

\begin{definition}[\cite{AC04,Sel05}]
  A {\em dagger compact closed category} is a dagger tortile category
  such that $\theta_A=\id_A$ for all $A$. Equivalently, it is a dagger
  symmetric monoidal category that is right autonomous and satisfies
  \begin{equation}\label{eqn-dagger-compact-closed}
    \raisebox{1ex}{\mnew{\wirechart{}{
          \blank\wireopenlabel{d}{\eta_A}\wire{r}{A}&\\
          \blank\wire{r}{A^*}&
          }}}
    \ssep=\ssep
    \raisebox{1ex}{\mnew{\wirechart{}{
          \blank\wireopenlabel{d}{\eps\da_{A}}\wire{r}{A^*}&\blank\wirecross{d}\wire{r}{A}&\\
          \blank\wire{r}{A}&\blank\wirecross{u}\wire{r}{A^*}&
          }}}
  \end{equation}
\end{definition}

The equivalence of the two definition is immediate from the third form
of the definition of dagger tortile categories. Note that
(\ref{eqn-theta-dagger}) is equivalent to
(\ref{eqn-dagger-compact-closed}) in the symmetric case.  Further,
these conditions are equivalent to the condition in
Lemma~\ref{lem-dagger-pivotal-2}(d).

\begin{example}
  The category $\FdHilb$ of finite dimensional Hilbert spaces is
  dagger compact closed, with $A^*$ the usual dual space of linear
  functions from $A$ to $I$, and with $f\da$ the usual linear algebra
  adjoint.
\end{example}

\paragraph{Graphical languages.}

Each of the notions defined in this section (except the spherical
notion) has a graphical language, extending the corresponding
graphical language from Section~\ref{sec-autonomous}, with the dagger
of a diagram taken to be its mirror image. Each notion has a coherence
theorem, proved by the same method as
Theorems~\ref{thm-coherence-dagger-categories}
and~\ref{thm-coherence-dagger-monoidal}. As expected, equivalence of
diagrams is up to isomorphism (for spacial dagger pivotal categories);
up to regular isotopy (for dagger braided pivotal categories); up to
framed 3-dimensional isotopy (for dagger tortile categories); and up
to isomorphism (for dagger compact closed categories).

%----------------------------------------------------------------------
\subsection{Dagger traced categories}\label{subsec-dagger-traced}

There is no difficulty in defining dagger variants of each of the
traced notions of Section~\ref{sec-traced}. A (left or right) trace on
a dagger monoidal category is called a {\em dagger trace} if it
satisfies
\begin{equation}\label{eqn-trace-dagger}
  (\Tr f)\da = \Tr(f\da)
\end{equation}
For example: a {\em dagger right traced category} is a right traced
dagger monoidal category satisfying (\ref{eqn-trace-dagger}). A
balanced traced category is {\em dagger balanced traced} if it is
dagger balanced and satisfies (\ref{eqn-trace-dagger}). And similarly
for the other notions.  The representation theorems of
Section~\ref{sec-traced} extend to these dagger variants:

\begin{theorem}[Representation of dagger braided/balanced/symmetric traced categories]
  Every dagger braided [balanced, symmetric] traced category can be
  fully and faithfully embedded in a dagger braided pivotal [dagger
  tortile, dagger compact closed] category, via a dagger braided
  [balanced, symmetric] traced functor.\eot
\end{theorem}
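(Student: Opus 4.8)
The plan is to adapt the $\Int$-construction used to prove Theorems~\ref{thm-embedding-braided-traced}, \ref{thm-embedding-balanced-traced}, and~\ref{thm-embedding-symmetric-traced}. Fix a dagger braided [balanced, symmetric] traced category $\Cc$. By those theorems we already have the braided pivotal [tortile, compact closed] category $\Int(\Cc)$, a full and faithful functor $F:\Cc\ii\Int(\Cc)$ (concretely $A\mapsto(A,I)$), and the fact that $F$ is a braided [balanced, symmetric] traced functor. So it remains only to equip $\Int(\Cc)$ with a dagger making it dagger braided pivotal [dagger tortile, dagger compact closed], and to observe that $F$ then becomes a dagger functor.

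First I would define the dagger on $\Int(\Cc)$. A morphism $f:(A^+,A^-)\ii(B^+,B^-)$ is by definition a $\Cc$-morphism $f:A^+\x B^-\ii B^+\x A^-$; set $f\da:(B^+,B^-)\ii(A^+,A^-)$ to be the $\Cc$-adjoint $f\da:B^+\x A^-\ii A^+\x B^-$, which has exactly the type of a morphism $(B^+,B^-)\ii(A^+,A^-)$. This assignment is manifestly identity-on-objects and involutive, since these properties hold in $\Cc$. The substantive point is contravariant functoriality: composition in $\Int(\Cc)$ is, by definition, a trace of a morphism assembled from $f$, $g$, and the braiding (or symmetry) of $\Cc$, so $(g\cp f)\da=f\da\cp g\da$ unwinds to an equation in $\Cc$ proved by pushing $\da$ through the tensor (axiom $(f\x g)\da=f\da\x g\da$), through the braiding (unitarity of $\sym$), and through the trace — the last step being precisely the dagger trace axiom~(\ref{eqn-trace-dagger}). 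I expect this to be the main obstacle: a routine but lengthy diagram chase, and the place where all three dagger hypotheses (dagger monoidal, unitary braiding/twist, dagger trace) are used at once. The same manipulations show that the tensor, associativity, unit, braiding, and (in the balanced case) twist of $\Int(\Cc)$ — all built from the corresponding data of $\Cc$ via tensors and traces — are compatible with $\da$ and unitary, since the analogous data in $\Cc$ are.

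Next I would treat the autonomous/pivotal structure: the unit $\eta_{(A^+,A^-)}$ and counit $\eps_{(A^+,A^-)}$ of $\Int(\Cc)$ are transposes of identity morphisms of $\Cc$, so with the dagger defined as above one checks directly that they satisfy the condition of Lemma~\ref{lem-dagger-pivotal-2}(c), that the canonical isomorphisms $(X\x Y)^*\iso Y^*\x X^*$ and $I^*\iso I$ are unitary, and that $f^{*\dagger}=f^{\dagger*}$. Hence $\Int(\Cc)$ is dagger pivotal with unitary braiding, i.e.\ dagger braided pivotal; in the balanced case the canonical twist of $\Int(\Cc)$ is moreover unitary, so $\Int(\Cc)$ is dagger tortile; and in the symmetric case the twist is the identity, so $\Int(\Cc)$ is dagger compact closed. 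Finally, $F$ is a dagger functor: $F(f)$ is just $f$ conjugated by the unitary coherence isomorphisms $A\x I\iso A$, so $F(f\da)=(Ff)\da$ at once. Combined with the fullness, faithfulness, and braided [balanced, symmetric] traced-ness of $F$ from the earlier representation theorems, this exhibits $F$ as the desired dagger braided [balanced, symmetric] traced embedding. As in Theorems~\ref{thm-coherence-braided-traced} and~\ref{thm-coherence-balanced-traced}, the details of the $\Int$-construction itself are lengthy and tedious and are omitted.
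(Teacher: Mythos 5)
Your proposal is correct and follows essentially the same route as the paper, which simply remarks that the proof is by Joyal, Street, and Verity's Int-construction, ``which respects the dagger structure.'' Your elaboration --- defining $f\da$ on $\Int(\Cc)$ via the dagger of $\Cc$ on hom-sets, using the dagger trace axiom and unitarity of the braiding/twist to verify contravariant functoriality and the dagger pivotal conditions, and noting that the embedding is a dagger functor --- is exactly the content the paper leaves implicit.
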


The proof, in each case, is by Joyal, Street, and Verity's
Int-construction {\cite{JSV96}}, which respects the dagger structure.

\paragraph{Graphical languages.}

The graphical language of each class of traced categories extends to
the corresponding dagger traced categories, in a way suggested by
equation (\ref{eqn-trace-dagger}). As usual, the dagger of a diagram
is its mirror image, thus for example
\[ \left[\mnew{\wirechart{@C=1cm@R=.8cm}{
  \blank\wireopen{d}\wire{rr}{}&&
  \blank\wireclose{d}
  \\
  \blank\wireright{r}{X}&
  \blank\wireright{r}{X}&
  \blank
  \\
  *{}\wireright{r}{A}&
  \blank\ulbox{[].[u]}{f}\wireright{r}{B}&
  *{}
  \\}}\right]\largeda
\ssep=\ssep
\mnew{\wirechart{@C=1cm@R=.8cm}{
  \blank\wireopen{d}\wire{rr}{}&&
  \blank\wireclose{d}
  \\
  \blank\wireright{r}{X}&
  \blank\wireright{r}{X}&
  \blank
  \\
  *{}\wireright{r}{B}&
  \blank\urbox{[].[u]}{f}\wireright{r}{A}&
  *{}
  \\}}
\]
The coherence theorems of Section~\ref{sec-traced} extend to this setting. 

%----------------------------------------------------------------------
\subsection{Dagger biproducts}

In a dagger category, if $A\oplus B$ is a categorical product (with
projections $\pi_1:A\oplus B\ii A$ and $\pi_2:A\oplus B\ii B$), then
it is automatically a coproduct (with injections $\pi\da_1:A\ii
A\oplus B$ and $\pi\da_2:B\ii A\oplus B$). It therefore makes sense to
define a notion of {\em dagger biproduct}.

\begin{definition}
  A {\em dagger biproduct category} is a biproduct category carrying a
  dagger structure, such that $\pi\da_i=\iota_i:A_i\ii A_1\oplus A_2$
  for $i=1,2$.
\end{definition}

As in Section~\ref{subsec-biproduct}, we can equivalently define a
dagger biproduct category as a dagger symmetric monoidal category,
together with natural families of morphisms
\[ \Delta_A:A\ii A\x A,\sep \Diamond_A:A\ii I,\sep
\nabla_A:A\x A\ii A,\sep \Box_A:I\ii A,
\]
such that $\Delta\da_A=\nabla_A$ and $\Diamond\da_A=\Box_A$,
satisfying the axioms in Table~\ref{tab-product-axioms}.

\paragraph{Graphical language.}

The graphical language of dagger biproduct categories is like that of
biproduct categories, where the dagger of a diagram is taken to be its
mirror image. For example, 
\[ \left[\mnew{\resizebox{3.5cm}{!}{\includegraphics{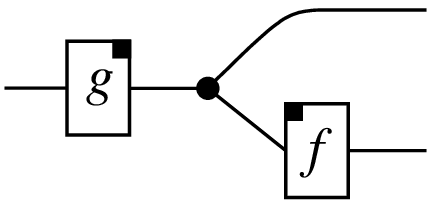}}}\right]\largeda
\ssep=\ssep
\mnew{\resizebox{3.5cm}{!}{\includegraphics{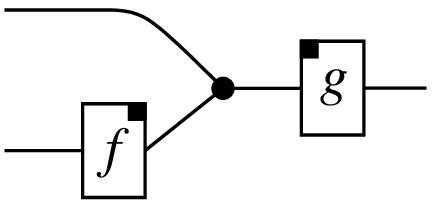}}}
\]

\begin{theorem}[Coherence for dagger biproduct categories]
  A well-formed equation between morphism terms in the language of
  dagger biproduct categories follows from the axioms of dagger biproduct
  categories if and only if it holds in the graphical language, up to
  isomorphism of diagrams, the diagrammatic manipulations shown
  in Table~\ref{tab-product-examples}, and their duals.
\end{theorem}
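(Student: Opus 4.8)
The plan is to deduce this from coherence for biproduct categories by the same $\dagger$-elimination argument that was used for Theorems~\ref{thm-coherence-dagger-categories} and~\ref{thm-coherence-dagger-monoidal}. Soundness is routine: mirror-imaging of diagrams is a contravariant involution that respects composition, tensor, the crossings, and the biproduct nodes, and it plainly preserves isomorphism of diagrams as well as the manipulations of Table~\ref{tab-product-examples} and their duals. The content is completeness.

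The first step is to show that every morphism term $t$ of dagger biproduct categories is provably equal, using only the axioms of dagger biproduct categories, to a term $t'$ in which $\dagger$ occurs only directly in front of morphism variables. One pushes $\dagger$ inward using the dagger axioms $(g\cp f)\da=f\da\cp g\da$, $\id\da=\id$, $f\dada=f$; the monoidal compatibility $(f\x g)\da=f\da\x g\da$; the unitarity of $\alpha$, $\lambda$, $\rho$, and $\sym$; and the biproduct compatibility axioms $\Delta\da_A=\nabla_A$, $\Diamond\da_A=\Box_A$, together with their consequences $\nabla\da_A=\Delta_A$ and $\Box\da_A=\Diamond_A$ obtained from $f\dada=f$. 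The resulting $t'$ is built only from composition, tensor, the structural isomorphisms and their inverses, $\sym$, the four maps $\Delta,\Diamond,\nabla,\Box$, and the symbols $f$ and $f\da$; it may therefore be read verbatim as a well-formed term in the ordinary language of biproduct categories over the signature $\Sigma'$ obtained from $\Sigma$ by adjoining, for each $f\in\Sigma_1$, a fresh morphism variable that we again denote $f\da$.

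Next, the graphical language of dagger biproduct categories over $\Sigma$ is in evident bijection with the graphical language of (non-dagger) biproduct categories over $\Sigma'$: reinterpret each mirror-imaged box (the diagrammatic form of $f\da$) as an ordinary box labelled by the fresh variable $f\da$, and leave the nodes $\Delta,\Diamond,\nabla,\Box$ unchanged. This bijection respects isomorphism of diagrams together with the manipulations of Table~\ref{tab-product-examples} and their duals, in both directions --- in particular, naturality of $\Delta$ and $\nabla$ holds with respect to every morphism variable of $\Sigma'$, the $f\da$ included. Now suppose $t$ and $s$ are terms whose diagrams are equivalent in the dagger biproduct graphical language. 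By soundness of the rewriting of the previous step, the diagram of $t$ is isomorphic to that of $t'$, and similarly for $s$ and $s'$, so the diagrams of $t'$ and $s'$ are equivalent as well; transporting them across the bijection, they remain equivalent in the biproduct graphical language over $\Sigma'$. Coherence for biproduct categories then yields a derivation of $t'=s'$ from the axioms of biproduct categories over $\Sigma'$. Reading each such axiom as the corresponding axiom of dagger biproduct categories (and each occurrence of the variable $f\da$ as the morphism $f\da$) turns this into a derivation of $t'=s'$ from the axioms of dagger biproduct categories; combined with the provable equalities $t=t'$ and $s=s'$ of the first step, we obtain $t=s$.

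The only ingredient that is not pure bookkeeping is the $\dagger$-elimination step, and there the one thing to check is that the dagger compatibility axioms peculiar to dagger biproduct categories --- namely $\Delta\da=\nabla$ and $\Diamond\da=\Box$, over and above those of a dagger symmetric monoidal category --- are exactly what is required in order to move $\dagger$ past every primitive of the biproduct language. This is where the specific definition enters, and it is straightforward; everything else follows the template of the earlier dagger coherence theorems.\eot
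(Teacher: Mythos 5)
Your proposal is correct and follows essentially the same route as the paper: the paper's proof is exactly the reduction to coherence for biproduct categories via the $\dagger$-elimination template of Theorems~\ref{thm-coherence-dagger-categories} and~\ref{thm-coherence-dagger-monoidal}, with the axioms $\Delta\da_A=\nabla_A$ and $\Diamond\da_A=\Box_A$ supplying the missing rewriting steps. You merely spell out the extended-signature bookkeeping that the paper leaves implicit.
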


\begin{proof}
  By reduction to biproduct categories, as in the proofs of
  Theorems~\ref{thm-coherence-dagger-categories}
  and~\ref{thm-coherence-dagger-monoidal}. The axioms
  $\Delta\da_A=\nabla_A$ and $\Diamond\da_A=\Box_A$ allow $\dagger$ to
  be removed from anywhere but a morphism variable.\eot
\end{proof}

Finally, there is an obvious notion of {\em dagger traced biproduct
  category} (which is really a dagger traced dagger biproduct
category), with graphical language derived from traced biproduct
categories.

%----------------------------------------------------------------------
\section{Bicategories}

A bicategory {\cite{Ben67}} is a generalization of a monoidal
category. In addition to objects $A,B,\ldots$ and morphisms
$f,g,\ldots$, one now also considers {\em 0-cells}
$\alpha,\beta,\ldots$, which we can visualize as {\em colors}. For
example, consider the following diagram. It is a standard diagram for
monoidal categories, except that the areas between the wires have been
colored.
\[ \resizebox{1.6in}{!}{\includegraphics{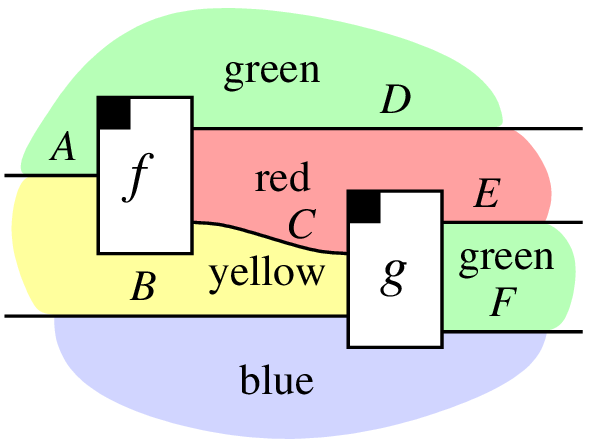}}
\]
As usual, we have objects $A,B,C,D,E,F$ and morphisms $f:A\ii C\x D$
and $g:B\x C\ii F\x E$. But now there are also 0-cells called green,
red, yellow, and blue. In such diagrams, each object has a {\em
  source}, which is the 0-cell just above it, and a {\em target},
which is the 0-cell just below it. For example, we have
$A:\mbox{green}\ii\mbox{yellow}$, $B:\mbox{yellow}\ii\mbox{blue}$, and
so on. It is now clear that, to be consistently colored, such diagrams
have to satisfy some coloring constraints. The constraints are:
\begin{itemize}
\item The tensor $B\x A$ of two objects may only be formed if the
  target of $A$ is equal to the source of $B$. In symbols, for any
  0-cells $\alpha,\beta,\gamma$, if $A:\alpha\ii\beta$ and
  $B:\beta\ii\gamma$, then $B\x A:\alpha\ii\gamma$.
\item If $f:A\ii B$ is a morphism, then $A$ and $B$ must have a common
  source and a common target. In symbols, if $f:A\ii B$ and
  $A:\alpha\ii\beta$, then $B:\alpha\ii\beta$.
\item One also requires a unit object $I_{\alpha}:\alpha\ii \alpha$
  for every color $\alpha$.
\end{itemize}

As an illustration of the second property, consider $f:A\ii C\x D$ in
the above example, where $A:\mbox{green}\ii\mbox{yellow}$ and $C\x
D:\mbox{green}\ii\mbox{yellow}$. Subject to the above coloring
constraints, a bicategory is then required to satisfy exactly the same
axioms as a monoidal category.  Notice, for example, that if $f:A\ii
B$ and $g:B\ii C$ and $f,g$ are well-colored, then so is $g\cp f:A\ii
C$. Also, the identity maps $\id_A:A\ii A$, the associativity map
$\alpha_{A,B,C}:(A\x B)\x C\catarrow{\iso} A\x(B\x C)$, and the other
structural maps are well-colored.  In particular, a monoidal category
is the same thing as a one-object bicategory.

To give a detailed account of bicategories and their graphical
languages is beyond the scope of this paper. We have already discussed
over 30 different flavors of monoidal categories, and the reader can
well imagine how many possible variations of bicategories there are,
with 2-, 3-, and 4-dimensional graphical languages, once one considers
bicategorical versions of braids, twists, adjoints, and traces. There
are even more variations if one considers tricategories and beyond.
We refer the reader to {\cite{Ben67}} for the definition and basic
properties of bicategories, and to {\cite{Str95}},
{\cite[Sec.~7]{BD95}} for a taste of their graphical languages.

%----------------------------------------------------------------------
\section{Beyond a single tensor product}\label{sec-star-autonomous}

All the categorical notions that we have considered in this paper have
just a single tensor product, which we represented as juxtaposition in
the graphical languages. For notions of categories with more than one
tensor product, the graphical languages get much more complicated.
The details are beyond the scope of this paper, so we just outline the
basics and give some references. 

Examples of categories with more than one tensor are linearly
distributive categories {\cite{CS97}} and *-autonomous categories
{\cite{Bar79}}. Both of these notions are models of multiplicative
linear logic {\cite{Gir87a}}. These categories have two tensors, often
called ``tensor'' and ``par'', and written
\[     A\x B
\sep\mbox{and}\sep
A\llamp B.
\]
The two tensors are related by some morphisms, such as $A\x(B\llamp
C)\ii (A\x B)\llamp C$, while other similar morphisms, such as $(A\x
B)\llamp C\ii A\x(B\llamp C)$, are not present.

To make a graphical language for more than one tensor product, one
must label the wires by morphism terms, rather than morphism
variables. One must also introduce special tensor and par nodes as
shown here:
\[ \mnew{\xymatrix@R-4ex{
  \ar[dr]^<>(.3){B} \\
  & \circbox{\x} \ar[r]^{A\x B} &\\
  \ar[ur]_<>(.3){A}
}}
\ssep
\mnew{\xymatrix@R-4ex{
  &&\\
  \ar[r]^{A\x B} & \circbox{\x} \ar[ur]^{B} \ar[dr]_{A}\\
  &&
}}
\ssep
\mnew{\xymatrix@R-4ex{
  \ar[dr]^<>(.3){B} \\
  & \circbox{\llamp} \ar[r]^{A\llamp B} &\\
  \ar[ur]_<>(.3){A}
}}
\ssep
\mnew{\xymatrix@R-4ex{
  &&\\
  \ar[r]^{A\llamp B} & \circbox{\llamp} \ar[ur]^{B} \ar[dr]_{A}\\
  &&
}},
\]
along with similar nodes for the units. Equivalence of diagrams must
be taken up to axiomatic manipulations, such as the following, which
is called {\em cut elimination} in logic:
\[ \mnew{\xymatrix@R-4ex{
  \ar[dr]^<>(.3){B} &&&\\
  & \circbox{\x} \ar[r]^{A\x B} & \circbox{\x} \ar[ur]^{B} \ar[dr]_{A}\\
  \ar[ur]_<>(.3){A}&&&
}}
\sep=\sep
\mnew{\xymatrix@R-4ex{
    \ar[rr]^{B}&&\\
    \ar[rr]_{A}&&
}}.
\]
Finally, one must state a {\em correctness criterion}, to explain why
certain diagrams, such as the left one following, are well-formed,
while others, such as the right one, are not well-formed.
\[ \xymatrix@R-4ex{
  &&\\
  \ar[r]^{A\x B} & \circbox{\x} \ar@/^3ex/[rr]^{B} \ar@/_3ex/[rr]_{A}&&
  \circbox{\x} \ar[r]^{A\x B} &\\
  &&
}
\sep
 \xymatrix@R-4ex{
  &&\\
  \ar[r]^{A\llamp B} & \circbox{\llamp} \ar@/^3ex/[rr]^{B} \ar@/_3ex/[rr]_{A}&&
  \circbox{\x} \ar[r]^{A\x B} &\\
  &&
}
\]
The resulting theory is called the theory of {\em proof nets}, and was
first given by Girard for unit-free multiplicative linear logic
{\cite{Gir87a}}. It was later extended to include the tensor units by
Blute et al.~{\cite{BCST96}}.

%----------------------------------------------------------------------
\section{Summary}\label{sec-summary}

%......................................................................
\begin{table}
\newlength{\mywidth}
\newcommand{\fittowidth}[2]{\settowidth{\mywidth}{#2}\ifdim\mywidth>#1\resizebox{#1}{!}{#2}\else#2\fi}
\newcommand{\doubleline}[2]{\begin{tabular}{@{}l@{}}#1\\#2\end{tabular}}

\newcommand{\chartboxaux}[6]{%
\framebox{\resizebox{.55in}{!}{%
\begin{tabular}[b]{@{}p{1in}@{}}
  \fittowidth{1in}{#2} \\
  \multicolumn{1}{@{}c@{}}{$\m{\rule{0mm}{#1}}{#3}$}\\
  #4\hfill#5\hfill#6
\end{tabular}%
}}}

\newcommand{\chartbox}{\chartboxaux{.6in}}
\newcommand{\chartboxb}{\chartboxaux{.43in}}

\newbox\myboxa
\newbox\myboxb
% Note: a box containing an EPS graphic cannot be accessed
% twice, so we must copy #3 ``by name'' below
\newcommand\fittobox[3]{\setbox\myboxa\hbox{\resizebox{#1}{!}{#3}}\setbox\myboxb\hbox{\resizebox{!}{#2}{#3}}\ifdim\ht\myboxa>\ht\myboxb\resizebox{!}{#2}{#3}\else\resizebox{#1}{!}{#3}\fi}

\newcommand{\gr}[1]{\mnew{\fittobox{1in}{0.55in}{\includegraphics{#1}}}}
\newcommand{\grb}[1]{\mnew{\fittobox{1in}{0.35in}{\includegraphics{#1}}}}
\newcommand{\eq}[2]{\resizebox{1in}{!}{\mnew{\resizebox{.4in}{!}{\includegraphics{#1}}}~\mnew{=}~\mnew{\resizebox{.4in}{!}{\includegraphics{#2}}}}}
\newcommand{\eqb}[4]{\resizebox{1in}{!}{\mnew{\resizebox{#1}{!}{\includegraphics{#3}}}~\mnew{=}~\mnew{\resizebox{#2}{!}{\includegraphics{#4}}}}}

\def\chartAA{\chartbox
 {Category}
 {\gr{chart-11}}
 {d:1}{i:1}{c:\chk}}
\def\chartAB{\chartbox
 {Planar monoidal}
 {\gr{chart-12}}
 {d:2}{i:2}{c:\cite{JS88,JS91}}}
\def\chartAC{\chartbox
 {Spacial monoidal}
 {\eq{chart-13a}{chart-13b}}
 {d:2}{i:3}{c:conj}}
\def\chartAD{\chartbox
 {Braided monoidal}
 {\gr{chart-14}}
 {d:3}{i:3}{c:\cite{JS91}}}
\def\chartAE{\chartbox
 {Balanced monoidal}
 {\gr{chart-15}}
 {d:3f}{i:3f}{c:\cite{JS91}}}
\def\chartAF{\chartbox
 {Symmetric monoidal}
 {\gr{chart-16}}
 {d:3}{i:4}{c:\cite{JS91}}}
\def\chartAG{\chartbox
 {Product}
 {\eq{chart-17a}{chart-17b}}
 {d:3}{i:eqn}{c:\chk}}
\def\chartAH{\chartbox
 {Coproduct}
 {\eq{chart-18a}{chart-18b}}
 {d:3}{i:eqn}{c:\chk}}
\def\chartAI{\chartbox
 {Biproduct}
 {\eqb{.35in}{.45in}{chart-19a}{chart-19b}}
 {d:3}{i:eqn}{c:\chk}}

\def\chartBA{\chartbox
 {Right traced}
 {\gr{chart-21}}
 {d:2}{i:2}{c:conj}}
\def\chartBB{\chartbox
 {Planar traced}
 {\gr{chart-22}}
 {d:2}{i:2}{c:conj}}
\def\chartBC{\chartbox
 {Spacial traced}
 {\eq{chart-23a}{chart-23b}}
 {d:2}{i:3}{c:conj}}
\def\chartBD{\chartbox
 {Braided traced}
 {\gr{chart-24}}
 {\small d:2${}^+$}{i:reg.rot}{c:int${}^*$}}
\def\chartBE{\chartbox
 {Balanced traced}
 {\doubleline{\eq{chart-25a}{chart-25b}}{\eqb{.55in}{.25in}{chart-25c}{chart-25d}}}
 {d:3f}{i:3f}{c:int${}^*$}}
\def\chartBF{\chartbox
 {Symmetric traced}
 {\gr{chart-26}}
 {d:3}{i:4}{c:int${}^*$}}
\def\chartBG{\chartbox
 {Traced product}
 {\gr{chart-27}}
 {d:3}{i:eqn}{c:\chk}}
\def\chartBH{\chartbox
 {Traced coproduct}
 {\gr{chart-28}}
 {d:3}{i:eqn}{c:\chk}}
\def\chartBI{\chartbox
 {Traced biproduct}
 {\gr{chart-29}}
 {d:3}{i:eqn}{c:\chk}}

\def\chartCA{\chartboxb
 {\doubleline{Planar autonomous}{(rigid)}}
 {\grb{chart-31}}
 {d:2}{i:2}{c:\cite{JS88}}}
\def\chartCB{\chartboxb
 {\doubleline{Planar pivotal}{(sovereign)}}
 {\grb{chart-32}}
 {d:2}{i:2.rot}{c:\cite{FY92}${}^*$}}
\def\chartCC{\chartbox
 {Spacial pivotal}
 {\eq{chart-33a}{chart-33b}}
 {d:2}{i:3}{c:conj}}
\def\chartCD{\chartbox
 {Braided autonomous}
 {\gr{chart-34}}
 {d:2${}^+$}{i:reg}{c:\cite{FY92}${}^*$}}
\def\chartCE{\chartboxb
 {\doubleline{Braided pivotal}{\fittowidth{1in}{(balanced autonomous)}}}
 {\grb{chart-35}}
 {\small d:2${}^+$}{i:reg.rot}{c:\cite{FY92}${}^*$}}
\def\chartCF{\chartbox
 {Tortile (ribbon)}
 {\gr{chart-36}}
 {d:3f}{i:3f}{c:\cite{Shum94}${}^*$}}
\def\chartCG{\chartbox
 {Compact closed}
 {\gr{chart-37}}
 {d:3}{i:4}{c:\cite{KL80}${}^*$}}

\[
\xymatrix@!R=.42in@C=.7in{
  *\txt{\bf Progressive} &
  *\txt{\bf Traced} &
  *\txt{\bf Autonomous} \\
  *{\chartAA} &
  *{\chartBA}\ar[dl] &
  *{\chartCA}\ar@/_.75in/[]!L;[dll]!UR \\
  *{\chartAB}\ar[u] &
  *{\chartBB}\ar[u]\ar[l] &
  *{\chartCB}\ar[u]\ar[l] \\
  *{\chartAC}\ar[u] &
  *{\chartBC}\ar[u]\ar[l] &
  *{\chartCC}\ar[u]\ar[l]
  \save+<1in,0cm>*{\chartCD}="box43"\ar[uu]!RD \restore \\
  *{\chartAD}\ar[u] &
  \save+<.7in,0cm>*{\chartBD}="box24"\ar[l]\ar!U;[uu]!RD \restore &
  \save+<1in,0cm>*{\chartCE}="box34"\ar"box24"\ar!UL;[uu]!RD\ar"box34";"box43" \restore \\
  *{\chartAE}\ar[u] &
  *{\chartBE}\ar[uu]\ar"box24"\ar[l] &
  *{\chartCF}\ar[uu]\ar!UR;"box34"!LD\ar[l] \\
  *{\chartAF}\ar[u] &
  *{\chartBF}\ar[u]\ar[l] &
  *{\chartCG}\ar[u]\ar[l] \\
  \save+<-.2in,0in>*{\chartAG}="box17"\ar!U+<.1in,0in>;[u]!D+<-.1in,0in> \restore &
  \save+<-.2in,0in>*{\chartBG}="box27"\ar!U+<.1in,0in>;[u]!D+<-.1in,0in>\ar"box17" \restore \\
  \save+<.2in,0in>*{\chartAH}="box18"\ar!U+<.1in,0in>;[uu]!D+<.3in,0in> \restore &
  \save+<.2in,0in>*{\chartBH}="box28"\ar!U+<.1in,0in>;[uu]!D+<.3in,0in>\ar"box18" \restore \\
  *{\chartAI}\ar!U+<.1in,0in>;"box18"!D+<-.1in,0in>\ar!U+<-.3in,0in>;"box17"!D+<-.1in,0in> &
  *{\chartBI}\ar!U+<.1in,0in>;"box28"!D+<-.1in,0in>\ar!U+<-.3in,0in>;"box27"!D+<-.1in,0in>\ar[l] \\
}
\]
\caption{Summary of monoidal notions and their graphical languages}
\label{tab-chart}
\end{table}
%......................................................................

Table~\ref{tab-chart} summarizes the graphical languages from
Sections~\ref{sec-categories}--{\ref{sec-products}}. The name of each
class of categories is shown along with a typical diagram or equation.
The arrows indicate forgetful functors.  We have omitted spherical
categories, because they do not possess a graphical language modulo a
natural notion of isotopy.

The letter $d$ indicates the dimension of the diagrams, and the letter
$i$ indicates the dimension of the ambient space for isotopy.  If
$i>d$, then isotopy coincides with isomorphism of diagrams.  Special
cases are ``3f'' for framed diagrams and framed isotopy in 3
dimensions; ``2+'' for two-dimensional diagram with crossings (i.e.,
isotopy is taken on 2-dimensional projections, rather than on
3-dimensional diagrams); ``reg'' for regular isotopy; and ``rot'' to
indicate that isotopy includes rotation of boxes. Finally, ``eqn''
indicates that equivalence of diagrams is taken modulo equational
axioms.

The letter $c$ indicates the status of a coherence theorem. This is
usually a reference to a proof of the theorem, or ``conj'' if the
result is conjectured. A checkmark ``$\chk$'' indicates a result that
is folklore or whose proof is trivial. ``int'' indicates that the
coherence theorem follows from a version of Joyal, Street, and
Verity's Int-construction, and the corresponding coherence theorem for
pivotal categories. An asterisk ``$*$'' indicates that the result has
only been proved for simple signatures.

Dagger variants can be defined of all of the notions shown in
Table~\ref{tab-chart}, except the planar autonomous and braided
autonomous notions.  Finally, bicategories require their own
(presumably much larger) table and are not included here.

%======================================================================
%======================================================================
%======================================================================

\newpage

\bibliographystyle{abbrv}
\bibliography{graphical}
\end{document}